  \def\sigma{sigma}%
  \def\({}%
  \def\){}%
\theoremstyle:=definition,remark,plain\do{%
        \expandafter\g@addto@macro\csname th@\theoremstyle\endcsname{%
            \addtolength\thm@preskip\parskip
            }%
        }
\z@skip \halign{\relax\hfil\txtline@@{##}\hfil\cr\leavevmode#1\crcr}}}
\theoremstyle{definition}
\newtheorem{thm}{Theorem}[section]
\newtheorem{lem}[thm]{Lemma}
\newtheorem{proc}[thm]{Procedure}
\newtheorem{cor}[thm]{Corollary}
\newtheorem{defn}[thm]{Definition}
\newtheorem{propn}[thm]{Proposition}
\newtheorem*{thm*}{Theorem}
\newtheorem*{qn*}{Question}
\newtheorem{props}[thm]{Properties}
\newcounter{lthm}
\newtheorem{letterthm}[lthm]{Theorem}
\theoremstyle{remark}
\newtheorem{rk}[thm]{Remark}
\newtheorem*{rk*}{Remark}
\newtheorem{rks}[thm]{Remarks}
\newtheorem*{rks*}{Remarks}
\newtheorem{ex}[thm]{Example}
\newcommand{\gr}{\mathrm{gr}}
\newcommand{\Aut}{\mathrm{Aut}}
\newcommand{\id}{\mathrm{id}}
\newcommand{\chr}{\mathrm{char}}
\renewcommand{\O}{\mathcal{O}}
\newcommand\blfootnote[1]{%
  \begingroup
  \renewcommand\thefootnote{}\footnote{#1}%
  \addtocounter{footnote}{-1}%
  \endgroup
}
\numberwithin{thm}{subsection}
\numberwithin{lem}{subsection}
\numberwithin{propn}{subsection}
\numberwithin{props}{subsection}
\numberwithin{defn}{subsection}
\numberwithin{cor}{subsection}
\numberwithin{conj}{subsection}
\numberwithin{rk}{subsection}
\numberwithin{proc}{subsection}
\numberwithin{notn}{subsection}
\numberwithin{qn}{subsection}
\begin{document}

\numberwithin{equation}{section}
\binoppenalty=\maxdimen
\relpenalty=\maxdimen

\title{Bounded skew power series rings for inner \(\sigma\)-derivations}
\author{Adam Jones and William Woods}
\date{\today}
\maketitle
\begin{abstract}

\blfootnote{\emph{2010 Mathematics Subject Classification}: 16S35, 16S36, 16W60, 16W80.}

\noindent We define and explore the bounded skew power series ring $R^+[[x;\sigma,\delta]]$ defined over a complete, filtered, Noetherian prime ring $R$ with a commuting skew derivation $(\sigma,\delta)$. We establish precise criteria for when this ring is well-defined, and for an appropriate completion $Q$ of $Q(R)$, we prove that if $Q$ has characteristic $p$, $\delta$ is an inner $\sigma$-derivation and no positive power of $\sigma$ is inner as an automorphism of $Q$, then $Q^+[[x;\sigma,\delta]]$ is often prime, and even simple under certain mild restrictions on $\delta$. It follows from this result that $R^+[[x;\sigma,\delta]]$ is itself prime.

\end{abstract}
\tableofcontents

\section{Introduction and motivation}

Let $R$ be a complete filtered ring and $(\sigma, \delta)$ a skew derivation (Definition \ref{defn: skew derivation}) on $R$. Under certain appropriate \emph{compatibility} conditions between $(\sigma, \delta)$ and the filtration on $R$ (see Definition \ref{defn: skew power series ring, compatible case} below), we can define the \emph{skew power series ring}
$$R[[x; \sigma, \delta]] = \left\{ \sum_{n\geq 0} r_n x^n : r_n\in R\right\},$$
also a topological ring, with continuous multiplication (extending the multiplication on $R$) uniquely defined by the family of relations $xr = \sigma(r)x + \delta(r)$ for all $r\in R$. This is of course related to the classical notion of a \emph{skew polynomial ring} $R[x;\sigma,\delta]$ \cite[\S 1.2]{MR}, which sits inside $R[[x; \sigma, \delta]]$ as a dense subring, and whose properties are far better understood. (Note that the existence of skew power series rings is non-trivial, and conditions under which they exist are not yet fully understood: see \cite[\S 1]{SchVen06} and \cite[\S 3.4]{letzter-noeth-skew}.)

Skew power series rings of \emph{automorphic type} (with $\delta = 0$) are fairly well-studied classical objects \cite[\S 1.4]{MR} \cite[\S 2.3]{Coh95}. We note also that the case where $\delta$ is (locally) nilpotent has recently been studied \cite{bergen-grzeszczuk-skew}. However, for our purposes, this will not be enough, and we will need $\delta$ to satisfy a more general \emph{topological} nilpotency property. Skew power series rings, in this level of generality, were first introduced and studied in \cite[\S 1]{SchVen06} and \cite[\S 2]{venjakob}.

The primary motivation for the current paper comes from the study of Iwasawa algebras of soluble compact $p$-adic Lie groups: see \S \ref{subsec: iwasawa} below. In particular, there is an ongoing attempt to understand the prime ideals in Iwasawa algebras (see e.g.\ \cite{ardakovInv, ardakovbrown, jones-abelian-by-procyclic, woods-catenary}), and in skew power series extensions more generally \cite{letzter-noeth-skew}. However, we also mention that this skew power series construction encompasses many other rings of interest elsewhere, including $q$-commutative power series rings \cite{letzter-wang-q-comm} and other completed quantum coordinate rings \cite{wang-quantum}.

In contrast to skew polynomial rings, whose prime ideal structure is relatively well understood \cite{goodearl-letzter}, very little is known about the prime ideal structure of skew power series rings. Almost nothing is known in generality apart from the results of \cite{letzter-noeth-skew}. 

\subsection{Iwasawa algebras}\label{subsec: iwasawa}

Iwasawa algebras are objects of fundamental importance in the representation theory of $p$-adic groups.

A \emph{compact $p$-adic Lie group} $G$ is most simply defined as a closed subgroup of some $GL_n(\mathbb{Z}_p)$ (a topological group), where $\mathbb{Z}_p$ denotes the group of $p$-adic integers. Such a group is profinite \cite[Definition 1.1]{DDMS}, so its open subgroups form a base for the neighbourhoods of the identity. Given a fixed base ring $k$, usually taken to be $\mathbb{F}_p$ or $\mathbb{Z}_p$, the \emph{Iwasawa algebra} of $G$ (over $k$) is the completion $kG$ of the usual group algebra $k[G]$: when $k = \mathbb{F}_p$ or $\mathbb{Z}_p$, this can be calculated as $$kG=\underset{U\trianglelefteq_o G}{\varprojlim}{k[G/U]},$$ where the notation $U\trianglelefteq_o G$ means that $U$ is an open normal subgroup of $G$.

If $G$ is a compact $p$-adic Lie group, and $H$ is a closed normal subgroup of $G$ satisfying $G/H \cong \mathbb{Z}_p$, then there exists a skew derivation $(\sigma, \delta)$ on $kH$ and an element $x\in kG$ such that $kG = kH[[x; \sigma, \delta]]$ \cite[Lemma 1.6 and \S 4]{SchVen06}. In fact, we can choose $x$ such that $\delta = \sigma - \mathrm{id}$ (we will say that the skew derivation $(\sigma, \delta)$ is \emph{of Iwasawa type}). In this context, an example of a foundational motivating question for us is:

\vspace{12pt}
\centerline{
\text{\parbox{.78\textwidth}{
\emph{Let $P$ be a $G$-invariant prime ideal of $kH$, so that $PkG$ is a two-sided ideal of $kG$. Is $PkG$ prime?}
}
}
}

This is a very special case of the question \cite[3.19]{letzter-noeth-skew}, but it still remains open. An important alternative way of formulating this question is, if $P$ is a $G$-invariant prime ideal of $kH$, and $R=kH/P$, then is $kG/PkG=R[[\overline{x};\overline{\sigma},\overline{\delta}]]$ a prime ring?

In studying compact $p$-adic Lie groups, it is common to focus first on a slightly more restricted class of groups. Two such classes are common: the \emph{uniform} (or \emph{uniformly powerful}) groups \cite{DDMS}, and the \emph{$p$-valuable} groups \cite{lazard}, both classes of compact $p$-adic Lie groups admitting well-behaved filtration properties. If $G$ is a soluble uniform group or a soluble $p$-valuable group, it is known that there is a series
$$1 = G_0 \lhd G_1 \lhd \dots \lhd G_n = G$$
where each $G_i/G_{i-1} \cong \mathbb{Z}_p$. In particular, $kG$ can be built up from $k$ as an $n$-fold iterated skew power series extension, where each skew derivation is of Iwasawa type.

\subsection{The filtration assumptions}

We will need to make the following assumptions often, so we give them a name. We will say that the filtered ring $(R, w_0)$ satisfies \eqref{filt} if
\begin{equation}
    \tag{filt}
    \qquad\;\;\left.
    \text{\parbox{.78\textwidth}{
    $R$ is a prime $\mathbb{Z}_p$-algebra, and $w_0: R\to \mathbb{Z}\cup\{\infty\}$ is a complete (separated) filtration on $R$, such that $\gr_{w_0}(R)$ is a Noetherian ring, finitely generated as a module over a central graded subring $A$ which contains a non-nilpotent element of non-zero degree.
    }}
    \quad\;\; \right\}
    \label{filt}
\end{equation}

(We will usually omit the word ``separated", and assume throughout that all filtrations are separated, in contrast to our main reference \cite{LVO}.)

These conditions will allow us to define a filtration $u$ on the Goldie ring of quotients $Q(R)$ with particularly nice properties: this filtration can be constructed along the lines of \cite[Theorem C]{ardakovInv} or \cite[Theorem 3.3]{jones-abelian-by-procyclic}. We state the result we will use as Theorem \ref{thm: filtered localisation} below, but as we will also analyse the procedure for building this filtration in detail, we outline it as Procedure \ref{proc: build a standard filtration}.

The condition \eqref{filt} is relatively mild in our context. For instance, let $k = \mathbb{F}_p$ or $\mathbb{Z}_p$, and let
\begin{equation}\label{eqn: iterated local skew power series ring}
S = k[[x_1]][[x_2; \sigma_2, \delta_2]] \dots [[x_n; \sigma_n, \delta_n]]
\end{equation}
be an iterated local skew power series ring in the sense of \cite{woods-SPS-dim}, a scalar local ring with maximal ideal $\mathfrak{m}$. Suppose that $\deg_\mathfrak{m}(\sigma_i - \id) \geq 1$ for all $2\leq i\leq n$. Suppose also that \emph{either} $\deg_\mathfrak{m}(\delta_i) \geq 2$ for all $2\leq i\leq n$, \emph{or} that \eqref{eqn: iterated local skew power series ring} is \emph{triangular} in the sense of \cite[Definition 2.8]{woods-SPS-dim}. Then there is a complete separated filtration $f$ on $S$ such that $\gr_f(S)$ is a commutative polynomial ring over $\mathbb{F}_p$ in $n$ variables (if $k = \mathbb{F}_p$) or $n+1$ variables (if $k = \mathbb{Z}_p$) \cite[Theorems D and E]{woods-SPS-dim}. Now, if $P$ is a non-maximal prime ideal of $S$, then the quotient filtration on $R := S/P$ satisfies \eqref{filt} \cite[Example 4.11]{jones-woods-1}.

For example, these conditions are satisfied when $S = kH$ is the Iwasawa algebra of a soluble uniform pro-$p$ group \cite[Example 4.12]{jones-woods-1}, as in \S \ref{subsec: iwasawa}. That is: if $P$ is a non-maximal prime ideal of $kH$, then $kH/P$ satisfies \eqref{filt}.

\subsection{Bounded skew power series rings and subrings}\label{subsec: main results}

(We state abbreviated versions of our main results in this introduction, and reference more detailed versions in the main text of the paper.)

As intimated in \S \ref{subsec: iwasawa}, an important open question in the representation theory of compact $p$-adic Lie groups is the question of whether $R[[x;\sigma,\delta]]$ is a prime ring when $R$ is a prime quotient of an Iwasawa algebra. The main goal of of this paper is to partially answer this question for a larger class of rings $R$.

For now, we work more generally. Let $(S,w)$ be a complete filtered ring, and let $(\sigma, \delta)$ be a \emph{commuting} skew derivation on $S$, i.e. a skew derivation satisfying $\sigma\delta = \delta\sigma$. (In the literature on \emph{$q$-skew derivations}, this is sometimes called a \emph{1-skew derivation}, but we have avoided this term in favour of a more descriptive term.)

In \cite{jones-woods-1}, the authors explored the notion of \emph{compatibility} of a skew derivation (Definition \ref{defn: compatibility}), and demonstrated how various partial skew power series rings over $S$ exist when this condition is satisfied. However, compatibility is quite a strong condition, so we will need a more nuanced criterion to suit the circumstances of our current investigation.

In \S \ref{subsec: restricted skew power series}, we will introduce the notion of a \emph{quasi-compatible} skew derivation: roughly speaking, $(\sigma,\delta)$ is quasi-compatible with $w$ if $\sigma^i\delta^j$ is a filtered endomorphism of $(S,w)$ for any $i\in\mathbb{Z}$, $j\in\mathbb{N}$, their degrees with respect to $w$ all have a common lower bound, and $\delta$ is topologically nilpotent in an appropriate sense. We demonstrate in \S \ref{sec: filtered localisation} that this allows us to define a ring structure on the set of bounded power series over $S$, culminating in our first main result:

\begin{letterthm}[Theorem \ref{thm: restricted skew power series rings exist}]\label{letterthm: restricted skew power series rings exist}
Suppose $(S,w)$ is a complete filtered ring and $(\sigma,\delta)$ is a quasi-compatible commuting skew derivation on $S$. Then the \emph{bounded skew power series ring} $$S^+[[x; \sigma, \delta]]:=\left\{\sum_{n\in\mathbb{N}} s_nx^n:s_n\in S, \text{ and there exists } M\in\mathbb{Z} \text{ such that }w(s_n)\geq M\text{ for all }n\right\}$$ is a well-defined ring. It can be given a ring topology, and the skew polynomial ring $S[x; \sigma, \delta]$ is a dense subring. Moreover, if $w$ is a positive filtration, $S^+[[x;\sigma,\delta]] = S[[x; \sigma, \delta]]$ is the (complete) ring of ordinary skew power series over $S$.\qed
\end{letterthm}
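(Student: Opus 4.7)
The plan is to define multiplication on $S^+[[x;\sigma,\delta]]$ coefficient-wise via the only formula forced by the commutation relation $xr=\sigma(r)x+\delta(r)$, verify that this formula converges in $S$ and produces a bounded coefficient sequence (this is exactly where quasi-compatibility and topological nilpotency of $\delta$ are used), and then extend the ring axioms and topology from the skew polynomial subring $S[x;\sigma,\delta]$ by continuity.

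First, a straightforward induction on $n$, using the commutativity $\sigma\delta=\delta\sigma$, yields the binomial identity
$$x^n r \;=\; \sum_{k=0}^n \binom{n}{k}\,\sigma^k\delta^{n-k}(r)\,x^k, \qquad r\in S,\ n\geq 0.$$
Writing $a=\sum_n s_n x^n$ and $b=\sum_m t_m x^m$ and formally multiplying, one reads off that the coefficient of $x^\ell$ in $ab$ must be
$$(ab)_\ell \;=\; \sum_{m=0}^{\ell}\ \sum_{n\geq \ell-m}\binom{n}{\ell-m}\,s_n\,\sigma^{\ell-m}\delta^{\,n+m-\ell}(t_m),$$
and this therefore has to be taken as the \emph{definition}.

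The second step is to show this double sum is well-defined in the complete ring $S$ and that the resulting family $\{(ab)_\ell\}_\ell$ is uniformly bounded below. The outer sum is finite, so it suffices to handle the inner sum for fixed $m,\ell$: as $n\to\infty$, the exponent $n+m-\ell$ of $\delta$ grows, and topological nilpotency of $\delta$ forces the $w$-values of the summands to diverge to $+\infty$, giving convergence in $S$. For boundedness of $\{(ab)_\ell\}$, the uniform lower bound on $w(\sigma^i\delta^j)$ supplied by quasi-compatibility, combined with the assumed lower bounds on $w(s_n)$ and $w(t_m)$, should produce a single $M'\in\mathbb{Z}$ with $w((ab)_\ell)\geq M'$ for all $\ell$.

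Finally, associativity, distributivity, and compatibility with the existing multiplication on $S$ hold on the dense subring $S[x;\sigma,\delta]$ by classical skew polynomial theory, and extend by continuity to $S^+[[x;\sigma,\delta]]$ once the latter is equipped with an appropriate ring filtration (combining the $x$-adic order with $\inf_n w(s_n)$) under which this multiplication is continuous; density of $S[x;\sigma,\delta]$ then follows by truncating $\sum_n s_n x^n$ at increasing powers of $x$. The positive case is immediate: if $w\geq 0$ on nonzero elements, then boundedness of $\{w(s_n)\}$ is automatic, so $S^+[[x;\sigma,\delta]]$ coincides with the full skew power series ring $S[[x;\sigma,\delta]]$. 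The main obstacle I expect is the estimation in the second step: balancing the (potentially negative) lower bounds on the $w(s_n)$ against the quasi-compatibility bound on $w(\sigma^i\delta^j)$ and the rate of topological nilpotency of $\delta$ to produce a constant $M'$ that does not depend on $\ell$, and simultaneously verifying continuity of multiplication carefully enough that the final continuity/density argument goes through.
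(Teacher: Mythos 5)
Your proposal is correct and follows essentially the same route as the paper: the coefficient formula you write down is exactly the limit of the truncated products used there, the convergence and uniform lower bound come from the same estimate $\deg_w(\sigma^q\delta^p)\geq\lfloor p/N\rfloor+B$, and the ring axioms are transferred from the dense subring $S[x;\sigma,\delta]$ by continuity of the product with respect to $v_\varepsilon$ (with $\varepsilon\leq 1/N$). The obstacle you flag --- proving the product map is continuous so that the density argument applies --- is precisely the content of Proposition \ref{propn: existence of skew power series rings} (convergence for \emph{arbitrary} approximating polynomial sequences) plus the diagonal argument in the paper's proof; note also that $v_\varepsilon$ is in general only a module filtration defining a ring topology, not a ring filtration.
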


The proof that this ring exists is technical but necessary, as we will usually not be working with positive filtrations.

\begin{rk*}
In \cite{jones-woods-1}, we instead used the term bounded skew power series to refer to the ring $S^b[[x;\sigma,\delta]]$, whose elements satisfy a weaker convergence condition. These are not the same ring in general: it should be remarked that, under appropriate conditions, $S^b[[x;\sigma,\delta]]$ arises as a completion of $S^+[[x;\sigma,\delta]]$.
\end{rk*}

A key ingredient in many later results is the following data, which we use to define large subrings of $S^+[[x;\sigma,\delta]]$. Suppose now that $(S,w)$ is a general \emph{filtered $\mathbb{Z}_p$-algebra} (Definition \ref{defn: filtered Z_p-algebra}): the following are Definitions \ref{defn: X_n etc in char p} and \ref{defn: X_n etc in inner case}.

\begin{enumerate}[label=(\arabic*)]
\item If $\chr(S) = p$, set $X_n = x^{p^n}$, $\Sigma_n = \sigma^{p^n}$, $\Delta_n = \delta^{p^n}$.
\item If there exists $t\in S$ such that $\delta(s) = ts - \sigma(s)t$ for all $s\in S$ and $\sigma(t) = t$, set
$$T_n = (-1)^{p+1}t^{p^n},\qquad X_n = (x-t)^{p^n}+T_n,\qquad \Sigma_n = \sigma^{p^n},$$
and define $\Delta_n(s) = T_n s - \Sigma_n(s)T_n$ for all $s\in S$.
\end{enumerate}

These definitions coincide if we are in situations (1) and (2) simultaneously. In both cases, $(\Sigma_n, \Delta_n)$ is a skew derivation on $S$ induced by $X_n$ inside $S[x; \sigma, \delta]$. This means that $S[X_n; \Sigma_n, \Delta_n]$ is a skew polynomial subring of $S[x; \sigma, \delta]$, and in fact, we always have $X_{n+1} \in S[X_n]$, so we get a chain of subrings:
$$S[x;\sigma,\delta] = S[X_0; \Sigma_0, \Delta_0] \supseteq S[X_1; \Sigma_1, \Delta_1] \supseteq \dots,$$
where each $S[X_n; \Sigma_n, \Delta_n]$ is a free $S[X_{n+1}; \Sigma_{n+1}, \Delta_{n+1}]$-module of rank $p$. 

\textbf{Note:} In the world of skew polynomial rings, related results are known for a broader range of skew derivations (compare \cite[Lemma 3.11, Theorem 3.12]{goodearl-letzter}), but these will not be useful to us in our work on skew power series rings.

We prove in \S \ref{subsec: skew power series subrings} that, under mild conditions on $w$, the ring $S^+[[x;\sigma,\delta]]$ exists if and only if $S^+[[X_n;\Sigma_n,\Delta_n]]$ exists for some $n$ (Theorem \ref{thm: skew power series subrings exist}). We also give criteria under which $S^+[[x; \sigma, \delta]]$ is Noetherian (Corollary \ref{cor: Noetherian skew power series}).

\subsection{Standard and \(\sigma\)-standard filtered artinian rings}\label{subsec: standard and sigma-standard}

To explore the deeper properties of bounded skew power series rings, we want to work with a smaller class of complete filtered rings that are algebraically and topologically well-behaved. To be precise, let $Q$ be a simple artinian $\mathbb{Z}_p$-algebra. Then we say that a filtration $u$ on $Q$ is \emph{standard} if

\begin{itemize}
\item there exists an integer $n\geq 1$ and a division ring $F$ such that $Q = M_n(F)$,
\item $F$ is the division ring of fractions of an appropriately defined (noncommutative) complete \emph{discrete valuation ring} $(D,v)$ with the naturally induced valuation $v$,
\item if $x = \begin{pmatrix}
x_{11} & \dots & x_{1n}\\
\vdots & \ddots & \vdots\\
x_{n1} & \dots & x_{nn}
\end{pmatrix}\in Q$, then $u(x) = \min_{i,j} \{v(x_{ij})\}$.
\end{itemize}

We will say that $(Q,u)$ is a \emph{standard filtered artinian ring}. More generally, if we assume instead that $Q$ is semisimple artinian, and decomposes into a direct product $Q_1\times\dots\times Q_r$ of standard filtered artinian rings $(Q_i,u_i)$, then for any automorphism $\sigma$ of $Q$, we say that $Q$ is a \emph{$\sigma$-standard filtered artinian ring} if

\begin{itemize}
    \item $r=p^n$ and $\sigma(Q_i)=Q_{i+1}$ for all $1\leq i\leq p^n$ (subscripts modulo $p^n$),
    \item $u$ is the product filtration $u:=\inf\{u_i:i\leq p^n\}$,
    \item $u_i = u_{i+1}\circ \sigma$ for all $1\leq i\leq p^n-1$.
\end{itemize}

Note that we do \emph{not} assume $u_1 = u_1 \circ \sigma^{p^n}$.

The following result generalises the well-known crossed product decomposition for skew power series rings in the Iwasawa setting where $\delta=\sigma-\id$.

\begin{letterthm}[Theorem \ref{thm: crossed}]\label{letterthm: crossed product decomposition over Q}
Suppose that $(Q,u)$ is a $\sigma$-standard filtered artinian ring, and that there exists $t\in Q$ such that $\delta(q)=tq-\sigma(q)t$ for all $q\in Q$ and $\sigma(t) = t$, so that we may define $X_r, \Sigma_r, \Delta_r, T_r$ for all $r\in\mathbb{N}$ as in \S \ref{subsec: main results}. Suppose further that 

\begin{itemize}
\item $(\sigma,\delta)$ is quasi-compatible with $u$, and

\item there exists some $n \geq 0$ such that $(\Sigma_n, \Delta_n)$ is compatible with $u$, and

\item if $\chr(Q) = 0$, then $u(t) \geq 0$.
\end{itemize}

Then, for any $m\geq 0$, we have a crossed product
$$Q^+[[x; \sigma, \delta]]_{(x-t)} = Q^+[[X_m; \Sigma_m, \Delta_m]]_{(X_m - T_m)} * (\mathbb{Z}/p^m\mathbb{Z}),$$
where elements of $\mathbb{Z}/p^m\mathbb{Z}$ act by powers of $\sigma$. If $t$ is invertible and $u(t) = u(t^{-1}) = 0$, so that $x-t$ is already a unit in $Q^+[[x; \sigma, \delta]]$, then this gives a crossed product
\[
\pushQED{\qed}
Q^+[[x; \sigma, \delta]] = Q^+[[X_m; \Sigma_m, \Delta_m]] * (\mathbb{Z}/p^m\mathbb{Z}).\qedhere
\popQED
\]
\end{letterthm}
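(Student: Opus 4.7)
The strategy is to use the substitution \(y := x - t\) to transform the inner skew-derivation situation into a purely automorphic one, and then to exhibit the resulting free-module decomposition explicitly.

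First, a direct calculation using \(\delta(q) = tq - \sigma(q)t\) shows that
\[(x - t)q = \sigma(q)x + \delta(q) - tq = \sigma(q)x + tq - \sigma(q)t - tq = \sigma(q)(x - t)\]
for all \(q \in Q\), so that \(y := x - t\) satisfies \(yq = \sigma(q)y\), and inductively \(y^n q = \sigma^n(q) y^n\). Moreover \(yt = \sigma(t)y = ty\) since \(\sigma(t) = t\), so \(y\) and \(t\) commute. By the very definition of \(X_m\) in \S \ref{subsec: main results}(2), we have \(y^{p^m} = X_m - T_m\). This is the pivotal algebraic identity: left multiplication by \(y^{p^m}\) acts on \(Q\) by \(\sigma^{p^m} = \Sigma_m\), and \(y^{p^m}\) becomes a unit after localizing at \(x - t\).

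Next, I would invoke the results of \S \ref{subsec: skew power series subrings} (in particular \autoref{thm: skew power series subrings exist}): under quasi-compatibility of \((\sigma,\delta)\) and compatibility of \((\Sigma_m, \Delta_m)\) with \(u\), the ring \(Q^+[[X_m; \Sigma_m, \Delta_m]]\) exists and embeds in \(Q^+[[x; \sigma, \delta]]\), and one may form the Ore localization at \(X_m - T_m\) on each side. The bulk of the proof is then to show that the rewriting \(x^n = (y + t)^n = \sum_k \binom{n}{k} y^k t^{n-k}\) extends from the skew polynomial to the bounded skew power series setting, giving every element \(f \in Q^+[[x; \sigma, \delta]]_{(x-t)}\) a unique expansion
\[f = \sum_{i=0}^{p^m - 1} a_i y^i, \qquad a_i \in Q^+[[X_m; \Sigma_m, \Delta_m]]_{(X_m - T_m)}.\]
At the polynomial level this is clean, since \(\{y^0, y^1, \ldots, y^{p^m - 1}\}\) is a free basis of \(Q[x;\sigma,\delta]\) over \(Q[X_m; \Sigma_m, \Delta_m]\); one then collects powers of \(y\) by residue class modulo \(p^m\) and absorbs the quotient into powers of \(y^{p^m} = X_m - T_m\).

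With the free module structure in hand, the crossed product relations follow from
\[y^i \cdot a = \sigma^i(a) \cdot y^i \quad (a \in Q), \qquad y^i \cdot y^j = \begin{cases} y^{i+j} & \text{if } i + j < p^m, \\ (X_m - T_m)\, y^{i+j-p^m} & \text{otherwise;}\end{cases}\]
the first relation extends by continuity from \(Q\) to the (localized) base ring, and the second exhibits the \(2\)-cocycle taking values in the unit group of the base ring, which is precisely why we must localize at \(X_m - T_m\). Finally, if \(t\) is a unit with \(u(t) = u(t^{-1}) = 0\), factor \(x - t = -t(1 - t^{-1}x)\); as \(u(x) > 0\) in the induced filtration, \(t^{-1}x\) is topologically nilpotent and \(1 - t^{-1}x\) is inverted by a geometric series in \(Q^+[[x;\sigma,\delta]]\), so no localization is needed and the cleaner identity \(Q^+[[x; \sigma, \delta]] = Q^+[[X_m; \Sigma_m, \Delta_m]] * (\mathbb{Z}/p^m\mathbb{Z})\) results.

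The main obstacle is the middle paragraph: the algebraic identities are essentially formal, but verifying that the substitution \(x = y + t\) together with the collection of terms by residue class modulo \(p^m\) preserves the bounded-below filtration condition defining \(Q^+[[\,\cdot\,]]\) requires careful bookkeeping. This is precisely where quasi-compatibility of \((\sigma,\delta)\), compatibility of \((\Sigma_m, \Delta_m)\), and the hypothesis \(u(t) \geq 0\) in characteristic zero are all needed; in characteristic \(p\) the argument is somewhat eased by Frobenius vanishing of the intermediate binomial coefficients \(\binom{p^m}{k}\), but in characteristic zero the full strength of \(u(t) \geq 0\) is used to control the convergence of the binomial expansions of \(x^n\) in terms of \(y\) and \(t\).
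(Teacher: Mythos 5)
Your route is the paper's own (set $y = x-t$, use $y^{p^m} = X_m - T_m$, exhibit $Q^+[[x;\sigma,\delta]]$ as a free module over $Q^+[[X_m;\Sigma_m,\Delta_m]]$ on $1, y, \dots, y^{p^m-1}$, localise so that $y$ becomes a unit, and read off the crossed product), but the two steps you treat as routine are exactly where the real work lies, and as written they are genuine gaps. First, the localisation: you say ``one may form the Ore localization at $X_m - T_m$ on each side'' with no justification. The paper obtains this only because $x-t$ is normal (easy, Lemma \ref{lem: when sigma fixes t}) \emph{and regular}, and because the rings involved are Noetherian (Assumption 4, via Corollary \ref{cor: Noetherian skew power series}), so that Lemma \ref{lem: normal2} applies. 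Regularity of $x-t$ is not formal: $t$ need not be a unit, and a bounded series $\sum q_k x^k$ with $q_0 t = 0$ and $q_k = q_{k+1}t$ must be excluded; the paper does this in Proposition \ref{propn: x-t is regular and normal}, using the $\sigma$-standard structure of $(Q,u)$ (a homogeneous unit of degree $1$ in each $\gr_{u_i}(Q_i)$ and simplicity of the degree-zero part) together with the compatibility of $(\Sigma_m,\Delta_m)$. Nothing in your sketch addresses this.

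Second, the free-basis decomposition at the power-series level. You rightly identify the substitution $x = y+t$ and the collection of powers of $y$ by residue class modulo $p^m$ as ``the main obstacle,'' but you neither carry it out nor cite the results that do: Theorem \ref{thm: skew power series subrings exist}, which you do cite, only gives existence and quasi-compatibility of the subring, not the basis. The needed statement is Theorem \ref{thm: well-defined subring} together with Corollary \ref{cor: powers of x-t also form a basis}, whose proof is the infinite-matrix argument of \S \ref{subsec: continuous homomorphisms}: the change-of-basis matrix is $I-U$ with $U$ strictly lower-triangular, bounded with lower bound $1$ (this is precisely where divisibility of the intermediate binomial coefficients by $p$ and the hypothesis $u(t)\geq 0$ in characteristic $0$ enter), hence invertible by a convergent series of bounded column-null matrices, which is what makes your ``unique expansion $f=\sum_{i<p^m} a_i y^i$'' true for \emph{bounded} power series. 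Your closing geometric-series argument when $u(t)=u(t^{-1})=0$ is fine (since $\sigma(t)=t$ forces $\delta(t^{-1})=0$, so $x$ and $t^{-1}$ commute and the series has bounded coefficients), and your description of the action and the $2$-cocycle matches the paper's; so once the two gaps above are filled — most economically by invoking Proposition \ref{propn: x-t is regular and normal}, Lemma \ref{lem: normal2} and Corollary \ref{cor: powers of x-t also form a basis} — your argument coincides with the paper's proof of Theorem \ref{thm: crossed}.
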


The above crossed product decomposition will be essential to understanding the ring structure of $Q^+[[x;\sigma,\delta]]$ for many reasons: for instance, in attempting to prove that $Q^+[[x; \sigma, \delta]]$ is a prime ring, it allows us to reduce to the case where $Q$ is standard (Theorem \ref{thm: can assume M is sigma-invariant}).

\begin{letterthm}[Theorem \ref{thm: simple extension} and Theorem \ref{thm: simple when no power of sigma is inner}]\label{letterthm: simple when no power of sigma is inner}

Let $(Q,u)$ be a standard filtered artinian $\mathbb{Z}_p$-algebra carrying a commuting skew derivation $(\sigma,\delta)$ which is quasi-compatible with $u$. Suppose there exists $t\in Q$ such that $\delta(q) = tq - \sigma(q)t$ for all $q\in Q$, that $(\Sigma_m,\Delta_m)$ is compatible with $u$ for sufficiently large $m$, and no positive power of $\sigma$ is inner as an automorphism of $Q$. Then:

\begin{itemize}

\item If $\chr(Q)=p$ then the localisation $Q^+[[x;\sigma,\delta]]_{(x-t)}$ is a simple ring.

\item If $\chr(Q)=0$ and $u(t)\geq 0$ then $Q^+[[x;\sigma,\delta]]_{(x-t)}\cong \left(Q^+[[X_m; \Sigma_m, \Delta_m]]_{(X_m - T_m)}\right)^k$ for some $k,m\in\mathbb{N}$ such that $Q^+[[X_m; \Sigma_m, \Delta_m]]_{(X_m - T_m)}$ is a simple ring.\qed
\end{itemize}
\end{letterthm}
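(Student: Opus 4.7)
The strategy is to reduce via the crossed product decomposition from Theorem \ref{letterthm: crossed product decomposition over Q} to (i) proving simplicity of the base ring $B_m := Q^+[[X_m; \Sigma_m, \Delta_m]]_{(X_m - T_m)}$, and (ii) analysing the crossed product by $\mathbb{Z}/p^m\mathbb{Z}$. First I would fix an $m$ large enough that $(\Sigma_m,\Delta_m)$ is compatible with $u$; Theorem \ref{letterthm: crossed product decomposition over Q} then identifies $A:=Q^+[[x;\sigma,\delta]]_{(x-t)}$ with $B_m *(\mathbb{Z}/p^m\mathbb{Z})$, the group acting by powers of $\sigma$. Since $\Delta_m$ is the inner $\Sigma_m$-derivation induced by $T_m$ with $\Sigma_m(T_m)=T_m$, and no positive power of $\Sigma_m=\sigma^{p^m}$ is inner on $Q$ (the hypothesis on $\sigma$ passes to $\Sigma_m$), the hypotheses of the auxiliary Theorem \ref{thm: simple extension} are met, so $B_m$ is simple.

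The heart of the proof is showing that the $\mathbb{Z}/p^m\mathbb{Z}$-action on $B_m$ by powers of $\sigma$ is X-outer: that is, no $\sigma^j$ with $0<j<p^m$ is induced by conjugation by a unit of the Martindale symmetric ring of quotients of $B_m$. My plan is a leading-term argument in the filtration $u$ extended to $B_m$ via the machinery of \S \ref{sec: filtered localisation}: if $\sigma^j(b)=ubu^{-1}$ for all $b\in B_m$, then using that $(X_m-T_m)$ satisfies $(X_m-T_m)q=\Sigma_m(q)(X_m-T_m)$ for $q\in Q$, one can expand $u$ with respect to $X_m-T_m$ and extract the lowest-degree homogeneous component. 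This component must realise $\sigma^j$ as inner on $Q$ (possibly after multiplication by a power of $X_m-T_m$ to adjust by a power of $\Sigma_m$), contradicting the no-positive-power-inner hypothesis on $\sigma$. This descent from inner-ness in $B_m$ to inner-ness in $Q$ is the main obstacle; it relies crucially on the filtration being well-behaved under the localisation at $X_m-T_m$.

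With X-outerness in hand, the characteristic $p$ case follows from the classical fact that a crossed product of a simple ring by a finite group acting X-outerly is simple, giving simplicity of $Q^+[[x;\sigma,\delta]]_{(x-t)}$ directly. The characteristic $0$ case is more delicate because the group algebra $\mathbb{Q}_p[\mathbb{Z}/p^m\mathbb{Z}]$ splits non-trivially (via $p^m$-th roots of unity in extensions of $\mathbb{Q}_p$), which may introduce central idempotents into $A$. To show $A\cong B_m^k$, I would identify a complete set of central idempotents in $A$ by averaging over the $\mathbb{Z}/p^m\mathbb{Z}$-action on an appropriate part of the centre of $B_m$; the assumption $u(t)\geq 0$ is used to ensure these averaging constructions stay inside the bounded skew power series framework. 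X-outerness together with simplicity of $B_m$ then forces each idempotent block of $A$ to be a simple ring isomorphic to $B_m$. The count $k$ emerges from the index of the stabiliser in $\mathbb{Z}/p^m\mathbb{Z}$ of the relevant central data. The simplicity of each factor is exactly the content of Theorem \ref{thm: simple extension} applied to $B_m$ from the opening step.
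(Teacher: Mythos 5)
Your reduction runs in the opposite direction from the paper, and in doing so it skips the step where the hypothesis on $\sigma$ actually does its work. In the paper, Theorem \ref{thm: simple extension} does not \emph{prove} that $B_m := Q^+[[X_m;\Sigma_m,\Delta_m]]_{(X_m-T_m)}$ is simple; it \emph{assumes} simplicity of $B_m$ (for minimal $m$) and climbs the crossed-product tower one $\mathbb{Z}/p\mathbb{Z}$-step at a time. The simplicity of $B_m$ is the content of Theorem \ref{thm: simple when no power of sigma is inner}: compatibility of $(\Sigma_m,\Delta_m)$ with $u$ lets one invoke \cite[Theorem C]{jones-woods-2} (every nonzero ideal of $Q^+[[X_m;\Sigma_m,\Delta_m]]$ meets the polynomial subring), the inner derivation untwists $Q[X_m;\Sigma_m,\Delta_m]=Q[y;\Sigma_m]$ with $y=X_m-T_m$, and Proposition \ref{propn: polynomial} — which is exactly where ``no positive power of $\sigma$ is inner on $Q$'' is used — shows every nonzero ideal there is $(y^n)$, so localising at $y$ gives simplicity. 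You cite the auxiliary theorem for this and give no argument, so the core ingredient is missing from your proposal.

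The genuine gap, though, is the X-outerness step you put in its place. First, the leading-term argument needs units of $B_m$ to be expanded as Laurent series in $y=X_m-T_m$ with left coefficients in $Q$; rewriting $\sum_k a_k X_m^k$ in powers of $y$ produces coefficient sums of the form $\sum_{k\geq j}\binom{k}{j}a_k T_m^{k-j}$ whose terms are only bounded below in $u$, so they need not converge, and the localisation at $y$ carries no complete filtration in the paper's framework (inverting $y$ destroys boundedness below, which is why the paper never filters $B_m$), so ``extract the lowest-degree homogeneous component'' is not available. Second, and more fundamentally, the hypothesis only rules out inner powers of $\sigma$ on $Q$; a power $\sigma^j$ with $0<j<p^m$ may still become inner on $B_m$ via conjugation by a genuinely infinite series, and the paper's argument is built precisely to tolerate this possibility: Theorem \ref{thm: intersection with skew group ring} shows that a non-trivial ideal of the crossed product forces $\sigma$ to be inner on the base, and the characteristic-$p$ contradiction in Proposition \ref{propn: cases} then comes from the central twisted group algebra $C=F^t[\mathbb{Z}/p\mathbb{Z}]$ being a (purely inseparable) field, via the Frobenius computation and \cite[Lemma 4.1]{Rei76} — not from outerness. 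Note also the internal warning sign: the classical ``outer action on a simple ring gives a simple crossed product'' theorem is characteristic-free, so if your outerness claim were provable it would yield simplicity of $Q^+[[x;\sigma,\delta]]_{(x-t)}$ in characteristic $0$ as well, strictly stronger than the $\left(Q^+[[X_m;\Sigma_m,\Delta_m]]_{(X_m-T_m)}\right)^k$ decomposition the paper obtains (and it would make your idempotent-averaging step vacuous, since outerness forces $k=1$). That mismatch indicates the outerness claim cannot be established by the sketched argument and may simply be false; without it, both bullets of the theorem are unproven in your proposal.
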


As in Theorem \ref{letterthm: crossed product decomposition over Q} above, if additionally $t$ is invertible and $u(t) = u(t^{-1}) = 0$, then $Q^+[[x;\sigma,\delta]]$ is a simple ring. In either case, $Q^+[[x; \sigma, \delta]]$ is prime.

It is interesting to compare this to similar results for skew \emph{polynomial} rings. We have a reasonable understanding of the conditions under which the skew polynomial ring $Q[x;\sigma,\delta]$ is simple: this can be read off from \cite[Corollary 2.3]{LerMat91} and \cite[Theorems 3.6 and 3.7]{LerMat92}. However, when $\delta$ is an \emph{inner} $\sigma$-derivation, say $\delta(q) = tq - \sigma(q)t$ for all $q\in Q$, then $Q[x;\sigma,\delta]$ is never simple: setting $y = x-t$ gives $Q[x; \sigma, \delta] = Q[y; \sigma]$, and the ideal generated by $y$ is proper. Hence Theorem \ref{letterthm: simple when no power of sigma is inner} stands in stark contrast to the skew polynomial case. 

\subsection{Filtered localisation}\label{subsec: localisation}

Now, let us suppose that $(R,w_0)$ is a prime, Noetherian $\mathbb{Z}_p$-algebra satisfying our hypothesis \eqref{filt}.

An important classical tool in ring theory is passing from $R$ to its (semi)simple artinian ring of quotients $Q(R)$. Any skew derivation $(\sigma,\delta)$ of $R$ extends uniquely to $Q(R)$ \cite[Lemma 1.3]{goodearl-skew-poly-and-quantized}, and when studying skew polynomial rings, we have the relation $Q(R)[x;\sigma,\delta]=Q(R)\otimes_R R[x;\sigma,\delta]$, which allows us to simplify many arguments: see e.g.\ \cite[2.3(iv), \S 3]{goodearl-letzter}.

However, there are several barriers to extending this kind of argument to the world of skew power series rings. Most importantly, our rings carry the additional data of a complete ring filtration which are essential to defining skew power series rings in this level of generality. If $(R, w_0)$ is a complete filtered prime ring, using the techniques of \cite{li-ore-sets}, it is often possible to obtain a filtration $w$ on $Q(R)$ such that $(R, w_0)\to (Q(R), w)$ is continuous. However, this will usually not be complete; worse, in \cite[\S 3]{ardakovInv}, Ardakov noted that the completion $\widehat{Q(R)}^w$ (of $Q(R)$ with respect to $w$) will usually not be semisimple, even when $R$ is a commutative integral domain.

Under suitable assumptions on $R$ and $Q(R)$, for example when $R = \O$ is an appropriate complete maximal order in $Q(R)$ and $Q(R)$ is a standard filtered artinian ring, we get that $Q(R)^+[[x;\sigma,\delta]]=Q(R)\otimes_R R[[x;\sigma,\delta]]$, so we can safely extend to $Q(R)$ as in the skew polynomial case. But in general, our initial ring $R$ will be much smaller, and this will not work.

To overcome this issue, we need to replace $\widehat{Q}:=\widehat{Q(R)}^w$ with a ($\sigma$-)standard filtered artinian ring $Q$, a semisimple artinian ring which also arises as a completion of $Q(R)$ and which carries an extension of $(\sigma,\delta)$ under certain mild conditions. In \cite[\S 3]{ardakovInv}, Ardakov outlines a technique for constructing standard filtrations $u$ on the simple quotients $Q=\widehat{Q}/M$ (where $M$ is a maximal ideal).

In \S \ref{subsec: sigma-orbit of maximal ideals and maximal orders}, we will work slightly more generally, and define $Q = \widehat{Q}/N$, where $N$ is a finite intersection of maximal ideals, making $Q$ a \emph{semisimple} artinian ring. To ensure that $\sigma$ preserves $N$, it will be convenient to take $N$ to be the intersection of a $\sigma$-orbit of maximal ideals -- in fact, it will transpire that we can take this $\sigma$-orbit to have size $p^n$ for some integer $n$, and $Q$ to be a $\sigma$-standard filtered artinian ring.

We will regularly pass from $R$ to $Q(R)$ and its completions, via several different rings and filtrations, which we explain in detail as Procedure \ref{proc: build a standard filtration} below, but which we briefly outline here as follows:
\begin{equation*}
\xymatrix{
(R,w_0)\ar@{^(->}[r]& (Q(R),w)\ar@{^(->}[r]& (\widehat{Q},w)\ar@{->>}[r]&(Q,u).
}
\end{equation*}
Here $u$ is a $\sigma$-standard filtration on $Q$. Note that since $Q(R)$ is simple, the composition $Q(R)\to Q$ is an injection, so we can regard $Q$ as the completion of $Q(R)$ with respect to $u$. However, even though the skew derivation $(\sigma, \delta)$ on $R$ will extend uniquely to a skew derivation on $Q(R)$ and on $\widehat{Q}$, it will \emph{not} necessarily descend to a skew derivation on $Q$, as we cannot ensure that $\delta$ will preserve $N=\ker(\widehat{Q}\to Q)$.

As in the statement of Theorem \ref{letterthm: simple when no power of sigma is inner}, we focus on two crucial situations throughout the paper:

\begin{enumerate}[label=(\arabic*)]
\item $\chr(R) = p$.
\item There exists $t\in \widehat{Q}$ such that $\delta(q) = tq - \sigma(q)t$ for all $q\in \widehat{Q}$ (i.e.\ $\delta$ is an \emph{inner} $\sigma$-derivation). Moreover, if $\chr(R)=0$ then $w(t) \geq 0$.
\end{enumerate}

\begin{rks*}
$ $

\begin{enumerate}[label=(\roman*)]
\item 
We have stated the assumptions on characteristic in terms of $R$ for ease of understanding, but clearly we have $\chr(R) = \chr(Q(R)) = \chr(\widehat{Q}) = \chr(Q)$.

We have stated the assumptions on $t$ in situation (2) in terms of $\widehat{Q}$ for full generality. However, in practice we will usually be concerned with the case when $\delta$ is an inner $\sigma$-derivation on $R$ defined with respect to some $t\in R$. Indeed, if $\delta(r) = tr - \sigma(r)t$ for all $r\in R$, then this is also true for all $r\in Q(R)$, and hence for all $r\in \widehat{Q}$, by the uniqueness statements of Lemma \ref{lem: compatibility is not enough}. Moreover, if $t\in R$, then $w_0(t) \geq 0 \implies w(t)\geq 0$, by Corollary \ref{cor: degree zero pieces map to degree zero pieces}.

\item Note that we may always take $t = -1$ in situation (2), as $w(-1)\geq 0$ for any filtration $w$ and $\sigma(-1)=-1$ for any automorphism $\sigma$. Taking $t = -1$, we recover the case where $\delta=\sigma-\id$, which is essential in the study of Iwasawa algebras, so we call this the \emph{Iwasawa case}.
\end{enumerate}
\end{rks*}

\begin{letterthm}[Theorem \ref{thm: bounded in the semisimple case}]\label{letterthm: restricted skew power series ring over Q}

If $(R,w_0)$ satisfies \eqref{filt} and $(\sigma,\delta)$ is compatible with $w_0$, then $(\sigma,\delta)$ extends uniquely to a quasi-compatible commuting skew derivation on $(\widehat{Q},w)$. Moreover, if $(Q,u)$ is a $\sigma$-standard filtered artinian quotient of $\widehat{Q}$ and $\delta$ preserves $\ker(\widehat{Q}\to Q)$, then $(\sigma,\delta)$ induces a canonical commuting skew derivation on $Q$, which restricts to $(\sigma,\delta)$ on $Q(R)$. In this case, and assuming that we are in situations (1) or (2) above:
\begin{enumerate}[label=(\roman*)]
\item $(\sigma,\delta)$ is quasi-compatible with $u$, and the bounded skew power series ring $Q^+[[x;\sigma,\delta]]$ is a well-defined, Noetherian ring.
\item If $Q^+[[x;\sigma,\delta]]$ is prime, then $R^+[[x;\sigma,\delta]]$ is prime.\qed
\end{enumerate}

\end{letterthm}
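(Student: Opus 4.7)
The plan is to construct the extensions $R \to \widehat{Q} \to Q$ and transfer structure along them in stages: first the skew derivation, then quasi-compatibility of the filtrations, and finally primeness.

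For the first part, I would extend $(\sigma,\delta)$ from $R$ to $Q(R)$ using the classical fact that a skew derivation on a Goldie prime ring passes uniquely to its ring of quotients. The filtration $w$ on $Q(R)$ produced by Procedure \ref{proc: build a standard filtration} is constructed so that the extended $(\sigma,\delta)$ acts by filtered endomorphisms, so continuity gives a unique extension to $\widehat{Q}$, and the commutation $\sigma\delta = \delta\sigma$ passes to $\widehat{Q}$ by density. Quasi-compatibility with $w$ then follows from compatibility with $w_0$: the common lower bound on the degrees of $\sigma^i\delta^j$ is inherited from $R$, and topological nilpotency of $\delta$ comes from its strictly positive degree shift, which is part of the compatibility condition. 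To descend to $Q = \widehat{Q}/N$, I note that $\sigma(N) = N$ because $N$ is the intersection of a single $\sigma$-orbit of maximal ideals, while $\delta(N) \subseteq N$ by hypothesis; hence $(\sigma,\delta)$ descends to a commuting skew derivation on $Q$, and commutativity of $Q(R) \hookrightarrow \widehat{Q} \twoheadrightarrow Q$ (with $Q(R) \to Q$ injective by simplicity of $Q(R)$) confirms that it restricts to the original on $Q(R)$.

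For part (i), quasi-compatibility of $(\sigma,\delta)$ with $u$ should follow from the hypothesis of case (1) or (2) that $(\Sigma_m,\Delta_m)$ is compatible with $u$ for some sufficiently large $m$: combined with the subring chain from \S \ref{subsec: main results}, I would apply Theorem \ref{thm: skew power series subrings exist} in reverse to transfer existence and quasi-compatibility back to $(\sigma,\delta)$. Well-definedness of $Q^+[[x;\sigma,\delta]]$ is then Theorem \ref{letterthm: restricted skew power series rings exist}, and Noetherianity follows from Corollary \ref{cor: Noetherian skew power series}. For part (ii), the composition $R \hookrightarrow Q(R) \hookrightarrow \widehat{Q} \twoheadrightarrow Q$ is injective, and this yields a coefficient-wise injection $R^+[[x;\sigma,\delta]] \hookrightarrow Q^+[[x;\sigma,\delta]]$. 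Given nonzero two-sided ideals $I, J \trianglelefteq R^+[[x;\sigma,\delta]]$ with $IJ = 0$, the aim is to show that the ideals they generate in $Q^+[[x;\sigma,\delta]]$ remain nonzero with product zero, contradicting the assumed primeness. The intended mechanism is that $Q^+[[x;\sigma,\delta]]$ arises as an Ore localisation at the regular elements of $R$ of a suitable completion of $R^+[[x;\sigma,\delta]]$, so that nonzero ideals extend to nonzero ideals and the annihilation propagates.

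The main obstacle will be (ii): the map $R \to \widehat{Q} \to Q$ factors through a noninjective quotient, so controlling how ideals of $R^+[[x;\sigma,\delta]]$ extend to $Q^+[[x;\sigma,\delta]]$ requires careful use of the filtered-localisation machinery of \S \ref{subsec: localisation} -- specifically, that regular elements of $R$ remain regular in $Q$, and that the successive completion and quotient operations commute with the bounded skew power series construction in the precise way needed to propagate ideal relations from $R^+[[x;\sigma,\delta]]$ upwards to $Q^+[[x;\sigma,\delta]]$.
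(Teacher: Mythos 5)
Your outline reproduces the paper's skeleton (extend to $Q(R)$ and $\widehat{Q}$, descend to $Q$, then transfer primeness back), but two essential steps are missing or rest on claims that are not available. First, in part (i) you write that the compatibility of $(\Sigma_m,\Delta_m)$ with $u$ for large $m$ is ``the hypothesis of case (1) or (2)''. It is not: situations (1) and (2) only assert characteristic $p$, respectively that $\delta$ is inner via some $t\in\widehat{Q}$ with $\sigma(t)=t$ and $w(t)\geq 0$. That some $(\Sigma_m,\Delta_m)$ descends to $Q$ and is compatible with $u$ is the main technical content of \S\ref{subsec: extending the skew derivation}--\ref{subsec: establishing quasi-compatibility} (Lemma \ref{lem: compatible with w_0 implies filtered wrt u}, Corollary \ref{cor: sigma-id and delta have non-negative degree}, Lemma \ref{lem: degrees of Delta_n w.r.t. U}, Proposition \ref{propn: strong compatibility}, Theorem \ref{thm: strongly bounded}), and it cannot be read off from compatibility with $w_0$ or $w$: the filtration $u$ is the $J(\O)$-adic filtration of a maximal order which $\sigma$ need not preserve (Example \ref{ex: sigma does not preserve maximal orders}), so compatibility is genuinely destroyed in steps \textbf{(c)}--\textbf{(e)} and must be recovered by the degree-tracking arguments and Lemma \ref{lem: delta sends O to J(O)^d}. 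In addition, when $Q$ is semisimple but not simple the paper must first invoke Lemma \ref{lem: if Q is not simple, delta is inner} and Lemma \ref{lem: power 1} (so that $\delta$ is inner on $Q$ even in situation (1)), then apply the simple-factor case to each $Q_i=\widehat{Q}/M_i$ and reassemble quasi-compatibility for the product filtration; only after that does the ``Theorem \ref{thm: skew power series subrings exist} in reverse'' step, and Corollary \ref{cor: Noetherian skew power series}, go through.

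Second, your mechanism for part (ii) would fail. The identification of $Q^+[[x;\sigma,\delta]]$ as an Ore localisation of a completion of $R^+[[x;\sigma,\delta]]$ at the regular elements of $R$ is exactly what the paper explains is unavailable in this generality: Theorem \ref{thm: restricted skew power series rings exist}(iii) applies to $\O\subseteq Q$, not to $R\subseteq Q$, since $R$ is in general far too small (see \S\ref{subsec: localisation}). Moreover, passing to the two-sided ideals generated by $I$ and $J$ in $Q^+[[x;\sigma,\delta]]$ would insert arbitrary elements of $Q^+$ between $I$ and $J$, so there is no reason their product is zero. The paper's actual argument is different and is the crux: one takes $I'=\overline{Q(R)I}$ and $J'=\overline{Q(R)J}$ inside $Q^+[[x;\sigma,\delta]]$, uses density of $Q(R)[x;\sigma,\delta]$ and continuity of multiplication to see these are left ideals of $Q^+[[x;\sigma,\delta]]$, and then uses the Noetherianity established in part (i) via the ascending chain $I'\subseteq I's^{-1}\subseteq I's^{-2}\subseteq\cdots$ (for $s\in R$ regular) to deduce $I'Q(R)\subseteq I'$; this right $Q(R)$-invariance is what allows $I'J'\subseteq\overline{I'Q(R)J}\subseteq\overline{Q(R)IJ}=0$, whence primeness of $Q^+[[x;\sigma,\delta]]$ (applied to these nonzero left ideals) forces $I=0$ or $J=0$. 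The closure-plus-Noetherian-chain argument is the missing idea; naming the obstacle (regularity and commutation of completion with the power series construction) does not substitute for it.
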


This result demonstrates how $\sigma$-standard filtered artinian completions can play a similar role in the skew power series world to that played by the Goldie ring of quotients $Q(R)$ in the skew polynomial world. Using this result, and our results on skew power series rings over $\sigma$-standard filtered artinian rings from \S \ref{subsec: standard and sigma-standard}, we can prove our final main theorem, which we can regard as the most important result of the paper.

\begin{letterthm}[Theorem \ref{thm: infinite order implies prime}]\label{letterthm: infinite order implies prime}

Suppose $(R,w_0)$ satisfies \eqref{filt} and $(\sigma,\delta)$ is compatible with $w_0$, and suppose that we are simultaneously in situations (1) and (2) above. Let $(Q,u)$ be any $\sigma$-standard filtered artinian completion of $Q(R)$. If no positive power of $\sigma$ is an inner automorphism of $Q$, then $R^+[[x;\sigma,\delta]]$ is a prime ring.\qed

\end{letterthm}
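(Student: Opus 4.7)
The plan is to reduce primeness of $R^+[[x;\sigma,\delta]]$ to a problem on the standard components of $Q$, then invoke our earlier results on standard filtered artinian rings.

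By Theorem \ref{letterthm: restricted skew power series ring over Q}(ii), it suffices to prove that $Q^+[[x;\sigma,\delta]]$ is prime. Since we are in situations (1) and (2) simultaneously, $\chr(Q) = p$ and $\delta$ on $Q$ is the inner $\sigma$-derivation induced by the image of $t \in \widehat{Q}$. Before applying the crossed product decomposition I would verify that $\sigma(t) = t$: writing $c = \sigma(t) - t$, the commutation $\sigma\delta = \delta\sigma$ forces $cr = \sigma(r)c$ for all $r \in Q$, and a quick component-wise computation (using that $\sigma$ cyclically permutes the factors $Q_1,\ldots,Q_{p^N}$ of $Q$) shows $c = 0$, so the hypotheses of Theorem \ref{letterthm: crossed product decomposition over Q} are met.

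Choosing $m$ large enough that $m \geq N$ and $(\Sigma_m,\Delta_m)$ is compatible with $u$, the crossed product decomposition yields
$$Q^+[[x;\sigma,\delta]]_{(x-t)} = Q^+[[X_m;\Sigma_m,\Delta_m]]_{(X_m-T_m)} * (\mathbb{Z}/p^m\mathbb{Z}).$$
Since $\Sigma_m$ preserves each factor $Q_i$, the base ring of this crossed product splits as a product indexed by the $p^N$ standard components. I would then apply Theorem \ref{letterthm: simple when no power of sigma is inner} to each $Q_i$-component, noting in particular that the non-innerness of every positive power of $\Sigma_m|_{Q_i}$ on $Q_i$ follows from the hypothesis on $\sigma$ by transporting any hypothetical witnessing element along the $\sigma$-equivariant isomorphisms $Q_i \to Q_j$ to produce a witness of innerness of the corresponding power of $\sigma$ on $Q$. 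This gives that each component of the base ring is a simple ring.

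Finally, $\mathbb{Z}/p^m\mathbb{Z}$ contains the stabiliser subgroup $p^N\mathbb{Z}/p^m\mathbb{Z}$ preserving each factor $Q_i$, while the quotient $\mathbb{Z}/p^N\mathbb{Z}$ acts by transitive cyclic permutation on the $p^N$ simple components of the base; a standard crossed product argument then shows that $Q^+[[x;\sigma,\delta]]_{(x-t)}$ is simple, and in particular prime. To lift primeness back to $Q^+[[x;\sigma,\delta]]$ itself I would use that $x - t$ is regular in $Q^+[[x;\sigma,\delta]]$: the localisation map is then injective, and the standard argument (if $IJ = 0$ then $\overline{I}\,\overline{J} = 0$ in the prime localisation, forcing one of $\overline{I},\overline{J}$ to vanish there, and regularity of $x-t$ forces the corresponding ideal to have been zero to begin with) completes the proof. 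The main obstacles I anticipate are the bookkeeping needed to restrict $(u,\Sigma_m,\Delta_m)$ cleanly to each factor $Q_i$ while preserving compatibility and quasi-compatibility, and verifying that $x-t$ is regular in $Q^+[[x;\sigma,\delta]]$; both should be manageable given the explicit structure of these rings.
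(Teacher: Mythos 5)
Your overall architecture is sound and close to the paper's: reduce to primeness of $Q^+[[x;\sigma,\delta]]$ via Theorem \ref{letterthm: restricted skew power series ring over Q}(ii), check $\sigma(t)=t$ (this is Lemma \ref{lem: power 1}; note that when $Q$ is simple the ``component-wise computation using the cyclic permutation'' is vacuous and you must invoke the non-innerness of $\sigma$ itself, as the paper does), decompose via the crossed product, transport witnesses of innerness to see that no positive power of the restricted automorphism is inner on each simple factor, and apply Theorem \ref{letterthm: simple when no power of sigma is inner} there. The one step that fails as written is the final ``standard crossed product argument''. You take the decomposition at a level $m$ chosen so that $(\Sigma_m,\Delta_m)$ is compatible with $u$, which in general forces $m>N$. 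Then the stabiliser of each simple base component $B_i$ inside $\mathbb{Z}/p^m\mathbb{Z}$ is the \emph{nontrivial} subgroup $p^N\mathbb{Z}/p^m\mathbb{Z}$, and the crossed product is Morita equivalent to $B_1 * (\mathbb{Z}/p^{m-N}\mathbb{Z})$; a crossed product of a simple characteristic-$p$ ring by a nontrivial cyclic $p$-group is not automatically simple (compare $k[\mathbb{Z}/p\mathbb{Z}]$ in characteristic $p$, which is local and non-simple). Establishing simplicity of precisely such crossed products is the hard content of the paper's \S \ref{subsec: simple induction} (Theorem \ref{thm: intersection with skew group ring}, Proposition \ref{propn: cases}, Theorem \ref{thm: simple extension}), and it uses the skew power series structure in an essential way, so it cannot be dismissed as standard.

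The gap is repairable inside your own framework: Theorem \ref{letterthm: crossed product decomposition over Q} gives the decomposition at \emph{every} level, in particular at level exactly $N$, where $\sigma$ permutes the $p^N$ simple components of the base in a single $p^N$-cycle and the stabilisers are trivial; then the identity of $B_1$ is a full idempotent with corner ring $B_1=Q_1^+[[y;\Sigma_N|_{Q_1},\Delta_N|_{Q_1}]]_{(y-T_N)}$, which is simple by Theorem \ref{thm: simple when no power of sigma is inner} applied to the standard factor $Q_1$ (your witness-transport argument supplies the non-innerness of all positive powers of $\Sigma_N|_{Q_1}$), so $Q^+[[x;\sigma,\delta]]_{(x-t)}$ is simple. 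Alternatively, identify $B_1*(p^N\mathbb{Z}/p^m\mathbb{Z})$ with that same level-$N$ localised ring by applying the crossed product decomposition inside $Q_1$, and then apply Theorem \ref{thm: simple when no power of sigma is inner}. For comparison, the paper sidesteps the issue by never claiming simplicity over the semisimple $Q$: it only extracts \emph{primeness} of the localisation, via Passman's Corollary 14.8 applied to a minimal prime of the base with trivial stabiliser (Corollary \ref{cor: if Q_i[[x]] is prime then Q[[x]] is prime}(iii)), reserving the simplicity statement for the simple factor. Your closing descent, using regularity and normality of $x-t$ (Proposition \ref{propn: x-t is regular and normal} and Lemma \ref{lem: normal2}) and then Theorem \ref{letterthm: restricted skew power series ring over Q}(ii), is correct.
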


Note that, as we are in situation (2), $\delta$ always preserves $\ker(\widehat{Q}\to Q)$ by Lemma \ref{lem: when delta preserves N}, so $(\sigma,\delta)$ induces a canonical commuting skew derivation on $Q$, and the above theorem makes sense.

\section{Preliminaries}

\subsection{Filtrations and filtration topologies}\label{subsec: filtrations}

Our standard reference for ring filtrations is \cite{LVO}, though we define our notation carefully here to set the conventions for this paper: in particular, the reader should note that filtrations as defined in \cite{LVO} are \emph{ascending} (in the sense that $F_aR \subseteq F_{b}R$ when $a\leq b$), whereas we adopt the opposite convention. Throughout this paper, we use the word \emph{filtration} to denote a separated, descending filtration. A full definition is given below.

Let $\Gamma$ be a discrete subgroup of $\mathbb{R}$. Throughout most of this paper, we will take $\Gamma = \mathbb{Z}$, but it will occasionally be useful to allow for other subgroups too.

\begin{defn}\label{defn: filtrations}
$ $

\begin{itemize}
\item Let $A$ be an abelian group under addition. A function $u: A\to \Gamma\cup\{\infty\}$ is said to be a \emph{$\Gamma$-filtration} on $A$ if, for all $a,b\in A$,
\begin{enumerate}[label=(\roman*),noitemsep]
\item $u(a+b) \geq \min\{u(a),u(b)\}$, and
\item $u(a) = \infty$ if and only if $a = 0$.
\end{enumerate}
We will sometimes say that $(A,u)$ is a \emph{$\Gamma$-filtered abelian group}. Note that condition (ii) states that the filtration is \emph{separated}, which is not a requirement for some authors.
\item \cite[Chapter I, Definition 2.1]{LVO} Let $R$ be a ring. A function $u: R\to \Gamma\cup\{\infty\}$ is said to be a \emph{(ring) $\Gamma$-filtration} on $R$, and $(R,u)$ is said to be a \emph{$\Gamma$-filtered ring}, if $(R,u)$ is a $\Gamma$-filtered abelian group which additionally satisfies $u(1) = 0$ and $u(rs) \geq u(r) + u(s)$ for all $r,s\in R$.
\item \cite[Chapter I, Definition 2.2]{LVO}
Let $(R,u)$ be a filtered ring, and $M$ a left $R$-module. A function $v: M\to \Gamma\cup\{\infty\}$ is said to be a \emph{(module) $\Gamma$-filtration} on $M$, and $(M,v)$ is said to be a \emph{$\Gamma$-filtered $(R,u)$-module}, if $(M,v)$ is a $\Gamma$-filtered abelian group which additionally satisfies $v(rm) \geq u(r) + v(m)$ for all $m\in M$ and $r\in R$.
\end{itemize}
In all cases, we may leave the filtration tacitly understood and simply say ``$M$ is a $\Gamma$-filtered $R$-module", and so on. Moreover, when $\Gamma = \mathbb{Z}$, we will say simply \emph{filtered} (resp. \emph{filtration}) instead of $\mathbb{Z}$-filtered (resp. $\mathbb{Z}$-filtration).
\end{defn}

\begin{rk}\label{rk: level sets for ring filtration}
\cite{LVO} prefers the following equivalent way of defining a filtration. Let $R$ be a ring, and suppose we have a sequence $FR = \{F_nR\}_{n\in\mathbb{Z}}$ of additive subgroups of $R$ satisfying $1\in F_0 R$ and such that, for all $m,n\in\mathbb{Z}$, we have both $F_nR \supseteq F_{n+1}R$, $(F_mR)(F_nR) \subseteq F_{m+n}R$, and $\bigcap_{n\in\mathbb{Z}} F_nR = \{0\}$. Such a sequence defines the filtration $u: x\mapsto \max\{n\in\mathbb{Z} : x\in F_n R\}$ for all $x\neq 0$ (and $u(0) = \infty$). Conversely, a filtration $u$ gives rise to such a sequence $FR$ by setting $F_nR = u^{-1}([n, \infty])$.

When $u$ and $FR$ are associated in this way, we will call $FR$ the sequence of \emph{level sets} associated to $u$.

We could also have defined a filtration on an abelian group $A$ as a sequence $FA = \{F_n A\}_{n\in\mathbb{Z}}$ of subgroups satisfying $F_nA \supseteq F_{n+1}A$ and $\bigcap_{n\in\mathbb{Z}} F_nA = \{0\}$, and a filtration on an $R$-module $M$ as a sequence $FM = \{F_n M\}_{n\in\mathbb{Z}}$ of subgroups satisfying $F_nM \supseteq F_{n+1}M$, $\bigcap_{n\in\mathbb{Z}} F_nM = \{0\}$, and $(F_mR)(F_nM) \subseteq F_{m+n}M$.
\end{rk}

\begin{defn}\label{defn: filtration topology}
Let $A$ be a $\Gamma$-filtered abelian group with level sets $\{F_nA\}_{n\in\mathbb{Z}}$. This defines the \emph{filtration topology} on $A$: a subset $U\subseteq A$ is defined to be open if and only if, for all $x\in U$, there exists $n\in\Gamma$ such that $x + F_nA\subseteq U$.

This means that, for all $x\in A$, the sets $\{x + F_n A\}_{n\in\Gamma}$ form a neighbourhood base for $x$. As $\Gamma$ is countable, in particular this makes $A$ a first-countable topological space.
\end{defn}

As ($\Gamma$-)filtered rings and ($\Gamma$-)filtered modules are also ($\Gamma$-)filtered abelian groups under addition, we will define the filtration topology on them in the same way, and the following definitions and results will also hold for them.

\begin{defn}\label{defn: cauchy, convergent, complete}
Let $(A,u)$ be a $\Gamma$-filtered abelian group. We make the following usual definitions.

\begin{itemize}
\item The sequence $(a_i)_{i\in\mathbb{N}}$ of elements of $A$ is \emph{Cauchy} if, for all $K\in\Gamma$, there exists $N\in\mathbb{N}$ such that $i,j \geq N \implies u(a_i - a_j) \geq K$.
\item The sequence $(a_i)_{i\in\mathbb{N}}$ \emph{converges} to $a\in A$ if, for all $K\in\Gamma$, there exists $N\in\mathbb{N}$ such that $i\geq N \implies u(a_i - a) \geq K$.
\item The infinite sum $\sum_{i=0}^\infty a_i$ \emph{converges} if its sequence of partial sums $\left(\sum_{i=0}^n a_i\right)_{n\in\mathbb{N}}$ converges.
\item $(A,u)$ is \emph{complete} if all Cauchy sequences converge in $A$.
\end{itemize}

Basic properties of convergence and complete filtered abelian groups can be found in \cite[Chapter I, \S 3.3]{LVO}. A treatment of similar material, phrased in terms of \emph{normed rings}, can be found in \cite[\S\S 6.1--6.2]{DDMS}.
\end{defn}

\begin{rk}\label{rk: filtration topology is a ring or module topology}
Openness is preserved under translation: that is, given any $a\in A$, the subset $U\subset A$ is open if and only if $U+a := \{u+a:u\in U\}\subseteq A$ is open.

In fact, the filtration topology on a filtered ring is a \emph{ring topology} in the sense of \cite[Definition 1.1]{warner}, and the filtration topology on a filtered module is a \emph{module topology} in the sense of \cite[Definition 2.1]{warner}: see e.g.\ \cite[Chapter I, Property 3.1(e)]{LVO}.
\end{rk}

Next, we turn to studying morphisms. (All of the below could be done for $\Gamma$-filtrations, but as we will not need this in this paper, we restrict simply to filtrations for ease of notation.)

We recall the following basic fact.

\begin{lem}\label{lem: continuous homomorphisms}
Let $(A,u)$ and $(B,v)$ be filtered abelian groups, with level sets $FA$ and $FB$ respectively. If $f: A\to B$ is a group homomorphism, then $f$ is continuous if and only if, for all $q\in\mathbb{Z}$, there exists $p\in\mathbb{Z}$ such that $F_pA \subseteq f^{-1}(F_q B)$.
\end{lem}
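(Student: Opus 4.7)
The plan is to reduce continuity of $f$ to continuity at the single point $0$, and then rewrite continuity at $0$ in terms of the canonical neighbourhood bases provided by the level sets.

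First, I would invoke Remark~\ref{rk: filtration topology is a ring or module topology}: the filtration topologies on $A$ and $B$ are translation-invariant, and by Definition~\ref{defn: filtration topology} the descending chains $\{F_nA\}_{n\in\mathbb{Z}}$ and $\{F_nB\}_{n\in\mathbb{Z}}$ are neighbourhood bases of $0$ in $A$ and $B$ respectively. Because $f$ is a group homomorphism, we have $f(0)=0$ and $f(a + x) = f(a) + f(x)$, so the diagram of translations identifies continuity of $f$ at an arbitrary point $a$ with continuity of $f$ at $0$. Hence $f$ is continuous if and only if $f$ is continuous at $0$.

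Second, continuity at $0$ translates directly into the stated condition. For the forward direction, given $q\in\mathbb{Z}$, the set $F_qB$ is an open neighbourhood of $0$ in $B$, so $f^{-1}(F_qB)$ is an open neighbourhood of $0$ in $A$; by the neighbourhood base property, it contains some $F_pA$. Conversely, if for every $q\in\mathbb{Z}$ there is some $p\in\mathbb{Z}$ with $F_pA\subseteq f^{-1}(F_qB)$, then the preimage under $f$ of each basic open neighbourhood of $0$ in $B$ contains a basic open neighbourhood of $0$ in $A$, which is exactly what continuity at $0$ requires.

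There is no real obstacle: the statement is a formal specialisation of the standard criterion for continuity of a homomorphism of first-countable topological groups in terms of explicit neighbourhood bases. The only points worth flagging are that one uses the group homomorphism property precisely to pass from continuity at $0$ to continuity everywhere, and that separatedness of the filtrations plays no role in the argument --- all that is needed is that the $F_nA$ and $F_nB$ form descending neighbourhood bases at $0$ which are subgroups.
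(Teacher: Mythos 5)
Your proof is correct and follows essentially the same route as the paper: both arguments use the translation invariance of the filtration topology (Remark \ref{rk: filtration topology is a ring or module topology}) together with the fact that the level sets form neighbourhood bases at $0$, the only cosmetic difference being that you phrase the reduction as ``continuity at $0$'' while the paper verifies directly that preimages of open sets are open by translating by $f(a)$. No gaps.
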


\begin{proof}
One direction follows immediately from the definitions: since $F_qB$ is open in $B$ by definition, we must have that $f^{-1}(F_qB)$ is open in $A$ for any continuous function $f$, and $0\in f^{-1}(F_qB)$ as $f$ is a group homomorphism. In particular, there exists $p\in\mathbb{Z}$ such that $0 + F_pA \subseteq f^{-1}(F_qB)$.

To prove the other direction, we start with an open set $V\subseteq B$. If $U := f^{-1}(V)\neq\varnothing$, take arbitrary $a\in U$. Then we will have $f(a) \in V$, so $V' = V - f(a)$ is an open set of $B$ (by Remark \ref{rk: filtration topology is a ring or module topology}) containing $0$. Hence, by definition, there exists $q\in \mathbb{Z}$ such that $F_qB \subseteq V'$, and our assumption now implies that there exists $p\in\mathbb{Z}$ such that $F_pA \subseteq f^{-1}(F_qB) \subseteq f^{-1}(V')$. Hence $a + F_pA \subseteq a + f^{-1}(V')$, and since $f(a + f^{-1}(V')) = V$ we must have $a + F_pA\subseteq U$ as required. As $a\in U$ was arbitrary, $U$ is open.
\end{proof}

\begin{defn}\label{defn: filtered and strict morphisms}
Let $(A,u)$, $(B,v)$ be filtered abelian groups and $f: A\to B$ a group homomorphism. Following \cite[Chapter I, \S 2.5]{LVO}:

\begin{enumerate}[label=(\roman*)]
\item $f$ is \emph{filtered} if there exists $d\in\mathbb{Z}$ such that $f(F_nA) \subseteq F_{n+d}B$ for all $n\in \mathbb{Z}$. If $d$ is the \emph{largest} integer such that $f(F_nA) \subseteq F_{n+d}B$ for all $n\in \mathbb{Z}$, we say that $d$ is the \emph{degree} of $f$, and write $d = \deg(f)$. 
\item $f$ is \emph{strictly filtered} if $f(F_nA) = f(A) \cap F_nB$.
\item If $A = B$ and $u = v$, and $f: (A,u) \to (A,u)$ is filtered, we will say that $f$ is a \emph{filtered endomorphism} of $(A,u)$. In this case we may say that the degree of $f$ is taken \emph{with respect to $u$}, and denote it $\deg_u(f)$ or similar. This will avoid confusion when there are several filtrations on $A$ under consideration.
\end{enumerate}
\end{defn}

These properties are related by

\vspace{12pt}
\centerline{
strictly filtered$\implies$filtered of degree 0$\implies$filtered$\implies$continuous,
}

where the final implication follows from Lemma \ref{lem: continuous homomorphisms}. Note that, if $f$ is a strictly filtered \emph{automorphism}, then $f^{-1}$ is also strictly filtered.

\textbf{Warning.} In addition to our choice to use descending filtrations, our terminology differs from that of \cite{LVO} in two ways:
\begin{itemize}
\item In \cite[Chapter I, \S 2.5]{LVO}, the term ``filtered" has two different meanings: they define \emph{filtered (ring) homomorphisms} (which allow $\deg(f)\in\mathbb{Z}$ to be arbitrary) and \emph{filtered morphisms} (which place a restriction on the allowed values of $\deg(f)$). Our usage is consistent with the former: for us, $\deg(f)\in\mathbb{Z}$ can be arbitrary.
\item In \cite[Chapter I, \S 2.5]{LVO}, a filtered (homo)morphism can have many different degrees, whereas for us the degree of a filtered map is unique.
\end{itemize}

\begin{rks}\label{rks: filtered and strict linear maps}
$ $

\begin{enumerate}[label=(\roman*)]
\item If $A, B, C$ are filtered abelian groups and $f: A\to B$ and $g: B \to C$ are filtered linear maps, then $gf: A\to C$ is filtered, and $\deg(gf) \geq \deg(g) + \deg(f)$.

If $f_1, f_2: A\to A'$ are filtered linear maps, then $f_1\pm f_2: A\to A'$ are filtered, and $\deg(f_1 \pm f_2) \geq \min\{\deg(f_1), \deg(f_2)\}$.
\item Let $(R,u)$ be a filtered ring. Then inner automorphisms are always filtered: indeed, if we take $a\in R^\times$ and define $\eta(r) = ara^{-1}$ for all $r\in R$, then $u(\eta(r)) \geq u(r) + u(a) + u(a^{-1})$, so $\deg_u(\eta) \geq u(a) + u(a^{-1})$.

Similarly, suppose $\sigma$ is a filtered automorphism of $R$, $t$ is a fixed element of $R$, and $\delta$ is the linear endomorphism of $R$ defined by $\delta(r) = tr - \sigma(r)t$ for all $r\in R$. (Later, such a map $\delta$ will be called an \emph{inner $\sigma$-derivation}.) Then $\delta$ is always filtered: for any $r\in R$, we have $u(\delta(r)) \geq \min\{u(t) + u(r), u(\sigma(r)) + u(t)\}$, so $\deg_u(\delta) \geq u(t) + \min\{0, \deg_u(\sigma)\}$.
\item Suppose $f: (A,u)\to (B,v)$ is strictly filtered. If $y\in B$ is in the image of $f$ with $v(y) = n$, then there exists $x\in A$ with $f(x) = y$ such that $u(x) = n$. Put another way, if $y\in B$ is in the image of $f$, then $v(y) = \max\{u(x): x\in f^{-1}(y)\}$.

When $f$ is injective, this says that $v(f(x)) = u(x)$ for all $x\in A$. However, in general, we can only conclude that $v(f(x)) = \max\{u(x+k) : k\in\ker(f)\} \geq u(x)$ for general $x\in A$: that is, $v|_{f(A)}$ is the quotient filtration on $f(A)$ induced by $u$.

If $f$ is surjective, and $\varphi_A, \varphi_B$ are filtered endomorphisms of $A$ and $B$ respectively satisfying $\varphi_B f = f\varphi_A$, then we can see from the above that $\deg_v(\varphi_B) \geq \deg_u(\varphi_A)$.
\end{enumerate}
\end{rks}

As mentioned in the introduction, we will often have to deal with simple artinian rings $Q$. By the classical Artin-Wedderburn theory, there exists a set of matrix units in $Q$ giving an isomorphism $Q \cong M_s(F)$, for a determined integer $s\geq 1$ and division ring $F$. Throughout this paper, $M_s(F)$ will denote the $s\times s$ matrix ring over $F$ together with a fixed choice of matrix units called the \emph{standard} matrix units \cite[Example 2.2.11(iii)]{MR}: we will often fix a set of matrix units and write $Q = M_s(F)$ where this will not cause any difficulties.

\begin{defn}\label{defn: matrix filtration}
Let $(R, u)$ be a filtered ring and $s\geq 1$ an integer. Then the filtration $M_s(u)$ on $M_s(R)$ is defined to be the map
$$\sum_{1\leq i,j\leq s} a_{ij} e_{ij} \mapsto \min_{1\leq i,j\leq s} \{u(a_{ij})\},$$
where $\{e_{ij}\}_{1\leq i,j\leq s}$ are the standard matrix units of $M_s(R)$ \cite[Example 2.2.11(iii)]{MR}.
\end{defn}

\subsection{Discrete valuation rings and standard filtrations}\label{subsec: DVRs}

Beginning with a filtered ring $(R, w)$ satisfying \eqref{filt}, we hope to construct a simple artinian filtered ring $(Q, u)$ which is particularly well-behaved. In this subsection, we make this precise.

\begin{defn}\label{defn: DVR}
For the purposes of this paper, and following \cite[\S 3.6]{ardakovInv}, we will say that a \emph{discrete valuation ring} is a Noetherian domain $D$ which is not a division ring, with the property that, for every non-zero $x\in Q(D)$ (its Goldie division ring of quotients), we have either $x\in D$ or $x^{-1}\in D$.
\end{defn}

\begin{props}\label{props: DVRs}
Suppose $D$ is a discrete valuation ring. Most of the following facts were proved in \cite[\S 2.5]{jones-woods-2}.
\begin{enumerate}[label=(\roman*)]
\item $D$ is a (scalar) local ring. This implies that $D^\times = D\setminus J(D)$, and so in particular, if $x\in Q(D)$ but $x\not\in D$, then $x^{-1}\in J(D)$.
\item The Jacobson radical is given by $J(D) = \pi D$ for some normal element $\pi$, and every non-zero left or right ideal has the form $\pi^n D=J(D)^n$ for some $n\geq 0$. It follows that $\bigcap_{n\in\mathbb{N}} \pi^n D = \{0\}$. Indeed, we can take $\pi$ to be any element of $J(D)\backslash J(D)^2$, and we call such an element $\pi$ a \emph{uniformiser} for $D$. Then $J(D)$ is easily seen to be invertible, with inverse $J(D)^{-n} = \pi^{-n} D \subseteq Q(D)$ for all $n$.
\item For every $x\in Q(D)$, $x\in J(D)^n$ for some $n\in\mathbb{Z}$, i.e.\ $Q(D)=\bigcup_{n\in\mathbb{Z}} \pi^n D$.
\end{enumerate}
\end{props}

By default, we will assume that all discrete valuation rings $D$ are endowed with their associated $J(D)$-adic filtration $v$. (This is a \emph{valuation} in the sense that $v(ab) = v(a) + v(b)$ for all $a,b\in D$.)

\begin{defn}\label{defn: standard filtrations}
$ $

\begin{enumerate}[label=(\roman*)]

\item Suppose $D$ is a discrete valuation ring (complete or otherwise), and $F=Q(D)$ is its division ring of quotients. Then the \emph{induced} filtration $v$ on $F$ is defined as follows: for all $0\neq x\in F$, we have $v(x) = n$ if and only if $n\in\mathbb{Z}$ is maximal such that $x\in J(D)^n$. Of course, this restricts to the $J(D)$-adic filtration defined on $D$, and since $\pi$ is normal and $D$ is a domain, $\gr_v(F)$ is isomorphic to a skew Laurent polynomial ring $(D/J(D))[\overline{\pi},\overline{\pi}^{-1}; \alpha]$.

On the other hand, if $F$ is any division ring, we will say that a filtration $v$ on $F$ is \emph{standard} if it is induced from some complete discrete valuation ring $D\subseteq F$ satisfying $Q(D) = F$.
\item If $Q$ is a simple artinian ring, we will say that a filtration $u$ on $Q$ is \emph{standard} if there exists an isomorphism $\iota: Q\to M_s(F)$ such that $u = M_s(v) \circ \iota$ for some standard filtration $v$ on $F$.

Where possible, we will omit mention of the isomorphism $\iota$ and simply write $Q = M_s(F)$ and $u = M_s(v)$, though it will sometimes be necessary to retain it when there are multiple standard filtrations under consideration.
\end{enumerate}
\end{defn}

\begin{rks}\label{rks: standard filtrations are complete and have unique discrete valuation rings}
$ $

\begin{enumerate}
\item Note that we have defined standard filtrations to be complete.
\item If $F$ is a division ring equipped with a standard filtration $f$, then the complete discrete valuation ring $D$ that induces $f$ is \emph{unique}, and can be recovered as the $f$-positive subring of $F$, i.e.\ $f^{-1}([0,\infty])$.

If $Q = M_s(F)$ is a simple artinian ring equipped with a standard filtration $u = M_s(v)$ as above, then its $u$-positive part is $\O = M_s(D)$. This is a maximal order in $Q$.
\end{enumerate}
\end{rks}

If $R$ is a ring admitting two filtrations, $w_1$ and $w_2$, then we say that $w_1$ and $w_2$ are \emph{topologically equivalent} if the identity map $\id:R\to R$ is continuous both as a map $(R, w_1)\to (R, w_2)$ and as a map $(R, w_2)\to (R, w_1)$. This is equivalent to the definition given in \cite[Chapter I, \S 3.2]{LVO} by Lemma \ref{lem: continuous homomorphisms}.

\begin{lem}\label{lem: D is uniquely determined by topology}
Let $F$ be a division ring admitting two standard filtrations $f_1$ and $f_2$. Then $f_1$ and $f_2$ are topologically equivalent if and only if $f_1 = f_2$.
\end{lem}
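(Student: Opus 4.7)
The trivial direction is that $f_1 = f_2$ obviously forces the identity map to be continuous both ways. For the substantive direction, the plan is to reduce topological equivalence to the algebraic statement that the positive parts $D_i := f_i^{-1}([0,\infty])$ coincide inside $F$. By Remark \ref{rks: standard filtrations are complete and have unique discrete valuation rings}(2) each $D_i$ is a complete discrete valuation ring with $Q(D_i) = F$, and $f_i$ is recovered from $D_i$ by extending the $J(D_i)$-adic filtration to $F$; so once I know $D_1 = D_2$, the Jacobson radicals agree (as $J(D)$ is intrinsic to $D$) and the canonical extension procedure yields $f_1 = f_2$ automatically.

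I aim to prove $D_1 \subseteq D_2$, after which symmetry gives $D_1 = D_2$. Suppose for contradiction that some $x \in D_1 \setminus D_2$ exists. Then $x \neq 0$, $f_1(x) \geq 0$, and Properties \ref{props: DVRs}(i) force $x^{-1} \in J(D_2)$, so $f_2(x^{-1}) \geq 1$. Each $f_i$ is multiplicative on $F$ (a straightforward consequence of the normality of a uniformiser and of $F = \bigcup_n \pi^n D_i$), so iterating gives
$$f_1(x^{-n}) = -n f_1(x) \leq 0 \quad\text{and}\quad f_2(x^{-n}) \geq n \quad\text{for every } n \geq 1.$$
Continuity of $\id : (F, f_2) \to (F, f_1)$, via Lemma \ref{lem: continuous homomorphisms}, supplies an integer $m$ with the property that $f_2(y) \geq m$ implies $f_1(y) \geq 1$. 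Testing on $y = 1$ (where $f_2(1) = 0$) forces $m \geq 1$; then testing on $y = x^{-m}$ gives $f_1(x^{-m}) \geq 1$, directly contradicting $f_1(x^{-m}) \leq 0$.

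The only conceptual step requiring care is the initial reduction: one must recognise that the topological data actually pins down the positive subring of $F$ exactly, rather than merely up to some scaling or distortion. Once that reformulation is in hand, the core argument is very short — one uses multiplicativity of the $f_i$ to promote a single putative bad element $x \in D_1 \setminus D_2$ to a sequence $\{x^{-n}\}_{n \geq 1}$ whose $f_1$- and $f_2$-values march off in opposite directions, which immediately violates any uniform continuity bound. No auxiliary constructions or deeper structure theory are needed.
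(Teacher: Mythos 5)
Your proposal is correct and follows essentially the same route as the paper: reduce to showing the positive subrings coincide (via Remark \ref{rks: standard filtrations are complete and have unique discrete valuation rings}(2)), take $x\in D_1\setminus D_2$, use Properties \ref{props: DVRs}(i) to get $x^{-1}\in J(D_2)$, and contradict topological equivalence via the sequence $(x^{-n})$, whose $f_2$-values grow while its $f_1$-values stay $\leq 0$. The only difference is cosmetic: the paper phrases the contradiction as ``$(x^{-n})$ converges to $0$ under $f_2$ but not under $f_1$,'' whereas you unwind continuity through Lemma \ref{lem: continuous homomorphisms} explicitly (and invoke multiplicativity of $f_1$, though the weaker bound $f_1(x^{-n})\leq -f_1(x^n)\leq 0$ from the filtration inequality already suffices).
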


\begin{proof}
Write $D_1$ and $D_2$ for the $f_1$- and $f_2$-positive subrings of $F$ respectively. By Remark \ref{rks: standard filtrations are complete and have unique discrete valuation rings}.2, $f_1 = f_2$ if and only if $D_1 = D_2$. Suppose $D_1 \neq D_2$: then (without loss of generality) there exists some element $0\neq x\in D_1 \setminus D_2$. By Property \ref{props: DVRs}(i), it follows that $x^{-1} \in J(D_2) \setminus J(D_1)$: then the sequence $(x^{-n})_{n\in \mathbb{N}}$ converges to $0$ under $f_2$ but diverges under $f_1$, so we cannot have $f_1$ topologically equivalent to $f_2$.
\end{proof}

\begin{rk}
Note that this is not true for general simple artinian rings $Q$, which can admit two distinct but topologically equivalent standard filtrations: see Example \ref{ex: sigma does not preserve maximal orders}. However, we show here that the discrete valuation ring determining any given standard filtration on $Q$ is unique up to isomorphism.

If $Q$ is a simple artinian ring, then $Q\cong M_s(F)$, and the division ring $F$ is uniquely determined up to isomorphism as the endomorphism ring of a minimal right ideal, as in the usual Wedderburn theory. If $Q$ is additionally equipped with a standard filtration $u$, then there exists an isomorphism $\iota: Q\to M_s(F)$ such that $u = M_s(v)\circ \iota$ for a standard filtration $v$ on $F$, which determines a discrete valuation ring $D$.

Now suppose additionally that we have an isomorphism $\iota': Q\to M_s(F)$, and that $u = M_s(v')\circ \iota'$ for some standard filtration $v'$ on $F$ determining a discrete valuation ring $D'$. Then $M_s(v') \circ \iota'\iota^{-1}$ is equal to $M_s(v)$. Write $\iota'\iota^{-1} = M_s(\tau)\circ \eta$, where $\tau$ is an automorphism of $F$ and $\eta$ is an inner automorphism of $M_s(F)$ by \cite[Lemma 2.2, Theorem 2.4]{cauchon-robson}: this means that $M_s(v'\circ \tau) \circ \eta = M_s(v)$, and so $M_s(v'\circ \tau)$ and $M_s(v)$ are topologically equivalent by Remark \ref{rks: filtered and strict linear maps}(ii), which finally implies that $v'\circ \tau$ and $v$ must be topologically equivalent standard filtrations on $F$. But now they are equal by Lemma \ref{lem: D is uniquely determined by topology}, and so $\tau$ restricts to an isomorphism $\tau: D\to D'$.
\end{rk}

\subsection{Skew derivations and compatibility}\label{subsec: compatibility}

\begin{defn}\label{defn: skew derivation}
Let $R$ be a ring. We will say that the pair $(\sigma, \delta)$ is a \emph{skew derivation} on $R$ if $\sigma\in\Aut(R)$ and $\delta$ is a \emph{(left) $\sigma$-derivation}, i.e.\ a linear endomorphism of $R$ such that $\delta(rs) = \delta(r) s + \sigma(r)\delta(s)$ for all $r,s\in R$. (Note that some authors allow $\sigma$ to be a more general endomorphism of $R$.)

We say that the skew derivation $(\sigma,\delta)$ is \emph{commuting} if $\sigma\circ\delta=\delta\circ\sigma$, which we will usually assume.
\end{defn}

Given a skew derivation $(\sigma, \delta)$ on $R$, we can define the \emph{skew polynomial ring} $R[x; \sigma, \delta]$: this is equal to $R[x]$ as a left $R$-module, and its multiplication is uniquely determined by the multiplication in $R$ and the relations $xr = \sigma(r)x + \delta(r)$ for all $r\in R$. This construction encompasses many important and classical ring constructions, such as Weyl algebras, certain quantum groups \cite{BroGoo02}, enveloping algebras of soluble complex Lie algebras, and (after suitable localisations) group algebras of poly-(infinite cyclic) groups. See e.g.\ \cite[\S 1.2]{MR} or \cite[\S 2]{GooWar04} for some of the basic properties of this construction.

However, in general, it is not possible to define skew power series rings without further constraints on $(\sigma, \delta)$, due to potential convergence issues. We begin with the following definition:

\begin{defn}\label{defn: compatibility} \cite[Definition 2.8]{jones-woods-2}
Let $(R,w)$ be a filtered ring. Then the skew derivation $(\sigma, \delta)$ is \emph{(strongly) compatible} with $w$ if $w(\sigma(r) - r) > w(r)$ and $w(\delta(r)) > w(r)$ for all $0\neq r\in R$. 

That is, $\sigma-\id$ and $\delta$ are filtered linear endomorphisms satisfying $\deg_w(\sigma-\id) > 0$ and $\deg_w(\delta) > 0$. Notice that $w(\sigma(r) - r) > w(r)$ implies $w(\sigma(r)) = w(r)$, so that $\sigma$ is \emph{strictly} filtered, from which we can conclude also that $\deg_w(\sigma^{-1}-\id) > 0$.

The condition $w(\sigma(r) - r) > w(r)$ is sometimes weakened to the condition $w(\sigma(r)) = w(r)$: we call this \emph{weak compatibility} \cite[Definition 2.6]{jones-woods-2}, but we will not use this notion in this paper, so whenever we refer to a skew derivation being \emph{compatible}, we mean that it is strongly compatible.
\end{defn}

\begin{rk*}
As far as we are aware, all definitions of skew power series rings (in this context) in the literature so far rely on some notion of compatibility with a filtration: this definition of (strong) compatibility comes from \cite[Definition 1.8]{jones-woods-1}, but see \cite[Definition 2.6]{jones-woods-2}, \cite[Setup 3.1(5)]{letzter-noeth-skew}, \cite[\S 1]{SchVen06} for various related notions. However, none of these definitions will be enough for our eventual purposes, so we will extend this definition in \S \ref{subsec: restricted skew power series} below.
\end{rk*}

For now, we give the full definition of skew power series rings over positively filtered rings mentioned in the introduction.

\begin{defn}\label{defn: skew power series ring, compatible case}
Let $(R,w)$ be a complete, \emph{positively} filtered ring, i.e.\ suppose that $w$ takes values in $\mathbb{N}\cup\{\infty\}$. Let $(\sigma, \delta)$ be a compatible skew derivation. Then we define the \emph{skew power series ring}
$$R[[x;\sigma,\delta]] := \left\{ \sum_{n\geq 0} r_n x^n : r_n\in R\right\}.$$
If we choose any rational $\varepsilon > 0$ and set $\Gamma = \mathbb{Z} + \varepsilon \mathbb{Z}$, then we can make $R[[x; \sigma, \delta]]$ into a complete $\Gamma$-filtered left $R$-module with complete $\Gamma$-filtration $v_\varepsilon: R[[x; \sigma, \delta]] \to \Gamma\cup\{\infty\}$ defined by
$$v_\varepsilon\left(\sum_{n\geq 0} r_n x^n\right) = \inf_{n\geq 0} \{w(r_n) + \varepsilon n\},$$
and with continuous multiplication extending the multiplication on $R$, uniquely defined by the family of relations $xr = \sigma(r)x + \delta(r)$ for all $r\in R$. (In fact, if $0 < \varepsilon \leq 1$, then $v_\varepsilon$ is a \emph{ring} $\Gamma$-filtration in this context, but we do not rely on this fact.)
\end{defn}

\begin{rk*}
It is non-trivial to prove that this does in fact define a continuous multiplication on $R[[x; \sigma, \delta]]$. However, as this notion is now well-established in the literature, and in any case it will follow from results in \S \ref{subsec: restricted skew power series} below, we omit the proof at this stage. Proofs of the existence of skew power series rings in very similar contexts can be found in \cite[\S 3.4]{letzter-noeth-skew} or \cite[Lemma 2.1]{venjakob}.
\end{rk*}

The following theorem highlights the important role of \emph{standard} filtrations in our work.

\begin{thm}\label{thm: simple artinian with compatible skew derivation}
Suppose $Q$ is a simple artinian ring with a standard filtration $u$, and let $\O = u^{-1}([0,\infty])$ be the associated maximal order. Suppose that $(\sigma, \delta)$ is a skew derivation on $Q$ compatible with $u$. Then the skew power series ring $\O[[x; \sigma, \delta]]$ exists and is prime.
\end{thm}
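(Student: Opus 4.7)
The plan is to construct the ring via a direct application of Definition \ref{defn: skew power series ring, compatible case} and then prove primeness through the associated graded ring.

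First I would check that compatibility of $(\sigma,\delta)$ with $u$ forces $\sigma$ and $\delta$ to preserve the maximal order $\O = u^{-1}([0,\infty])$. Since compatibility implies $\sigma$ is strictly filtered, $u(\sigma(a)) = u(a) \geq 0$ for $a\in\O$; and since $\delta$ is filtered of strictly positive degree, $u(\delta(a)) > u(a) \geq 0$ for $0 \neq a\in\O$. So $(\sigma,\delta)$ restricts to a skew derivation on $\O$, compatible with the \emph{positive} filtration $u|_{\O}$. Moreover $\O$ is closed in the complete filtered ring $(Q,u)$ (it is a level set of $u$), hence complete. Definition \ref{defn: skew power series ring, compatible case} then applies directly to $(\O, u|_{\O}, \sigma, \delta)$, producing the ring $\O[[x;\sigma,\delta]]$ equipped with the complete $\Gamma$-filtration $v_\varepsilon$ for any rational $\varepsilon\in(0,1]$.

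For primeness, I would fix such an $\varepsilon$ and analyse $\gr_{v_\varepsilon}\O[[x;\sigma,\delta]]$. Compatibility gives $u(\sigma(a)-a) > u(a)$, so $\sigma(a)$ and $a$ have equal principal symbols in $\gr_u(\O)$, and similarly the principal symbol of $\delta(a)$ vanishes at the level $u(a)$. Hence the relation $xa = \sigma(a)x + \delta(a)$ becomes $\overline{x}\,\overline{a} = \overline{a}\,\overline{x}$ in the associated graded, making $\overline{x}$ central. It follows that
\[
\gr_{v_\varepsilon}\O[[x;\sigma,\delta]] \;\cong\; \gr_u(\O)[\overline{x}]
\]
as a (commutative in $\overline{x}$) polynomial extension.

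Next I would identify $\gr_u(\O)$ explicitly. Writing $Q = M_s(F)$ and $u = M_s(v)$ where $v$ is induced by the complete discrete valuation ring $D\subset F$, we have $\O = M_s(D)$ and $\gr_u(\O) \cong M_s(\gr_v(D))$. By Definition \ref{defn: standard filtrations} the graded ring $\gr_v(D)$ is the skew polynomial ring $(D/J(D))[\overline{\pi};\alpha]$ over the division ring $D/J(D)$, and is therefore a domain, in particular prime. Hence $\gr_u(\O)$ is prime (primeness is Morita invariant), and so is the polynomial extension $\gr_u(\O)[\overline{x}]$.

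Finally I would transfer primeness back by the standard principal-symbol lifting: given nonzero $a,b \in \O[[x;\sigma,\delta]]$, their principal symbols $\overline{a},\overline{b}$ are nonzero in $\gr_{v_\varepsilon}\O[[x;\sigma,\delta]]$, so by primality of this graded ring there exists $\overline{c}$ with $\overline{a}\,\overline{c}\,\overline{b} \neq 0$; any lift $c$ then satisfies $v_\varepsilon(acb) = v_\varepsilon(a) + v_\varepsilon(c) + v_\varepsilon(b) < \infty$, i.e.\ $acb \neq 0$. The main technical obstacle is bookkeeping the $\Gamma$-grading carefully enough to justify the isomorphism $\gr_{v_\varepsilon}\O[[x;\sigma,\delta]] \cong \gr_u(\O)[\overline{x}]$; once that is in hand, the rest is routine.
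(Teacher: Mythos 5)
Your argument is correct, but it follows a genuinely different route from the paper's. The existence step coincides: both you and the paper restrict $(\sigma,\delta)$ to the complete, positively filtered ring $\O$ and invoke Definition \ref{defn: skew power series ring, compatible case}. For primeness, however, the paper's proof is a two-line reduction: $\O[x;\sigma,\delta]$ is prime since $\O$ is prime \cite[Theorem 1.2.9(iii)]{MR}, and by \cite[Theorem C]{jones-woods-2} every non-zero two-sided ideal of $\O[[x;\sigma,\delta]]$ meets the polynomial subring $\O[x;\sigma,\delta]$ non-trivially, so the product of two non-zero ideals is non-zero. You instead identify $\gr_{v_\varepsilon}(\O[[x;\sigma,\delta]])\cong\gr_u(\O)[\overline{x}]$, note that $\gr_u(\O)\cong M_s(\gr_v(D))$ is prime because $\gr_v(D)\cong(D/J(D))[\overline{\pi};\alpha]$ is a domain, and transfer primeness back via principal symbols; that transfer is sound, since any non-zero homogeneous element of the graded ring is a principal symbol, and $\gr(a)\gr(c)\gr(b)\neq 0$ forces $\gr(acb)=\gr(a)\gr(c)\gr(b)\neq 0$. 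The graded identification you flag as the main obstacle is available in the paper's own toolkit: it is used (for Noetherianity rather than primeness) in the proof of Corollary \ref{cor: Noetherian skew power series}, via \cite[Proposition 1.17]{jones-woods-1} together with the fact that completion does not change the associated graded ring; note that it genuinely needs positivity of $u|_{\O}$, which you have. One small correction: take $\varepsilon$ strictly less than $1$ --- at $\varepsilon=1$ an element $a$ with $u(\delta(a))=u(a)+1$ contributes the class of $\delta(a)$ to $\overline{x}\,\overline{a}$, so $\overline{x}$ need not be central (the graded ring is then a differential operator extension of $\gr_u(\O)$, still prime, but not the ring you wrote down). What your route buys is independence from the non-trivial external input \cite[Theorem C]{jones-woods-2}; what the paper's route buys is brevity and the stronger intermediate fact that ideals of $\O[[x;\sigma,\delta]]$ are detected on the dense polynomial subring.
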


\begin{proof}
It follows immediately from the definitions that $(\sigma, \delta)$ restricts to a skew derivation on $\O$, and is still compatible with $u|_\O$, so the skew power series ring $\O[[x; \sigma, \delta]]$ exists by the remark above.

Let $I$ and $J$ be non-zero ideals of $\O[[x; \sigma, \delta]]$. Since $\O[x; \sigma, \delta]$ is a prime ring by \cite[Theorem 1.2.9(iii)]{MR}, it will suffice to prove that $I\cap \O[x; \sigma, \delta]$ and $J\cap \O[x; \sigma, \delta]$ are non-zero, as this will ensure that their product is non-zero and contained in $IJ$. But this is \cite[Theorem C]{jones-woods-2}.
\end{proof}

We now look specifically at \emph{inner} $\sigma$-derivations, which will play a large role in this paper.

\begin{defn}\label{defn: inner sigma-derivation}
Let $R$ be a ring and $(\sigma, \delta)$ a skew derivation on $R$. We say that $\delta$ is \emph{inner} if there exists an element $t\in R$ such that $\delta(r)=tr-\sigma(r)t$ for all $r\in R$.
\end{defn}

We will be primarily interested in rings $Q$ admitting standard filtrations (Definition \ref{defn: standard filtrations}(ii)), and by definition these are simple artinian. However, from \S \ref{subsec: sigma-orbit of maximal ideals and maximal orders} onwards, we will sometimes need to pass to more general semisimple artinian rings, and in these cases the following lemma simplifies the situation considerably:

\begin{lem}\cite[Lemma 1.4]{cauchon-robson}\label{lem: if Q is not simple, delta is inner}
If $(\sigma, \delta)$ is a skew derivation on $Q$, where $Q$ is a semisimple artinian ring that is not simple, and $\sigma$ permutes the simple factors of $Q$ transitively, then $\delta$ is inner.\qed
\end{lem}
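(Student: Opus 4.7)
\emph{Proof plan.} The plan is to exploit the central primitive idempotents $e_1,\dots,e_r$ corresponding to the block decomposition $Q = Q_1\times\cdots\times Q_r$. Since $Q$ is not simple, $r\geq 2$, and the transitivity hypothesis on $\sigma$ lets us relabel so that $\sigma(e_i) = e_{i+1}$ with indices mod $r$. The natural candidate for the inner element is
\[
    t := \sum_{i=1}^r e_i\delta(e_i);
\]
writing $c_i := e_i\delta(e_i)\in Q_i$, this is $t = \sum_i c_i$.

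The first main step is to pin down $\delta(e_i)$. Applying Leibniz to $e_i^2 = e_i$ gives $\delta(e_i) = \delta(e_i)e_i + e_{i+1}\delta(e_i)$; centrality of the $e_j$ then forces $e_j\delta(e_i)=0$ for $j\notin\{i,i+1\}$, so $\delta(e_i)\in Q_i\oplus Q_{i+1}$. Applying Leibniz to the orthogonality relation $e_ie_{i+1}=0$ identifies the $Q_{i+1}$-component of $\delta(e_i)$ as $-c_{i+1}$.

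Next, for $q\in Q_i$, write $q = qe_i = e_iq$ and apply Leibniz to conclude that $\delta(q)\in Q_i\oplus Q_{i+1}$, so we can split $\delta(q)=\delta_i(q)+\delta_{i+1}(q)$. Comparing components in $\delta(qq')=\delta(q)q'+\sigma(q)\delta(q')$ for $q,q'\in Q_i$, orthogonality of the factors kills the cross-terms and leaves
\[
    \delta_i(qq') = \delta_i(q)q', \qquad \delta_{i+1}(qq') = \sigma(q)\delta_{i+1}(q').
\]
Specialising one argument to $e_i$ yields $\delta_i(q)=c_iq$ and $\delta_{i+1}(q)=\sigma(q)\delta_{i+1}(e_i)=-\sigma(q)c_{i+1}$, hence $\delta(q) = c_iq - \sigma(q)c_{i+1}$ on $Q_i$. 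Summing over $i$ and using that $c_jq_i=0$ unless $j=i$ and $\sigma(q_i)c_j=0$ unless $j=i+1$, we obtain $tq - \sigma(q)t = \delta(q)$ for general $q=\sum_iq_i$, as required.

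The main obstacle is the first step: extracting enough structural information about $\delta(e_i)$ from the Leibniz rule alone. Once we know $\delta(e_i)\in Q_i\oplus Q_{i+1}$ and have identified its $Q_{i+1}$-component, the rest is dictated by the block structure. It is worth remarking that both hypotheses are genuinely used: without transitivity, the $\sigma$-orbits on the simple factors would decouple and no single $t$ could work across them (an ordinary derivation on a $\sigma$-fixed simple factor need not be inner), while $r\geq 2$ is what makes the decomposition $\delta(q)=\delta_i(q)+\delta_{i+1}(q)$ informative.
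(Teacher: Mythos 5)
Your proof is correct: the Leibniz computations with the central idempotents are all valid (centrality gives $e_j\delta(e_i)=0$ for $j\notin\{i,i+1\}$, the relation $e_ie_{i+1}=0$ — which is where non-simplicity enters — gives the $Q_{i+1}$-component $-c_{i+1}$ of $\delta(e_i)$, the block analysis gives $\delta(q)=c_iq-\sigma(q)c_{i+1}$ on $Q_i$, and summing over the blocks yields $\delta(q)=tq-\sigma(q)t$ for $t=\sum_i e_i\delta(e_i)$, exactly the paper's notion of inner $\sigma$-derivation). The paper itself offers no proof, only the citation to Cauchon--Robson, and your argument is the standard idempotent computation underlying that reference, so there is nothing to add.
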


\subsection{Filtered \texorpdfstring{\(\mathbb{Z}_p\)}{Zp}-algebras}

\begin{defn}\label{defn: filtered Z_p-algebra}
We will say that $(R,w)$ is a \emph{filtered $\mathbb{Z}_p$-algebra} if it is a filtered ring and a $\mathbb{Z}_p$-algebra, and the algebra map $(\mathbb{Z}_p, v_p)\to (R,w)$ is filtered of non-negative degree.
\end{defn}

\begin{lem}\label{lem: p-adic facts}
Suppose that $(R,w)$ is a filtered $\mathbb{Z}_p$-algebra. Then

\begin{enumerate}[label=(\roman*)]
\item $\displaystyle w(n) \geq v_p(n)$ for all $n\in\mathbb{Z}_p$.
\item $\displaystyle w\left(\binom{p^n}{i}\right) \geq n - v_p(i)$ for all $n\in\mathbb{N}$ and all $0\leq i\leq p^n$.
\item $\displaystyle w\left(\binom{p^n}{i}\right) \geq n - \lfloor \log_p(N) \rfloor$ for all $n\in\mathbb{N}$ and all $0\leq i\leq N\leq p^n$.
\end{enumerate}
\end{lem}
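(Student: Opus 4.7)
The plan is to prove (i) directly from the definition of a filtered $\mathbb{Z}_p$-algebra, and then derive (ii) and (iii) as successive refinements using the standard $p$-adic estimates for binomial coefficients.

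For (i), I would simply unpack Definition \ref{defn: filtered Z_p-algebra}: the structural map $\iota:(\mathbb{Z}_p, v_p)\to (R,w)$ is filtered of non-negative degree, which by Definition \ref{defn: filtered and strict morphisms}(i) means there exists $d\geq 0$ with $w(\iota(n))\geq v_p(n)+d$ for all $n\in\mathbb{Z}_p$. Since $d\geq 0$, this gives $w(n)\geq v_p(n)$, identifying $n$ with its image in $R$.

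For (ii), the main input is the integer identity
\[
i\,\binom{p^n}{i}=p^n\binom{p^n-1}{i-1}
\]
valid for $1\leq i\leq p^n$. Taking $p$-adic valuations and rearranging yields $v_p\binom{p^n}{i}=n-v_p(i)+v_p\binom{p^n-1}{i-1}\geq n-v_p(i)$, where the inequality uses that $p$-adic valuations of non-zero integers are non-negative. Combining with (i) gives $w\binom{p^n}{i}\geq v_p\binom{p^n}{i}\geq n-v_p(i)$. The edge case $i=0$ is vacuous under the convention $v_p(0)=\infty$.

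For (iii), I would argue that for $1\leq i\leq N$ we have $p^{v_p(i)}\mid i$, so $p^{v_p(i)}\leq i\leq N$, giving $v_p(i)\leq\log_p N$; since $v_p(i)$ is an integer, this sharpens to $v_p(i)\leq\lfloor\log_p N\rfloor$. Substituting into (ii) delivers the bound. There is essentially no obstacle here: the lemma is a bookkeeping statement combining the definition of a filtered $\mathbb{Z}_p$-algebra with two standard estimates, and the only mild subtlety is the convention at $i=0$, which in (iii) is harmless in practice since this lemma will be applied to sums (e.g.\ from the binomial expansion of $(a+b)^{p^n}$) in which the $i=0$ term is handled separately.
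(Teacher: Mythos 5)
Your proposal is correct, and all three parts reach the same conclusions as the paper, but by slightly different routes, so a brief comparison is worthwhile. For (i), you read the inequality straight off Definition \ref{defn: filtered Z_p-algebra} via Definition \ref{defn: filtered and strict morphisms}(i): filtered of non-negative degree gives $w(\iota(a))\geq v_p(a)+d\geq v_p(a)$, which is a legitimate one-line argument. The paper instead rederives (i) from scratch, using only $w(p)\geq 1$, $w(1)=0$ and subadditivity: it shows $w(m)=0$ for $1\leq m\leq p-1$ by a minimal-counterexample argument and then uses the $p$-adic digit expansion $n=c_0+c_1p+\cdots$; this buys robustness (the conclusion holds under the weaker hypotheses actually used), whereas your argument buys brevity under the stated definition. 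For (ii), the paper computes the exact valuation $v_p\left(\binom{p^n}{i}\right)=n-v_p(i)$ via factorials and the symmetry $v_p(\ell)=v_p(p^n-\ell)$, while you use the absorption identity $i\binom{p^n}{i}=p^n\binom{p^n-1}{i-1}$ to get the inequality $v_p\left(\binom{p^n}{i}\right)\geq n-v_p(i)$; the inequality is all that is needed, so both are fine. Part (iii) is argued identically in both. Your caveat about $i=0$ is apt and, if anything, more careful than the paper: the paper's cited fact ``$v_p(i)\leq\lfloor\log_p(i)\rfloor$ for all $i\geq 0$'' fails at $i=0$ (and the stated bound in (iii) is genuinely false for $i=0$ when $N<p^n$), but in every application of (iii) in the paper the index runs over $i\geq 1$, so restricting to $1\leq i\leq N$, as you do, is exactly what is needed.
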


\begin{proof}
$ $

\begin{enumerate}[label=(\roman*)]
\item Clearly $\displaystyle w(p^k) \geq kw(p)\geq k$ for all $k\in\mathbb{N}$. Also, by definition we have $w(1) = 0$, and so for all $i\geq 1$ we have 
$\displaystyle w(i) = w(\underbrace{1+\dots+1}_{i \text{ times}}) \geq w(1) = 0$.

So now we will prove that $w(1) = w(2) = \dots = w(p-1) = 0$. Suppose not: then there is some $2\leq m\leq p-1$ minimal such that $w(m) \geq 1$. Choose the integer $\ell\geq 1$ such that $\ell m < p < (\ell+1)m$, and rearrange to see that $1\leq p - \ell m < m$. But then $w(p - \ell m) \geq \min\{w(p), w(\ell) + w(m)\} \geq 1$, contradicting the minimality of $m$.

This is enough, as every element $n\in \mathbb{Z}_p$ can be written as $n = c_0 + c_1 p + c_2 p^2 + \dots$ for $0\leq c_i \leq p-1$, and so $w(n) \geq \min\{i : c_i \neq 0\} = v_p(n)$.
\item We can calculate $v_p\left(\binom{p^n}{i}\right) = v_p((p^n)!) - v_p(i!) - v_p((p^n-i)!)$. But $v_p(\ell) = v_p(p^n-\ell)$ for all $1\leq \ell\leq p^n-1$, so
$$v_p((p^n-i)!) = v_p(i) + v_p(i+1) + \dots + v_p(p^n-1) = v_p((p^n-1)!) - v_p((i-1)!).$$
It follows that
$$v_p\left(\binom{p^n}{i}\right) = v_p((p^n)!) - v_p((p^n-1)!) - v_p(i!)  + v_p((i-1)!) = n - v_p(i),$$
and now the result follows from (i).
\item Follows from (ii) using the fact that $v_p(i) \leq \lfloor \log_p(i) \rfloor$ for all $i\geq 0$.\qedhere
\end{enumerate}
\end{proof}

\subsection{Crossed products}\label{subsec: defn of crossed product}

\begin{defn}\label{defn: crossed product}
Let $R$ be a ring and $G$ a group. The ring $S$ is said to be a \emph{crossed product} of $R$ with $G$ if
\begin{enumerate}[label=(\roman*)]
\item $R$ is a subring of $S$,
\item there is an injective map (of sets) $G\to S$, with image $\overline{G} = \{\overline{x} : x\in G\}$ (the \emph{basis} of the crossed product), such that $S = \bigoplus_{x\in G} R\overline{x}$ as left $R$-modules,
\item there exist a map of sets $\tau: G\times G\to R^\times$ (the \emph{twisting} map) and a map of sets $\alpha: G\to \Aut(R)$ (the \emph{action} map) which define the multiplication in $S$ so that, for all $x,y\in G$ and all $r\in R$, we have
$$\overline{xy} = \tau(x,y)\, \overline{x}\, \overline{y} \qquad \qquad \text{and} \qquad\qquad \overline{x}r = r^{\alpha(x)}\overline{x}.$$
\end{enumerate}
\end{defn}

We will write such a crossed product as $R * G$, omitting mention of $\tau$ and $\alpha$, as we hope that they will always be clear from context.

Our standard reference for crossed products is Passman \cite{passmanICP}, though note that the convention in \cite{passmanICP} is that $S$ is a \emph{right} $R$-module, with corresponding modifications in the definitions of $\tau$ and $\alpha$.

\section{Rings and subrings of bounded skew power series}\label{sec: filtered localisation}

Let $(R, w_0)$ be a filtered ring. In \S \ref{subsec: compatibility}, we defined the skew power series ring $R[[x; \sigma, \delta]]$ in the case where $(\sigma, \delta)$ is compatible with $w_0$ and $w_0$ is a positive filtration. Under the further assumption that $(R, w_0)$ satisfies \eqref{filt}, we briefly outlined in \S \ref{subsec: localisation} the procedure to pass from $(R, w_0)$ to the far nicer filtered ring $(Q, u)$. We will elaborate on this procedure in \S  \ref{sec: sigma-orbits of maximal ideals}, but unfortunately, when we do this, the skew derivation on $Q$ induced by $(\sigma, \delta)$ (when it exists) may no longer be compatible with $u$. 

In this section, we will weaken the notion of compatibility to ensure that some kind of skew power series ring will still exist. To avoid any confusion, we will replace $(R,w_0)$ and $(Q,u)$ with the more general complete, filtered ring $(S,w)$ throughout.

\subsection{Quasi-compatible skew derivations}

\begin{defn}\label{defn: strongly bounded}
Let $(S, w)$ be a filtered ring. Suppose $(\sigma, \delta)$ is a commuting skew derivation on $S$.

\begin{enumerate}[label=(\roman*)]
\item Suppose $X$ is a set of linear endomorphisms of $S$ which are filtered (Definition \ref{defn: filtered and strict morphisms}(i)) with respect to $w$, so that $\deg_w(\varphi)$ exists for each $\varphi\in X$. We will say that $X$ is \emph{strongly bounded} (with respect to $w$) if there exists a simultaneous lower bound for all $\deg_w(\varphi)$ as $\varphi$ ranges over $X$: that is, if there exists $B\in\mathbb{Z}$ such that $\min_{\varphi\in X} \{\deg_w(\varphi)\} \geq B$.
\item We will say that $(\sigma, \delta)$ is \emph{filtered} (with respect to $w$) if the maps $\sigma$, $\sigma^{-1}$ and $\delta$ are filtered endomorphisms of $S$.
\item We will say that $(\sigma, \delta)$ is \emph{strongly bounded} (with respect to $w$) if it is filtered and the set $\sigma^{\mathbb{Z}} \delta^{\mathbb{N}} = \{\sigma^i \delta^j : i\in\mathbb{Z}, j\in\mathbb{N}\}$ is strongly bounded.
\item We will say that $(\sigma,\delta)$ is \emph{quasi-compatible} (with $w$) if it is strongly bounded, and $\deg_w(\delta^N)>0$ for some $N\in\mathbb{N}$.
\end{enumerate}
\end{defn}

This is a generalisation of the notion of \emph{compatible} (Definition \ref{defn: compatibility}): it roughly says that the filtered maps $\sigma-\id$ and $\delta$ are not \emph{too} far away from having positive degree. This is what will allow us to define the bounded skew power series ring $S^+[[x; \sigma, \delta]]$.

\begin{rk*}
Let $(S, w)$ be a filtered ring, and $(\sigma, \delta)$ a commuting skew derivation on $S$ which is quasi-compatible with $w$, so that we can set $B = \min \deg_w(\sigma^{\mathbb{Z}} \delta^{\mathbb{N}})$ and take some $N\in\mathbb{N}$ such that $\deg_w(\delta^N) \geq 1$. Let $p = eN + f$, where $e, f\in\mathbb{N}$, and let $q\in\mathbb{Z}$: then we have
$$\deg_w(\sigma^q\delta^p) = \deg_w(\sigma^q\delta^f\delta^{eN}) \geq \deg_w(\delta^{eN}) + B \geq e + B,$$
where the first inequality comes from Remark \ref{rks: filtered and strict linear maps}(i). In particular, for all $p\in\mathbb{N}$, we have
\begin{align}\label{eqn: bound with nilpotency of delta}
\deg_w(\sigma^q\delta^p) \geq \left\lfloor \dfrac{p}{N}\right\rfloor + B.
\end{align}
\end{rk*}

The following is an easy sufficient criterion for a finitely generated monoid of filtered endomorphisms to be strongly bounded. Later we will apply it to the special cases $(f_1, f_2) = (\sigma, \sigma^{-1})$ where $\sigma$ is an automorphism, and $(f_1, f_2, f_3) = (\sigma, \sigma^{-1}, \delta)$ where $(\sigma, \delta)$ is a skew derivation.

\begin{lem}\label{lem: criterion for strong boundedness}
Let $(S,w)$ be a filtered ring, and let $f_1, \dots, f_r$ be pairwise commuting filtered endomorphisms of $(S,w)$. Suppose there exist positive integers $m_1, \dots, m_r$ such that $\deg_w(f_j^{m_j}) \geq 0$ for all $1\leq j\leq r$. Then $X = \{f_1^{i_1} \dots f_r^{i_r} : i_1, \dots, i_r\in\mathbb{N}\}$ is strongly bounded with respect to $w$.
\end{lem}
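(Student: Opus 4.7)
The plan is a Euclidean-division argument on each exponent $i_j$, reducing the problem to bounding finitely many fixed composites.

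First I would write each exponent as $i_j = q_j m_j + s_j$ with $q_j \in \mathbb{N}$ and $0 \le s_j < m_j$. Since the $f_j$ pairwise commute, I can reorder the composite and factor it as
$$f_1^{i_1} \cdots f_r^{i_r} \;=\; \bigl(f_1^{m_1}\bigr)^{q_1} \cdots \bigl(f_r^{m_r}\bigr)^{q_r} \circ f_1^{s_1} \cdots f_r^{s_r}.$$
By the first part of Remark~\ref{rks: filtered and strict linear maps}(i), degrees of composites of filtered maps add subadditively, so
$$\deg_w\!\bigl(f_1^{i_1} \cdots f_r^{i_r}\bigr) \;\ge\; \sum_{j=1}^r q_j\,\deg_w\!\bigl(f_j^{m_j}\bigr) \;+\; \sum_{j=1}^r \deg_w\!\bigl(f_j^{s_j}\bigr).$$

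Next, by the hypothesis $\deg_w(f_j^{m_j}) \ge 0$, the first sum is non-negative, so it can be discarded from the lower bound. For the second sum, note that for each $j$ the index $s_j$ takes only finitely many values, namely $0,1,\dots,m_j-1$, and each $f_j^{s_j}$ is a finite composite of the filtered map $f_j$ (taking $f_j^0 = \id$, which has degree $0$), hence is itself filtered with a well-defined integer degree. Setting
$$d_j := \min_{0 \le s < m_j} \deg_w\!\bigl(f_j^{s}\bigr) \in \mathbb{Z},$$
we obtain
$$\deg_w\!\bigl(f_1^{i_1} \cdots f_r^{i_r}\bigr) \;\ge\; \sum_{j=1}^r d_j \;=:\; B,$$
and this $B \in \mathbb{Z}$ depends only on the data $f_1,\dots,f_r$ and $m_1,\dots,m_r$, not on the exponents $i_1,\dots,i_r$. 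Thus $X$ is strongly bounded with respect to $w$.

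There is no real obstacle here: the argument is a routine reduction modulo the $m_j$, relying only on the subadditivity of degree under composition and the fact that a finite set of integers has a minimum. The commutativity hypothesis is used essentially in step one to collect the ``large'' factors $f_j^{m_j}$ together; without it the reordering would fail.
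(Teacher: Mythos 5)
Your proof is correct and follows essentially the same route as the paper: Euclidean division of each exponent by $m_j$, using commutativity to collect the blocks $f_j^{m_j}$ of non-negative degree and superadditivity of $\deg_w$ under composition, then reducing to a finite amount of data. The only cosmetic difference is that the paper bounds below by the minimum degree over the finite set of composites $f_1^{s_1}\cdots f_r^{s_r}$ with $0\leq s_j<m_j$, whereas you use the (slightly cruder but equally valid) bound $\sum_j \min_{0\leq s<m_j}\deg_w(f_j^s)$.
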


\begin{proof}
If $f$ and $g$ are filtered, and $\deg_w(f^m) \geq 0$, then $\deg_w(f^{d+m}g) \geq \deg_w(f^d g)$ for all $d\in\mathbb{N}$. It follows that
$$\min \left\{\deg_w(f_1^{i_1} \dots f_r^{i_r}) : i_j \in\mathbb{N}\right\} \geq \min \left\{\deg_w(f_1^{i_1} \dots f_r^{i_r}) : 0\leq i_j < m_j\right\}.$$
The right-hand side is the minimum degree of a \emph{finite} set of filtered endomorphisms, so there must be a lower bound.
\end{proof}

\begin{cor}\label{cor: criterion for quasi-compatibility}

If $(\sigma,\delta)$ is a filtered, commuting skew derivation of $S$ such that $\deg_w(\sigma^k) \geq 0$, $\deg_w(\sigma^{-k})\geq 0$ and $\deg(\delta^m)>0$ for some $k,m\in\mathbb{N}$, then $(\sigma,\delta)$ is quasi-compatible.

In particular, if $\sigma^k$ is strictly filtered for some $k$, and $\deg_w(\delta^m)>0$ for some $m$, then $(\sigma,\delta)$ is quasi-compatible.

\end{cor}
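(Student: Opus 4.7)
The plan is to deduce the corollary as a direct application of Lemma \ref{lem: criterion for strong boundedness}. Quasi-compatibility requires two things: strong boundedness of $\sigma^{\mathbb{Z}}\delta^{\mathbb{N}}$, and the existence of some $N$ with $\deg_w(\delta^N)>0$. The second condition is exactly the hypothesis (take $N=m$), so the real content is strong boundedness.

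To get strong boundedness, I would apply Lemma \ref{lem: criterion for strong boundedness} with $r=3$ and $(f_1,f_2,f_3) = (\sigma,\sigma^{-1},\delta)$. These three maps are pairwise commuting: $\sigma$ and $\sigma^{-1}$ trivially; $\sigma$ and $\delta$ by the commuting-skew-derivation hypothesis; and $\sigma^{-1}$ and $\delta$ by conjugating $\sigma\delta = \delta\sigma$ by $\sigma^{-1}$ on both sides. The hypotheses $\deg_w(\sigma^k)\geq 0$, $\deg_w(\sigma^{-k})\geq 0$, $\deg_w(\delta^m)>0 \geq 0$ supply the integers $m_1=k$, $m_2=k$, $m_3=m$ required by the lemma. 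The lemma then gives that $\{\sigma^{i_1}\sigma^{-i_2}\delta^{i_3} : i_1,i_2,i_3\in\mathbb{N}\}$ is strongly bounded. Since every element of $\sigma^{\mathbb{Z}}\delta^{\mathbb{N}}$ has the form $\sigma^{i_1-i_2}\delta^{i_3}$ for suitable $i_1,i_2,i_3\in\mathbb{N}$, this set equals $\sigma^{\mathbb{Z}}\delta^{\mathbb{N}}$, completing the argument.

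For the \emph{in particular} clause, I would use the implication chain recorded after Definition \ref{defn: filtered and strict morphisms}: strictly filtered $\implies$ filtered of degree $0$, giving $\deg_w(\sigma^k)=0$. Moreover, the remark following that definition notes that the inverse of a strictly filtered automorphism is strictly filtered, so $(\sigma^k)^{-1} = \sigma^{-k}$ is strictly filtered too, yielding $\deg_w(\sigma^{-k})\geq 0$. The first half of the corollary then applies.

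The main obstacle is essentially nonexistent here; this is a bookkeeping corollary whose content is identifying the right maps to feed into Lemma \ref{lem: criterion for strong boundedness}. The only mildly subtle point is the identification of $\{\sigma^{i_1}\sigma^{-i_2}\delta^{i_3}\}_{i_j\in\mathbb{N}}$ with $\sigma^{\mathbb{Z}}\delta^{\mathbb{N}}$, which relies on writing each integer as a difference of two non-negative integers.
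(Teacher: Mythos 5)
Your proposal is correct and follows exactly the route the paper takes: the paper's proof is a one-line application of Lemma \ref{lem: criterion for strong boundedness} with $(f_1,f_2,f_3)=(\sigma,\sigma^{-1},\delta)$, and your identification of $\{\sigma^{i_1}\sigma^{-i_2}\delta^{i_3}:i_j\in\mathbb{N}\}$ with $\sigma^{\mathbb{Z}}\delta^{\mathbb{N}}$ together with the remark on strictly filtered automorphisms for the \emph{in particular} clause simply spells out what the paper leaves implicit.
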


\begin{proof}

This is immediate from Lemma \ref{lem: criterion for strong boundedness}, taking $f_1=\sigma,f_2=\sigma^{-1},f_3=\delta$.\end{proof}

\subsection{Bounded skew power series rings}\label{subsec: restricted skew power series}

We now define the most appropriate notion of a skew power series ring for our purposes, ultimately proving Theorem \ref{letterthm: restricted skew power series rings exist}.

\begin{defn}\label{defn: R+[[x]] as module}
Let $(S,w)$ be a filtered ring. As a left $S$-module in the obvious way, we will define
$$S^+[[x]] = \left\{\underset{n\in\mathbb{N}}{\sum}{s_nx^n}:s_n\in S,\text{ and there exists }M\in\mathbb{Z}\text{ such that }w(s_n)\geq M\text{ for all }n\in\mathbb{N}\right\}.$$
This is a submodule of the full module of formal power series $S[[x]]$.

For any rational $\varepsilon > 0$, set $\Gamma = \mathbb{Z} + \mathbb{Z}\varepsilon$, and define the function $v_\varepsilon: S[[x]]\to\Gamma\cup\{\pm \infty\}$ by
\begin{align}
v_\varepsilon\left(\sum_{n\in\mathbb{N}} s_n x^n\right) = \min_{n\in\mathbb{N}}\{w(s_n) + \varepsilon n\}.
\end{align}
If the $w(s_n)$ are bounded below, we may say that the sequence $(s_n)$ is itself \emph{bounded} (with respect to $w$). In this case, $v_\varepsilon(\sum s_n x^n) \in \Gamma\cup\{\infty\}$. This makes $S^+[[x]]$ into a $\Gamma$-filtered left $S$-module.
\end{defn}

\begin{rk*}
The submodule $S[x]$ of polynomial elements is \emph{dense} in $(S^+[[x]], v_\varepsilon)$ for any $\varepsilon$. Indeed, any element $f = \sum_{n\in\mathbb{N}} s_n x^n\in S^+[[x]]$ can be approximated arbitrarily well by polynomials $f_N = \sum_{n=0}^N s_n x^n\in S[x]$ for sufficiently large $N$.
\end{rk*}

Given any skew derivation $(\sigma, \delta)$, recall that there is a unique multiplication on the left $S$-module $S[x]$ which extends the multiplication on $S$ and satisfies $xs = \sigma(s)x + \delta(x)$ for all $s\in S$. This makes the module $S[x]$ into the skew polynomial ring $S[x;\sigma,\delta]$ as described in Definition \ref{defn: skew derivation}.

Assume now that $\sigma\delta = \delta\sigma$. Then, inside $S[x; \sigma, \delta]$, we have $\displaystyle x^i s = \sum_{e=0}^i \binom{i}{e} \sigma^e \delta^{i-e}(s) x^e$ for any $s\in S$ \cite[Lemma 1.10]{woods-SPS-dim}, and so, for arbitrary coefficients $a_i, b_j\in S$,
\begin{align*}
\left( \sum_{i=0}^m a_i x^i\right)\left( \sum_{j=0}^n b_jx^j\right) = \sum_{i=0}^m \sum_{j=0}^n a_i\left(\sum_{e=0}^i \binom{i}{e} \sigma^e \delta^{i-e}(b_j) x^e\right)x^j,
\end{align*}
and by swapping the sums and setting $k = j+e$, we can calculate that
\begin{align}
\left( \sum_{i=0}^m a_i x^i\right)\left( \sum_{j=0}^n b_jx^j\right) = \sum_{k=0}^{m+n} \left(\sum_{\substack{e, i:\\ 0\leq e\leq i\leq m,\\ 0\leq k-e\leq n}} \binom{i}{e} a_i \sigma^e \delta^{i-e}(b_{k-e}) \right)x^k.\label{eqn: product in skew polynomial ring}
\end{align}
(Compare \cite[equation (4)]{SchVen06}.) For notational convenience, we set $$\mathcal{S}_k(m,n) = \{(e, i)\in\mathbb{N}^2 : e \leq i \leq m, k-e \leq n\},$$ and when possible we will write the rather unwieldy limits of the inner sum as $(e,i) \in \mathcal{S}_k(m,n)$.

Our aim is to show that, under appropriate conditions on $v_\varepsilon$, we can extend this to a multiplication on $S^+[[x]]$ by continuity: that is, we need to find a continuous map $m: S^+[[x]] \times S^+[[x]] \to S^+[[x]]$ with the property that, if $f,g\in S[x]$ are arbitrary and $fg\in S[x]$ is their product considered as elements of $S[x; \sigma, \delta]$, then $m(f,g) = fg$.

So, \textbf{for the remainder of \S \ref{subsec: restricted skew power series}}:
\begin{itemize}[noitemsep]
\item Let $(S, w)$ be a complete filtered ring.
\item Suppose $(\sigma,\delta)$ is a commuting skew derivation on $S$ which is quasi-compatible with $w$, say $B = \min\deg_w(\sigma^{\mathbb{Z}} \delta^\mathbb{N})$ and $\deg_w(\delta^N) \geq 1$.
\item Fix two arbitrary elements $f,g\in S^+[[x]]$ and two integers $L,M$ such that
\begin{align}
f = \sum_{i\geq 0} r_i x^i \quad&\text{ and }\quad g = \sum_{j\geq 0} s_j x^j,\notag\\
\min_{i\geq 0} \{w(r_i)\} \geq L\quad &\text{ and }\quad  \min_{j\geq 0}\{w(s_j)\} \geq M,\label{eqn: w(r_i) and w(s_i) bounded below}
\end{align}
where $r_i, s_j\in S$.
\end{itemize}

Begin by defining $\widetilde{f}^{(p)}$ and $\widetilde{g}^{(p)}$ to be the partial sums of $f$ and $g$ to degree $p$ for each $p\geq 1$: that is, $\widetilde{f}^{(p)} = \sum_{i=0}^{p} r_i x^i \text{ and } \widetilde{g}^{(p)} = \sum_{j=0}^{p} s_j x^j$.
Returning to \eqref{eqn: product in skew polynomial ring}, we can calculate their product $\widetilde{f}^{(p)}\widetilde{g}^{(p)} = \sum_{k=0}^{2p} c^{(p)}_k x^k$, where
$$c^{(p)}_k := \sum_{(e,i) \in \mathcal{S}_k(p,p)} \binom{i}{e} r_i \sigma^e \delta^{i-e}(s_{k-e}).$$

(For convenience, set $c_k^{(p)} = 0$ if $k > 2p$.)
\begin{lem}\label{lem: coefficients of products of partial sums converge}
For each fixed $k \in\mathbb{N}$, the coefficients $c^{(p)}_k$ converge in $(S,w)$ as $p\to\infty$.
\end{lem}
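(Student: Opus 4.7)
The strategy is to show that the differences $c^{(p+1)}_k - c^{(p)}_k$ have $w$-degree tending to $\infty$, and then invoke completeness of $(S,w)$: because the filtration is ultrametric, ``consecutive Cauchy'' is enough to deduce Cauchy and hence convergence.

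First I would unpack the combinatorics of $\mathcal{S}_k(p,p)$. Fix $k$ and take $p \geq k$. The constraints $e \leq i \leq p$ together with $0 \leq k-e \leq p$ then reduce to $0 \leq e \leq k$ and $e \leq i \leq p$. Hence $\mathcal{S}_k(p+1, p+1) \setminus \mathcal{S}_k(p,p)$ consists precisely of the pairs $(e, p+1)$ with $0 \leq e \leq k$, and
$$c^{(p+1)}_k - c^{(p)}_k = \sum_{e=0}^{k} \binom{p+1}{e}\, r_{p+1}\, \sigma^e \delta^{p+1-e}(s_{k-e}),$$
a sum of at most $k+1$ terms whose index set does not grow with $p$.

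Next I would bound each summand. From \eqref{eqn: w(r_i) and w(s_i) bounded below} we have $w(r_{p+1}) \geq L$ and $w(s_{k-e}) \geq M$; Lemma \ref{lem: p-adic facts}(i) gives $w\bigl(\binom{p+1}{e}\bigr) \geq 0$; and the nilpotency estimate \eqref{eqn: bound with nilpotency of delta} yields $\deg_w(\sigma^e \delta^{p+1-e}) \geq \lfloor (p+1-e)/N \rfloor + B$. Combining these and minimising over the finite range $0 \leq e \leq k$,
$$w\bigl(c^{(p+1)}_k - c^{(p)}_k\bigr) \;\geq\; L + M + B + \lfloor (p+1-k)/N \rfloor,$$
which tends to $+\infty$ as $p \to \infty$.

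The ultrametric inequality then gives $w(c^{(p)}_k - c^{(q)}_k) \geq \min_{q \leq j < p} w(c^{(j+1)}_k - c^{(j)}_k)$ for $q \leq p$, so $(c^{(p)}_k)_{p \in \mathbb{N}}$ is Cauchy in $(S, w)$; completeness delivers the limit. The only substantive ingredient is \eqref{eqn: bound with nilpotency of delta}, and it is exactly the topological nilpotency of $\delta$ (encoded in the $\lfloor (p+1-e)/N \rfloor$ term) that drives convergence---mere filteredness of $\sigma$ and $\delta$ would leave the bound constant in $p$. I do not anticipate any further obstacle; the main care needed is in the bookkeeping of the index set $\mathcal{S}_k(p,p)$ as $p$ varies.
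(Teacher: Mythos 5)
Your argument is correct and is essentially the paper's: both proofs show $(c^{(p)}_k)_p$ is Cauchy using exactly the bounds \eqref{eqn: w(r_i) and w(s_i) bounded below} and \eqref{eqn: bound with nilpotency of delta} and then invoke completeness of $(S,w)$; the only cosmetic difference is that you estimate consecutive differences (identifying $\mathcal{S}_k(p+1,p+1)\setminus\mathcal{S}_k(p,p)$ as the single column $i=p+1$) and appeal to the ultrametric inequality, whereas the paper bounds $w\bigl(c^{(P)}_k-c^{(p)}_k\bigr)$ for all $P\geq p$ directly. One small point: Lemma \ref{lem: p-adic facts}(i) is stated for filtered $\mathbb{Z}_p$-algebras, which $(S,w)$ is not assumed to be here, but the fact you need, $w\bigl(m\cdot 1_S\bigr)\geq 0$ for positive integers $m$ (equivalently, that integer multiples can be dropped from the estimate by the ultrametric inequality), holds in any filtered ring, so this is only a citation quibble.
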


\begin{proof}
Fix $k \in\mathbb{N}$. As $S$ is assumed complete, we will show that the sequence $(c^{(p)}_k)_{p\geq 1}$ is Cauchy: that is, fix some arbitrary $K>0$, and we will show that there exists $p\geq 1$ such that, for all $P\geq p$, we have $w(c^{(P)}_k - c^{(p)}_k) > K$.

We begin by calculating $c^{(P)}_k - c^{(p)}_k$ for arbitrary $P$ and $p$:
\begin{align*}
c^{(P)}_k - c^{(p)}_k &= \sum_{(e,i)} \binom{i}{e} r_i \sigma^e \delta^{i-e}(s_{k-e}),
\end{align*}
where the sum ranges over all $(e,i) \in \mathcal{S}_k(P,P) \setminus \mathcal{S}_k(p,p)$. It follows that
\begin{align*}
\displaystyle w\left(c^{(P)}_k - c^{(p)}_k\right) &\geq \min_{(e,i)} \left\{ w(r_i\sigma^e\delta^{i-e}(s_{k-e}))\right\}\\
&\geq \min_{(e,i)} \left\{ w(r_i) + w(\sigma^e\delta^{i-e}(s_{k-e}))\right\}\\
&\geq \min_{(e,i)} \left\{ w(r_i) + w(s_{k-e}) + \left\lfloor \dfrac{i-e}{N} \right\rfloor + B \right\},
\end{align*}
using \eqref{eqn: bound with nilpotency of delta}.  Then, by \eqref{eqn: w(r_i) and w(s_i) bounded below}, we get
\begin{align*}
\displaystyle w\left(c^{(P)}_k - c^{(p)}_k\right)\geq \min_{(e,i)} \left\{ L + M + B + \left\lfloor \dfrac{i-e}{N} \right\rfloor \right\}.
\end{align*}

We now choose $p$ to satisfy $p \geq k$ and $p > N(K-(L+M+B)) + k$: note that this latter condition is equivalent to the condition that $L+M+B + \left\lfloor \dfrac{p-k}{N} \right\rfloor > K$.

Fix arbitrary $P \geq p$ and $(e,i) \in \mathcal{S}_k(P,P) \setminus \mathcal{S}_k(p,p)$: this means that either $i > p$ or $k-e > p$ (or both). Firstly, as $p\geq k$ by our first assumption on $p$, the condition $k - e > p$ is impossible, and so we must have $i > p$. Secondly, as we always have $k - e \geq 0$, we will have $i-e \geq i-k > p-k$. Hence we must have
\begin{align*}
\displaystyle \min_{(e,i)} \left\{ L + M + B + \left\lfloor \dfrac{i-e}{N} \right\rfloor \right\} \geq L + M + B + \left\lfloor \dfrac{p-k}{N} \right\rfloor,
\end{align*}
which is greater than $K$ by our second assumption on $p$.
\end{proof}

Now write $c_k\in S$ for the limit of $(c^{(p)}_k)_{p\geq 1}$, and define $\ell = \sum_{k\geq 0} c_k x^k\in S[[x]]$.

\begin{lem}\label{lem: the limit exists in R^+[[x]]}
$ $

\begin{enumerate}[label=(\roman*)]
\item For all $p$ and $k$, we have $w(c^{(p)}_k) \geq L + M + B$.
\item For all $k$, we have $w(c_k) \geq L+M+B$.
\item $\ell\in S^+[[x]]$.
\end{enumerate}
\end{lem}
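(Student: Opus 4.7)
The plan is essentially to recycle the estimates already established in the proof of Lemma \ref{lem: coefficients of products of partial sums converge}, now applied without any cancellation of terms.

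For part (i), I would fix arbitrary $p$ and $k$ and estimate $w(c^{(p)}_k)$ directly from its defining sum $\sum_{(e,i) \in \mathcal{S}_k(p,p)} \binom{i}{e} r_i \sigma^e \delta^{i-e}(s_{k-e})$. Each summand has $w$-value at least
\[
w\!\left(\tbinom{i}{e}\right) + w(r_i) + \deg_w(\sigma^e \delta^{i-e}) + w(s_{k-e}).
\]
The first term is non-negative because $\binom{i}{e}$ is a positive integer and any filtered ring satisfies $w(n)\geq 0$ for $n\in\mathbb{Z}_{>0}$ (by repeated application of $w(1)=0$ and the additive axiom). By \eqref{eqn: w(r_i) and w(s_i) bounded below} we have $w(r_i)\geq L$ and $w(s_{k-e})\geq M$, and by strong boundedness $\deg_w(\sigma^e \delta^{i-e})\geq B$. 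Taking the minimum over the summands gives $w(c^{(p)}_k)\geq L+M+B$.

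For part (ii), I would simply pass to the limit: since $c_k^{(p)} \to c_k$ in $(S,w)$ as $p\to\infty$ (Lemma \ref{lem: coefficients of products of partial sums converge}), for any given $n\in\mathbb{N}$ we can choose $p$ so that $w(c_k - c_k^{(p)}) \geq L+M+B$. Then
\[
w(c_k) \geq \min\bigl\{w(c_k - c_k^{(p)}),\, w(c_k^{(p)})\bigr\} \geq L+M+B,
\]
using part (i). Equivalently, this is the standard fact that level sets $F_n S = w^{-1}([n,\infty])$ are closed in the filtration topology.

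Part (iii) is then immediate from (ii) and the definition of $S^+[[x]]$ in Definition \ref{defn: R+[[x]] as module}: the coefficients $c_k$ of $\ell$ are uniformly bounded below by $L+M+B$, so $\ell$ lies in the bounded power series module. I do not anticipate any genuine obstacle here; the only subtlety worth flagging is that we must avoid reusing the telescoping bound from the previous lemma (which exploited that $i$ or $k-e$ exceeded $p$) and instead rely on the unconditional lower bound $\deg_w(\sigma^e\delta^{i-e})\geq B$ coming directly from strong boundedness of $\sigma^{\mathbb{Z}}\delta^{\mathbb{N}}$.
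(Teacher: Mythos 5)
Your proposal is correct and follows essentially the same route as the paper: bound each summand of $c^{(p)}_k$ using \eqref{eqn: w(r_i) and w(s_i) bounded below} and the strong-boundedness lower bound $B$ (the paper absorbs the binomial coefficient by noting $w(\binom{i}{e}r_i)\geq w(r_i)$, which matches your $w(\binom{i}{e})\geq 0$ observation), then deduce (ii) by passing to the limit since level sets are closed, and (iii) from the definition of $S^+[[x]]$. The paper compresses (ii) and (iii) to ``follow immediately from the definitions'', so your explicit limit argument is just a spelled-out version of the same step.
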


\begin{proof}
If $k > 2p$, then $c^{(p)}_k = 0$, so $w(c^{(p)}_k) = \infty$, so part (i) is trivially true. For $k \leq 2p$, part (i) follows from a very similar calculation to that in the proof of Lemma \ref{lem: coefficients of products of partial sums converge}:
\begin{align*}
w(c^{(p)}_k) &= w\left(\sum_{(e,i) \in \mathcal{S}_k(p,p)} \binom{i}{e} r_i \sigma^e \delta^{i-e}(s_{k-e})\right)\\
&\geq \min_{(e,i)} \{w(r_i) + w(\sigma^e \delta^{i-e} (s_{k-e}))\}\\
&\geq \min_{(e,i)} \{w(r_i) + w(s_{k-e})\} + B\\
&\geq L + M + B
\end{align*}
using \eqref{eqn: bound with nilpotency of delta} and \eqref{eqn: w(r_i) and w(s_i) bounded below}. Parts (ii) and (iii) now follow immediately from the definitions.
\end{proof}

This $\ell$ will be our value of $m(f,g)$. Note that it is independent of the constant $\varepsilon$ chosen in Definition \ref{defn: R+[[x]] as module}.

\begin{propn}\label{propn: first limit}
Let $\varepsilon > 0$ be arbitrary, and write $v = v_\varepsilon$. Then $\widetilde{f}^{(p)}\widetilde{g}^{(p)} \to \ell$ in $(S^+[[x]], v)$ as $p\to \infty$.
\end{propn}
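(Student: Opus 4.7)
The plan is to estimate $v(\widetilde{f}^{(p)}\widetilde{g}^{(p)}-\ell) = \min_{k\geq 0}\{w(c^{(p)}_k - c_k) + \varepsilon k\}$ and show it tends to $\infty$ as $p\to\infty$. I will split this minimum according to whether $k \leq p$ or $k > p$ and bound each range separately, since the Cauchy-style estimate used in the proof of Lemma \ref{lem: coefficients of products of partial sums converge} is only effective in the former range.

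For $0 \leq k \leq p$, I would re-run the estimate from Lemma \ref{lem: coefficients of products of partial sums converge}: given $(e,i) \in \mathcal{S}_k(P,P)\setminus\mathcal{S}_k(p,p)$ with $k\leq p$, the constraint $k - e \leq k \leq p$ forces $i > p$, hence $i - e \geq p - k + 1$. Applying \eqref{eqn: bound with nilpotency of delta} and \eqref{eqn: w(r_i) and w(s_i) bounded below} to each summand and then letting $P \to \infty$ gives
\[
w(c_k - c^{(p)}_k) \geq L + M + B + \left\lfloor \dfrac{p-k+1}{N} \right\rfloor.
\]
Adding $\varepsilon k$ and using $\lfloor(p-k+1)/N\rfloor \geq (p-k+1)/N - 1$, the expression $L+M+B+\lfloor(p-k+1)/N\rfloor+\varepsilon k$ is a linear-plus-constant function of $k$; its minimum on $[0,p]$ is attained at $k=0$ or $k=p$ according to the sign of $\varepsilon - 1/N$, and in either case is bounded below by $L+M+B+\min(\varepsilon, 1/N)\cdot p - 1$.

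For $k > p$, I cannot extract a useful bound from a Cauchy estimate, but I do not need one: Lemma \ref{lem: the limit exists in R^+[[x]]} bounds both $w(c^{(p)}_k)$ and $w(c_k)$ below by $L+M+B$, so $w(c_k - c^{(p)}_k) \geq L+M+B$ as well, and hence $w(c_k - c^{(p)}_k) + \varepsilon k > L + M + B + \varepsilon p$. Combining the two ranges gives
\[
v\bigl(\widetilde{f}^{(p)}\widetilde{g}^{(p)} - \ell\bigr) \geq L+M+B+\min(\varepsilon,\, 1/N)\cdot p - 1,
\]
which tends to $\infty$ as $p\to\infty$, as required.

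The only mild obstacle is the tail $k > p$: here there is no analogue of the Cauchy-style decay, so we cannot improve upon the uniform lower bound from Lemma \ref{lem: the limit exists in R^+[[x]]}, and we must instead lean on the positive contribution $\varepsilon k > \varepsilon p$ coming from the $v_\varepsilon$-filtration to force convergence. This is precisely the reason the notion of $v_\varepsilon$-convergence (rather than mere coefficient-wise convergence) is the appropriate one for defining the multiplication on $S^+[[x]]$.
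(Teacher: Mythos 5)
Your argument is correct, and it differs from the paper's proof mainly in how the "small $k$" range is handled. The paper fixes $K>0$, splits the index set at a $K$-dependent (but $p$-independent) threshold $A=\varepsilon^{-1}(K-(L+M+B))$, handles $k>A$ exactly as you handle $k>p$ (the uniform bound of Lemma \ref{lem: the limit exists in R^+[[x]]} plus the $\varepsilon k$ term), and for the finitely many $k\leq A$ simply invokes the coefficientwise convergence of Lemma \ref{lem: coefficients of products of partial sums converge} qualitatively, choosing $P_k$ for each such $k$. You instead split at $k=p$ and re-run the Cauchy estimate inside Lemma \ref{lem: coefficients of products of partial sums converge} to get the explicit, uniform-in-$k$ rate $w(c_k-c^{(p)}_k)\geq L+M+B+\lfloor(p-k+1)/N\rfloor$ (valid since $k\leq p$ forces $i>p$ and $e\leq k$, so $i-e\geq p-k+1$, and the bound passes to the limit $P\to\infty$ because level sets are closed), which after adding $\varepsilon k$ and minimising the resulting linear function over $[0,p]$ yields the global bound $v(\widetilde{f}^{(p)}\widetilde{g}^{(p)}-\ell)\geq L+M+B+\min(\varepsilon,1/N)\,p-1$. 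The payoff of your route is an explicit rate of convergence (and it works for any $\varepsilon>0$, just as the paper's does); the cost is redoing the computation of \eqref{eqn: bound with nilpotency of delta}-based estimates that the paper prefers to quote as a black box, so the paper's version is shorter but purely qualitative. Your closing remark about the tail $k>p$ is exactly the point the paper also exploits: no decay in $w(c_k-c^{(p)}_k)$ is available there, and it is the $\varepsilon k$ term of $v_\varepsilon$ that forces convergence.
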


\begin{proof}
Let $K > 0$. We must show that, for sufficiently large $p$, we have $v(\widetilde{f}^{(p)}\widetilde{g}^{(p)} - \ell) > K$. But
\begin{align*}
v(\widetilde{f}^{(p)}\widetilde{g}^{(p)} - \ell) &= v\left(\sum_{k=0}^{\infty} (c^{(p)}_k - c_k) x^k\right) = \min_{k\geq 0} \left\{ w(c^{(p)}_k - c_k) + \varepsilon k\right\},
\end{align*}
so we must show equivalently that, for sufficiently large $p$, $w(c^{(p)}_k - c_k) + \varepsilon k > K$ for all $k\geq 0$.

Now, on the one hand we know that $w(c^{(p)}_k - c_k) \geq L+M+B$ for \emph{all} $p$ and $k$ by Lemma \ref{lem: the limit exists in R^+[[x]]}(i), (ii). Hence, for all $k > A := \varepsilon^{-1}(K-(L+M+B))$, we have $w(c^{(p)}_k - c_k) + \varepsilon k > K$ for \emph{all} $p$. On the other hand, for each fixed $0\leq k\leq A$, it follows from Lemma \ref{lem: coefficients of products of partial sums converge} and the definition of $c_k$ that there exists $P_k$ such that $w(c^{(p)}_k - c_k) > K$ whenever $p > P_k$. Taking $P = \max\{P_0, \dots, P_A\}$, it is now clear that $v(\widetilde{f}^{(p)}\widetilde{g}^{(p)} - \ell) > K$ whenever $p > P$.
\end{proof}

Now let $(f^{(p)})_{p\geq 1}$, $(g^{(p)})_{p\geq 1}$ be \emph{arbitrary} sequences of polynomials, say $f^{(p)} = \sum_{i=0}^{m(p)} r^{(p)}_i x^i$ and $g^{(p)} = \sum_{j=0}^{n(p)} s^{(p)}_j x^j$, converging to $f$ and $g$ respectively under $v$. That is,
\begin{align}
\min_{i\geq 0}\left\{ w(r^{(p)}_i - r_i) + \varepsilon i\right\} \to \infty \text{ as } p\to \infty,\quad\;
\min_{j\geq 0}\left\{ w(s^{(p)}_j - s_j) + \varepsilon j\right\} \to \infty \text{ as } p\to \infty,\label{eqn: v-bounds 1}
\end{align}
where it is understood that $r^{(p)}_i = 0$ for $i > m(p)$, and similarly $s^{(p)}_j = 0$ for $j > n(p)$.

Once again from \eqref{eqn: product in skew polynomial ring}, we can calculate $f^{(p)} g^{(p)} = \sum_{k\geq 0} a^{(p)}_k x^k$, where
$$a^{(p)}_k := \sum_{(e,i) \in \mathcal{S}_k(m(p),n(p))} \binom{i}{e} r^{(p)}_i \sigma^e \delta^{i-e}(s^{(p)}_{k-e})$$
for $0 \leq k \leq m(p)+n(p)$ and $a^{(p)}_k = 0$ otherwise.

\begin{propn}\label{propn: existence of skew power series rings}
Let $0 < \varepsilon\leq 1/N$, and write $v = v_\varepsilon$. Then $f^{(p)}g^{(p)} \to \ell$ in $(S^+[[x]], v)$ as $p\to \infty$.
\end{propn}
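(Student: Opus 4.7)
The plan is to reduce to Proposition \ref{propn: first limit}, which already handles the specific "partial sum" sequences $\widetilde{f}^{(p)}, \widetilde{g}^{(p)}$. Since both $f^{(p)}$ and $\widetilde{f}^{(p)}$ converge to $f$ in $v$, the difference $v(f^{(p)} - \widetilde{f}^{(p)}) \to \infty$ as $p\to\infty$, and similarly for the $g$'s. Writing
$$f^{(p)}g^{(p)} - \widetilde{f}^{(p)}\widetilde{g}^{(p)} = (f^{(p)} - \widetilde{f}^{(p)})g^{(p)} + \widetilde{f}^{(p)}(g^{(p)} - \widetilde{g}^{(p)}),$$
and using the ultrametric triangle inequality for $v$, it suffices to prove that the $v$-value of each summand on the right tends to $\infty$.

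The key technical step, and the main obstacle, is to establish a continuity estimate for the multiplication on $S[x;\sigma,\delta]$: namely, for arbitrary polynomials $h, h'\in S[x;\sigma,\delta]$,
$$v(h h') \geq v(h) + v(h') + B - 1.$$
To prove this, expand the coefficient of $x^k$ in $hh'$ using \eqref{eqn: product in skew polynomial ring}, and bound a typical term using the quasi-compatibility estimate \eqref{eqn: bound with nilpotency of delta}:
$$w\bigl(\tbinom{i}{e}h_i\sigma^e\delta^{i-e}(h'_{k-e})\bigr) \geq w(h_i) + w(h'_{k-e}) + \left\lfloor \tfrac{i-e}{N}\right\rfloor + B.$$
After adding $\varepsilon k$ and rewriting $\varepsilon k = \varepsilon i + \varepsilon(k-e) - \varepsilon(i-e)$, the "cross" terms regroup as $(w(h_i)+\varepsilon i) + (w(h'_{k-e})+\varepsilon(k-e)) + B + \bigl(\lfloor (i-e)/N\rfloor - \varepsilon(i-e)\bigr)$. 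Since $\varepsilon \leq 1/N$ and $\lfloor (i-e)/N\rfloor \geq (i-e)/N - 1$, the bracketed remainder is bounded below by $(i-e)(1/N-\varepsilon) - 1 \geq -1$, uniformly in $i,e$. Taking minima gives $w(\text{coefficient of }x^k) + \varepsilon k \geq v(h) + v(h') + B - 1$ for every $k$, which is the claimed estimate.

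To conclude, note that since $g^{(p)} \to g$ in $v$, the sequence $v(g^{(p)})$ is eventually bounded below by some constant $C_g$; similarly $v(\widetilde{f}^{(p)})$ is eventually bounded below by some constant $C_f$. The continuity estimate then yields
$$v\bigl((f^{(p)} - \widetilde{f}^{(p)})g^{(p)}\bigr) \geq v(f^{(p)} - \widetilde{f}^{(p)}) + C_g + B - 1,$$
$$v\bigl(\widetilde{f}^{(p)}(g^{(p)} - \widetilde{g}^{(p)})\bigr) \geq C_f + v(g^{(p)} - \widetilde{g}^{(p)}) + B - 1,$$
both of which tend to $\infty$. Combined with Proposition \ref{propn: first limit}, this gives $v(f^{(p)}g^{(p)} - \ell) \to \infty$, i.e.\ $f^{(p)}g^{(p)} \to \ell$. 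The subtle point worth flagging is that the estimate $v(hh') \geq v(h) + v(h') + B - 1$ relies essentially on the hypothesis $\varepsilon \leq 1/N$: if $\varepsilon$ were any larger, the gain $\lfloor (i-e)/N\rfloor$ from topological nilpotency of $\delta$ could not absorb the loss $-\varepsilon(i-e)$ from rewriting $\varepsilon k$, and the continuity of multiplication would fail.
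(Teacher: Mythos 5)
Your proof is correct, and it organises the argument differently from the paper. The paper proves the same reduction to Proposition \ref{propn: first limit} but works entirely at the level of coefficients: it writes $a^{(p)}_k - c^{(p)}_k$ as a sum over index sets $\mathcal{S}_k(m(p),n(p))$ and $\mathcal{S}_k(p,p)$ and runs a three-case analysis ($\mathcal{T}_1\cap\mathcal{T}_2$, $\mathcal{T}_1\setminus\mathcal{T}_2$, $\mathcal{T}_2\setminus\mathcal{T}_1$), including a slightly delicate argument in the last case distinguishing whether $\deg f^{(p)} = m(p)\to\infty$ or not. You instead use the exact ring-level identity $f^{(p)}g^{(p)} - \widetilde{f}^{(p)}\widetilde{g}^{(p)} = (f^{(p)} - \widetilde{f}^{(p)})g^{(p)} + \widetilde{f}^{(p)}(g^{(p)} - \widetilde{g}^{(p)})$ in $S[x;\sigma,\delta]$, together with the uniform sub-multiplicativity estimate $v_\varepsilon(hh') \geq v_\varepsilon(h) + v_\varepsilon(h') + B - 1$ for polynomials; this estimate is in substance the paper's inequality \eqref{eqn: using bounds}, just promoted to a statement about $v_\varepsilon$ of a product rather than invoked termwise. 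Your route is shorter, makes the role of the hypothesis $\varepsilon \leq 1/N$ completely transparent (it is exactly what lets $\lfloor (i-e)/N\rfloor$ absorb $-\varepsilon(i-e)$), and eliminates the bookkeeping about the degrees $m(p), n(p)$ entirely, since the terms the paper handles in its Case 3 never appear once the difference is written as a sum of two honest products; the minor prices are that you must note $v_\varepsilon(g^{(p)})$ and $v_\varepsilon(\widetilde{f}^{(p)})$ are eventually bounded below (immediate from convergence and from $v_\varepsilon(\widetilde{f}^{(p)})\geq v_\varepsilon(f)\geq L$) and that $\widetilde{f}^{(p)}\to f$, which is the density remark after Definition \ref{defn: R+[[x]] as module}. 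The paper's more pedestrian coefficientwise treatment avoids stating the auxiliary product estimate, but yours arguably better isolates the real content, namely that $v_\varepsilon$ is sub-multiplicative up to the uniform defect $B-1$.
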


\begin{proof}
Take $K > 0$. In light of Proposition \ref{propn: first limit}, we need only show that there exists some $P\geq 1$ such that, for all $p>P$, $v(f^{(p)}g^{(p)} - \widetilde{f}^{(p)}\widetilde{g}^{(p)}) > K$.

To do this, we will calculate
\begin{align*}
v(f^{(p)}g^{(p)} - \widetilde{f}^{(p)}\widetilde{g}^{(p)}) &= v\left(\sum_{k=0}^{\infty} (a^{(p)}_k - c^{(p)}_k) x^k\right) = \min_{k\geq 0} \left\{ w\left(a^{(p)}_k - c^{(p)}_k\right) + \varepsilon k\right\},
\end{align*}

and for each $k$,
\begin{align*}
a^{(p)}_k - c^{(p)}_k = \sum_{\mathcal{T}_1} \binom{i}{e} r^{(p)}_i \sigma^e \delta^{i-e}(s^{(p)}_{k-e}) - \sum_{\mathcal{T}_2} \binom{i}{e} r_i \sigma^e \delta^{i-e}(s_{k-e}),
\end{align*}
where $\mathcal{T}_1 = \mathcal{S}_k(m(p),n(p))$ and $\mathcal{T}_2 = \mathcal{S}_k(p,p)$. Split the right-hand side of this equation into $(e,i)$-terms, so that $\displaystyle a^{(p)}_k - c^{(p)}_k = \sum_{(e,i)\in \mathcal{T}} \binom{i}{e} t(k,e,i)$, where $\mathcal{T} = \mathcal{T}_1 \cup \mathcal{T}_2$: it will follow that
$$v(f^{(p)}g^{(p)} - \widetilde{f}^{(p)}\widetilde{g}^{(p)}) \geq \min_{k\geq 0} \min_{(e,i)\in \mathcal{T}} \left\{w(t(k,e,i)) + \varepsilon k\right\}.$$
Explicitly, for any $k\geq 0$ and $p\geq 1$, we define
$$
t(k,e,i) = t^{(p)}(k,e,i) =
\begin{cases}
\displaystyle r^{(p)}_i \sigma^e \delta^{i-e}(s^{(p)}_{k-e}) - r_i \sigma^e \delta^{i-e}(s_{k-e}), & (e,i) \in \mathcal{T}_1 \cap \mathcal{T}_2,\\
\displaystyle r^{(p)}_i \sigma^e \delta^{i-e}(s^{(p)}_{k-e}),& (e,i) \in \mathcal{T}_1 \setminus \mathcal{T}_2,\\
\displaystyle - r_i \sigma^e \delta^{i-e}(s_{k-e}),& (e,i) \in \mathcal{T}_2 \setminus \mathcal{T}_1.
\end{cases}
$$
For the remainder of the proof, we deal with these three cases separately.

We pause here to note a calculation that we will use several times. For arbitrary $r, s\in S$, it follows from \eqref{eqn: bound with nilpotency of delta} that
\begin{align}
w(r \sigma^e \delta^{i-e}(s)) + \varepsilon k &\geq w(r) + w(s) + \left\lfloor \dfrac{i-e}{N} \right\rfloor + B + \varepsilon k\notag\\
&\geq w(r) + w(s) + \dfrac{i-e}{N} + B - 1 + \varepsilon k\notag\\
&\geq w(r) + w(s) + \varepsilon(i-e+k) + B-1\notag\\
&= (w(r) + \varepsilon i) + (w(s) + \varepsilon(k-e)) + B-1.\label{eqn: using bounds}
\end{align}

We now bound $w(t(k,e,i)) + \varepsilon k$ in each case.

\textbf{Case 1:} $(e,i) \in \mathcal{T}_1 \cap \mathcal{T}_2$. In this case, it is easily checked that
$$t(k,e,i) = (r^{(p)}_i - r_i) \sigma^e \delta^{i-e}(s^{(p)}_{k-e} - s_{k-e}) + (r^{(p)}_i - r_i) \sigma^e \delta^{i-e}(s_{k-e}) + r_i \sigma^e \delta^{i-e}(s^{(p)}_{k-e} - s_{k-e}),$$
and so
\begin{align*}
w(t(k,e,i)) + \varepsilon k&\geq \min\Big\{ w \left((r^{(p)}_i - r_i) \sigma^e \delta^{i-e}(s^{(p)}_{k-e} - s_{k-e})\right) + \varepsilon k,\\
&\phantom{\geq \min\Big\{} w\left((r^{(p)}_i - r_i) \sigma^e \delta^{i-e}(s_{k-e})) \right) + \varepsilon k,\\
&\phantom{\geq \min\Big\{} w\left( r_i \sigma^e \delta^{i-e}(s^{(p)}_{k-e} - s_{k-e}) \right) + \varepsilon k \Big\}.
\end{align*}
By three applications of \eqref{eqn: using bounds}, we get
\refstepcounter{equation}\label{eqn: cases}
\begin{align}
\tag{\ref{eqn: cases}.1a}w(t(k,e,i)) + \varepsilon k&\geq \min\Big\{ \left(w (r^{(p)}_i - r_i) + \varepsilon i\right) + \left(w(s^{(p)}_{k-e} - s_{k-e}) + \varepsilon(k-e)\right) + B-1, \\
\tag{\ref{eqn: cases}.1b}&\phantom{\geq \min\Big\{} \left(w (r^{(p)}_i - r_i) + \varepsilon i\right) + \left(w(s_{k-e}) + \varepsilon(k-e)\right) + B-1, \\
\tag{\ref{eqn: cases}.1c}&\phantom{\geq \min\Big\{} \left(w(r_i) + \varepsilon i\right) + \left( w(s^{(p)}_{k-e} - s_{k-e})  + \varepsilon(k-e)\right) + B-1\Big\}.
\end{align}
Now, by definition of $f$ and $g$ \eqref{eqn: w(r_i) and w(s_i) bounded below}, we have $w(r_h) \geq L$ and $w(s_j) \geq M$ for \emph{all} $h\geq 0$ and $j\geq 0$. Also, by \eqref{eqn: v-bounds 1}, there exists $P\geq 1$ such that, for all $p > P$,
\begin{itemize}
\item $w (r^{(p)}_h - r_h) + \varepsilon h > \max\left\{ K-M-B+1, \dfrac{K-B+1}{2}\right\}$ for \emph{all} $h \geq 0$, and
\item $w(s^{(p)}_j - s_j) + \varepsilon j > \max\left\{ K-L-B+1, \dfrac{K-B+1}{2}\right\}$ for \emph{all} $j \geq 0$.
\end{itemize}
Fix any such $P$, and take $h = i$ and $j = k-e$. Using these bounds, it is easy to check that the three expressions (\ref{eqn: cases}.1a--c) are greater than $K$ as required for any $p > P$, $i\geq 0$ and $k-e\geq 0$. This holds in particular when $k\geq 0$ and $(e,i)\in \mathcal{T}_1 \cap \mathcal{T}_2$, as required for this case. So, whenever $p > P$,
$$\min_{k\geq 0} \min_{(e,i)\in \mathcal{T}_1 \cap \mathcal{T}_2} \left\{w(t(k,e,i)) + \varepsilon k\right\} > K.$$

\textbf{Case 2:} $(e,i) \in \mathcal{T}_1 \setminus \mathcal{T}_2$. In this case, similarly to case 1, we get
$$t(k,e,i)=(r_i^{(p)}-r_i)\sigma^e\delta^{i-e}(s_{k-e}^{(p)}-s_{k-e})+(r_i^{(p)}-r_i)\sigma^e\delta^{i-e}(s_{k-e})+r_i\sigma^e\delta^{i-e}(s_{k-e}^{(p)}-s_{k-e})+r_i\sigma^e\delta^{i-e}(s_{k-e}).$$
Again, with four applications of \eqref{eqn: using bounds}, we get
\begin{align}
\tag{\ref{eqn: cases}.2a}w(t(k,e,i)) + \varepsilon k&\geq \min\Big\{ \left(w (r^{(p)}_i - r_i) + \varepsilon i\right) + \left(w(s^{(p)}_{k-e} - s_{k-e}) + \varepsilon(k-e)\right) + B-1, \\
\tag{\ref{eqn: cases}.2b}&\phantom{\geq \min\Big\{} \left(w (r^{(p)}_i - r_i) + \varepsilon i\right) + \left(w(s_{k-e}) + \varepsilon(k-e)\right) + B-1, \\
\tag{\ref{eqn: cases}.2c}&\phantom{\geq \min\Big\{} \left(w(r_i) + \varepsilon i\right) + \left( w(s^{(p)}_{k-e} - s_{k-e})  + \varepsilon(k-e)\right) + B-1, \\
\tag{\ref{eqn: cases}.2d}&\phantom{\geq \min\Big\{} \left(w(r_i) + \varepsilon i\right) + \left( w(s_{k-e})  + \varepsilon(k-e)\right) + B-1\Big\}.
\end{align}
Expressions (\ref{eqn: cases}.2a--c) can be made greater than $K$ exactly as in case 1: the value of $P$ found in case 1 works here too.

To deal with the remaining expression, we recall that we are only interested in those $(e,i) \in \mathcal{T}_1\setminus \mathcal{T}_2$ in this case: that is, $0\leq e\leq i\leq m(p)$ and $0\leq k-e \leq n(p)$, but \emph{either} $i > p$ \emph{or} $k-e > p$ (or both). In particular, for any $(e,i) \in \mathcal{T}_1 \setminus \mathcal{T}_2$, we will always have $\varepsilon i + \varepsilon(k-e) > \varepsilon p$. Combining this with \eqref{eqn: w(r_i) and w(s_i) bounded below}, expression (\ref{eqn: cases}.2d) can be bounded below as follows:
\begin{align*}
\left(w(r_i) + \varepsilon i\right) + \left( w(s_{k-e})  + \varepsilon(k-e)\right) + B-1 > L + M + B - 1 + \varepsilon p.
\end{align*}
Now we need only choose our value of $P$ above so that $P \geq \varepsilon^{-1}(K - (L+M+B-1))$: this will ensure that expression (\ref{eqn: cases}.2d) is greater than $K$ for all $p > P$. In total, we have shown that, whenever $p > P$,
$$\min_{k\geq 0} \min_{(e,i)\in \mathcal{T}_1 \setminus \mathcal{T}_2} \left\{w(t(k,e,i)) + \varepsilon k\right\} > K.$$

\textbf{Case 3.} $(e,i) \in \mathcal{T}_2 \setminus \mathcal{T}_1$. In this case, $t(k,e,i)=- r_i \sigma^e \delta^{i-e}(s_{k-e})$, and we have simply
\begin{align}\label{eqn: case 3}
w(t(k,e,i)) + \varepsilon k \geq \left(w(r_i) + \varepsilon i\right) + \left(w(s_{k-e}) + \varepsilon(k-e)\right) + B-1.
\end{align}
We also know that $(e,i) \in \mathcal{T}_2\setminus \mathcal{T}_1$: that is, $0\leq e\leq i\leq p$ and $0\leq k-e \leq p$, but \emph{either} $i > m(p)$ \emph{or} $k-e > n(p)$.

Restrict first to those $(e,i)$ satisfying $i > m(p)$. Depending on the asymptotic behaviour of $m(p)$ as $p\to\infty$, we argue in two different ways:
\begin{itemize}
\item If $m(p)\to\infty$, then combining \eqref{eqn: case 3} with \eqref{eqn: w(r_i) and w(s_i) bounded below}, we get $$w(t(k,e,i)) + \varepsilon k \geq L + M + B-1 + \varepsilon m(p),$$ and by choosing $P$ sufficiently large, we can ensure that this expression is greater than $K$ for all $p > P$.
\item If $m(p)\not\to\infty$: let $d = \liminf_{p\to\infty} m(p)$, and fix $p_1 < p_2 < \dots$ such that $m(p_\ell) = d$ for all $\ell\geq 1$. Then $f^{(p_1)}, f^{(p_2)}, \dots$ is a sequence of polynomials of degree $d$ converging to $f$, and so $f$ must also be a polynomial of degree at most $d$. In particular, choosing $P$ sufficiently large, we can ensure that $m(p) \geq d$ for all $p > P$: this implies $i > d$ and so $r_i = 0$. In this case, $w(t(k,e,i)) + \varepsilon k = \infty$ for all $p > P$.
\end{itemize}
In both cases, we can choose a sufficiently large value of $P$ such that $w(t(k,e,i)) + \varepsilon k > K$ for all $p > P$ whenever $i > m(p)$. By a symmetric argument, we can also choose $P$ so that $w(t(k,e,i)) + \varepsilon k > K$ for all $p > P$ whenever $k-e > n(p)$.

Since every $(e,i)\in \mathcal{T}_2 \setminus \mathcal{T}_1$ satisfies either $i > m(p)$ or $k-e > n(p)$, we have shown that $\displaystyle \min_{k\geq 0} \min_{(e,i)\in \mathcal{T}_2 \setminus \mathcal{T}_1} \left\{w(t(k,e,i)) + \varepsilon k\right\} > K$ whenever $p > P$.
\end{proof}

We can now complete the proof of Theorem \ref{letterthm: restricted skew power series rings exist}, stated more fully as follows:

\begin{thm}\label{thm: restricted skew power series rings exist}
Suppose $(S,w)$ is a complete filtered ring, and $(\sigma,\delta)$ is a commuting skew derivation on $S$ which is quasi-compatible with $w$. Then: 
\begin{enumerate}[label=(\roman*)]
\item The \emph{bounded skew power series ring} $S^+[[x; \sigma, \delta]]$ (constructed with respect to $w$) is a well-defined ring containing $S[x; \sigma, \delta]$ as a subring. Moreover, $S^+[[x; \sigma, \delta]]$ can be made into a topological ring with topology given by a $\Gamma$-filtration of left $S$-modules $v = v_\varepsilon$ (for any sufficiently small positive rational $\varepsilon$) such that $v|_S = w$, and $S[x; \sigma, \delta]$ is a \emph{dense} subring of $(S^+[[x; \sigma, \delta]], v)$.
\item If $S$ is positively filtered, then $S^+[[x; \sigma, \delta]]$ is complete with respect to $v$, and is equal to $S[[x; \sigma, \delta]]$.
\item Suppose $A$ is a complete, positively filtered subring of $S$ preserved by $(\sigma, \delta)$. Let $T$ be a $\sigma$-invariant denominator set in $A$, and suppose that $S = T^{-1}A$. Suppose also that, given any bounded countable sequence $(s_n)_{n\in\mathbb{N}}$ of elements of $S$, there exists $t\in T$ such that $ts_n\in A$ for all $n\in\mathbb{N}$. Then $T^{-1}(A[[x; \sigma, \delta]]) = S\otimes_A A[[x; \sigma, \delta]]$ can be canonically identified with $S^+[[x; \sigma, \delta]]$.\qed
\end{enumerate}
\end{thm}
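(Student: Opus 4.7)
For part~(i), the heavy analytic work has been done by Lemmas~\ref{lem: coefficients of products of partial sums converge} and~\ref{lem: the limit exists in R^+[[x]]} together with Proposition~\ref{propn: existence of skew power series rings}; what remains is to package these into a ring structure. I would define the product $f \cdot g := \ell$, where $\ell$ is the element of $S^+[[x]]$ built in Lemma~\ref{lem: the limit exists in R^+[[x]]} from the partial sums of $f$ and $g$. Proposition~\ref{propn: existence of skew power series rings} (applied for any rational $0 < \varepsilon \leq 1/N$) is precisely the statement that this limit is independent of the chosen polynomial approximants, so the product is well-defined. When $f, g \in S[x]$, taking them as their own (eventually constant) approximating sequences recovers the skew polynomial product, so $S[x;\sigma,\delta]$ sits inside as a subring and $1 \in S$ is the multiplicative identity. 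Associativity and distributivity then transfer from $S[x;\sigma,\delta]$ by a standard density argument: to establish $(fg)h = f(gh)$, take polynomial sequences $f^{(p)}, g^{(p)}, h^{(p)}$ converging to $f, g, h$, apply associativity in $S[x;\sigma,\delta]$, and use Proposition~\ref{propn: existence of skew power series rings} twice (noting that $f^{(p)}g^{(p)}$ is itself a polynomial approximating $fg$).

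For the topological structure, I would observe that the inequality~\eqref{eqn: using bounds} underlying the proof of Proposition~\ref{propn: existence of skew power series rings} actually gives, for any rational $0 < \varepsilon \leq 1/N$, the global filtration bound $v(fg) \geq v(f) + v(g) + B - 1$ for all $f, g \in S^+[[x;\sigma,\delta]]$; this exhibits multiplication as a continuous filtered map. The restriction $v|_S = w$ is immediate since $v_\varepsilon(s) = w(s)$ for constant series, and density of $S[x;\sigma,\delta]$ is the content of the remark following Definition~\ref{defn: R+[[x]] as module}. Part~(ii) then follows easily: positivity of $w$ forces every coefficient sequence in $S$ to be bounded below by $0$, so $S^+[[x]] = S[[x]]$ as sets, and completeness under $v_\varepsilon$ is a routine coefficient-wise check using completeness of $(S, w)$ together with the observation that a Cauchy sequence in $S^+[[x;\sigma,\delta]]$ automatically has its coefficients bounded below.

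For part~(iii), I would build the canonical map $\Phi : T^{-1}(A[[x;\sigma,\delta]]) \to S^+[[x;\sigma,\delta]]$ via the universal property of Ore localization: the composition $A[[x;\sigma,\delta]] \to S^+[[x;\sigma,\delta]]$ sends elements of $T$ to units in $S \subseteq S^+[[x;\sigma,\delta]]$, and hence extends to $\Phi$. Injectivity reduces to the fact that $A \hookrightarrow S$. Surjectivity is where the extra hypothesis does the real work: given any $\sum s_n x^n \in S^+[[x;\sigma,\delta]]$, the sequence $(s_n)$ is bounded below in $(S, w)$, so by assumption we can choose $t \in T$ with $ts_n \in A$ for all $n$, and then $\sum s_n x^n = \Phi\bigl(t^{-1} \sum (ts_n) x^n\bigr)$. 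The main obstacle I anticipate is the technical coherence in part~(iii): in particular, verifying that $T$ is actually a left Ore set in $A[[x;\sigma,\delta]]$ (which requires the clearing-denominators hypothesis together with $\sigma$-invariance of $T$), and that $\Phi$ respects multiplication. The density arguments in part~(i) are conceptually clean but require some care when cascading Proposition~\ref{propn: existence of skew power series rings} through chains of limits.
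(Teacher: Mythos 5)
Your proposal is correct, and for parts (ii)--(iii) it follows essentially the paper's argument; the interesting divergence is in part (i). The paper first proves that the product map $m$ is jointly (sequentially) continuous on all of $S^+[[x]]\times S^+[[x]]$ by a two-level diagonal argument (approximating arbitrary convergent sequences by polynomials and cascading Proposition \ref{propn: existence of skew power series rings}), and only then deduces associativity from continuity plus density of $S[x;\sigma,\delta]$. You instead get associativity and distributivity directly: since $f^{(p)}g^{(p)}$ is itself a sequence of polynomials converging to $fg$, Proposition \ref{propn: existence of skew power series rings} applied twice, together with uniqueness of limits (the filtration $v_\varepsilon$ is separated), transfers $(f^{(p)}g^{(p)})h^{(p)}=f^{(p)}(g^{(p)}h^{(p)})$ to the limit; and you obtain continuity from the explicit estimate $v_\varepsilon(fg)\ge v_\varepsilon(f)+v_\varepsilon(g)+B-1$, which indeed follows from \eqref{eqn: using bounds} for $0<\varepsilon\le 1/N$ (the binomial coefficients only raise $w$-values, and the bound survives passage to the limit defining each coefficient $c_k$). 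This is a genuinely shorter route: a biadditive map satisfying such a uniformly shifted bound is automatically jointly continuous via the three-term difference trick, so you bypass the paper's cascading-limits argument entirely --- just be sure to establish biadditivity (by your density argument) before invoking that trick, so the logical order is: well-definedness, agreement with the polynomial product, distributivity, associativity, then the filtered bound and continuity. On part (iii), the obstacle you flag (that $T$ must be verified to be an Ore set in $A[[x;\sigma,\delta]]$ before $T^{-1}(A[[x;\sigma,\delta]])$ even makes sense) is dissolved by arguing as the paper does: once you know $A[[x;\sigma,\delta]]$ embeds in $S^+[[x;\sigma,\delta]]$, that elements of $T$ become units there, and that every element is of the form $t^{-1}f$ with $f\in A[[x;\sigma,\delta]]$ (exactly your clearing-denominators formula), the ring $S^+[[x;\sigma,\delta]]$ is itself a left ring of fractions of $A[[x;\sigma,\delta]]$ with respect to $T$; this yields the Ore condition a posteriori and the canonical identification simultaneously, so no separate verification is required.
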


\begin{proof}

As $(\sigma, \delta)$ is quasi-compatible with $w$, there exists some $N > 0$ such that $\deg_w(\delta^N) \geq 1$. Let $v = v_\varepsilon$ for any $0 < \varepsilon \leq 1/N$.

For part (i): suppose we are given arbitrary $f,g\in S^+[[x]]$. Choose $(f^{(p)})_{p\geq 1}$ and $(g^{(p)})_{p\geq 1}$ to be arbitrary sequences in $S[x]$ converging to $f$ and $g$ respectively, and define $m(f,g)$ to be the limit of $f^{(p)} g^{(p)}$ (where this product is calculated in $S[x; \sigma, \delta]$) as $p\to\infty$: by Proposition \ref{propn: existence of skew power series rings}, this is well-defined, and we remarked after Lemma \ref{lem: the limit exists in R^+[[x]]} that it is independent of the choice of $\varepsilon$. Moreover, if $f$ and $g$ themselves are polynomials, we can take the constant sequences $f^{(p)} = f$, $g^{(p)} = g$ to see that $m(f,g) = fg$.

We need to show that $m$ is a ring multiplication map on $S^+[[x]]$. We begin by showing that $m$ is continuous on all of $(S^+[[x]], v)$. Let $K > 0$ be arbitrary.
\begin{itemize}
\item Choose sequences $(f^{(p)})_{p\geq 1}$ and $(g^{(p)})_{p\geq 1}$ (\emph{not} necessarily sequences of polynomials) in $S^+[[x]]$ converging to $f$ and $g$ respectively. Then there exists $P \geq 1$ such that, for all $p\geq P$,
$$v(f^{(p)} - f) > K, \qquad v(g^{(p)} - g) > K.$$
\item For each fixed $p$, choose sequences $(f^{(p,i)})_{i\geq 1}$ and $(g^{(p,i)})_{i\geq 1}$ in $S[x]$ which converge to $f^{(p)}$ and $g^{(p)}$ respectively. Then there exists $I_p \geq 1$ such that, for all $i \geq I_p$,
$$v(f^{(p,i)} - f^{(p)}) > K, \qquad v(g^{(p,i)} - g^{(p)}) > K.$$
\item For each fixed $p$, since $f^{(p,i)}$ and $g^{(p,i)}$ are polynomials, $m(f^{(p,i)},g^{(p,i)}) \to m(f^{(p)}, g^{(p)})$ as $i\to\infty$. Then there exists $J_p$ such that, for all $i\geq J_p$,
$$v(m(f^{(p,i)},g^{(p,i)}) - m(f^{(p)}, g^{(p)})) > K.$$
\end{itemize}
Choose a sequence of positive integers $(i_p)_{p\geq 1}$ such that $i_p \geq \max\{I_p, J_p\}$ for all $p$. We have shown that, for all $p \geq P$, we have
\begin{align}
v(f^{(p,i_p)} - f) > K, \qquad v(g^{(p,i_p)} - g) > K,\label{eqn: convergence 1}\\
v(m(f^{(p,i_p)},g^{(p,i_p)}) - m(f^{(p)}, g^{(p)})) > K.\label{eqn: convergence 2}
\end{align}
In particular, as $K$ was arbitrary, \eqref{eqn: convergence 1} implies that $(f^{(p,i_p)})_{p\geq 1}$ and $(g^{(p,i_p)})_{p\geq 1}$ are sequences \emph{of polynomials} converging to $f$ and $g$ respectively, and so we must have $m(f^{(p,i_p)},g^{(p,i_p)}) \to m(f,g)$ as $p\to\infty$ by Proposition \ref{propn: existence of skew power series rings}.

Fix $K>0$ again.
\begin{itemize}
\item As $m(f^{(p,i_p)},g^{(p,i_p)}) \to m(f,g)$, there exists $P' \geq 1$ such that, for all $p\geq P'$,
$$v(m(f^{(p,i_p)},g^{(p,i_p)}) - m(f,g)) > K.$$
\end{itemize}
However, now combining this with \eqref{eqn: convergence 2}, for all $p\geq \max\{P,P'\}$, we get
$$v(m(f^{(p)},g^{(p)}) - m(f,g)) > K.$$
Since $K$ is arbitrary, we get $m(f^{(p)},g^{(p)}) \to m(f,g)$ as $p\to\infty$, and so $m$ is sequentially continuous. But as $S^+[[x]]$ is first-countable (see the remark after Definition \ref{defn: filtration topology}), this means that $m$ is continuous.

To show that $(f, g) \mapsto fg := m(f,g)$ is a multiplication map on $S^+[[x; \sigma, \delta]]$, it remains to show that it is associative. However, we remarked already that $S[x]$ is a dense submodule of $S^+[[x]]$ in Definition \ref{defn: R+[[x]] as module}, and we defined $m$ above to agree with the (associative) multiplication in $S[x; \sigma, \delta]$, so this follows by continuity, and we have established (i).

The remaining parts now follow easily from the definitions:
\begin{enumerate}[label=(\roman*)]
\setcounter{enumi}{1}
\item Follows from the description in Definition \ref{defn: R+[[x]] as module} of $S^+[[x; \sigma, \delta]]$ as a set: if $S$ is positively filtered, then $w(s_n) \geq 0$ for all $n\in\mathbb{N}$.
\item (Compare \cite[Lemma 1.4]{goodearl-skew-poly-and-quantized}.) The natural inclusion map $A\hookrightarrow S$ extends to a strictly filtered inclusion map $A[[x; \sigma, \delta]]\hookrightarrow S^+[[x; \sigma, \delta]]$ sending $x$ to $x$. Every element of $T$ is invertible in $S$, and hence in $S^+[[x; \sigma, \delta]]$. Furthermore, given any $\sum_n s_n x^n\in S^+[[x; \sigma, \delta]]$, as the $w(s_n)$ are bounded below, there exists some $t\in T$ such that $\sum_n s_n x^n$ can be written as $t^{-1}f$ for some $f \in A[[x; \sigma, \delta]]$. Hence $S^+[[x; \sigma, \delta]] = T^{-1}(A[[x; \sigma, \delta]]) = S\otimes_A A[[x; \sigma, \delta]]$.\qedhere
\end{enumerate}
\end{proof}

We have shown above that the following definition makes sense, and we record it here:

\begin{defn}\label{defn: skew power series rings}
Suppose $(S,w)$ is a complete filtered ring, and $(\sigma, \delta)$ is a commuting skew derivation on $S$ which is quasi-compatible with $w$. Then we can define the \emph{bounded skew power series ring}
$$S^+[[x;\sigma, \delta]] = \left\{\underset{n\in\mathbb{N}}{\sum}{s_nx^n}:s_n\in S,\text{ there exists }M\in\mathbb{Z}\text{ such that }w(s_n)\geq M\text{ for all }n\in\mathbb{N}\right\},$$
with multiplication given by
\begin{align*}
\left( \sum_{i\geq 0} a_i x^i\right)\left( \sum_{j\geq 0} b_jx^j\right) = \sum_{k\geq 0} \left(\sum_{0\leq e\leq k} \sum_{i\geq e} \binom{i}{e} a_i \sigma^e \delta^{i-e}(b_{k-e}) \right)x^k.
\end{align*}
Moreover, for any sufficiently small rational $\varepsilon > 0$, and setting $\Gamma = \mathbb{Z} + \varepsilon\mathbb{Z}$: the $\Gamma$-filtration of $S$-modules $v_\varepsilon$ defined by
\begin{align*}
v_\varepsilon\left(\sum_{n\in\mathbb{N}} s_n x^n\right) = \min_{n\in\mathbb{N}}\{w(s_n) + \varepsilon n\}
\end{align*}
defines a ring topology on $S^+[[x; \sigma, \delta]]$, the subring $S[x;\sigma,\delta]$ is dense in $(S^+[[x; \sigma, \delta]], v_\varepsilon)$, and the multiplication in $S^+[[x; \sigma, \delta]]$ is continuous with respect to $v_\varepsilon$.
\end{defn}

\begin{rks}\label{rks: topologies}
$ $

\begin{enumerate}[label=(\roman*)]
\item Different values of $\varepsilon$ may give different topologies if $S$ is not positively filtered. However, the resulting (abstract) ring is always the same, depending only on the filtration $w$ on $S$. We will usually omit these details in what follows, as the value of $\varepsilon$ will usually be unimportant to us and the filtration $w$ will be easily understood from context.
\item The module filtration $v_\varepsilon$ does not always define a ring filtration. In fact, if $\sigma$ has negative degree, it never does: indeed, choose some $s\in S$ such that $w(\sigma(s)) < w(s)$. Then $v_{\varepsilon}(xs)=v_{\varepsilon}(\sigma(s)x+\delta(s))=\min\{w(\sigma(s))+\varepsilon,w(\delta(s))\}$ by definition, and so in particular $v_\varepsilon(xs) \leq w(\sigma(s)) + \varepsilon < w(s) + \varepsilon=v_{\varepsilon}(x)+v_{\varepsilon}(s)$.
\item We can always choose $\varepsilon = m/n$ for positive integers $m,n$, and then replace $v_\varepsilon$ by the topologically equivalent ($\mathbb{Z}$-)filtration $nv_\varepsilon$. In this way, we will no longer need to deal with general $\Gamma$-filtrations throughout the remainder of the paper.
\end{enumerate}
\end{rks}

\subsection{Skew power series subrings: in characteristic \(p\)}\label{subsec: char p}

We now want to examine certain large subrings of bounded skew power series rings, whose structure we will later see is essential to understanding the full skew power series ring.

\begin{defn}\label{defn: X_n etc in char p}
Suppose that $S$ is a ring of characteristic $p>0$, and let $(\sigma,\delta)$ be any commuting skew derivation on $S$. For any $n\in\mathbb{N}$, set
\begin{itemize}
\item $X_n:=x^{p^n}\in S[x;\sigma,\delta]$,
\item $\Sigma_n:=\sigma^{p^n}$,
\item $\Delta_n:=\delta^{p^n}$.
\end{itemize}
Note that $(\Sigma_n,\Delta_n)$ is a (commuting) skew derivation on $S$ by \cite[Remark 1.2(i)]{jones-woods-1}, and we have $X_n s = \Sigma_n(s) X_n + \Delta_n(s)$ inside $S[x; \sigma, \delta]$ for all $s\in S$. Hence we can define the \emph{subring} $S[X_n;\Sigma_n,\Delta_n] \subseteq S[x; \sigma, \delta]$.
\end{defn}

\begin{propn}\label{propn: subrings in characteristic p}
Suppose $(S, w)$ is a complete filtered ring of characteristic $p$, and let $(\sigma, \delta)$ be a commuting skew derivation on $S$. Fix any $n\in\mathbb{N}$. Then the following are equivalent:
\begin{enumerate}[label=(\alph*)]
\item $(\sigma,\delta)$ is quasi-compatible with $w$.
\item $(\Sigma_n,\Delta_n)$ is quasi-compatible with $w$ for \emph{all} $n\in\mathbb{N}$.
\item $(\Sigma_n,\Delta_n)$ is quasi-compatible with $w$ for \emph{some} $n\in\mathbb{N}$.
\end{enumerate}
In this case, for any $n\in\mathbb{N}$, $S^+[[x;\sigma,\delta]]$ is freely generated as a module over the subring $S^+[[X_n;\Sigma_n,\Delta_n]]$ with basis $\{x^i:i=0,\dots,p^n-1\}$.

\end{propn}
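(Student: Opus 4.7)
The plan is to first establish the chain of equivalences (a) $\Leftrightarrow$ (b) $\Leftrightarrow$ (c), and then construct the free module basis explicitly. The key combinatorial fact underlying both parts is the decomposition
\[
\sigma^i \delta^j = \sigma^r \delta^s \cdot \Sigma_n^{i'} \Delta_n^{j'},
\]
where $i = r + p^n i'$ and $j = s + p^n j'$ with $0 \leq r, s < p^n$, $i' \in \mathbb{Z}$, and $j' \in \mathbb{N}$. For (a) $\Rightarrow$ (b), this shows that $\Sigma_n^{\mathbb{Z}} \Delta_n^{\mathbb{N}} \subseteq \sigma^{\mathbb{Z}} \delta^{\mathbb{N}}$, so any common lower bound $B$ on the degrees of the latter also bounds the former; and if $\deg_w(\delta^N) \geq 1$ then $\deg_w(\Delta_n^k) = \deg_w(\delta^{p^n k})$ is at least $\lfloor p^n k / N \rfloor + B$ by \eqref{eqn: bound with nilpotency of delta}, which exceeds $0$ once $k$ is sufficiently large. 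Conversely, for (c) $\Rightarrow$ (a), the same decomposition gives $\deg_w(\sigma^i \delta^j) \geq C + B$, where $C := \min\{\deg_w(\sigma^r \delta^s) : 0 \leq r, s < p^n\}$ is a finite minimum over finitely many filtered maps, and the nilpotency condition for $\delta$ is immediate from $\deg_w(\delta^{p^n k}) = \deg_w(\Delta_n^k) \geq 1$. Since (b) $\Rightarrow$ (c) is trivial, this gives all three equivalences.

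For the free module structure, I first verify that the map sending $X_n^m \mapsto x^{p^n m}$ extends to a ring embedding $S^+[[X_n; \Sigma_n, \Delta_n]] \hookrightarrow S^+[[x; \sigma, \delta]]$. The map preserves the bounded-below condition on coefficients, so it is well-defined on the level of modules. It is multiplicative because, in characteristic $p$, the binomial coefficients $\binom{p^n}{e}$ vanish for $0 < e < p^n$, collapsing the general identity $x^{p^n} s = \sum_e \binom{p^n}{e} \sigma^e \delta^{p^n - e}(s) x^e$ in $S[x; \sigma, \delta]$ down to $x^{p^n} s = \sigma^{p^n}(s) x^{p^n} + \delta^{p^n}(s)$, which is exactly the defining relation $X_n s = \Sigma_n(s) X_n + \Delta_n(s)$ for the subring $S[X_n; \Sigma_n, \Delta_n]$. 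Continuity of multiplication in both skew power series rings (Theorem \ref{thm: restricted skew power series rings exist}) then extends this agreement from polynomials to all bounded power series.

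With the embedding in place, the basis claim follows by grouping coefficients according to residue class modulo $p^n$: given $f = \sum_{k \geq 0} s_k x^k \in S^+[[x; \sigma, \delta]]$ with $w(s_k) \geq M$ for all $k$, set $g_i := \sum_{m \geq 0} s_{p^n m + i} X_n^m$ for $0 \leq i < p^n$. Each $g_i$ lies in $S^+[[X_n; \Sigma_n, \Delta_n]]$ (bounded by the same $M$), and expanding $g_i x^i = \sum_m s_{p^n m + i} x^{p^n m + i}$ recovers $f = \sum_{i = 0}^{p^n - 1} g_i x^i$. For uniqueness, suppose $\sum_i g_i x^i = 0$ with $g_i = \sum_m a_{i, m} X_n^m$; expanding via the embedding yields $\sum_{i, m} a_{i, m} x^{p^n m + i} = 0$, and since each $k \geq 0$ has a unique representation $k = p^n m + i$ with $0 \leq i < p^n$, every $a_{i, m}$ must vanish. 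I expect the main obstacle to be the verification that the inclusion $S^+[[X_n; \Sigma_n, \Delta_n]] \hookrightarrow S^+[[x; \sigma, \delta]]$ is a genuine ring homomorphism rather than merely an $S$-module map, which hinges on the characteristic-$p$ vanishing of the interior binomial coefficients; the remaining steps are essentially bookkeeping using the continuous multiplications constructed in \S\ref{subsec: restricted skew power series}.
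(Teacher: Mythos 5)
Your proposal is correct and follows essentially the same route as the paper: the easy implications plus the Euclidean decomposition $\sigma^i\delta^j=\sigma^r\delta^s\,\Sigma_n^{i'}\Delta_n^{j'}$ with a finite minimum over the residues (the paper packages this as an induction via the $n=1$ case), and then the ring embedding $S^+[[X_n;\Sigma_n,\Delta_n]]\hookrightarrow S^+[[x;\sigma,\delta]]$ obtained from the characteristic-$p$ vanishing of the interior binomial coefficients together with density of polynomials and continuity of the multiplications, followed by regrouping coefficients modulo $p^n$ for the free basis. The only detail worth making explicit is that the inclusion map itself is continuous (take the filtration parameters as $\varepsilon=\varepsilon'/p^n$, as in the paper), which is what licenses passing the polynomial identity to the limit.
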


\begin{proof}
The implications (b) $\implies$ (a) $\implies$ (c) are obvious, because $(\sigma, \delta) = (\Sigma_0, \Delta_0)$. Since $\Sigma_{n+1} = \Sigma_n^p$ and $\Delta_{n+1} = \Delta_n^p$ for all $n\in\mathbb{N}$, to prove (c) $\implies$ (b) by induction, it will suffice to write $X = X_1$, $\Sigma = \Sigma_1$, $\Delta = \Delta_1$ and prove that $(\sigma, \delta)$ is quasi-compatible if and only if $(\Sigma,\Delta)$ is quasi-compatible.

If $(\sigma,\delta)$ is quasi-compatible, then there are $A\in\mathbb{Z}$ and $N\in\mathbb{N}$ such that $\deg_w(\sigma^i\delta^j)\geq A$ for all $i\in\mathbb{Z}$, $j\in\mathbb{N}$, and $\deg_w(\delta^N)>0$. Therefore, for arbitrary $i\in\mathbb{Z}$ and $j\in\mathbb{N}$, we have $\deg_w(\Sigma^i\Delta^j)=\deg_w(\sigma^{ip}\delta^{jp})\geq A$, and $\deg_w(\Delta^N)=\deg_w(\delta^{Np})\geq p\deg_w(\delta^N)>0$, so $(\Sigma,\Delta)$ is quasi-compatible.

Conversely, if $(\Sigma,\Delta)$ is quasi-compatible, then there are $A\in\mathbb{Z}$ and $N\in\mathbb{N}$ such that $\deg_w(\Sigma^i\Delta^j)\geq A$ for all $i\in\mathbb{Z}$, $j\in\mathbb{N}$, and $\deg_w(\Delta^N)>0$. Set $B:=\min\{\deg_w(\sigma^k\delta^\ell): 0\leq k,\ell\leq p-1\}$, and take $i\in\mathbb{Z}$, $j\in\mathbb{N}$ arbitrary: since we know that $\deg(\Sigma^{i'}\Delta^{j'})=\deg(\sigma^{i'p}\delta^{j'p})\geq A$ for all $i'\in\mathbb{Z},j'\in\mathbb{N}$, it follows that if we set $i=pi'+k$, $j=pj'+\ell$ for some $0\leq k,\ell\leq p-1$, then $\deg_w(\sigma^i\delta^j)=\deg_w(\sigma^k\delta^\ell\Sigma^{i'}\Delta^{j'})\geq A+B$. Hence $(\sigma,\delta)$ is strongly bounded. We also have $\deg(\delta^{Np})=\deg(\Delta^N)>0$, so $(\sigma, \delta)$ is quasi-compatible.

Now assume that $(\sigma, \delta)$ and $(\Sigma, \Delta)$ are quasi-compatible with $w$. Given any sufficiently small $\varepsilon, \varepsilon'$, write $v_{x,\varepsilon}$ and $v_{X,\varepsilon'}$ for the associated filtrations on $S^+[[x]]$ and $S^+[[X]]$ respectively. Consider the canonical (left $S$-module) inclusion map $\theta: (S^+[[X]], v_{X,\varepsilon'}) \to (S^+[[x]],v_{x,\varepsilon})$: choosing $\varepsilon = \varepsilon'/p$, we get $v_{X,\varepsilon'}(f) = v_{x,\varepsilon}(\theta(f))$ for all $f\in S^+[[X]]$, and so $\theta$ is strictly filtered and hence continuous.

To show that $\theta: S^+[[X; \Sigma, \Delta]] \to S^+[[x; \sigma, \delta]]$ is a ring homomorphism, we need to show that the diagram

\centerline{
\xymatrix@C=36pt{
S^+[[X; \Sigma, \Delta]]\times S^+[[X; \Sigma, \Delta]]\ar[r]^-{m_X}\ar[d]_{\theta\times \theta}& S^+[[X; \Sigma, \Delta]]\ar[d]^\theta\\
S^+[[x; \sigma, \delta]]\times S^+[[x; \sigma, \delta]]\ar[r]_-{m_x}& S^+[[x; \sigma, \delta]]
}
}

commutes, where $m_X$ and $m_x$ are the multiplication maps. But $\theta\circ m_X$ and $m_x\circ (\theta\times\theta)$ are continuous and agree on the dense subset $S^+[X; \Sigma, \Delta] \times S^+[X; \Sigma, \Delta]$, so they agree everywhere.

Finally, since any power series $\underset{i\in\mathbb{N}}{\sum}{s_ix^{i}}$ in $S^+[[x;\sigma,\delta]]$ can be uniquely written as $$\sum_{k=0}^{p-1} \left(\sum_{i'\in\mathbb{N}} {s_{i'p+k}X^{i'}}\right)x^k,$$ it follows that $S^+[[x;\sigma,\delta]]$ is freely generated over $S^+[[X;\Sigma,\Delta]]$ with basis $\{x^i : 0\leq i\leq p-1\}$ as required.\end{proof}

\subsection{Skew power series subrings: inner \(\sigma\)-derivations}\label{subsec: skew power series subrings}

Following on from the previous subsection, we now want to assume $(S,w)$ is a general complete filtered ring, not necessarily of characteristic $p$. In general, we do not have that $(\sigma^{p^n},\delta^{p^n})$ is a skew derivation for all $n$, but there are cases where we can establish an appropriate analogue version of Proposition \ref{propn: subrings in characteristic p}.

\textbf{We will assume throughout \S \ref{subsec: skew power series subrings}} that $(\sigma, \delta)$ is a commuting skew derivation of $S$ and that $\delta$ is an \emph{inner} $\sigma$-derivation of $S$ (Definition \ref{defn: inner sigma-derivation}): precisely, we will assume that we have $t\in S$ such that $\delta(s)=ts-\sigma(s)t$ for all $s\in S$.

The condition that $\sigma(t)=t$ will be very important to us throughout the article, and the following lemma establishes an important case when we can ensure it is satisfied.

\begin{lem}\label{lem: power 1}
Suppose that $S=Q_1\times\dots\times Q_r$ is a product of simple rings, that $\sigma$ is not inner, and that $\sigma(Q_i)=Q_{i+1}$ (subscripts modulo $r$) for each $i$. Then $\sigma(t)=t$.
\end{lem}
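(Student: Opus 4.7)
The plan is to exploit the commuting condition $\sigma\delta = \delta\sigma$ applied to the explicit formula $\delta(s) = ts - \sigma(s)t$. Computing
\[
\sigma(\delta(s)) = \sigma(t)\sigma(s) - \sigma^2(s)\sigma(t),\qquad \delta(\sigma(s)) = t\sigma(s) - \sigma^2(s)t,
\]
and equating them yields $(\sigma(t) - t)\sigma(s) = \sigma^2(s)(\sigma(t)-t)$ for all $s \in S$. Setting $u := \sigma(t)-t$ and substituting $s \mapsto \sigma^{-1}(s)$, this becomes the intertwining relation
\[
u s = \sigma(s)\, u \quad\text{for all } s \in S.
\]
The entire task then reduces to showing $u = 0$.

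When $r \geq 2$, one can conclude without even invoking the non-inner hypothesis. Let $e_j$ be the central idempotent of $S$ equal to the identity of $Q_j$. Since $\sigma(Q_j) = Q_{j+1}$ and central idempotents go to central idempotents, $\sigma(e_j) = e_{j+1}$. Specializing the intertwining relation to $s = e_j$ gives $u e_j = e_{j+1} u$. The left side is supported in the $j$-th block of the product decomposition and the right side in the $(j+1)$-th block, so by disjointness of supports both sides must vanish. Reading off the $j$-th component forces $u_j = 0$, and letting $j$ vary shows $u = 0$. (Incidentally this confirms that the non-inner assumption is automatic in this case, since an inner automorphism fixes central idempotents and hence cannot permute the $Q_i$ nontrivially.)

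The interesting case is $r = 1$, where $S$ is simple and the non-inner hypothesis genuinely enters. Assume for contradiction that $u \neq 0$. Then $SuS$ is a nonzero two-sided ideal of $S$, so by simplicity $SuS = S$; pick $a_i, b_i \in S$ with $\sum_i a_i u b_i = 1$. The crucial move is to apply the intertwining relation in the form $u b_i = \sigma(b_i) u$ to each term, rewriting the equation as $v u = 1$ with $v := \sum_i a_i \sigma(b_i)$. Thus $u$ has a left inverse; in the simple artinian setting of this paper (Dedekind-finite), this upgrades to a two-sided inverse, and then $s \mapsto u s u^{-1} = \sigma(s)$ realises $\sigma$ as an inner automorphism, contradicting the hypothesis. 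Hence $u = 0$ and $\sigma(t) = t$.

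The main obstacle is this last case: one must both convert the two-sided factorization $\sum a_i u b_i = 1$ into a one-sided factorization through $u$ (achieved by using the intertwining relation to push the $b_i$ past $u$), and then upgrade left invertibility to two-sided invertibility (which is where some finiteness hypothesis on the simple factors is tacitly used). The $r \geq 2$ case is much softer, being essentially a block-support argument.
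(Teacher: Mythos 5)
Your argument is correct in substance but organised differently from the paper's. Both proofs start identically, using $\sigma\delta=\delta\sigma$ to obtain the intertwining relation $us=\sigma(s)u$ for $u=\sigma(t)-t$. The paper then runs a single uniform argument for all $r$: since $u$ is normal, each component $a_i\in Q_i$ is $0$ or a unit; if all are units then $\sigma$ is conjugation by $u$ (contradicting non-innerness), and otherwise there is an index with $a_i=0$ but $a_{i+1}$ a unit, which contradicts $u e_i=\sigma(e_i)u$ since $\sigma(e_i)a_{i+1}\neq 0$. You instead split into $r\geq 2$ and $r=1$. Your $r\geq 2$ block-support argument ($ue_j=e_{j+1}u$ forces both sides to vanish) is clean and even shows the non-inner hypothesis is not needed there, which is a genuine (if minor) sharpening; the paper's boundary-index argument is the same idea in disguise but uses the unit/zero dichotomy. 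For $r=1$ your route differs: the paper notes directly that the normal element $u$ in a simple ring is $0$ or a unit, whereas you produce only a left inverse from $SuS=S$ and then invoke Dedekind-finiteness to upgrade it.

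That last step is the one place to tighten. The lemma as stated assumes only that the $Q_i$ are simple, not artinian, so appealing to "the simple artinian setting" imports a hypothesis not in the statement (harmless for every application in the paper, where $Q$ is semisimple artinian, but not needed). The finiteness assumption is in fact avoidable from what you already have: the intertwining relation gives $uS=\sigma(S)u=Su$, so $uS$ is a two-sided ideal of the simple ring $S$; if $u\neq 0$ it equals $S$, yielding both a left and a right inverse for $u$ (and the two coincide), after which $\sigma(s)=usu^{-1}$ contradicts non-innerness exactly as you say. With that one-line replacement your proof covers the lemma in full generality.
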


\begin{proof}
Since $\sigma$ and $\delta$ commute, we have for all $q\in S$ 
\[
\delta(\sigma(q))=t\sigma(q)-\sigma^2(q)t=\sigma(\delta(q))=\sigma(t)\sigma(q)-\sigma^2(q)\sigma(t).
\]

Thus $(t-\sigma(t))\sigma(q)=\sigma^2(q)(t-\sigma(t))$, which implies that $t-\sigma(t)$ is a normal element of $S=Q_1\times\dots\times Q_r$. So write $t-\sigma(t)=(a_1,a_2,\dots,a_r)$, then each $a_i$ is a normal element in the simple ring $Q_i$, hence it is either 0 or a unit.

If $a_i$ is a unit for every $i=1,\dots,r$ then $t-\sigma(t)$ is a unit in $S$. However, taking $x:=\sigma^{-1}(q)$, we get that $(t-\sigma(t))x=\sigma(x)(t-\sigma(t))$. So if $t-\sigma(t)$ is a unit, it follows that $\sigma$ is conjugation by $t-\sigma(t)$, which implies that $\sigma$ is inner, contradicting our assumption.

Therefore, $a_i=0$ for some $i$. But if $\sigma(t)\neq t$, then $a_j\neq 0$ for some $j$, so there must exist $i$ such that $a_i=0$ but $a_{i+1}\neq 0$, and hence $a_{i+1}$ is a unit in $Q_{i+1}$. Fix such an $i$, and let $e_i$ be the standard $i$'th basis vector, so that $(t-\sigma(t))e_i=0$.

But $\sigma(Q_i)=Q_{i+1}$, so $0\neq\sigma(e_i)\in Q_{i+1}$, and hence $\sigma(e_i)a_{i+1}\neq 0$ since $a_{i+1}$ is a unit. Thus $$0=(t-\sigma(t))e_i=\sigma(e_i)(t-\sigma(t))\neq 0\text{ -- contradiction.}$$
Therefore, $\sigma(t)=t$.
\end{proof}

The following are some basic consequences of the assumption $\sigma(t) = t$.

\begin{lem}\label{lem: when sigma fixes t}
Suppose that $\sigma(t) = t$, and suppose that $(\sigma, \delta)$ is quasi-compatible with $w$, so that the bounded skew power series ring $S^+[[x; \sigma, \delta]]$ exists. Then
\begin{enumerate}[label=(\roman*)]
\item $x$ commutes with $t$,
\item $x-t$ is a normal element.
\end{enumerate}
\end{lem}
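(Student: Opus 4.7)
My plan is to deduce both statements from the fundamental commutation rule $xr = \sigma(r)x + \delta(r)$, together with the identity $\delta(t) = 0$ which follows immediately from the hypothesis $\sigma(t) = t$: indeed, $\delta(t) = t\cdot t - \sigma(t)\cdot t = t^2 - t^2 = 0$.

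For part (i), I just substitute $r = t$ into the commutation rule:
\[
xt = \sigma(t)x + \delta(t) = tx + 0 = tx,
\]
which gives $[x,t] = 0$. This holds inside the skew polynomial subring $S[x;\sigma,\delta]$, which embeds into $S^+[[x;\sigma,\delta]]$ by Theorem \ref{thm: restricted skew power series rings exist}, so the equation persists in the bounded skew power series ring.

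For part (ii), the key calculation is the following identity, valid for all $s\in S$:
\[
(x-t)s = xs - ts = \sigma(s)x + \delta(s) - ts = \sigma(s)x + (ts - \sigma(s)t) - ts = \sigma(s)(x-t).
\]
Equivalently, $s(x-t) = (x-t)\sigma^{-1}(s)$. Using part (i) to see that $x-t$ commutes with every power $x^n$, this upgrades immediately to the identity
\[
(x-t)\!\!\left(\sum_{n\ge 0} s_n x^n\right) = \left(\sum_{n\ge 0} \sigma(s_n) x^n\right)\!(x-t)
\]
inside the skew polynomial ring $S[x;\sigma,\delta]$, with the analogous identity (using $\sigma^{-1}$) on the other side.

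The one point that needs a tiny argument is passage from polynomials to arbitrary bounded power series. Since $(\sigma,\delta)$ is quasi-compatible, $\sigma$ and $\sigma^{-1}$ are filtered endomorphisms, so if $(s_n)$ is bounded then so are $(\sigma(s_n))$ and $(\sigma^{-1}(s_n))$; thus the right-hand sides of the displayed formulae actually lie in $S^+[[x;\sigma,\delta]]$. Density of $S[x;\sigma,\delta]$ in $S^+[[x;\sigma,\delta]]$ (Theorem \ref{thm: restricted skew power series rings exist}(i)) combined with continuity of multiplication then extends the identities from polynomials to arbitrary $f\in S^+[[x;\sigma,\delta]]$, giving $(x-t)f = g(x-t)$ and $f(x-t) = (x-t)h$ for explicit $g,h\in S^+[[x;\sigma,\delta]]$. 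This shows $(x-t)S^+[[x;\sigma,\delta]] = S^+[[x;\sigma,\delta]](x-t)$, i.e.\ $x-t$ is normal. There is no real obstacle here; the only minor care is checking that the coefficient-wise conjugation formulae respect the bounded-coefficient condition, which is exactly what quasi-compatibility guarantees.
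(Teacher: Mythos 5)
Your proof is correct and follows essentially the same route as the paper: part (i) is the direct substitution $xt=\sigma(t)x+\delta(t)=tx$, and part (ii) rests on the same key identity $(x-t)s=\sigma(s)(x-t)$, extended to bounded power series using that $\sigma$ (and $\sigma^{-1}$) are filtered. Your extra care in spelling out the density/continuity step and the reverse inclusion $S^+[[x;\sigma,\delta]](x-t)\subseteq (x-t)S^+[[x;\sigma,\delta]]$ via $\sigma^{-1}$ is a welcome (if minor) sharpening of what the paper leaves implicit.
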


\begin{proof}
$ $

\begin{enumerate}[label=(\roman*)]
\item An easy calculation, using the relation $xt = \sigma(t)x + \delta(t)$.
\item We can check that, for any $s\in S$, we have $(x-t)s = \sigma(s)(x-t)$. Hence, if $(s_i)_{i\geq 0}$ is a bounded sequence of elements of $S$, then
$$(x-t)\left( \sum_{i\geq 0} s_i x^i\right) = \left( \sum_{i\geq 0} \sigma(s_i) x^i\right)(x-t),$$
and the sequence $(\sigma(s_i))_{i\geq 0}$ is still bounded as $\sigma$ is filtered.\qedhere
\end{enumerate}
\end{proof}

\begin{lem}\label{lem: power 2}
If $\sigma(t) = t$, then
\begin{align}\label{eqn: formula for delta^n}
\displaystyle \delta^n(s)=\underset{0\leq k\leq n}{\sum}{\binom{n}{k}(-1)^kt^{n-k}\sigma^k(s)t^k}
\end{align}
for all $n\in \mathbb{N}$ and all $s\in S$. In particular, if $S$ has characteristic $p$, then $\delta^{p^m}(s)=t^{p^m}s - \sigma^{p^m}(s)t^{p^m}$ for all $s\in S$.\end{lem}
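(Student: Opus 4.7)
The plan is to recognise $\delta$ as a difference of two commuting operators and then apply the ordinary binomial theorem. Define the additive endomorphisms $L, M : S\to S$ by $L(s) = ts$ and $M(s) = \sigma(s)t$. Then, by hypothesis, $\delta = L - M$. The first step is to verify that $L$ and $M$ commute: for any $s\in S$, we have $LM(s) = t\sigma(s)t$ and $ML(s) = \sigma(ts)t = \sigma(t)\sigma(s)t = t\sigma(s)t$, where the last equality uses $\sigma(t)=t$. Hence $LM = ML$.

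Next, since $L$ and $M$ commute as endomorphisms of the additive group $S$, the usual binomial theorem gives
\begin{equation*}
\delta^n = (L-M)^n = \sum_{k=0}^{n} \binom{n}{k}(-1)^k L^{n-k} M^k.
\end{equation*}
An easy induction on $k$ shows that $M^k(s) = \sigma^k(s)t^k$ (using $\sigma(t)=t$, so $\sigma^j(t^k) = t^k$), while clearly $L^{n-k}(s) = t^{n-k}s$. Composing gives $L^{n-k}M^k(s) = t^{n-k}\sigma^k(s)t^k$, which yields the desired formula \eqref{eqn: formula for delta^n}. I do not expect any genuine obstacle here; the only thing to be careful about is the order of $L$ and $M$ in the composition, which is why commutativity of $L$ and $M$ is essential.

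For the consequence in characteristic $p$, specialise to $n = p^m$. By Lemma \ref{lem: p-adic facts}(ii) (or the elementary fact that $\binom{p^m}{k}$ is divisible by $p$ for $0 < k < p^m$), all intermediate terms in the sum vanish in $S$, leaving only the $k=0$ and $k=p^m$ terms:
\begin{equation*}
\delta^{p^m}(s) = t^{p^m} s + (-1)^{p^m}\sigma^{p^m}(s) t^{p^m}.
\end{equation*}
If $p$ is odd then $(-1)^{p^m} = -1$, and if $p=2$ then $-1 = 1$ in $S$, so in either case this reads $\delta^{p^m}(s) = t^{p^m}s - \sigma^{p^m}(s)t^{p^m}$, as claimed.
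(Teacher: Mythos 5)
Your proof is correct, and it takes a different route from the paper's. The paper proves \eqref{eqn: formula for delta^n} by a direct induction on $n$: it applies $\delta$ to each summand using the $\sigma$-derivation Leibniz rule, exploits $\delta(t^k)=0$ and $\sigma(t^k)=t^k$, and then recombines terms via Pascal's identity $\binom{n}{k}+\binom{n}{k-1}=\binom{n+1}{k}$. You instead write $\delta = L - M$ with $L(s)=ts$, $M(s)=\sigma(s)t$, check $LM=ML$ (which is exactly where $\sigma(t)=t$ enters), and invoke the binomial theorem in the ring of additive endomorphisms of $S$, then evaluate $L^{n-k}M^k$. This is shorter and makes the role of the hypothesis $\sigma(t)=t$ more transparent; the paper's induction is more hands-on but stays entirely within the $\sigma$-derivation calculus. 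Your decomposition is in fact the same device the paper uses immediately after this lemma (writing $\delta = \lambda - \rho\sigma$ with $\lambda,\rho$ left and right multiplication by $t$ and expanding $\lambda^{p^n}=(\delta+\rho\sigma)^{p^n}$ binomially to get the expression for $\Delta_n$), so your argument is fully consistent with the paper's toolkit. Your handling of the characteristic-$p$ specialisation, including the sign $(-1)^{p^m}$ and the case $p=2$, is also correct and matches what the paper leaves implicit.
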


\begin{proof}
Equation \eqref{eqn: formula for delta^n} is clearly true for $n=1$, and
$$\delta^{n+1}(s) =\delta(\delta^n(s))=\delta\left(\underset{0\leq k\leq n}{\sum}{\binom{n}{k}(-1)^kt^{n-k}\sigma^k(s)t^k}\right)=\underset{0\leq k\leq n}{\sum}{\binom{n}{k}(-1)^k\delta(t^{n-k}\sigma^k(s)t^k)}.$$
But we may calculate
$$\delta(t^{n-k}\sigma^k(s)t^k) = \delta(t^{n-k})\sigma^k(s)t^k + \sigma(t^{n-k})\delta(\sigma^k(s))t^k + \sigma(t^{n-k}\sigma^k(s))\delta(t^k)$$
as follows: by our assumptions on $\sigma$ and $\delta$, we have $\delta(t^s) = 0$ and $\sigma(t^s) = t^s$ for all $s$, and we have $\delta(\sigma^k(s)) = \sigma^k(ts - \sigma(s)t) = t\sigma^k(s) - \sigma^{k+1}(s)t$. Hence
\begin{align*}
\delta(t^{n-k}\sigma^k(s)t^k) &=\underset{0\leq k\leq n}{\sum}{\binom{n}{k}(-1)^kt^{n+1-k}\sigma^k(s)t^k}-\underset{0\leq k\leq n}{\sum}{\binom{n}{k}(-1)^kt^{n-k}\sigma^{k+1}(s)t^{k+1}}\\
&=t^{n+1}s+\underset{1\leq k\leq n}{\sum}{\left(\binom{n}{k}+\binom{n}{k-1}\right)(-1)^kt^{n+1-k}\sigma^k(s)t^k}+(-1)^{n+1}\sigma^{n+1}(s)t^{n+1}\\
&=\underset{0\leq k\leq n+1}{\sum}{\binom{n+1}{k}(-1)^kt^{n+1-k}\sigma^k(s)t^k},
\end{align*}
establishing \eqref{eqn: formula for delta^n} by induction.

Recalling that $\binom{p^m}{k} = 0$ in $S$ whenever $S$ has characteristic $p$ and $1 \leq k \leq p^m-1 $ (e.g. by Lemma \ref{lem: p-adic facts}(ii) applied to $(\mathbb{Z}_p, v_p)$), this implies that $\delta^{p^m}(s) = t^{p^m}s - \sigma^{p^m}(s)t^{p^m}$ for all $m\geq 0$ and all $s\in S$.
\end{proof}

The following definition is an analogue of Definition \ref{defn: X_n etc in char p} when $\delta$ is inner and defined by $t\in S$.

\begin{defn}\label{defn: X_n etc in inner case}
In this case, for any $n\in\mathbb{N}$, we set
\begin{itemize}
\item $T_n = (-1)^{p+1}t^{p^n}\in S$,
\item $X_n = (x-t)^{p^n} + T_n\in S[x;\sigma,\delta]$,
\item $\Sigma_n:=\sigma^{p^n}$,
\item $\Delta_n(s) = T_ns - \Sigma_n(s) T_n$ for all $s\in S$, an inner $\Sigma_n$-derivation.
\end{itemize}
Again, $(\Sigma_n,\Delta_n)$ is a (commuting) skew derivation on $S$, and we have $X_n s = \Sigma_n(s) X_n + \Delta_n(s)$ inside $S[x; \sigma, \delta]$ for all $s\in S$ by the calculations above.

If $S$ has characteristic $p$ and $\sigma(t) = t$, then $(x - t)^{p^n} = x^{p^n} - t^{p^n}$, and so $X_n$, $\Sigma_n$ and $\Delta_n$ agree with Definition \ref{defn: X_n etc in char p}.
\end{defn}

\begin{rk}\label{rk: relationship between X_n}
The relationship between $(t, x, \sigma, \delta)$ and $(T_n, X_n, \Sigma_n, \Delta_n)$ is the same as the relationship between $(T_i, X_i, \Sigma_i, \Delta_i)$ and $(T_{i+n}, X_{i+n}, \Sigma_{i+n}, \Delta_{i+n})$ for any $i$ and $n$: that is,
\begin{itemize}
\item $\Sigma_{i+n} = \Sigma_i^{p^n}$,
\item $T_{i+n} = (-1)^{p+1}T_i^{p^n}$, and $\Delta_{i+n}$ is the corresponding inner $\Sigma_{i+n}$-derivation,
\item $X_{i+n} = (X_i - T_i)^{p^n} + T_{i+n} \in S[X_i; \Sigma_i, \Delta_i]$.
\end{itemize}
\end{rk}

Define $\rho, \lambda: S\to S$ to be right- and left-multiplication by $t$, i.e.\ $\rho(s) = st$ and $\lambda(s) = ts$. This means we can write $\delta = \lambda - \rho\sigma$ and $(-1)^{p+1}\Delta_n = \lambda^{p^n} - \rho^{p^n} \sigma^{p^n}$. Clearly $\rho$ and $\lambda$ commute, and under the assumption that $\sigma(t) = t$, it is easily checked that they both commute with $\sigma$. So, noting that $\lambda^{p^n} = (\delta + \rho\sigma)^{p^n}$ and expanding binomially,
\begin{equation}\label{eqn: Delta_n}
\Delta_n = (-1)^{p+1}\sum_{i=1}^{p^n} \binom{p^n}{i} \delta^i (\rho\sigma)^{p^n-i}.
\end{equation}

\begin{propn}\label{propn: delta bounded implies Delta_n bounded}
Suppose now that $(S, w)$ is a complete filtered $\mathbb{Z}_p$-algebra and that $\sigma(t) = t$, and assume that $(\sigma, \delta)$ is strongly bounded with lower bound $A = \min\deg_w(\sigma^{\mathbb{Z}} \delta^{\mathbb{N}})$. Moreover, if $p\neq 0$ in $S$, we will assume that $w(t) \geq 0$.
\begin{enumerate}[label=(\roman*)]
\item Take $d\geq 0$. If there exists $K_1$ such that $\deg(\delta^i) \geq d-A$ for all $i \geq K_1$, then there exists $K_2$ such that $\deg(\Delta_n) \geq d$ for all $n \geq K_2$.
\item Suppose we are given $d \geq A$, $n\in\mathbb{N}$ such that $\sigma^{p^r}$ is strictly filtered and $\deg(\Delta_r) \geq d$ for all $r\geq n$. Then there exists some $N$ such that $\deg(\delta^{p^N}) \geq d$.
\end{enumerate}
\end{propn}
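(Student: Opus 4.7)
Both parts hinge on the binomial expansion \eqref{eqn: Delta_n}, which may be rearranged as
$$(-1)^{p+1}\Delta_n \;=\; \delta^{p^n} + \sum_{i=1}^{p^n - 1}\binom{p^n}{i}\delta^i(\rho\sigma)^{p^n - i}.$$

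For part (i), the plan is to bound every term of this sum, splitting on whether $i \geq K_1$ or $i < K_1$. In the former range the hypothesis gives $\deg(\delta^i) \geq d - A$, which combined with the generic bounds $w\binom{p^n}{i} \geq 0$ and $\deg((\rho\sigma)^{p^n - i}) \geq A$ (the latter using $w(t) \geq 0$ in characteristic zero and being vacuous in characteristic $p$) yields a term of degree at least $d$. For $i < K_1$, Lemma \ref{lem: p-adic facts}(iii) yields $w\binom{p^n}{i} \geq n - \lfloor \log_p(K_1 - 1)\rfloor$, so together with the generic bounds $\deg(\delta^i), \deg((\rho\sigma)^{p^n - i}) \geq A$ the term has degree at least $n + 2A - \lfloor \log_p(K_1 - 1)\rfloor$, which exceeds $d$ as soon as $n$ is sufficiently large. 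Taking $K_2 := \max\{\lceil \log_p K_1\rceil,\, d - 2A + \lfloor \log_p(K_1 - 1)\rfloor\}$ suffices.

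Part (ii) in characteristic $p$ is immediate from Lemma \ref{lem: power 2}: the binomials vanish modulo $p$ for $1 \leq i \leq p^n - 1$, reducing the formula to $\Delta_n = (-1)^{p+1}\delta^{p^n}$, so $\deg(\delta^{p^N}) = \deg(\Delta_N) \geq d$ for any $N \geq n$. Characteristic zero (where $w(t) \geq 0$) is the substantive case, since the cross terms involve lower powers $\delta^i$ over which we have no a priori control. My plan is to prove, by strong induction on $M$, the strengthened claim that there is a constant $C > 0$ (depending on $d$, $n$, $A$) such that $\deg(\delta^{p^M}) \geq \min(d, M - C)$ for every $M \geq 0$; then $M = C + d$ gives $\deg(\delta^{p^{C+d}}) \geq d$. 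The base case $M \leq A + C$ is immediate from strong boundedness. For the inductive step, rearrange \eqref{eqn: Delta_n} to solve for $\delta^{p^M}$ and bound each cross term (write $i = p^a j$ with $\gcd(p, j) = 1$, noting $v_p(p^M - i) = a$) by case analysis on $a$: for $a < n$, the factor $w\binom{p^M}{i} \geq M - n + 1$ combined with generic bounds yields a term of degree at least $M - n + 1 + 2A$, exceeding $\min(d, M - C)$ provided $C \geq n - 1 - 2A$; for $n \leq a \leq M - (d - A)$, strict filteredness of $\sigma^{p^a}$ gives $\deg((\rho\sigma)^{p^M-i}) \geq 0$, and the term has degree at least $(M - a) + A \geq d$; for $M - (d - A) + 1 \leq a \leq M - 1$, the index $j$ is bounded by $p^{d - A - 1}$, and for $M$ large we have $a > C$, so the inductive hypothesis gives $\deg(\delta^{p^a}) \geq \min(d, a - C) \geq 0$, whence $\deg(\delta^i) \geq j \deg(\delta^{p^a}) \geq \deg(\delta^{p^a})$ and the term has degree at least $(M - a) + \min(d, a - C) \geq \min(d, M - C)$ by a brief case analysis on whether $a - C \geq d$. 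Combined with $\deg(\Delta_M) \geq d$ for $M \geq n$, this closes the induction.

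The main obstacle is precisely the third subcase: the generic bound on $\deg(\delta^i)$ is inadequate when $a$ is close to $M$, and the essential technical move is strengthening the inductive hypothesis from $\deg(\delta^{p^M}) \geq d$ to the $M$-dependent bound $\min(d, M - C)$, which allows the induction to close even when $d$ is negative and the worst cross terms involve arbitrarily high iterates $(\delta^{p^a})^j$. Choosing $C$ large enough to absorb the thresholds from all three subcases, and so that the base case ($M \leq A + C$) meets the regime in which the inductive step is valid, completes the argument.
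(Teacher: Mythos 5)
Your part (i) and the characteristic-$p$ case of part (ii) are correct and essentially the paper's own arguments (same split at $K_1$, same use of Lemma \ref{lem: p-adic facts}, same reduction via $\Delta_n=\delta^{p^n}$ in characteristic $p$). The problem is the characteristic-zero induction in part (ii). Your strengthened claim $\deg(\delta^{p^M})\geq\min(d,M-C)$ is the right shape -- the paper proves exactly this bound, with $C=n-A-1$ -- but your inductive step does not close. The base case from strong boundedness only covers $M\leq A+C$, whereas in your third subcase the smallest occurring valuation is $a=M-(d-A)+1$, so the condition ``$a>C$'' (which you need in order to get $\deg(\delta^{p^a})\geq\min(d,a-C)\geq 0$, and hence $\deg(\delta^{p^aj})\geq j\deg(\delta^{p^a})\geq\deg(\delta^{p^a})$) forces $M\geq C+d-A-1$. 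The constant $C$ appears on both sides and cancels: the base-case range and the valid-step range meet only when $d\leq 2A+2$, so no choice of $C$ ``absorbs the thresholds'' once $d>2A+2$ (which is the generic situation, since $A\leq 0$). In the uncovered range $A+C<M<C+d-A-1$ you must still control cross terms $\binom{p^M}{p^aj}\delta^{p^aj}(\rho\sigma)^{p^M-p^aj}$ with $n\leq a<C$ and $j\geq 2$: the inductive hypothesis only gives $\deg(\delta^{p^a})\geq a-C<0$, so $j\deg(\delta^{p^a})$ degrades with $j$, while the fallback generic bound $\deg(\delta^{p^aj})\geq A$ yields a term of degree at least $(M-a)+A\geq M-C+1+A$, which falls short of the required $\min(d,M-C)$ by $|A|-1$ whenever $A\leq -2$. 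Since the whole point of quasi-compatibility is that $A$ may be arbitrarily negative, this is a genuine hole, not a removable technicality. (Your subcase 3 also tacitly assumes $d\geq 0$, contrary to your remark that the argument handles negative $d$; this is a lesser issue.)

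The structural reason for the gap is that you track only the prime-power iterates $\delta^{p^M}$ and handle composite exponents through sub-multiplicativity $\deg((\delta^{p^a})^j)\geq j\deg(\delta^{p^a})$, which is lossy precisely while the base degree is still negative. The paper instead proves the bound for \emph{all} exponents simultaneously: setting $\varphi(i)=\deg(\delta^i\sigma^{-i})$, it shows $\varphi(i)\geq\min\{d,\,v_p(i)-n+A+1\}$ for every $i\geq 1$ by a double induction (outer on the valuation level $\ell$, inner on multiples $m$), whose key identity is the expansion of the power $\Delta_\ell^m$ rather than of a single $\Delta_M$. Expanding $\Delta_\ell^m$ produces cross terms $\delta^I(\rho\sigma)^{mp^\ell-I}$ with $I<mp^\ell$, to which the inductive hypothesis applies \emph{directly}, with no factor of $j$ lost, and the coefficient valuations $\sum_e(\ell-v_p(i_e))$ compensate for the drop in $v_p(I)$. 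To repair your proof you should strengthen the claim to all exponents (with the bound depending on $v_p(i)$) and expand powers $\Delta_\ell^m$ as in \eqref{eqn: Delta_n} raised to the $m$-th power, which is exactly the paper's route.
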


\begin{proof}

First note that if $p=0$ in $S$ then both statements are obvious, because $\Delta_n=\delta^{p^n}$ for all $n$. So from now on, we can assume that $p\neq 0$, and hence $w(t)\geq 0$.
$ $

\begin{enumerate}[label=(\roman*)]
\item Note that $\deg(\delta^i (\rho\sigma)^{p^n-i}) \geq \deg(\delta^i) + \deg(\rho^{p^n-i}) + \deg(\sigma^{p^n-i}) \geq \deg(\delta^i) + A$ by our assumptions on $t$ and $\sigma$. So, for any $n\in\mathbb{N}$, by \eqref{eqn: Delta_n},
\begin{equation}\label{eqn: degree of Delta_n}
\displaystyle \deg(\Delta_n) \geq \min_{1\leq i\leq p^n} \left\{w\left(\binom{p^n}{i}\right) + \deg(\delta^i)\right\} + A.
\end{equation}
Let $K_2:=\lfloor \log_p(K_1)\rfloor + d - 2A$, and fix $n\geq K_2$. Using \eqref{eqn: degree of Delta_n}, we only need to prove that $\displaystyle w\left(\binom{p^n}{i}\right) + \deg(\delta^i) \geq d-A$ for all $1\leq i\leq p^n$.
\begin{itemize}
\item Since $w\left(\binom{p^n}{i}\right)\geq 0$, the inequality holds for $i \geq K_1$ by our assumption on $K_1$.
\item Suppose $1\leq i< K_1$. Then by Lemma \ref{lem: p-adic facts}(ii), $$w\left(\binom{p^n}{i}\right)\geq n - \lfloor \log_p(K_1)\rfloor\geq K_2-\lfloor \log_p(K_1)\rfloor=d-2A$$ and $\deg(\delta^i) \geq A$ by assumption. Hence we have $w(\binom{p^n}{i}) + \deg(\delta^i) \geq  d - 2A+A=d-A$ in this case as well.
\end{itemize}

\item For all $i\geq 1$, define $\varphi(i) = \deg(\delta^i \sigma^{-i})$. Notice that $\deg(\delta^i (\rho\sigma)^{p^k-i}) \geq \varphi(i)$ whenever $k\geq n$ by our assumptions on $t$ and $\sigma$. We will show that, for all $i\geq 1$,
\begin{align}\label{eqn: bound for phi_i}
\varphi(i) \geq \min\{d, v_p(i) - n + A + 1\}.
\end{align}
The desired result will then follow by taking $N \geq \max\{d + n - A - 1, n\}$, as then we will have $\deg(\delta^{p^N}) = \deg(\delta^{p^N} \sigma^{-p^N}) = \varphi(p^N) \geq d$.

We will prove \eqref{eqn: bound for phi_i} using a double induction. More specifically, we will show that, for all $i\geq 1$ and all $k\in\mathbb{N}$, we have
\begin{itemize}
\item $\varphi(i) \geq \min\{d, v_p(i) - n + A + 1\}$ for $0\leq v_p(i)\leq k$,
\item $\varphi(i) \geq \min\{d, k - n + A + 1\}$ for $v_p(i) \geq k+1$.
\end{itemize}
As we always have $\varphi(i) \geq A$, this is true for all $i\geq 1$ whenever $k \leq n-1$. So suppose it is true for all $i\geq 1$ and all $k = 0, 1, \dots, \ell-1$, where $\ell \geq n$: we will show it is true for all $i\geq 1$ and $k = \ell$. 

If $v_p(i)<\ell$ then the result follows from induction, so we may assume that $p^\ell\mid i$. Thus we need only show that $\varphi(mp^\ell) \geq \min\{d, \ell-n+A+1\}$ for all $m\geq 1$.

In order to show this, we will assume that we are given some positive integer $m$ such that $\varphi(i) \geq \min\{d, v_p(i) - n + A + 1\}$ for all $i \leq mp^\ell - 1$. (This is true when $m = 1$.) We will then show it is true for $i = mp^\ell$, and it will follow that it is true for all $i \leq (m+1)p^\ell - 1$ by induction. By raising \eqref{eqn: Delta_n} to the $m$th power, we can calculate $\Delta_\ell^m$, and we get
$$\Delta_\ell^m - (-1)^{(p+1)m}\delta^{mp^\ell} = (-1)^{(p+1)m}\sum_{i_1, \dots, i_m} \binom{p^\ell}{i_1}\dots \binom{p^\ell}{i_m} \delta^I (\rho\sigma)^{mp^\ell - I},$$
where we write $I = i_1 + \dots + i_m$, and the sum ranges over all $m$-tuples $(i_1, \dots, i_m)$ such that $1\leq i_e \leq p^\ell$ for all $1\leq e\leq m$ and $i_1 + \dots + i_m \leq mp^\ell - 1$. Then
\begin{align*}
\deg(\Delta_\ell^m \pm \delta^{mp^\ell}) &= \deg\left(\sum_{i_1, \dots, i_m} \binom{p^\ell}{i_1}\dots \binom{p^\ell}{i_m} \delta^I (\rho\sigma)^{mp^\ell - I}\right)\\
&\geq \min_{i_1, \dots, i_m} \left\{ \deg\left(\binom{p^\ell}{i_1}\dots \binom{p^\ell}{i_m} \delta^I (\rho\sigma)^{mp^\ell - I}\right)\right\}\\
&\geq \min_{i_1, \dots, i_m} \left\{ \sum_{e=1}^m w\left(\binom{p^\ell}{i_e}\right) + \deg( \delta^I (\rho\sigma)^{mp^\ell - I})\right\}\\
&\geq \min_{i_1, \dots, i_m} \left\{ \sum_{e=1}^m (\ell-v_p(i_e)) + \varphi(I)\right\}\\
&\geq \min_{i_1, \dots, i_m} \left\{ \sum_{e=1}^m (\ell-v_p(i_e)) + \min\{d, v_p(I) - n + A + 1\}\right\},
\end{align*}
where the penultimate inequality follows as $mp^\ell$ is a multiple of $p^r$, and the final inequality comes from the induction hypothesis. (The $\pm$ symbol here means $+$ if $p=2$ and $m$ is odd, and $-$ otherwise.)

Now, for any fixed tuple $(i_1, \dots, i_m)$, suppose that $1\leq f\leq m$ is such that $v_p(i_f)$ is minimal: then since $0\leq v_p(i_e) \leq \ell$ for all $e$, we have $\sum_{e=1}^m (\ell-v_p(i_e)) \geq \ell-v_p(i_f)$, and $v_p(I) \geq v_p(i_f)$, so
\begin{align*}
\sum_{e=1}^m (\ell-v_p(i_e)) + \min\{d, v_p(I) - n + A + 1\} &\geq \ell-v_p(i_f) + \min\{d, v_p(i_f) - n + A + 1\}\\
&\geq \min\{d, \ell - n + A + 1\},
\end{align*}
and so we can conclude that $\deg(\Delta_\ell^m \pm \delta^{mp^\ell}) \geq \min\{d, \ell-n+A+1\}$. But $\deg(\Delta_\ell^m) \geq md \geq d$ by the assumption that $\ell\geq n$, and so $\varphi(mp^\ell) = \deg(\delta^{mp^\ell}) \geq \min\{d, \ell-n+A+1\}$ as required. \qedhere
\end{enumerate}
\end{proof}

\begin{thm}\label{thm: skew power series subrings exist}
Suppose that $(S,w)$ is a complete filtered $\mathbb{Z}_p$-algebra and $(\sigma, \delta)$ is a commuting skew derivation on $S$ such that $\sigma^{p^r}$ is strictly filtered for some $r$. Suppose there exists $t\in S$ such that $\delta(s)=ts-\sigma(s)t$ for all $s\in S$, satisfying $\sigma(t) = t$, and we assume that $w(t)\geq 0$ when $p\neq 0$ in $S$. Then the following are equivalent:
\begin{enumerate}[label=(\alph*)]
\item $(\sigma, \delta)$ is quasi-compatible with $w$.
\item $(\Sigma_n, \Delta_n)$ is quasi-compatible with $w$ for \emph{all} $n\in\mathbb{N}$.
\item $(\Sigma_n, \Delta_n)$ is quasi-compatible with $w$ for \emph{some} $n\in\mathbb{N}$.
\end{enumerate}

In particular, $S^+[[x; \sigma, \delta]]$ exists if and only if any $S^+[[Y; \Sigma_n, \Delta_n]]$ exists. Moreover, for any sufficiently small rational $\varepsilon > 0$, the natural inclusion map $\theta:(S[Y;\Sigma_n,\Delta_n], v_{Y, \varepsilon})\to (S^+[[x;\sigma,\delta]], v_{x, \varepsilon})$ extending the identity map on $S$ and sending $Y$ to $X_n$ is a filtered ring homomorphism of non-negative degree.
\end{thm}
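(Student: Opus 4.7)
The approach rests on Proposition \ref{propn: delta bounded implies Delta_n bounded}, which quantitatively relates the filtered degrees of $\delta^i$ and $\Delta_n$ in both directions. Since (b)$\Rightarrow$(c) is trivial, and since Remark \ref{rk: relationship between X_n} shows that passing from $(\sigma,\delta)$ to $(\Sigma_1,\Delta_1)$ is formally identical to passing from $(\Sigma_i,\Delta_i)$ to $(\Sigma_{i+1},\Delta_{i+1})$, (a)$\Rightarrow$(b) reduces by induction to showing $(\Sigma_1,\Delta_1)$ is quasi-compatible whenever $(\sigma,\delta)$ is. For this I would split the verification into strong boundedness and topological nilpotency. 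Strong boundedness of $(\Sigma_1,\Delta_1)$ follows from Lemma \ref{lem: power 2} applied to the skew derivation $(\Sigma_1,\Delta_1)$ itself, using $\Sigma_1(T_1)=T_1$: this writes $\Delta_1^j(s)$ as a finite sum $\sum_k\binom{j}{k}(-1)^k T_1^{j-k}\Sigma_1^k(s)T_1^k$, and combining with $w(T_1)\geq 0$ (automatic when $p=0$ in $S$ and by hypothesis otherwise) and the strong boundedness of $\{\sigma^i:i\in\mathbb{Z}\}$ from (a), we get a uniform lower bound for $\deg(\Sigma_1^i\Delta_1^j)$. (In characteristic $p$ the identity $\Delta_1=\delta^p$ gives the bound immediately.) For nilpotency, I would iterate $\deg(\delta^N)\geq 1$ to produce $\deg(\delta^i)\to\infty$, apply Proposition \ref{propn: delta bounded implies Delta_n bounded}(i) to find $n_0$ with $\deg(\Delta_r)\geq 1$ for all $r\geq n_0$, and then apply Proposition \ref{propn: delta bounded implies Delta_n bounded}(ii) to the skew derivation $(\Sigma_1,\Delta_1)$ in place of $(\sigma,\delta)$, where the associated $\Delta_r'$ is $\Delta_{1+r}$ by Remark \ref{rk: relationship between X_n}; this yields $N$ with $\deg(\Delta_1^{p^N})\geq 1$.

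The direction (c)$\Rightarrow$(a) runs symmetrically. Strong boundedness of $(\sigma,\delta)$ is straightforward: $\deg(\sigma^i)$ is bounded below by writing $i=qp^n+s$ and using strong boundedness of $\Sigma_n^q=\sigma^{qp^n}$ together with the finite family $\{\deg(\sigma^s):0\leq s<p^n\}$; $\deg(\delta^j)$ is bounded below via Lemma \ref{lem: power 2} applied to $(\sigma,\delta)$ together with $w(t)\geq 0$ (or via $\Delta_n=\delta^{p^n}$ in characteristic $p$). For nilpotency, Proposition \ref{propn: delta bounded implies Delta_n bounded}(i) applied to the skew derivation $(\Sigma_n,\Delta_n)$ gives $\deg(\Delta_r)\geq 1$ for all sufficiently large $r$, and then Proposition \ref{propn: delta bounded implies Delta_n bounded}(ii) applied to the original $(\sigma,\delta)$ yields $\deg(\delta^{p^N})\geq 1$ for some $N$. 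The existence of $S^+[[x;\sigma,\delta]]$ and $S^+[[Y;\Sigma_n,\Delta_n]]$ are each equivalent to the corresponding quasi-compatibility by Theorem \ref{thm: restricted skew power series rings exist}, hence jointly equivalent.

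Finally, for the homomorphism $\theta$, the ring-homomorphism property is immediate from the defining identity $X_n s=\Sigma_n(s)X_n+\Delta_n(s)$ holding in $S[x;\sigma,\delta]$. For non-negative filtered degree, the key observation is the factorization $X_n=(x-t)^{p^n}+T_n=(x+w)^{p^n}-w^{p^n}=x\cdot P(x)$, where $w=-t$ and $P(x)=\sum_{j=0}^{p^n-1}\binom{p^n}{j+1}w^{p^n-1-j}x^j$ has coefficients of non-negative $w$-value (using Lemma \ref{lem: p-adic facts} and $w(t)\geq 0$). Since $x$ commutes with $t$ by Lemma \ref{lem: when sigma fixes t}(i), hence with $P(x)$, we obtain $X_n^k=x^k P(x)^k$, from which $v_{x,\varepsilon}(X_n^k)\geq\varepsilon k=v_{Y,\varepsilon}(Y^k)$, and the filtered-degree bound for $\theta$ extends $S$-linearly to arbitrary $f\in S[Y;\Sigma_n,\Delta_n]$. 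The principal obstacle throughout is the bookkeeping around Proposition \ref{propn: delta bounded implies Delta_n bounded}(ii), whose hypothesis requires $\deg(\Delta_r)\geq d$ for \emph{all} $r\geq n$ rather than for a single $r$; this forces us to apply Proposition \ref{propn: delta bounded implies Delta_n bounded}(i) as a preliminary spreading step in both directions before (ii) can be invoked.
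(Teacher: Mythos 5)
Your proposal is correct and follows essentially the same route as the paper's proof: the characteristic-$p$ case is handled via $\Delta_n=\delta^{p^n}$, strong boundedness via the binomial formula of Lemma \ref{lem: power 2} (the paper routes this through Corollary \ref{cor: criterion for quasi-compatibility}), topological nilpotency is transferred by the same two-step use of Proposition \ref{propn: delta bounded implies Delta_n bounded}(i) as a spreading step followed by (ii), and the non-negative degree of $\theta$ comes from the same factorization $X_n = g(x,t)\,x$ with coefficients of non-negative value. The only blemish is the parenthetical claim that $w(T_1)\geq 0$ is ``automatic when $p=0$ in $S$'' — it is not, since $w(t)\geq 0$ is only assumed when $p\neq 0$ in $S$ — but this is harmless because, as you note, in characteristic $p$ one has $\Delta_1=\delta^p$ and no such hypothesis is needed there.
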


\begin{proof}

If $p=0$ in $S$ then this follows from Proposition \ref{propn: subrings in characteristic p}, so we may assume that $p\neq 0$ and $w(t)\geq 0$.

The implications (b) $\implies$ (a) $\implies$ (c) are obvious, since $(\sigma, \delta) = (\Sigma_0, \Delta_0)$.

To show (c) $\implies$ (b), we assume that $(\Sigma_n, \Delta_n)$ is quasi-compatible with $w$, and aim to show that $(\Sigma_\ell, \Delta_\ell)$ is quasi-compatible. First, note that $\Sigma_\ell^{p^r}$ is strictly filtered, so by Corollary \ref{cor: criterion for quasi-compatibility}, we need to show that there exists some $m$ such that $\deg(\Delta_\ell^m) \geq 1$.

Since $(\Sigma_n, \Delta_n)$ is quasi-compatible, there exists $A\leq 0$ such that $\min\deg(\Sigma_n^\mathbb{Z} \Delta_n^\mathbb{N}) = A$, and there exists $N$ such that $\deg(\Delta_n^N) \geq 1$. Hence $\deg(\Delta_n^{N(1-2A)})\geq 1 - 2A$, and it follows that $\deg(\Delta_n^M) \geq (1 - 2A) + A = 1-A$ for all $M\geq N(1-2A)$. In particular, by Proposition \ref{propn: delta bounded implies Delta_n bounded}(i) with $\Delta_n$ in place of $\delta$, there exists $K$ such that $\deg(\Delta_k) \geq 1$ for all $k\geq K$.

Choose $k \geq r$ (so that $\Sigma_\ell^{p^k}$ is strictly filtered) and $k \geq \ell$. Then, using Proposition \ref{propn: delta bounded implies Delta_n bounded}(ii) with $(\Sigma_\ell, \Delta_\ell)$ in place of $(\sigma, \delta)$, we get some $M$ such that $\deg(\Delta_\ell^{p^M})\geq 1$. Hence $(\Sigma_\ell, \Delta_\ell)$ is quasi-compatible.

Now assume that these three equivalent conditions hold. We can write $X_n:=(x-t)^{p^n}+(-1)^{p+1}t^{p^n}$ as $X_n=g(x,t)x$, where $g$ is a polynomial in $x$ and $t$, with coefficients in $\mathbb{Z}$. So since $w(t)\geq 0$, it follows that for every $m\in\mathbb{N}$, $$g(x,t)^m=\underset{k\geq 0}{\sum}{\gamma_{k,m}x^k}$$ where $w(\gamma_{k,m})\geq 0$ for each $k$. So we can write $\theta$ as
\begin{align*}
\theta:S[Y;\Sigma_n,\Delta_n]&\to S^+[[x;\sigma,\delta]],\\
\underset{m\geq 0}{\sum}{\lambda_mY^m}&\mapsto\underset{m\geq 0}{\sum}{\lambda_mX_n^m}=\underset{m\geq 0}{\sum}{\lambda_mg(x,t)^mx^m}=\underset{m\geq 0}{\sum}\underset{k\geq 0}{\sum}{\lambda_m\gamma_{k,m}x^{m+k}}.
\end{align*}
Clearly this is a ring homomorphism. Fix any appropriate $\varepsilon>0$: then, given arbitrary
$$f=\underset{m\geq 0}{\sum}{\lambda_mY^m}\in S[Y;\Sigma_n;\Delta_n]$$
with $v_{Y,\varepsilon}(f)=\inf\{w(\lambda_m)+\varepsilon m:m\geq 0\}=N$, we see that
\begin{align*}
v_{x,\varepsilon}(\theta(f)) &= \inf\left\{w\left(\underset{i+j=m}{\sum}{\lambda_i\gamma_{j,i}}\right)+\varepsilon m:m\geq 0\right\}\\
&\geq\inf\{\min\{w(\lambda_i)+w(\gamma_{j,i}):i+j=m\}+\varepsilon m:m\geq 0\}\\
&\geq\inf\{\min\{w(\lambda_i)+\varepsilon m:i\leq m\}:m\geq 0\}\\
&\geq\inf\{\min\{w(\lambda_i)+\varepsilon i:i\leq m\}:m\geq 0\}\\
&=\inf\{w(\lambda_m)+\varepsilon m:m\geq 0\}=N,
\end{align*}
hence $\theta$ is filtered of non-negative degree as required.\end{proof}

\subsection{Infinite matrices and filtered homomorphisms}\label{subsec: continuous homomorphisms}

Theorem \ref{thm: skew power series subrings exist} is almost an analogue of Proposition \ref{propn: subrings in characteristic p}, but it is missing the detail that we have not necessarily proved that $S^+[[X_n;\Sigma_n,\Delta_n]]$ is a subring. We now want to prove the full analogue.

Let $(S,w)$ be a complete filtered ring. We now consider \emph{infinite} matrices over $S$ (by which we mean matrices $A = (a_{i,j})$ whose rows and columns are indexed by $\mathbb{N}$) and \emph{infinite} row vectors (by which we mean row vectors $\mathbf{v} = (v_i)$ whose entries are indexed by $\mathbb{N}$). Recall that, for us, $0\in\mathbb{N}$, so the indexing will start at $0$.

\begin{defn}
$ $

\begin{enumerate}[label=(\roman*)]
\item An infinite matrix $A$ with entries in $S$ is \emph{bounded} (with respect to $w$) if there exists some $N\in\mathbb{Z}$ such that, for all $i,j\in\mathbb{N}$, $w(a_{i,j}) \geq N$. We will call $N$ a \emph{lower bound} for $A$. Similarly, an infinite row vector $\mathbf{v}$ with entries in $S$ is \emph{bounded} if there exists $N\in\mathbb{Z}$ such that, for all $i\in\mathbb{N}$, $w(v_i) \geq N$.

Below, bounded row vectors $\mathbf{s} = \begin{pmatrix}
s_0 & s_1 & s_2 & \dots
\end{pmatrix}$ will correspond to elements of the bounded power series module $s_0 + s_1x + s_2x^2 + \dots\in S^+[[x]]$.

\item An infinite matrix $A$ is \emph{row-null} if each row is a null sequence, i.e.\ if $\displaystyle \lim_{j\to\infty} w(a_{i,j}) = \infty$ for all $i$. (This is a generalisation of the notion of \emph{row-finiteness}.) Similarly, $A$ is \emph{column-null} if $\displaystyle \lim_{j\to\infty} w(a_{j,k}) = \infty$ for all $k$.

\item If $A$ and $B$ are infinite matrices, we will say that the product $AB$ \emph{is defined} if the infinite sum $x_{i,k} := \sum_{j\in\mathbb{N}} a_{i,j} b_{j,k}$ converges in $S$ for all $i$ and $k$, and we define it to be the infinite matrix $AB := (x_{i,k})_{i,k\in\mathbb{N}}$. Similarly, if $\mathbf{v}$ is an infinite row vector and $A$ is an infinite matrix, the product $\mathbf{v}A$ \emph{is defined} if $y_j := \sum_{i\in\mathbb{N}} v_i a_{i,j}$ converges in $S$ for all $j$, and we define it to be the infinite row vector $\mathbf{v}A = (y_j)_{j\in\mathbb{N}}$.
\end{enumerate}
\end{defn}

The following lemma is easy, and we omit its proof.

\begin{lem}\label{lem: sums of column-null matrices are column-null}
Let $A$ and $B$ be infinite matrices with entries in $S$, and suppose they are column-null. Then $A+B$ is column-null.\qed
\end{lem}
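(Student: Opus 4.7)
The plan is to observe that column-nullness is a pointwise condition on individual columns, and that the filtration axiom (Definition \ref{defn: filtrations}(i)) gives the needed ultrametric inequality $w(a+b) \geq \min\{w(a), w(b)\}$ at each entry. So the whole statement reduces to the elementary fact that the sum of two null sequences in $(S,w)$ is itself a null sequence.

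Concretely, I would fix an arbitrary column index $k \in \mathbb{N}$ and show that the $k$-th column of $A+B$, whose entries are $a_{j,k} + b_{j,k}$, satisfies $\lim_{j\to\infty} w(a_{j,k}+b_{j,k}) = \infty$. Given any $K\in\mathbb{Z}$, the column-null hypothesis on $A$ provides some $N_A$ with $w(a_{j,k})\geq K$ for all $j\geq N_A$, and similarly the hypothesis on $B$ provides $N_B$ with $w(b_{j,k})\geq K$ for all $j\geq N_B$. Then for any $j \geq \max\{N_A, N_B\}$, the filtration axiom yields
\[
w\bigl((A+B)_{j,k}\bigr) = w(a_{j,k}+b_{j,k}) \geq \min\{w(a_{j,k}),\, w(b_{j,k})\} \geq K.
\]
Since $K$ was arbitrary, the $k$-th column of $A+B$ is a null sequence, and since $k$ was arbitrary, $A+B$ is column-null.

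There is no real obstacle here; the result is a direct application of the definitions, which is presumably why the authors chose to omit the proof. The only thing worth emphasising is that column-nullness is genuinely a separate condition for each column (as opposed to uniform column-nullness), so we are permitted to choose $N_A$ and $N_B$ depending on $k$.
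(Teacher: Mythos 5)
Your proof is correct, and since the paper explicitly omits the proof of this lemma as easy, your direct argument (fixing a column, applying the ultrametric inequality $w(a+b)\geq\min\{w(a),w(b)\}$ entrywise, and choosing $N_A,N_B$ per column) is exactly the intended one. Nothing to add.
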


\begin{lem}\label{lem: products of matrices are defined and bounded}
Let $A$ and $B$ be infinite matrices with entries in $S$, and let $\mathbf{v}$ be an infinite row vector with entries in $S$.

\begin{enumerate}[label=(\roman*)]
\item Suppose $A$ and $B$ are bounded with lower bounds $M$ and $N$ respectively. If either $A$ is row-null or $B$ is column-null, then the product $AB$ is defined and is bounded with lower bound $M+N$.
\item Suppose $\mathbf{v}$ and $A$ are bounded with lower bounds $L$ and $M$ respectively. If $A$ is column-null, then the product $\mathbf{v}A$ is defined and is bounded with lower bound $L+M$.
\end{enumerate}
\end{lem}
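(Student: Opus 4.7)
The plan is to reduce both parts of the lemma to the basic completeness principle: in a complete filtered abelian group $(S,w)$, a series $\sum_{n} x_n$ converges as soon as $w(x_n)\to \infty$, because then the partial sums are Cauchy, and any convergent series inherits the pointwise lower bound $w(\sum x_n)\geq \inf_n w(x_n)$ from the filtration axioms and the continuity of $w$.

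For part (i), I would fix arbitrary indices $i,k\in\mathbb{N}$ and consider the formal sum $\sum_{j} a_{i,j}b_{j,k}$. The filtration inequality gives $w(a_{i,j}b_{j,k})\geq w(a_{i,j})+w(b_{j,k})$. In the case where $A$ is row-null, boundedness of $B$ yields $w(a_{i,j}b_{j,k})\geq w(a_{i,j})+N\to\infty$ as $j\to\infty$; in the case where $B$ is column-null, boundedness of $A$ yields $w(a_{i,j}b_{j,k})\geq M+w(b_{j,k})\to\infty$. Either way, the terms tend to $0$, so the series converges by completeness of $S$. Each partial sum satisfies $w(\cdot)\geq M+N$ (since every summand does), and passing to the limit using the filtration inequality $w(y)\geq\min\{w(y_n),w(y-y_n)\}$ together with $w(y-y_n)\to\infty$ gives $w(\sum_j a_{i,j}b_{j,k})\geq M+N$, which is the required lower bound on $AB$.

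Part (ii) is proved in exactly the same way: for each fixed $j$, the assumption that $A$ is column-null gives $w(v_i a_{i,j})\geq L+w(a_{i,j})\to\infty$ as $i\to\infty$, so $\sum_i v_i a_{i,j}$ converges in $S$, and the same lower-semicontinuity argument yields $w\bigl(\sum_i v_i a_{i,j}\bigr)\geq L+M$ uniformly in $j$.

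There is no real obstacle here: the only mild subtlety is that we must not confuse the finite-summation bound (which is immediate) with the infinite-sum bound, and so the step of passing from $w(\text{partial sums})\geq M+N$ to $w(\text{limit})\geq M+N$ should be written out carefully using the ultrametric inequality and the definition of convergence rather than invoked implicitly.
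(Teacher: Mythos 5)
Your proof is correct and follows essentially the same route as the paper: fix the indices, use $w(a_{i,j}b_{j,k})\geq w(a_{i,j})+w(b_{j,k})$ together with the row-null/column-null hypothesis to see the terms tend to $0$, invoke completeness for convergence, and deduce the lower bound $M+N$ (resp.\ $L+M$) for the limit. The only difference is cosmetic: you spell out the passage from the bound on partial sums to the bound on the limit, which the paper leaves implicit.
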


\begin{proof}
Suppose we are in the situation of part (i), and $B$ is column-null. Fix arbitrary $i,k\in\mathbb{N}$. Then $w(a_{i,j}) \geq M$ for all $j\in\mathbb{N}$, and $w(b_{j,k}) \to \infty$ as $j\to\infty$, so the sequence $(a_{i,j} b_{j,k})_{j\in\mathbb{N}}$ converges to $0$. As $S$ is complete, the sum $\sum_{j\in\mathbb{N}} a_{i,j} b_{j,k}$ converges \cite[Chapter I, \S 3.3, Proposition 4]{LVO}. Moreover, since $w(a_{i,j} b_{j,k}) \geq w(a_{i,j}) + w(b_{j,k}) \geq M + N$ for all $j\in\mathbb{N}$, we must have $w\left(\sum_{j\in\mathbb{N}} a_{i,j} b_{j,k}\right) \geq M+N$.

The other cases of this lemma are proved very similarly, so we omit their proofs.
\end{proof}

Part (ii) of this lemma allows us to explicitly define filtered left $S$-module homomorphisms out of $S^+[[x]]$. If $A$ is bounded with lower bound $N$ and column-null, then right multiplication by $A$ sends bounded infinite row vectors $\mathbf{s}$ to bounded infinite row vectors $\mathbf{s}A$. That is, the map
\begin{align*}
\rho_A: S^+[[x]] &\to S[[y]]\\
\sum_{i\in\mathbb{N}} s_i x^i &\mapsto \sum_{j\in\mathbb{N}} \left(\sum_{i\in\mathbb{N}} s_i a_{i,j} \right) y^j
\end{align*}
is well-defined and has image in $S^+[[y]]$.

If $S^+[[x]]$ and $S^+[[y]]$ are given the filtrations $v_{x,\varepsilon}$ and $v_{y,\varepsilon}$ respectively, then $\deg(\rho_A) \geq N$.

\begin{lem}\label{lem: product of column-nulls is column-null}
Suppose $A$ and $B$ are bounded and column-null. Then $AB$ is column-null.
\end{lem}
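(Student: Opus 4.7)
The plan is a direct $\varepsilon$-style argument, splitting the defining sum into a head and a tail. Fix a column index $k \in \mathbb{N}$ and an arbitrary $K \in \mathbb{Z}$; the goal is to show that $w((AB)_{j,k}) \geq K$ for all sufficiently large $j$. Note first that by Lemma \ref{lem: products of matrices are defined and bounded}(i), $AB$ is defined, so the entry $(AB)_{j,k} = \sum_{i \in \mathbb{N}} a_{j,i} b_{i,k}$ makes sense as a convergent sum in $S$. Let $M$ and $N$ be lower bounds for $A$ and $B$ respectively.

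First I would handle the tail. Since $B$ is column-null, the sequence $(w(b_{i,k}))_{i \in \mathbb{N}}$ tends to $\infty$, so there exists $I_0 \in \mathbb{N}$ such that $w(b_{i,k}) \geq K - M$ whenever $i \geq I_0$. For such $i$, and for \emph{any} $j$, we have
\[
w(a_{j,i} b_{i,k}) \geq w(a_{j,i}) + w(b_{i,k}) \geq M + (K - M) = K.
\]

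Next, the head: for each of the finitely many indices $i = 0, 1, \dots, I_0 - 1$, column-nullity of $A$ at column $i$ gives some $J_i \in \mathbb{N}$ such that $w(a_{j,i}) \geq K - N$ for all $j \geq J_i$. Setting $J := \max\{J_0, \dots, J_{I_0 - 1}\}$, we obtain $w(a_{j,i} b_{i,k}) \geq (K - N) + N = K$ for all $j \geq J$ and all $i < I_0$.

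Combining the two cases, for every $j \geq J$ and every $i \in \mathbb{N}$ we have $w(a_{j,i} b_{i,k}) \geq K$, and hence $w((AB)_{j,k}) \geq K$. Since $K$ was arbitrary, this proves $\lim_{j \to \infty} w((AB)_{j,k}) = \infty$, and as $k$ was arbitrary, $AB$ is column-null. The only mild subtlety is ensuring the head is really finite, which is exactly why we exploited the lower bound $M$ on $A$ to truncate using the column-nullity of $B$ first; trying to argue symmetrically without such a truncation would require uniform control over infinitely many columns of $A$, which is not available.
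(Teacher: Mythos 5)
Your proof is correct and follows essentially the same route as the paper: fix a column, split the sum at a point beyond which column-nullity of $B$ (together with the lower bound on $A$) controls the tail, then use column-nullity of $A$ on the finitely many remaining columns (together with the lower bound on $B$) to control the head for all sufficiently large rows. The only difference is notational (your summation index is the paper's $j$), so no further comment is needed.
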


\begin{proof}
Fix arbitrary $k\in\mathbb{N}$, and write $c_k = \begin{pmatrix}
x_0&x_1&x_2&\dots
\end{pmatrix}^T$ for column $k$ of $AB$, i.e.\ $\displaystyle x_i = \sum_{j\in\mathbb{N}} a_{i,j} b_{j,k}$. Write $L$ and $M$ for lower bounds of $A$ and $B$ respectively.

Choose arbitrary $n\in\mathbb{N}$. Then, since $B$ is column-null, there exists some $J\in\mathbb{N}$ such that $w(b_{j,k}) \geq n-L$ for all $j\geq J$. Moreover, since $A$ is column-null, we can find some $I\in\mathbb{N}$ such that $w(a_{i,j}) \geq n-M$ for all $i\geq I$ and all $j<J$. In either case, $w(a_{i,j}b_{j,k}) \geq n$ whenever $i\geq I$, and so $w(x_i) \geq n$ whenever $i\geq I$. As $n$ was arbitrary, we have shown that $w(x_i) \to \infty$ as $i\to\infty$. Hence $c_k$ is a null sequence as required.
\end{proof}

The proof of the following result can be compared to \cite[Corollary 6.11]{DDMS}.

\begin{lem}\label{lem: associativity of column-null matrices}
Suppose $\mathbf{v}$ is a bounded infinite row vector, and $A$ and $B$ are bounded, column-null infinite matrices. Then $\mathbf{v}(AB) = (\mathbf{v}A)B$.
\end{lem}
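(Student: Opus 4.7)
The plan is to fix a column index $k$ and an arbitrary integer $K$, and show that $w\bigl((\mathbf{v}(AB))_k - ((\mathbf{v}A)B)_k\bigr) > K$. Since $w$ is separated, this forces equality coordinate-wise, giving $\mathbf{v}(AB) = (\mathbf{v}A)B$. Both sides are well-defined: $AB$ exists and is bounded by Lemma \ref{lem: products of matrices are defined and bounded}(i), and is column-null by Lemma \ref{lem: product of column-nulls is column-null}, so $\mathbf{v}(AB)$ is defined and bounded by Lemma \ref{lem: products of matrices are defined and bounded}(ii); similarly $\mathbf{v}A$ is defined and bounded, and then so is $(\mathbf{v}A)B$.

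Let $L$, $M_A$, $M_B$ be lower bounds for $\mathbf{v}$, $A$, $B$ respectively. Since column $k$ of $B$ is null, I would choose $J$ so that $w(b_{jk}) > K - L - M_A$ for all $j \geq J$, and split the inner $j$-summation at $J$. For fixed $i$, the sequence $(a_{ij}b_{jk})_{j}$ is null, so $\alpha_i := \sum_{j<J} a_{ij}b_{jk}$ and $\beta_i := \sum_{j\geq J} a_{ij}b_{jk}$ are both defined, with $(AB)_{ik} = \alpha_i + \beta_i$. Each term $v_i a_{ij} b_{jk}$ with $j \geq J$ has $w$-value strictly exceeding $L + M_A + (K-L-M_A) = K$, so $w(v_i \beta_i) > K$. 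Since the $j$-th column of $A$ is null, $\alpha_i \to 0$ as $i \to \infty$, so $\sum_i v_i \alpha_i$ converges; then $\sum_i v_i \beta_i = \sum_i v_i (AB)_{ik} - \sum_i v_i \alpha_i$ also converges, and the standard bound that the $w$-value of a convergent sum is at least the infimum of the $w$-values of its terms yields
\[
w\Bigl(\sum_i v_i \beta_i\Bigr) > K.
\]

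For the main part, the $j$-summation is now finite, so it commutes with the outer $i$-sum, and continuity of right multiplication by the fixed element $b_{jk}$ lets me pull $b_{jk}$ out of the $i$-sum, giving
\[
\sum_i v_i \alpha_i \;=\; \sum_{j<J} \Bigl(\sum_i v_i a_{ij}\Bigr) b_{jk} \;=\; \sum_{j<J} (\mathbf{v}A)_j\, b_{jk}.
\]
Symmetrically, splitting $((\mathbf{v}A)B)_k = \sum_{j<J}(\mathbf{v}A)_j b_{jk} + \sum_{j\geq J}(\mathbf{v}A)_j b_{jk}$ and estimating the tail via $w((\mathbf{v}A)_j b_{jk}) \geq (L+M_A) + w(b_{jk}) > K$ shows that $((\mathbf{v}A)B)_k$ also agrees with $\sum_{j<J}(\mathbf{v}A)_j b_{jk}$ up to something of $w$-value exceeding $K$. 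Hence $w\bigl((\mathbf{v}(AB))_k - ((\mathbf{v}A)B)_k\bigr) > K$, and letting $K \to \infty$ concludes the proof by separatedness.

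The main obstacle is purely bookkeeping: each reordering of an infinite sum has to be justified either by reducing it to a finite sum (where commutativity is automatic and continuity of multiplication by a fixed element takes care of pulling constants out) or by showing it is a tail whose $w$-value exceeds a prescribed $K$. The hardest point to get right is ensuring that every infinite sum I write down genuinely converges in $(S,w)$ before invoking the inf-bound on its $w$-value; once that is in place, the argument is a standard two-variable Fubini swap in a complete filtered abelian group.
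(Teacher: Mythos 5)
Your proof is correct, and all the interchanges you perform are legitimately justified: the only genuinely infinite interchange you use is the swap of a \emph{finite} $j$-sum with the $i$-sum (additivity of limits), plus continuity of right multiplication by the fixed element $b_{j,k}$, and the tail estimates follow from the inf-bound on values of convergent sums, which is valid here since the filtration takes values in $\mathbb{Z}\cup\{\infty\}$. The route differs from the paper's in its bookkeeping rather than its ingredients. The paper first observes that for each $N$ only finitely many pairs $(i,j)$ satisfy $w(v_i a_{i,j} b_{j,k}) \leq N$ (choosing $J$ from column-nullity of $B$, then $I$ from column-nullity of $A$ exactly as you do), forms the finite partial sums $y_N$ over these two-dimensional truncations, shows $(y_N)$ is Cauchy with limit $y$, and then proves that \emph{both} iterated sums equal this auxiliary unordered sum $y$. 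You instead truncate only in the $j$-direction at a $K$-dependent $J$, compare both iterated sums to the common head $\sum_{j<J}(\mathbf{v}A)_j b_{j,k}$, and bound both tails by $K$ directly; the convergence of the split pieces (e.g.\ $\sum_i v_i\alpha_i$ and each $\sum_i v_i a_{i,j}b_{j,k}$ for $j<J$, via column-nullity of $A$, and $\sum_i v_i\beta_i$ by the difference trick) is exactly the extra checking your version needs, and you supply it. The paper's argument is more symmetric and identifies the common value intrinsically as an unordered sum; yours avoids the auxiliary limit at the cost of a few more convergence verifications. Either proof consumes precisely the hypotheses of boundedness, column-nullity of $A$ and $B$, and completeness of $S$.
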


\begin{proof}
By Lemmas \ref{lem: products of matrices are defined and bounded} and \ref{lem: product of column-nulls is column-null}, the products $AB$, $\mathbf{v}(AB)$, $\mathbf{v}A$ and $(\mathbf{v}A)B$ are defined, and $\mathbf{v}(AB)$ and $(\mathbf{v}A)B$ are infinite row vectors. Fix some arbitrary $k\in\mathbb{N}$: we will show that entry $k$ of $\mathbf{v}(AB)$ is equal to entry $k$ of $(\mathbf{v}A)B$, i.e.\ that
$$(\mathbf{v}(AB))_k = \sum_{i\in\mathbb{N}} \sum_{j\in\mathbb{N}} v_i a_{i,j} b_{j,k} = \sum_{j\in\mathbb{N}} \sum_{i\in\mathbb{N}} v_i a_{i,j} b_{j,k} = ((\mathbf{v}A)B)_k.$$

Write $K$, $L$ and $M$ for lower bounds of $\mathbf{v}$, $A$ and $B$ respectively.

First we will observe that, for each $N\in\mathbb{Z}$, there are only finitely many $(i,j)\in\mathbb{N}^2$ such that $w(v_i a_{i,j} b_{j,k}) \leq N$. Indeed, since $B$ is column-null, there exists $J\in\mathbb{N}$ such that $w(b_{j,k}) > N-K-L$ for all $j\geq J$, and since $A$ is column-null, there exists $I\in\mathbb{N}$ such that $w(a_{i,j}) > N-K-M$ for all $i\geq I$ and $j < J$. In either case, we get $w(v_i a_{i,j} b_{j,k}) > N$, and so the only $(i,j)\in\mathbb{N}^2$ satisfying $w(v_i a_{i,j} b_{j,k}) \leq N$ must satisfy $i<I$ and $j<J$. Write $T_N = \{(i,j)\in\mathbb{N}^2 : w(v_i a_{i,j} b_{j,k}) \leq N\}$ for this finite set.

Next, for all $N\in\mathbb{N}$, define $y_N := \sum_{(i,j)\in T_N} v_i a_{i,j} b_{j,k}$, a finite sum in $S$. The sequence $(y_N)_{N\in\mathbb{N}}$ is clearly Cauchy, and so converges in $S$ to some element $y$.

For all $i\in\mathbb{N}$, define $s_i = \sum_{j\in\mathbb{N}} v_i a_{i,j} b_{j,k}$; likewise, for all $j\in\mathbb{N}$, define $t_j = \sum_{i\in\mathbb{N}} v_i a_{i,j} b_{j,k}$. We are aiming to show that $\sum_{i\in\mathbb{N}} s_i = \sum_{j\in\mathbb{N}} t_j$. So fix some $N\in\mathbb{Z}$, and choose $I$ large enough so that $(i,j)\in T_N \implies i \leq I$: then
$$w\left(y_N - \sum_{i=0}^I s_i\right) > N \text{ and } w(y_N - y) > N,$$
from which it follows that $w(y - \sum_{i=0}^I s_i) > N$.

Hence, as $N$ was arbitrary, we have shown that $\sum_{i\in\mathbb{N}} s_i = y$. An almost identical argument shows $\sum_{j\in\mathbb{N}} t_j = y$.
\end{proof}

\begin{rk}
A very similar proof will show that, if $A$, $B$ and $C$ are bounded infinite matrices and $B$ and $C$ are column-null, then $(AB)C = A(BC)$. Likewise, if all three matrices are bounded and $A$ and $B$ are row-null, then $(AB)C = A(BC)$. Compare this to similar results for infinite matrices in the column-finite case: \cite[Theorem A]{Wan97}, \cite[Lemma 4]{ZhaDan06}, \cite[Theorem 21]{BosLop19}.

In particular, the set $\mathbf{M}(S)$ of bounded, column-null infinite matrices over $S$ forms a ring. So, if we write $S^\infty$ for the left $S$-module of bounded infinite row vectors, we get a ring homomorphism $\mathbf{M}(S) \to \mathrm{End}(S^\infty)$, where the matrix $A$ acts on $S^\infty$ by the map $(\mathbf{s} \mapsto \mathbf{s}A)$.
\end{rk}

Parts (i) and (ii) of the following lemma are well-known and used in combinatorics.

\begin{lem}\label{lem: infinite matrices}
Suppose $A = I-U$ is an infinite matrix, where $U$ is \emph{strictly lower-triangular}, i.e.\ $i\leq j\implies u_{i,j} = 0$. Then:
\begin{enumerate}[label=(\roman*)]
\item the sum $U + U^2 + U^3 + \dots$ converges entrywise, and the matrix $V = U + U^2 + U^3 + \dots$ is strictly lower-triangular,
\item $B = I + V$ satisfies $AB = BA = I$,
\item if $U$ is bounded with lower bound $1$ and is column-null, then $V$ is bounded with lower bound $1$ and is column-null.
\end{enumerate}
\end{lem}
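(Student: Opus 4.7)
I would prove the three parts in order, exploiting the strictly lower triangular structure of $U$ throughout. For part (i), I would begin by noting (by induction on $n$) that every $U^n$ is strictly lower-triangular, with $(i,j)$-entry given by a sum over strictly decreasing chains $i > i_1 > \cdots > i_{n-1} > j$ of products of entries of $U$. In particular $(U^n)_{i,j} = 0$ whenever $n > i - j$, so the sum defining $V_{i,j}$ collapses to at most $i - j$ nonzero terms, converges trivially, and strict lower triangularity of $V$ is then immediate.

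For part (ii), the key observation is that the matrix products $UV$ and $VU$ are well-defined entrywise as \emph{finite} sums: indeed $(UV)_{i,k} = \sum_{k<j<i} u_{i,j} v_{j,k}$ has at most $i - k - 1$ nonzero terms, and similarly for $VU$, so there are no convergence issues to worry about. With this in hand, I would compute directly $UV = U^2 + U^3 + \cdots = V - U$ (and analogously $VU = V - U$), from which $(I-U)(I+V) = I + V - U - UV = I$ and $(I+V)(I-U) = I$ follow at once.

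For part (iii), I would first observe that $U$ is automatically row-null, since each row of a strictly lower-triangular matrix is eventually zero. Combined with the column-null hypothesis, an induction on $n$ using Lemmas \ref{lem: products of matrices are defined and bounded} and \ref{lem: product of column-nulls is column-null} then shows that every $U^n$ is bounded with lower bound $n$, column-null, and row-null (each row being a finite sum). The bound $w(V_{i,k}) \geq 1$ follows immediately, since each nonzero contribution $(U^n)_{i,k}$ has $w$-value at least $n \geq 1$. The main obstacle is the column-nullity of $V$: to establish it I would fix a column $k$ and an integer $N$, and split $V_{i,k} = \sum_{n=1}^{i-k} (U^n)_{i,k}$ at $n = N$. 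Tail terms with $n \geq N$ contribute $w$-value at least $n \geq N$ automatically, while for each of the finitely many head indices $1 \leq n < N$, column-nullity of $U^n$ supplies some $I_n$ beyond which $w((U^n)_{i,k}) \geq N$. Taking $I = \max_{1 \leq n < N} I_n$ then yields $w(V_{i,k}) \geq N$ for all $i \geq I$, as required.
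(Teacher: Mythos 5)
Your proposal is correct and follows essentially the same route as the paper: entrywise finiteness of the sum defining $V$ from strict lower-triangularity, the lower bound $n$ for $U^n$ via the product lemmas, and the head/tail splitting of $V$'s columns to get column-nullity. The only minor deviation is in part (ii), where you verify $AB=BA=I$ by the direct identity $UV=VU=V-U$ instead of the paper's truncation to finite lower-triangular matrices; both are valid and rest on the same entrywise-finiteness observation.
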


\begin{proof}
As $U$ is row-finite, Lemma \ref{lem: products of matrices are defined and bounded}(i) ensures that the matrices $U^n$ are all defined, and an easy calculation shows that the first $n$ rows of $U^n$ are all zero: in particular, for each fixed $(i,j)\in\mathbb{N}^2$, only finitely many of the matrices $U$, $U^2$, $U^3$, $\dots$ have non-zero $(i,j)$-entry. Hence each entry of $V$ is in fact a finite sum. Since each $U^n$ is strictly lower-triangular, this establishes (i).

To show (ii), note that $A$ and $B$ are both row-finite, and so each entry of $AB$ and $BA$ requires only a finite calculation. The property $AB = BA = I$ can now be checked entrywise by truncating $A$ and $B$, i.e.\ by considering the square submatrices of $A$ and $B$ consisting of the first rows and columns, and then using the corresponding property of \emph{finite} lower-triangular matrices.

For part (iii): assume that $U$ has lower bound $1$. Then each $U^n$ has lower bound $n$ by Lemma \ref{lem: products of matrices are defined and bounded}(i), and is column-null by an inductive use of Lemma \ref{lem: product of column-nulls is column-null}. Now, to check that $V$ is column-null, fix some $N\in\mathbb{Z}$: we need to show that each column eventually has value greater than $N$. But this is true for the finite sum $U + U^2 + \dots + U^N$, by Lemma \ref{lem: sums of column-null matrices are column-null}, and adding the matrices $U^{N+1}, U^{N+2}, \dots$ will not affect this, as each of these matrices has lower bound $N+1$.
\end{proof}

\begin{lem}\label{lem: ring hom between bounded skew power series rings}
Suppose that $(S, w)$ is a complete filtered $\mathbb{Z}_p$-algebra and $(\sigma, \delta)$ is a commuting skew derivation on $S$, quasi-compatible with $w$. Suppose that there exists $t\in S$ such that $\delta(s) = ts - \sigma(s)t$ for all $s\in S$, satisfying $\sigma(t) = t$. Assume further that $\sigma^{p^r}$ is strictly filtered for some $r\in\mathbb{N}$, and if $p\neq 0$ in $S$ then $w(t) \geq 0$.

Then the continuous ring homomorphism $\theta: S[Y; \Sigma_n, \Delta_n]\to S^+[[x; \sigma, \delta]]$ of Theorem \ref{thm: skew power series subrings exist} extends uniquely to a continuous ring homomorphism $\theta: S^+[[Y; \Sigma_n, \Delta_n]]\to S^+[[x; \sigma, \delta]]$.
\end{lem}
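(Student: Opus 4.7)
The plan is to construct $\theta$ directly using the infinite-matrix machinery of \S\ref{subsec: continuous homomorphisms}. Since $\sigma(t)=t$ forces $\delta(t)=0$, Lemma \ref{lem: when sigma fixes t}(i) shows that $x$ and $t$ commute in $S[x;\sigma,\delta]$, so $X_n$ is a polynomial in the commuting indeterminates $x$ and $t$ with integer coefficients. Expanding, $X_n^m = \sum_{k\geq 0} a_{m,k} x^k$ where each $a_{m,k}\in S$ is a polynomial in $t$ with integer coefficients; using $w(t)\geq 0$ (which is automatic when $p=0$ in $S$), assemble these into the infinite matrix $A=(a_{m,k})_{m,k\in\mathbb{N}}$ over $S$, which is bounded with lower bound $0$.

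The key technical step is to show that $A$ is column-null. Outside the single anomalous case $p=2,\ n=0,\ \chr(S)\neq 2$, the constant term (in $x$) of $X_n$ vanishes, so $a_{m,k}=0$ whenever $m>k$ and each column is eventually zero. In the exceptional case $X_0=x-2t$, the $\mathbb{Z}_p$-algebra hypothesis gives $w(2)\geq v_p(2)=1$, from which Lemma \ref{lem: p-adic facts} and direct binomial expansion yield $w(a_{m,k})\geq m-k$, still producing null columns. In all cases one has the uniform bound $w(a_{m,k})\geq\max\{0,m-k\}$. By Lemma \ref{lem: products of matrices are defined and bounded}(ii), right multiplication by $A$ then defines a left $S$-linear map
\[
\theta: S^+[[Y; \Sigma_n, \Delta_n]] \longrightarrow S^+[[x; \sigma, \delta]], \qquad \sum_m \lambda_m Y^m \longmapsto \sum_k \Bigl(\sum_m \lambda_m a_{m,k}\Bigr)x^k,
\]
which visibly extends the polynomial map of Theorem \ref{thm: skew power series subrings exist}. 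Choosing $\varepsilon\leq 1$, the bound on $w(a_{m,k})$ yields $v_{x,\varepsilon}(\theta(h))\geq v_{Y,\varepsilon}(h)$ directly, so $\theta$ is filtered of non-negative degree, hence continuous.

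For multiplicativity, approximate arbitrary $f,g\in S^+[[Y;\Sigma_n,\Delta_n]]$ by their partial sums $f_N,g_N$; continuity of multiplication in the source gives $f_Ng_N\to fg$, and continuity of $\theta$ then yields $\theta(f_Ng_N)\to\theta(fg)$ as well as $\theta(f_N)\to\theta(f)$ and $\theta(g_N)\to\theta(g)$. Continuity of multiplication in the target gives $\theta(f_N)\theta(g_N)\to\theta(f)\theta(g)$. On the polynomial subring we already know $\theta(f_Ng_N)=\theta(f_N)\theta(g_N)$ by Theorem \ref{thm: skew power series subrings exist}, so the two sequences agree term by term; since filtrations are separated and limits are therefore unique, $\theta(fg)=\theta(f)\theta(g)$. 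Uniqueness of the extension is immediate from continuity together with density of $S[Y;\Sigma_n,\Delta_n]$ in $S^+[[Y;\Sigma_n,\Delta_n]]$.

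The principal obstacle is that $S^+[[x;\sigma,\delta]]$ need not be complete when $w$ takes negative values, so one cannot simply construct $\theta$ by extending the polynomial map to a completion of the target and invoking a universal property. The matrix construction sidesteps this by expressing each coefficient of $\theta(f)$ as a single convergent sum in the complete ring $S$, and column-nullity of $A$ is exactly what guarantees these coefficient-level sums converge. Verifying column-nullity in the anomalous $p=2,\ n=0,\ \chr(S)\neq 2$ case is the one place where the full $\mathbb{Z}_p$-algebra hypothesis (giving $w(2)\geq 1$, not merely $w(2)\geq 0$) enters substantively.
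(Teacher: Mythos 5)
Your argument is correct, and its computational core is the same as the paper's: you expand $X_n^m$ in powers of $x$, organize the coefficients into an infinite matrix $A$, and use boundedness plus column-nullity (via Lemma \ref{lem: products of matrices are defined and bounded}(ii)) to make sense of the coefficients of $\theta(f)$ as convergent sums in the complete ring $S$. Where you diverge is in how the ring-homomorphism property and the extension itself are obtained: the paper first extends $\theta$ to the completion $\widehat{S^+[[x;\sigma,\delta]]}$ by the general completion theorem, so multiplicativity, continuity and uniqueness come for free, and the matrix is only needed to check that the image of $S^+[[Y;\Sigma_n,\Delta_n]]$ actually lands back inside $S^+[[x;\sigma,\delta]]$ (coefficientwise stabilisation, using column-finiteness of $A$); you instead build the map directly by right multiplication by $A$, prove it is filtered of non-negative degree by the explicit estimate $w(a_{m,k})\geq\max\{0,m-k\}$ (with $\varepsilon\leq 1$), and then recover multiplicativity by a density-plus-joint-continuity argument in source and target, with uniqueness again by density. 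Your route buys an explicit degree bound and avoids invoking the completion of a possibly non-complete target (which, as you note, is precisely the obstruction that forces the paper's extra step), at the cost of verifying multiplicativity by hand; it also treats characteristic $p$ and the $p=2$, $n=0$ anomaly uniformly, whereas the paper dispatches characteristic $p$ separately at the outset. One small repair: your parenthetical claim that $w(t)\geq 0$ is ``automatic when $p=0$ in $S$'' is not true -- no bound on $w(t)$ is assumed in that case -- but none is needed, since in characteristic $p$ one has $X_n=x^{p^n}$ (Definition \ref{defn: X_n etc in char p} and the remarks following Definition \ref{defn: X_n etc in inner case}), so the entries $a_{m,k}$ are $0$ or $1$ and the lower bound $0$ and column-nullity hold regardless of $w(t)$; with that one-line fix your uniform bound, and hence the whole proof, goes through.
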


\begin{proof}

If $p=0$ in $S$ then this is clear, because $\theta$ sends $Y$ to $x^{p^n}$, so it extends to a map from $S[[Y; \Sigma_n, \Delta_n]]$ whose image is the set of power series in $x^{p^n}$, as in Proposition \ref{propn: subrings in characteristic p}. So we may assume that $p\neq 0$ and $w(t)\geq 0$.

Since $\theta$ is a continuous ring homomorphism, it extends to a map between the completions $$\widehat{\theta}:\widehat{S[Y;\Sigma_n,\Delta_n]}\to\widehat{S^+[[x;\sigma,\delta]]},$$ which remains a continuous ring homomorphism \cite[Chapter I, Theorem 3.4.5]{LVO}.

But $S[Y;\Sigma_n,\Delta_n]$ is dense in $S^+[[Y;\Sigma_n,\Delta_n]]$ by Theorem \ref{thm: restricted skew power series rings exist}(i), so it follows that $S^+[[Y;\Sigma_n,\Delta_n]]$ is contained in $\widehat{S[Y;\Sigma_n,\Delta_n]}$. Thus it remains to prove that the image of $S^+[[Y;\Sigma_n,\Delta_n]]$ under $\widehat{\theta}$ is contained in $S^+[[x;\sigma,\delta]]$.

More precisely, given $f=\underset{k\in\mathbb{N}}{\sum}{s_kY^k}\in S^+[[Y;\Sigma_n,\Delta_n]]$, $f$ is the limit in $S^+[[Y;\Sigma_n,\Delta_n]]$ of the sequence of polynomials $f_N:=\underset{k\leq N}{\sum}{s_kY^k}\in S[Y;\Sigma_n,\Delta_n]$. So the sequence $\theta(f_N)\in S^+[[x;\sigma,\delta]]$ converges to $\widehat{\theta}(f)\in\widehat{S^+[[x;\sigma,\delta]]}$ by continuity, and thus it remains to prove that $\theta(f_N)$ converges to an element of $S^+[[x;\sigma,\delta]]$.

For each $k\in\mathbb{N}$, write $A_k(x):=X_n^k=((x-t)^{p^n}+(-1)^{p+1}t^{p^n})^k$, a monic polynomial in $x$ with entries in $S$. Let the $x^j$-coefficient of $A_k(x)$ be $a_{k,j}$, and set $A = (a_{k,j})$, an infinite matrix.

Since $A_k(x)$ has lowest non-zero entry of degree at least $k$, we see that $A$ is row and column-finite. Setting $S^{\infty}$ as the left $S$-module of bounded infinite row vectors, for any $\mathbf{s}\in S^{\infty}$, the infinite row vector $\mathbf{s}A$ is defined and bounded by Lemma \ref{lem: products of matrices are defined and bounded}(ii).

Therefore, let $\mathbf{s}:=(s_0,s_1,\dots)\in S^{\infty}$ be the vector of coefficients for $f$, and let $\mathbf{r}:=\mathbf{s}A$, say $r=(r_0,r_1,\dots)\in S^{\infty}$. We will prove that $\theta(f_N)$ converges to the bounded power series $g:=\underset{k\in\mathbb{N}}{\sum}{r_kx^k}\in S^+[[x;\sigma,\delta]]$.

Since $\theta(Y)=X_n$, it is clear that $\theta(f_N)=\underset{k\leq N}{\sum}{s_kX_n^k}=\underset{k\leq N}{\sum}{s_kA_k(x)}$. So let $A_N$ be the first $N$ rows of $A$, and let $\mathbf{s}_N:=(s_0,\dots,s_N)\in S^N$, and it follows that $\mathbf{s}_NA_N$ is the vector of coefficients of the polynomial $\theta(f_N)$.

But since $A$ is column finite, this means that for each $j\in\mathbb{N}$, and for all sufficiently high $N$, the $j$'th entry of $\mathbf{s}_NA_N$ is equal to the $j$'th entry of $\mathbf{s}A$, which is $r_j$. In other words, the $x^j$ coefficient of $\theta(f_N)$ is $r_j$ for sufficiently high $N$, and thus $\theta(f_N)$ converges to $g$ in $S^+[[x;\sigma,\delta]]$ as required.\end{proof}

\begin{thm}\label{thm: well-defined subring}
Let $(S,w)$ be a complete, filtered $\mathbb{Z}_p$-algebra, and let $(\sigma,\delta)$ be a quasi-compatible commuting skew derivation on $S$. Suppose we are in one of the following two situations:
\begin{enumerate}[label=(\arabic*)]
\item $S$ has characteristic $p$.
\item There exists $t\in S$ such that $\delta(s)=ts-\sigma(s)t$ for all $s\in S$, satisfying $w(t)\geq 0$ and $\sigma(t) = t$, and $\sigma^{p^r}$ is strictly filtered for some $r\in\mathbb{N}$.
\end{enumerate}

Then $S^+[[X_n;\Sigma_n,\Delta_n]]$ is a subring of $S^+[[x;\sigma,\delta]]$ for all $n$, and $S^+[[x;\sigma,\delta]]$ is freely generated as a module over $S^+[[X_n;\Sigma_n,\Delta_n]]$ with basis $\{x^i:i=0,\dots,p^n-1\}$.\end{thm}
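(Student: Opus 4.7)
Case (1) is handled by Proposition \ref{propn: subrings in characteristic p}. For case (2), the sub-case $p = 0$ in $S$ reduces to case (1) since there $X_n = x^{p^n}$, $\Sigma_n = \sigma^{p^n}$ and $\Delta_n = \delta^{p^n}$; the case $n = 0$ is also trivial, because $X_0 \in x + S$ forces $S^+[[X_0; \Sigma_0, \Delta_0]] = S^+[[x; \sigma, \delta]]$. So I will assume throughout that $p \neq 0$ in $S$ (whence $w(t) \geq 0$) and $n \geq 1$. By Theorem \ref{thm: skew power series subrings exist} and Lemma \ref{lem: ring hom between bounded skew power series rings}, there is a continuous ring homomorphism $\theta : S^+[[Y; \Sigma_n, \Delta_n]] \to S^+[[x; \sigma, \delta]]$ extending the identity on $S$ and sending $Y \mapsto X_n$. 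What remains is to show that $\theta$ is injective and that every $f \in S^+[[x; \sigma, \delta]]$ admits a unique expression $\sum_{i=0}^{p^n - 1} g_i x^i$ with $g_i \in \theta(S^+[[Y; \Sigma_n, \Delta_n]])$.

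The plan is to apply the infinite-matrix machinery of \S\ref{subsec: continuous homomorphisms}. Under the bijection $m \leftrightarrow (j,i)$ given by $m = jp^n + i$ with $0 \leq i \leq p^n - 1$, let $A = (a_{m,k})_{m, k \in \mathbb{N}}$ be the infinite matrix whose $m$-th row records the coefficients of $X_n^j x^i$ in the basis $\{x^k\}_{k \in \mathbb{N}}$. Since $\sigma(t) = t$ forces $\delta(t) = 0$ and thus $xt = tx$, each $X_n^j x^i$ is a polynomial in $x$ of degree exactly $m$ with leading coefficient $1$, so $A$ is lower-triangular with $a_{m,m} = 1$. Writing $X_n = x^{p^n} + h$ where $h = \sum_{\ell = 1}^{p^n - 1} \binom{p^n}{\ell} (-t)^{p^n - \ell} x^\ell$ (the constant terms of $(x - t)^{p^n}$ and $(-1)^{p+1} t^{p^n}$ cancel), Lemma \ref{lem: p-adic facts}(ii) combined with $w(t) \geq 0$ shows that every coefficient of $h$ has $w$-value at least $1$. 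Hence $U := I - A$ is strictly lower-triangular and bounded with lower bound $1$.

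The crux is to verify that $U$ is column-null. Since $x$ commutes with $h$, the binomial expansion $X_n^j = \sum_{r=0}^{j} \binom{j}{r} x^{(j-r)p^n} h^r$ is valid, and each coefficient of $h^r$ has $w$-value at least $r$. A term $\binom{j}{r} x^{(j-r)p^n} h^r$ contributes to the $x^{k-i}$-coefficient of $X_n^j$ only when $(j-r)p^n + r \leq k - i$, i.e.\ $r \geq (jp^n - k + i)/(p^n - 1)$, so $w(a_{m, k}) \geq \lceil (jp^n - k + i)/(p^n - 1) \rceil$, which tends to $\infty$ as $m \to \infty$. With column-nullity established, Lemma \ref{lem: infinite matrices} produces the inverse $B = I + V = A^{-1}$, with $V$ bounded with lower bound $1$ and column-null, hence so is $B$. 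Any $f = \sum_k s_k x^k \in S^+[[x; \sigma, \delta]]$ has bounded coefficient vector $\mathbf{s}$, and $\mathbf{t} := \mathbf{s}B$ is well-defined and bounded by Lemma \ref{lem: products of matrices are defined and bounded}(ii); associativity (Lemma \ref{lem: associativity of column-null matrices}) yields $\mathbf{s} = (\mathbf{s}B)A = \mathbf{t}A$. Regrouping $\mathbf{t}$ via $m = jp^n + i$ and setting $g_i := \sum_j t_{jp^n + i} X_n^j \in S^+[[X_n; \Sigma_n, \Delta_n]]$ gives the required decomposition $f = \sum_{i=0}^{p^n-1} g_i x^i$, and uniqueness (hence injectivity of $\theta$) follows from the same associativity: any bounded $\mathbf{t}$ with $\mathbf{t}A = 0$ satisfies $\mathbf{t} = \mathbf{t}(AB) = (\mathbf{t}A)B = 0$. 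The main technical obstacle is the column-nullity of $U$, which rests on the $p$-divisibility of $\binom{p^n}{\ell}$ for $0 < \ell < p^n$ together with the hypothesis $w(t) \geq 0$.
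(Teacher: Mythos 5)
Your proof is correct and follows essentially the same route as the paper's: reduce to situation (2) with $p\neq 0$, obtain $\theta$ from Theorem \ref{thm: skew power series subrings exist} and Lemma \ref{lem: ring hom between bounded skew power series rings}, and then invert the infinite coefficient matrix of the elements $X_n^j x^i$ using Lemmas \ref{lem: products of matrices are defined and bounded}, \ref{lem: associativity of column-null matrices} and \ref{lem: infinite matrices} --- the only (cosmetic) difference being that the paper deduces column-nullity from column-finiteness of $A$ (the lowest-degree term of $X_n^j x^i$ has degree at least $j+i$), whereas you prove it by an explicit valuation estimate. One small point: your $n=0$ case is not quite immediate when $p=2$, since then $X_0 = x-2t \neq x$, but the same triangular-matrix argument (all sub-diagonal entries have value at least $1$, as $w(2t)\geq 1$) disposes of it.
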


\begin{proof}
In situation (1), this is just Proposition \ref{propn: subrings in characteristic p}, so we focus on situation (2). In situation (2), $S^+[[Y;\Sigma_n,\Delta_n]]$ exists by Theorem \ref{thm: skew power series subrings exist}, and there exists a continuous ring homomorphism $\theta:S^+[[Y;\Sigma_n,\Delta_n]]\to S^+[[x;\sigma,\delta]]$ sending $Y$ to $X_n$ by Lemma \ref{lem: ring hom between bounded skew power series rings}.

We aim to show that
$$S^+[[x; \sigma, \delta]] = \bigoplus_{i=0}^{p^n-1} S^+[[X_n; \Sigma_n, \Delta_n]] x^i.$$
More precisely, we can define the function $\displaystyle \Theta: \bigoplus_{i=0}^{p^n-1} S^+[[Y]]y^i \to S^+[[x]]$ by
$$\Theta(f^{(0)}(Y), f^{(1)}(Y) y, \dots, f^{(p^n-1)}(Y) y^{p^n-1}) = \theta(f^{(0)}(Y)) + \theta(f^{(1)}(Y)) x + \dots + \theta(f^{(p^n-1)}(Y)) x^{p^n-1},$$
where the $f^{(i)}$ are bounded power series with coefficients in $S$, and $Y,y$ are formal variables. It is clear that $\Theta$ is a $S^+[[Y;\Sigma_n,\Delta_n]]$-module homomorphism, as it is the extension of $\theta$ to the direct sum.

Write $S^\infty$ for the left $S$-module of bounded infinite row vectors with entries in $S$. For each $0\leq i\leq p^n-1$, write $\mathbf{f}^{(i)} = \begin{pmatrix}
f^{(i)}_0&f^{(i)}_1&f^{(i)}_2&\dots
\end{pmatrix}\in S^\infty$ for the vector of coefficients of $f^{(i)}$, so that
$$f^{(i)}(Y) = f^{(i)}_0 + f^{(i)}_1 Y + f^{(i)}_2 Y^2 + \dots$$
and
\begin{align*}
\Theta(f^{(0)}(Y), f^{(1)}(Y) t, \dots, f^{(p^n-1)}(Y) t^{p^n-1}) &= f^{(0)}_0 + f^{(1)}_0 x + \dots + f^{(p^n-1)}_0 x^{p^n-1}\\
&+ f^{(0)}_1 X_n + f^{(1)}_1 X_n x + \dots + f^{(p^n-1)}_1 X_n x^{p^n-1}\\
&+ f^{(0)}_2 X_n^2 + f^{(1)}_2 X_n^2 x + \dots + f^{(p^n-1)}_2 X_n^2 x^{p^n-1} + \dots
\end{align*}
We want to write the right-hand side as a bounded power series in $x$: that is, we aim to find a bounded vector of coefficients $\mathbf{s} = \begin{pmatrix}
s_0 & s_1 & s_2 & \dots
\end{pmatrix}$ such that the right-hand side above is equal to $s_0 + s_1 x + s_2 x^2 + \dots$

Denote by $\varphi: (S^\infty)^{p^n} \to S^\infty$ the ``interleaving" map
$$\varphi(\mathbf{f}) = \begin{pmatrix}
f^{(0)}_0&f^{(1)}_0&\dots&f^{(p^n-1)}_0&f^{(0)}_1&f^{(1)}_1&\dots&f^{(p^n-1)}_1&\dots
\end{pmatrix},$$
where $\mathbf{f} = (\mathbf{f}^{(0)}, \mathbf{f}^{(1)}, \dots, \mathbf{f}^{(p^n-1)})$. Defined in this way, entry $m$ of $\varphi(\mathbf{f})$ is $f^{(\ell)}_k$, where $m = kp^n + \ell$ for unique $k\in\mathbb{N}$ and $0\leq \ell\leq p^n-1$. Next, continuing to write $m = kp^n + \ell$, let $A_m(x) = X_n^k x^\ell = ((x-t)^{p^n} + (-1)^{p+1}t^{p^n})^k x^\ell$, a polynomial in $x$ with entries in $S$. Let the $x^j$-coefficient of $A_m(x)$ be $a_{m,j}$, and set $A = (a_{m,j})$, an infinite matrix. Then we should have $\mathbf{s} = \varphi(\mathbf{f})A$, as long as this product is defined. But since $A_m(x)$ has lowest non-zero entry of degree at least $k+\ell$, we have that $A$ is column-finite, and so the infinite row vector $\varphi(\mathbf{f})A$ is indeed defined and bounded by Lemma \ref{lem: products of matrices are defined and bounded}(ii).

We now want to show that $\Theta$ is a bijection. Note that $A_m(x)$ is a monic polynomial of degree $m$, and every coefficient of $A_m(x)$ except for the $x^m$-coefficient is divisible by $p$, we know that $A$ satisfies the hypotheses of Lemma \ref{lem: infinite matrices}(i)--(iii). That is, it can be written as $A = I-U$, where $U$ is strictly lower-triangular and bounded with lower bound 1, and so Lemma \ref{lem: infinite matrices} implies that $A$ has a two-sided inverse $B$ which is also lower-triangular, bounded and column-null. 

So, given $\mathbf{s}\in S^\infty$, we can uniquely recover some $\mathbf{t} = \mathbf{s}B\in S^\infty$ such that $\mathbf{s} = \mathbf{t}A$ by Lemma \ref{lem: associativity of column-null matrices}, and of course there exists a unique $\mathbf{f}\in (S^\infty)^{p^n}$ such that $\varphi(\mathbf{f}) = \mathbf{t}$. We have shown that the map $\mathbf{f} \mapsto \mathbf{s} = \varphi(\mathbf{f})A$ is bijective, and hence so is the map $\Theta$.

Finally, since $\Theta$ is an extension of $\theta$, it follows that $\theta$ is injective, so $S^+[[X_n;\Sigma_n,\Delta_n]]=\theta(S^+[[Y;\Sigma_n,\Delta_n]])$ is a well-defined skew power series subring of $S^+[[x;\sigma,\delta]]$.\end{proof}

\begin{cor}\label{cor: powers of x-t also form a basis}
Suppose we are in either case of Theorem \ref{thm: well-defined subring}. For each $0\leq i\leq p^n-1$, let $y_i\in S[x; \sigma, \delta]$ be a monic polynomial of degree $i$. Then $S^+[[x; \sigma, \delta]]$ is freely generated as a module over $S^+[[X_n; \Sigma_n, \Delta_n]]$ with basis $\{y_i : i=0, \dots, p^n-1\}$.
\end{cor}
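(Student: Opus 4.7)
The plan is to reduce immediately to the known case of the basis $\{x^i\}$ via a change-of-basis matrix. By hypothesis each $y_i \in S[x;\sigma,\delta]$ is monic of degree $i<p^n$, so we may write $y_i = \sum_{j=0}^{p^n-1} m_{ij}\, x^j$ with $m_{ij} \in S$, $m_{ii}=1$, and $m_{ij}=0$ for $j>i$. Let $M = (m_{ij})$ be the resulting $p^n \times p^n$ matrix with entries in $S$; it is lower-triangular with ones on the diagonal (unipotent), so it is invertible as a matrix over $S$ with inverse $M^{-1} = (\tilde m_{ij})$ again lower-triangular unipotent with entries in $S$. This step is purely formal linear algebra and works over any ring containing a unit.

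Next, I would define the obvious $S^+[[X_n;\Sigma_n,\Delta_n]]$-linear map
\[
\Phi \colon \bigoplus_{i=0}^{p^n-1} S^+[[X_n;\Sigma_n,\Delta_n]]\, y_i \longrightarrow S^+[[x;\sigma,\delta]], \qquad (b_0,\dots,b_{p^n-1}) \mapsto \sum_i b_i y_i,
\]
and expand using the relation $y_i = \sum_j m_{ij} x^j$. Since $m_{ij} \in S$ and $S$ sits as a subring of $S^+[[X_n;\Sigma_n,\Delta_n]]$, the products $b_i m_{ij}$ lie in $S^+[[X_n;\Sigma_n,\Delta_n]]$, and we get
\[
\sum_i b_i y_i = \sum_{j=0}^{p^n-1} \Bigl( \sum_i b_i m_{ij}\Bigr) x^j.
\]
Setting $a_j := \sum_i b_i m_{ij}$, i.e.\ $\vec a = \vec b\, M$ in row-vector notation, the composition of $\Phi$ with the coefficient map $\sum_j a_j x^j \mapsto (a_0,\dots,a_{p^n-1})$ (which is bijective by Theorem \ref{thm: well-defined subring}) is exactly right-multiplication by $M$ on $(S^+[[X_n;\Sigma_n,\Delta_n]])^{p^n}$.

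Finally, right-multiplication by $M$ on row vectors over $S^+[[X_n;\Sigma_n,\Delta_n]]$ has a two-sided inverse given by right-multiplication by $M^{-1}$, since the $\tilde m_{ij} \in S \subseteq S^+[[X_n;\Sigma_n,\Delta_n]]$ associate correctly and $MM^{-1} = M^{-1}M = I$ already in the matrix ring $M_{p^n}(S)$. Hence $\Phi$ is a bijection, which is the claimed statement that $\{y_i : 0\leq i\leq p^n-1\}$ is a free basis of $S^+[[x;\sigma,\delta]]$ over $S^+[[X_n;\Sigma_n,\Delta_n]]$. There is no serious obstacle here: the only thing to be careful about is that $S$ is noncommutative, so one must keep the sides of the multiplication straight (the coefficients $m_{ij}$ multiply $b_i$ on the right, which is exactly what converts a left $S^+[[X_n;\Sigma_n,\Delta_n]]$-module basis on the right into another such basis), but the argument goes through without incident.
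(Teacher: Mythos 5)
Your proof is correct and is essentially the paper's own argument: both express the $y_i$ in terms of the $x^j$ via a unipotent triangular change-of-basis matrix over $S$, note that this matrix is invertible over $S\subseteq S^+[[X_n;\Sigma_n,\Delta_n]]$, and thereby transfer freeness from the basis $\{x^i\}$ given by Theorem \ref{thm: well-defined subring} to $\{y_i\}$. The only difference is that you spell out the bijectivity of the resulting right-multiplication map explicitly, which the paper leaves implicit.
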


\begin{proof}
Take an arbitrary element of the submodule spanned by the $y_i$:
$$f = \sum_{i=0}^{p^n-1} g_i y_i\in \sum_{i=0}^{p^n - 1} S^+[[X_n]] y_i \subseteq S^+[[x]].$$
Write $y_i = a_{i,0} + a_{i,1}x + \dots + a_{i,i-1}x^{i-1} + x^i$. Then we can rewrite $f = \sum_{k=0}^{p^n-1} h_k x^k$, where the $h_k$ and the $g_i$ are related by
$$h_k = \sum_{i=k}^{p^n-1} g_i a_{i,k}.$$
That is, if $\mathbf{h} = \begin{pmatrix}
h_0 & h_1 & \dots & h_{p^n-1}
\end{pmatrix}$ and $\mathbf{g} = \begin{pmatrix}
g_0 & g_1 & \dots & g_{p^n-1}
\end{pmatrix}$, then $\mathbf{h} = \mathbf{g} A$, where $A = (a_{i,j})$ (and we have set $a_{i,i} = 1$ for all $i$, and $a_{i,j} = 0$ for all $i < j$). Then $A$ is an upper-triangular $p^n\times p^n$ matrix with entries in $S$ and $1$s on the diagonal. Hence $A$ is invertible, and so $\{y_i: i=0, \dots, p^n-1\}$ forms a basis if and only if $\{x^i: i=0, \dots, p^n-1\}$ does, which is true by Theorem \ref{thm: well-defined subring}.
\end{proof}

In particular, under the assumptions of Theorem \ref{thm: well-defined subring}(2), we may take $y_i = (x-t)^i$. Consequently, when $x-t$ is a unit in $S^+[[x; \sigma, \delta]]$, we get a crossed product (see \S \ref{subsec: defn of crossed product})
$$S^+[[x; \sigma, \delta]] = \bigoplus_{i=0}^{p^n-1} S^+[[X_n; \Sigma_n, \Delta_n]](x-t)^i = S^+[[X_n; \Sigma_n, \Delta_n]] * (\mathbb{Z}/p^n\mathbb{Z}).$$
If we are additionally working in characteristic $p$, then this decomposition holds even without the assumption that $w(t)\geq 0$, by Theorem \ref{thm: well-defined subring}(1) and the remarks of Definition \ref{defn: X_n etc in inner case}. We will deal with the case when $x-t$ is \emph{not} a unit in \S \ref{sec: crossed products}.

Recall that, in this paper, ``compatible" means ``strongly compatible".

\begin{cor}\label{cor: Noetherian skew power series}
Suppose we are in either case of Theorem \ref{thm: well-defined subring}. If $(\Sigma_n,\Delta_n)$ is compatible with $w$ for some $n\in\mathbb{N}$, then $(\Sigma_m,\Delta_m)$ is compatible for all $m\geq n$.

Suppose further that $(\Sigma_n, \Delta_n)$ is compatible with $w$, and that $S$ contains a complete, positively filtered, $(\Sigma_n,\Delta_n)$-invariant subring $A$ and a $\Sigma_n$-invariant denominator set $T\subseteq A$ such that $S = T^{-1}A$. Assume that, given any bounded countable sequence $(s_n)_{n\in\mathbb{N}}$ of elements of $S$, there exists $t\in T$ such that $ts_n\in A$ for all $n\in\mathbb{N}$. In this case, if $\gr_w(A)$ is Noetherian, then $S^+[[x;\sigma,\delta]]$ is also Noetherian.
\end{cor}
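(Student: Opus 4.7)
The corollary comprises two assertions, which I handle in turn.

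The auxiliary claim is that $(\Sigma_m, \Delta_m)$ is compatible for every $m \geq n$ once $(\Sigma_n, \Delta_n)$ is. In situation (1) of Theorem \ref{thm: well-defined subring} (characteristic $p$) this is immediate: $\Sigma_m = \Sigma_n^{p^{m-n}}$ and $\Delta_m = \Delta_n^{p^{m-n}}$ both inherit strictly positive degree from their $n$th analogues. Situation (2) requires a little more care. I would view $(T_n, X_n, \Sigma_n, \Delta_n)$ as ``base data'' via Remark \ref{rk: relationship between X_n}, so that $(\Sigma_{n+k}, \Delta_{n+k})$ arises from $(\Sigma_n, \Delta_n)$ by exactly the same recipe as $(\Sigma_k, \Delta_k)$ arose from $(\sigma, \delta)$. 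Compatibility of $(\Sigma_n, \Delta_n)$ makes $\Sigma_n$ strictly filtered and $\deg_w(\Delta_n) \geq 1$; combined with $w(T_n) \geq p^n w(t) \geq 0$, the shifted strong-boundedness constant is non-negative. Substituting $\delta \mapsto \Delta_n$, $\sigma \mapsto \Sigma_n$, $t \mapsto T_n$ in the expansion \eqref{eqn: Delta_n}, every summand contributes degree at least $i\, \deg_w(\Delta_n) \geq 1$ for $i \geq 1$, giving $\deg_w(\Delta_{n+k}) \geq 1 > 0$. For the automorphism part, the factorisation $\Sigma_{n+k} - \id = (\Sigma_n - \id)(\Sigma_n^{p^k - 1} + \cdots + \id)$ combined with strict filteredness of each $\Sigma_n^j$ yields $\deg_w(\Sigma_{n+k} - \id) \geq \deg_w(\Sigma_n - \id) > 0$.

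For the main Noetherianity claim, the strategy is to reduce all the way to a classical filtered-ring argument on the positively filtered subring $A$. By Theorem \ref{thm: well-defined subring}, $S^+[[x;\sigma,\delta]]$ is a free module of finite rank $p^n$ over the subring $S^+[[X_n;\Sigma_n,\Delta_n]]$, so it suffices to show the latter is Noetherian. Since $A$ is complete and positively filtered and the restriction of $(\Sigma_n,\Delta_n)$ to $A$ is still compatible, the ordinary skew power series ring $A[[X_n;\Sigma_n,\Delta_n]]$ (Definition \ref{defn: skew power series ring, compatible case}) is well-defined and complete with respect to the filtration $v_1$. Theorem \ref{thm: restricted skew power series rings exist}(iii) then yields the identification
\[
T^{-1}\bigl(A[[X_n;\Sigma_n,\Delta_n]]\bigr) = S^+[[X_n;\Sigma_n,\Delta_n]],
\]
and since localisation preserves Noetherianity, it suffices to prove that $A[[X_n;\Sigma_n,\Delta_n]]$ itself is Noetherian.

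For this last step, I plan to identify the associated graded ring explicitly. Compatibility of $(\Sigma_n, \Delta_n)$ with $w$ is exactly the statement that $\Sigma_n$ induces the identity map and $\Delta_n$ induces the zero map on $\gr_w A$; so in the associated graded, the twist relation $X_n a = \Sigma_n(a) X_n + \Delta_n(a)$ collapses to $\bar X_n \bar a = \bar a \bar X_n$. Hence $\gr_{v_1}(A[[X_n;\Sigma_n,\Delta_n]]) \cong (\gr_w A)[\bar X_n]$, a polynomial extension (commutative in the $\bar X_n$ variable); since $\gr_w A$ is Noetherian by assumption, so is this graded ring, by the Hilbert Basis Theorem. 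The standard lifting theorem for complete filtered rings (see \cite{LVO}) then transfers Noetherianity to $A[[X_n;\Sigma_n,\Delta_n]]$, completing the argument. The principal obstacle is the identification of the associated graded with $(\gr_w A)[\bar X_n]$, which rests entirely on the degree bookkeeping supplied by compatibility; once verified, the remainder is a routine invocation of Hilbert plus the filtered-to-graded lifting principle.
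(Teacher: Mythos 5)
Your argument follows essentially the same route as the paper's proof: reduce to the subring $S^+[[X_n;\Sigma_n,\Delta_n]]$ via the free-module decomposition of Theorem \ref{thm: well-defined subring}, identify it as $T^{-1}\bigl(A[[X_n;\Sigma_n,\Delta_n]]\bigr)$ using Theorem \ref{thm: restricted skew power series rings exist}(iii), deduce Noetherianity of $A[[X_n;\Sigma_n,\Delta_n]]$ from its associated graded ring together with the lifting result in \cite{LVO}, and handle the first assertion by the binomial/factorisation argument for $\Sigma_m-\id$ and the expansion \eqref{eqn: Delta_n} for $\Delta_m$ (the paper packages the latter as Proposition \ref{propn: delta bounded implies Delta_n bounded}(i)). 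One small correction: with $\varepsilon = 1$ the graded relation need not collapse to $\bar{X}_n\bar{a} = \bar{a}\bar{X}_n$, since compatibility only gives $w(\Delta_n(a)) \geq w(a)+1$, so the symbol of $\Delta_n(a)$ can survive in degree exactly $w(a)+1$; either choose $0<\varepsilon<1$ (as the paper does), so that $w(\Delta_n(a)) \geq w(a)+1 > w(a)+\varepsilon$ forces the derivation term to vanish in the graded ring, or note that the graded ring is then a derivation-type skew polynomial ring over $\gr_w(A)$, which is still Noetherian, so your conclusion stands either way.
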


\begin{proof}
By Theorem \ref{thm: well-defined subring}, $S^+[[x;\sigma,\delta]]$ is finitely generated over $S^+[[X_n;\Sigma_n,\Delta_n]]$, so $S^+[[x;\sigma,\delta]]$ is Noetherian if and only if $S^+[[X_n;\Sigma_n,\Delta_n]]$ is Noetherian.

So, in proving both statements, we may assume for convenience and without loss of generality that $n=0$: that is, $(\sigma,\delta)$ is compatible, i.e.\ $\deg_w(\sigma-\id)\geq 1$ and $\deg_w(\delta)\geq 1$. 

It follows that, for any $m\geq 1$,
$$\deg_w(\Sigma_m-\id)=\deg_w(\sigma^{p^m}-\id) = \deg_w\left(\underset{1\leq r\leq p^m}{\sum}{\binom{p^m}{r}(\sigma-\id)^r}\right)\geq 1,$$
where the second equality follows from writing $\sigma^{p^m} = ((\sigma-\id)+\id)^{p^m}$ and expanding using the binomial theorem.

We also have $\deg_w(\delta^r) \geq 1$ for all $r\geq 0$. In situation (1), when $S$ has characteristic $p$, this implies immediately that $\deg_w(\Delta_m)\geq 1$, as $\Delta_m = \delta^{p^m}$. In situation (2), it follows from Proposition \ref{propn: delta bounded implies Delta_n bounded}(i) and its proof (noting that $A = 0$) that $\deg_w(\Delta_m)\geq 1$.

Now choose any $0 < \varepsilon < 1$. Since $1 \leq \min\{\deg_w(\sigma-\id),\deg_w(\delta)\}$, the filtration of $S$-modules $v_{\varepsilon}$ on $S^+[[x;\sigma,\delta]]$ from Definition \ref{defn: skew power series rings} is a ring filtration (as it is a ring filtration on $S[x;\sigma,\delta]$ by the same arguments as \cite[Proposition 1.17]{jones-woods-1}, and $S[x;\sigma,\delta]$ is dense in $S^+[[x;\sigma,\delta]]$ by Theorem \ref{thm: restricted skew power series rings exist}(i)).

Restricting $v_{\varepsilon}$ to the subring $A[[x;\sigma,\delta]]$, we also see using \cite[Proposition 1.17]{jones-woods-1} that the associated graded ring $\gr_{v_{\varepsilon}}(A[x;\sigma,\delta])$ is isomorphic to $(\gr_w(A))[Z]$, and this must also be the associated graded ring of $A[[x;\sigma,\delta]]$, as $A[[x;\sigma,\delta]]$ is the completion of $A[x; \sigma, \delta]$ with respect to $v_{\varepsilon}$. Since $(\gr_w(A))[Z]$ is Noetherian, it follows from \cite[Ch II, Proposition 1.2.3]{LVO} that $A[[x;\sigma,\delta]]$ must also be Noetherian.

Finally $S^+[[x;\sigma,\delta]]=T^{-1}(A[[x;\sigma,\delta]])$ by Theorem \ref{thm: restricted skew power series rings exist}(iii), thus it is Noetherian \cite[Proposition 2.1.16]{MR}.\end{proof}

\section{Skew derivations on standard filtered artinian rings}\label{sec: sigma-orbits of maximal ideals}

Throughout \S \ref{sec: sigma-orbits of maximal ideals}, we will assume that $(R, w_0)$ is a filtered ring satisfying \eqref{filt}, and we will explore in more detail the construction of the ring $(Q,u)$ referred to in \S \ref{subsec: localisation}.

\subsection{The localisation theorem}
\label{subsec: localisation process}

We begin by stating the crucial filtered localisation theorem. A version of this theorem was first given in \cite[\S 3]{ardakovInv}, and expanded upon in \cite[Section 3]{jones-abelian-by-procyclic}.

\begin{thm}\label{thm: filtered localisation}
Suppose $(R, w_0)$ satisfies \eqref{filt}. Then there exists a filtration $u: Q(R) \to \mathbb{Z}\cup\{\infty\}$ such that: 
\begin{enumerate}[label=(\roman*),noitemsep]
\item the inclusion $(R,w_0) \to (Q(R),u)$ is continuous,
\item if $w_0(r)\geq 0$ then $u(r)\geq 0$ for any $r\in R$,
\item the completion $Q$ of $(Q(R),u)$ is simple artinian,
\item the unique extension of $u$ to $Q$ is standard,
\item $\O/J(\O)$ is a central simple algebra, where $\O = \{q\in Q : u(q) \geq 0\}$.
\end{enumerate}
\end{thm}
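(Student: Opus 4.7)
The plan is to construct $u$ via a filtered microlocalization process, modelled on \cite[Theorem C]{ardakovInv} and \cite[Theorem 3.3]{jones-abelian-by-procyclic}, and spelled out explicitly in Procedure \ref{proc: build a standard filtration}. The starting point is the central subring $A \subseteq \gr_{w_0}(R)$: by \eqref{filt} it contains a non-nilpotent element of non-zero degree, and since $R$ is prime and $\gr_{w_0}(R)$ is module-finite over $A$, one can arrange (by replacing the given element by a suitable power and using primeness to ensure regularity) a homogeneous regular central element $\bar{a} \in \gr_{w_0}(R)$ of some positive degree $d$. Lifting to $a \in R$ with $w_0(a) = d$ and forming the multiplicative set $T = \{a^n : n \geq 0\}$, the centrality of $\bar{a}$ in the graded ring yields an Ore condition for $T$ in $R$, and Li--Van Oystaeyen microlocalization produces a filtered ring $(T^{-1}R, w_1)$ whose associated graded ring is the homogeneous localization $\gr_{w_0}(R)[\bar{a}^{-1}]$, still Noetherian.

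Next, I would extend $w_1$ further to the full Goldie quotient $Q(R)$ by a filtered inversion of regular elements: after completing with respect to $w_1$, the image of $\bar{a}$ becomes a unit of positive degree, making the completion strongly $\mathbb{Z}$-filtered. In such a ring every regular element of $R$ is invertible, so the inclusion $R \hookrightarrow \widehat{T^{-1}R}$ factors through $Q(R)$ and pulls back to a filtration on $Q(R)$. Properties (i) and (ii) are built into the construction.

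For (iii)--(v), I would analyse the completion $\widehat{Q(R)}$. Its Jacobson radical $J$ is principally generated by a normal element of positive degree coming from $\bar{a}$, and the quotient $\widehat{Q(R)}/J$ is graded artinian, hence semisimple artinian by Wedderburn decomposition of its degree-zero component. To obtain simplicity rather than mere semisimplicity, I would pick a maximal ideal $M$ of $\widehat{Q(R)}$, take $Q$ to be the corresponding simple artinian quotient, and redefine $u$ on $Q(R)$ as the pullback of the quotient filtration along $Q(R) \to \widehat{Q(R)} \twoheadrightarrow Q$; injectivity of this composite follows from simplicity of $Q(R)$. To confirm that the extended filtration on $Q \cong M_s(F)$ is standard, I would identify a complete discrete valuation subring $D \subseteq F$ as (an appropriate image of) the positive part of $\widehat{Q(R)}$, whose $J(D)$-adic filtration induces $u$ via the matrix filtration. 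Property (v) then follows from the standard form: $\mathcal{O} = M_s(D)$ and $\mathcal{O}/J(\mathcal{O}) = M_s(D/J(D))$ is central simple over the field $Z(D/J(D))$, with the centrality coming from the fact that $\bar{a}$ was chosen central at the graded level.

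The main obstacle is the passage from a semisimple completion to a simple one in a way compatible with the original filtered structure --- in particular, the choice of $M$ must ensure that $Q(R) \to Q$ remains injective and that the induced filtration is standard rather than merely complete. This is the technically delicate part of Procedure \ref{proc: build a standard filtration}. A secondary hurdle is establishing that $\gr_{w_0}(R)[\bar{a}^{-1}]$ retains enough structure for the completion to be strongly filtered, which is where the hypothesis \eqref{filt} (Noetherianity of $\gr_{w_0}(R)$ together with module-finiteness over a central subring containing a non-nilpotent element of non-zero degree) is used essentially.
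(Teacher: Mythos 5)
Your overall route is the one the paper actually takes (the paper's proof is essentially a citation to \cite[Theorem 3.3]{jones-abelian-by-procyclic} and \cite[Theorems 3.6, 3.13]{ardakovInv}, with the construction spelled out as Procedure \ref{proc: build a standard filtration}\textbf{(a)--(e)}), but your sketch has a genuine gap at exactly the step you need for (iv) and (v). You claim that the complete discrete valuation ring $D\subseteq F$ can be identified with ``(an appropriate image of) the positive part of $\widehat{Q(R)}$''. That is not correct in general: the image $V$ of the positive part $U=w^{-1}([0,\infty])$ in $Q=\widehat{Q}/M$ is an order in $Q$, but it need not be a \emph{maximal} order, and the quotient filtration it induces need not be standard. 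The paper's procedure has two further steps precisely to repair this: step \textbf{(c)} replaces $w$ by the $z$-adic filtration $v_{z,U}$ attached to a regular normal element $z\in J(U)$ with central symbol, and step \textbf{(e)} invokes the nontrivial fact (from \cite[\S 3.11]{ardakovInv}) that there exists a maximal order $\O$ \emph{equivalent to} $V$, and then replaces the quotient filtration $v_{\overline{z},V}$ by the $J(\O)$-adic filtration $u$. Without this passage you cannot conclude that $u$ is standard (i.e.\ of the form $M_s(v)$ for a complete DVR), and the later Example \ref{ex: sigma does not preserve maximal orders} shows this choice of $\O$ is genuinely non-canonical. Moreover, since $u$ is only \emph{equivalent} to the quotient filtration (via the estimates $V\subseteq\O\subseteq\overline{z}^{-r}V$, $J(\O)^n\subseteq\overline{z}\O$ of Proposition \ref{propn: continuity of localisation procedure maps}\textbf{(e)}), properties (i) and (ii) are not ``built into the construction'' as you assert; they require exactly these comparison bounds, together with the fact that each stage sends positive parts to positive parts (Corollary \ref{cor: degree zero pieces map to degree zero pieces}).

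A second, smaller gap is your treatment of (v). Writing $\O=M_s(D)$ gives $\O/J(\O)\cong M_s(D/J(D))$, which is simple artinian, but ``central simple algebra'' here carries the additional content that it is finite-dimensional over its centre; this is not a formal consequence of centrality of $\bar a$ at the graded level. It is a substantive theorem of Ardakov (\cite[Theorems 3.6 and 3.13]{ardakovInv}), and it is one of the places where the full strength of \eqref{filt} (Noetherianity of $\gr_{w_0}(R)$ and module-finiteness over the central subring $A$) is consumed. As written, your argument establishes simplicity of $\O/J(\O)$ but not the finiteness over its centre, so (v) is not proved.
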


\begin{proof}
Parts (i)--(iv) are given by \cite[Theorem 3.3]{jones-abelian-by-procyclic}. Note that it was assumed there that $w_0$ takes values in $\mathbb{N}$ rather than $\mathbb{Z}$, but this makes no difference to the proof. Part (v) follows from \cite[Theorem 3.6 and Theorem 3.13]{ardakovInv}. Details of the construction of $u$ are given by Procedure \ref{proc: build a standard filtration} below.
\end{proof}

The localisation procedure making up the proof of Theorem \ref{thm: filtered localisation} begins with a filtered ring $(R,w_0)$ satisfying \eqref{filt}, and aims to build a filtration $u: Q(R) \to \mathbb{Z}\cup\{\infty\}$ such that $(Q,u)$ is standard, where $Q$ is the completion of $Q(R)$ with respect to $u$.

However, in this paper, we also start with the additional data of a skew derivation $(\sigma, \delta)$ on $R$ which interacts nicely with $w_0$, and we would like to understand how $(\sigma, \delta)$ interacts with $u$, so we outline this localisation procedure in detail below.

\begin{proc}\label{proc: build a standard filtration}
Beginning with a filtered ring $(R,w_0)$ satisfying \eqref{filt}, we proceed in five steps, labelled \textbf{(a)--(e)} below and throughout the paper.
\end{proc}

\begin{enumerate}[label=\textbf{(\alph*)}]
\item \textbf{Homogeneous localisation.} There exists a filtration \hbox{$w: Q(R)\to\mathbb{Z}\cup\{\infty\}$}, depending on a choice of minimal prime ideal $\mathfrak{q}$ of $A\subseteq Z(\gr_{w_0}(R))$. For further details on this construction and its properties, see \cite[\S 3]{jones-abelian-by-procyclic}.
\item \textbf{Microlocalisation.} \cite[\S 3.4]{ardakovInv} Complete $Q(R)$ with respect to $w$ to get the complete filtered artinian ring $(\widehat{Q}, w)$.
\item \textbf{Passing to an adic filtration.}
Let $U = \widehat{Q}_{\geq 0} := w^{-1}([0,\infty])$, and choose a regular normal element $z\in J(U)$ satisfying \cite[Properties 4.2]{jones-woods-1}. In fact, by the proof of \cite[Lemma 3.2]{jones-abelian-by-procyclic}, we may assume that $\gr_w(z)$ is central in $\gr_w (Q(R))$. Now define the ``$z$-adic" filtration $v_{z, U}$ on $\widehat{Q}$ whose $n$th level set is $z^n U = (zU)^n$.

\item \textbf{Quotienting by a maximal ideal.} Fix a maximal ideal $M\lhd \widehat{Q}$. Define $\overline{z}$ and $V$ to be the images of $z$ and $U$ inside the quotient $Q := \widehat{Q}/M$. Write $v_{\overline{z}, V}$ for the quotient filtration on $Q$, with $n$th level set $\overline{z}^n V = (z^n U + M)/M = (\overline{z}V)^n$.

\item \textbf{Passing to a maximal order and a standard filtration.} Choose a maximal order $\O \subseteq Q$ containing $V$ and equivalent to $V$ (which must exist by \cite[\S 3.11]{ardakovInv}).  Pass from $v_{\overline{z}, V}$ to the $J(\O)$-adic filtration $u$ on $Q$, i.e.\ the filtration with $n$th level set $J(\O)^n$. (For $n \geq 1$, $J(\O)^{-n}$ is defined to be $\{x\in Q : J(\O)^n x \subseteq \O\}$: as $J(\O)$ is left and right invertible \cite[proof of Proposition 3.9]{ardakovInv}, we have $J(\O)^a J(\O)^b = J(\O)^{a+b}$ for all $a, b\in \mathbb{Z}$.) This filtration $u$ is standard by \cite[Theorem 3.3]{jones-abelian-by-procyclic}.
\end{enumerate}

Further details on this process can be found in \cite[\S 3]{ardakovInv}, \cite[Construction 2.3]{jones-primitive-ideals} or \cite[\S\S 4.1--4.3]{jones-woods-1}. We will call a filtered ring $(Q,u)$ constructed in this way a \emph{standard completion} of $Q(R)$.

Let us analyse the individual stages of this procedure:
\begin{equation}\label{eqn: maps in localisation procedure}
\xymatrix@C+5pt{
(R,w_0)\ar@{^(->}[r]_-{\text{\textbf{(a)}}}& (Q(R),w)\ar@{^(->}[r]_-{\text{\textbf{(b)}}}& (\widehat{Q},w)\ar[r]_-{\text{\textbf{(c)}}}^-{\mathrm{id}}&(\widehat{Q}, v_{z,U})\ar@{->>}[r]_-{\text{\textbf{(d)}}}&(Q, v_{\overline{z},V})\ar[r]_-{\text{\textbf{(e)}}}^-{\mathrm{id}}&(Q,u)
}
\end{equation}

\begin{propn}\label{propn: continuity of localisation procedure maps}
$ $

\begin{enumerate}[label=\textbf{(\alph*)},noitemsep]
\item \raisebox{3pt}{\xymatrix{(R,w_0)\ar@{^(->}[r]& (Q(R),w)}} is filtered of degree 0.
\item \raisebox{3pt}{\xymatrix{(Q(R),w)\ar@{^(->}[r]& (\widehat{Q},w)}} is strictly filtered.
\item Write $F\widehat{Q}$ for the sequence of level sets of $(\widehat{Q},w)$: then there exists $e\geq 1$ such that $z^n U = F_{en}\widehat{Q}$ for all $n\in\mathbb{Z}$.
\item \raisebox{3pt}{\xymatrix{(\widehat{Q}, v_{z,U})\ar@{->>}[r]&(Q, v_{\overline{z},V})}} is strictly filtered.
\item There exist $m, n, r \geq 0$ such that $\overline{z}^{m\ell} V \subseteq J(\O)^\ell$ for all $\ell\in\mathbb{N}$, $\O\subseteq \overline{z}^{-r}V$, and $J(\O)^n \subseteq \overline{z}\O$.
\end{enumerate}
\end{propn}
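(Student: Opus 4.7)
The plan is to handle the five parts in order, each corresponding to one stage of Procedure~\ref{proc: build a standard filtration}.

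Parts (a) and (b) are essentially formal. For (a), the filtration $w$ is constructed in \cite[Theorem 3.3]{jones-abelian-by-procyclic} to satisfy $w(r)\geq w_0(r)$ for all $r\in R$, which is exactly what it means for the inclusion to be filtered of degree $0$. For (b), the canonical embedding of any filtered ring into its completion is always strictly filtered, since by construction the level sets of $(\widehat{Q},w)$ restrict to those of $(Q(R),w)$.

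For (c), I would set $e:=w(z)$. The key input is \cite[Properties 4.2]{jones-woods-1}, which together with the observation from \cite[Lemma 3.2]{jones-abelian-by-procyclic} that we may assume $\gr_w(z)$ is central, forces $zU = F_e\widehat{Q}$. From this I would induct on $|n|$: for $n\geq 0$, multiplication by the normal regular element $z$ is a bijection $F_{en}\widehat{Q}\to F_{e(n+1)}\widehat{Q}$ because $\gr_w(z)$ is not a zero divisor, so $z^{n+1}U = z\cdot z^n U = F_{e(n+1)}\widehat{Q}$; for $n<0$, note that $z$ is invertible in the simple artinian ring $Q(R)\subseteq\widehat{Q}$, and repeat the argument with $z^{-1}$ in place of $z$. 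For (d), the filtration $v_{\overline{z},V}$ is by construction the quotient filtration on $Q = \widehat{Q}/M$ induced by $v_{z,U}$, so the quotient map is strictly filtered by the defining property of quotient filtrations.

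Part (e) is where I expect the main difficulty. Since $\O$ is chosen in step (e) to contain $V$ and to be equivalent to it as an order in $Q$, standard facts about equivalent orders provide regular elements bounding $\O$ and $V$ against one another in $Q$; I would absorb these into powers of $\overline{z}$, using that the subspaces $\overline{z}^{-n}V$ exhaust $Q$ as $n\to\infty$. The containment $\O\subseteq\overline{z}^{-r}V$ is then immediate. For $\overline{z}^{m\ell}V\subseteq J(\O)^\ell$, I would first establish the case $\ell=1$: because $\overline{z}\in J(V)$ (from $z\in J(U)$) and $V$ is close to $\O$ by equivalence, some power $\overline{z}^m V$ lies in $J(\O)$. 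The extension to all $\ell$ then follows from normality of $\overline{z}$, which lets us rewrite $\overline{z}^{m\ell}V = (\overline{z}^m V)^\ell\subseteq J(\O)^\ell$. Finally, $J(\O)^n\subseteq\overline{z}\O$ follows because $\O$ is a matrix ring over a complete discrete valuation ring, in which every two-sided ideal is a power of $J(\O)$; the two-sided ideal $\overline{z}\O$ (two-sided by normality of $\overline{z}$) must therefore equal some $J(\O)^k$, and we take $n=k$.
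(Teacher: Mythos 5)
Parts (a)--(d) are essentially correct and follow the same route as the paper, which handles them by citation (to \cite{jones-woods-1} and \cite{LVO}); your induction in (c) is sound once the base case $zU=F_e\widehat{Q}$ is granted, though note that centrality and regularity of $\gr_w(z)$ alone do not force that base case --- what is needed is that $\gr_w(z)$ is an invertible homogeneous element of the graded ring together with a successive-approximation argument using completeness, which is exactly what the cited properties of $z$ and \cite{ardakovInv} supply. Likewise your derivation of $\O\subseteq\overline{z}^{-r}V$ in (e) from equivalence of orders, exhaustion $Q=\bigcup_n\overline{z}^{-n}V$ and normality of $\overline{z}$ in $V$ is fine, as is the passage from $\overline{z}^mV\subseteq J(\O)$ to $\overline{z}^{m\ell}V\subseteq J(\O)^\ell$.

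The genuine gap is in the remaining two claims of (e). For $J(\O)^n\subseteq\overline{z}\O$ you treat $\overline{z}\O$ as a two-sided ideal of $\O$ ``by normality of $\overline{z}$'', but $\overline{z}$ is only known to be normal in $V$ (equivalently, conjugation by $\overline{z}$ preserves $V$); there is no reason it should preserve the chosen maximal order $\O$, and Example \ref{ex: sigma does not preserve maximal orders} of this paper shows precisely that conjugation can move maximal orders even while fixing the ambient filtration data. So $\O\overline{z}\O\subseteq\overline{z}\O$ is unjustified and the argument that $\overline{z}\O$ equals some $J(\O)^k$ collapses. Moreover, the structural input you invoke --- that $\O$ is a matrix ring over a \emph{complete} discrete valuation ring, so that every two-sided ideal is a power of $J(\O)$ --- is not yet available: completeness of the $J(\O)$-adic filtration (i.e.\ standardness of $u$) is deduced from its topological equivalence with $v_{\overline{z},V}$, which is exactly what the containments of part (e) establish, so the appeal is circular. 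Finally, your base case $\overline{z}^mV\subseteq J(\O)$ is asserted from ``$V$ is close to $\O$ by equivalence'' but never proved; this statement, together with $J(\O)^n\subseteq\overline{z}\O$, is precisely the nontrivial content that the paper imports from \cite{ardakovInv} (Lemma 3.7 and \S 3.14 there), and your sketch does not supply a substitute argument.
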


\begin{proof}
$ $

\begin{enumerate}[label=\textbf{(\alph*)}]
\item This is \cite[Proposition 1.14]{jones-woods-1}.
\item This is a general fact about completions: see \cite[Chapter 1, Definition 3.4.1(2) and Proposition 3.5.1(b)]{LVO}.
\item Follows from \cite[\S 3.14, Proof of Theorem C(a)]{ardakovInv}, with $e = w(z)$.
\item $v_{\overline{z},V}$ is the quotient filtration of $v_{z,U}$ \cite[Chapter I, \S 2.5]{LVO}, so is always strictly filtered \cite[Chapter I, \S 3.2, after Definition 1]{LVO}.
\item It follows from \cite[\S 3.6, Proposition 3.7(a)]{ardakovInv} that there exists some $r\geq 0$ such that $V\subseteq \O\subseteq \overline{z}^{-r}V$, and it is proved implicitly in \cite[Lemma 3.7(d) and \S 3.14, proof of Theorem C]{ardakovInv} that there exist some $m,n\geq 0$ such that $\overline{z}^m \in J(\O)$ and $J(\O)^n \subseteq \overline{z}\O$: these are the values required.\qedhere
\end{enumerate}
\end{proof}

In particular, all of the maps in \eqref{eqn: maps in localisation procedure} are continuous, but are not necessarily filtered. It is also worth recording the following elementary corollaries:

\begin{cor}\label{cor: R and Q(R) embed in Q}
The composite map $Q(R)\to Q$ is continuous and injective, and its image is dense in $Q$ (with respect to either $v_{\overline{z},V}$ or $u$).
\end{cor}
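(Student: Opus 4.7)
The plan is to handle the three assertions — continuity, injectivity, density — in turn, in each case by unpacking Procedure \ref{proc: build a standard filtration} and the properties catalogued in Proposition \ref{propn: continuity of localisation procedure maps}. For continuity, I would simply concatenate the continuous maps in diagram \eqref{eqn: maps in localisation procedure}. Since the statement mentions two filtrations $v_{\overline{z},V}$ and $u$ on $Q$, I would record along the way that they define the same topology: the inclusion $\overline{z}^{m\ell}V \subseteq J(\O)^\ell$ of Proposition \ref{propn: continuity of localisation procedure maps}(e) gives one direction, and iterating $J(\O)^n \subseteq \overline{z}\O$ (using normality of $\overline{z}$) together with $\O \subseteq \overline{z}^{-r}V$ gives $J(\O)^{n\ell} \subseteq \overline{z}^{\ell-r}V$, which furnishes the other.

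Injectivity is essentially automatic. The hypothesis \eqref{filt} forces $R$ to be a prime Noetherian ring (Noetherianity passing from $\gr_{w_0}(R)$ by the standard graded-to-filtered argument), so Goldie's theorem gives $Q(R)$ simple artinian. Hence the kernel of the unital ring map $Q(R)\to Q$ is a proper two-sided ideal of the simple ring $Q(R)$, and is therefore zero.

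For density I would chase through \eqref{eqn: maps in localisation procedure}. By construction of $\widehat{Q}$ as the completion of $(Q(R), w)$, the image of $Q(R)$ is dense in $(\widehat{Q},w)$, and hence also in $(\widehat{Q},v_{z,U})$ by the topological equivalence of Proposition \ref{propn: continuity of localisation procedure maps}(c). The key manoeuvre — and really the only non-formal step — is to push density through the surjection $\pi:\widehat{Q}\twoheadrightarrow Q$. Since $\pi$ is strictly filtered (Proposition \ref{propn: continuity of localisation procedure maps}(d)), given $q\in Q$ and any $K\in\mathbb{Z}$ I would pick an arbitrary lift $\tilde{q}\in\pi^{-1}(q)$, use density in $(\widehat{Q},v_{z,U})$ to approximate it by some $a\in Q(R)$ with $v_{z,U}(\tilde{q}-a)\geq K$, and then conclude $v_{\overline{z},V}(q-\pi(a))\geq v_{z,U}(\tilde{q}-a)\geq K$ by the strict filtration property. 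Density with respect to $u$ then follows immediately from the topological equivalence established at the outset.

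I do not foresee any real obstacle; the corollary is a bookkeeping consequence of Procedure \ref{proc: build a standard filtration}, and the only care needed is to keep track of the two competing filtrations on each of $\widehat{Q}$ and $Q$, and to invoke the right ``strictly filtered'' or ``topologically equivalent'' statement at each stage so that the density claim transfers to either choice of filtration on the target.
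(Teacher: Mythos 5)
Your proof is correct and follows essentially the same route as the paper: continuity by composing the continuous maps of \eqref{eqn: maps in localisation procedure}, injectivity from simplicity of $Q(R)$, and density propagated along the chain (the paper simply observes that the maps \textbf{(c)--(e)} are surjective and continuous, so dense images compose; your explicit use of strictness of \textbf{(d)} is a harmless variant). One small caveat: your justification of the reverse containment $J(\O)^{n\ell}\subseteq \overline{z}^{\ell-r}V$ ``using normality of $\overline{z}$'' is not valid as stated, since $\overline{z}$ is normal in $V$ but need not normalise the maximal order $\O$ (cf.\ Example \ref{ex: sigma does not preserve maximal orders}), so $(\overline{z}\O)^\ell=\overline{z}^\ell\O$ requires a further argument (e.g.\ using that $\overline{z}^{-1}$ has finite $u$-value). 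This does not affect the corollary, because transferring density from $v_{\overline{z},V}$ to $u$ only needs the one direction $\overline{z}^{m\ell}V\subseteq J(\O)^\ell$ from Proposition \ref{propn: continuity of localisation procedure maps}\textbf{(e)}, i.e.\ continuity of $\id:(Q,v_{\overline{z},V})\to(Q,u)$, which is exactly what the paper uses.
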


\begin{proof}
The map is injective as $Q(R)$ is a simple ring. Since the maps \textbf{(b)--(e)} are continuous (by Proposition \ref{propn: continuity of localisation procedure maps}) and have dense image (\textbf{(b)} as it is a completion, and \textbf{(c)--(e)} as they are surjective), their composites are also continuous with dense image.
\end{proof}

\begin{cor}\label{cor: degree zero pieces map to degree zero pieces}
Each of the maps \eqref{eqn: maps in localisation procedure}\textbf{(a)--(e)} sends positive subrings to positive subrings: that is, $F_0 R \subseteq F_0 Q(R) \subseteq F_0 \widehat{Q} = U \twoheadrightarrow V \subseteq \O$.\qed
\end{cor}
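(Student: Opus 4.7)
The plan is to verify each link in the chain separately, and each one will follow directly from a fact already recorded in Procedure \ref{proc: build a standard filtration} or Proposition \ref{propn: continuity of localisation procedure maps}.

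First, for step \textbf{(a)}: by Proposition \ref{propn: continuity of localisation procedure maps}\textbf{(a)}, the map $(R, w_0) \to (Q(R), w)$ is filtered of degree $0$, which by Definition \ref{defn: filtered and strict morphisms}(i) means $w(r) \geq w_0(r)$ for every $r \in R$. Hence $w_0(r) \geq 0$ implies $w(r) \geq 0$, giving $F_0 R \subseteq F_0 Q(R)$. Next, for step \textbf{(b)}: the embedding $(Q(R), w) \hookrightarrow (\widehat{Q}, w)$ is strictly filtered by Proposition \ref{propn: continuity of localisation procedure maps}\textbf{(b)}, and since a strictly filtered injection preserves filtration values exactly (Remark \ref{rks: filtered and strict linear maps}(iii)), it is in particular filtered of degree $0$, so $F_0 Q(R) \subseteq F_0 \widehat{Q}$.

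The equality $F_0 \widehat{Q} = U$ is definitional: $U = \widehat{Q}_{\geq 0} = w^{-1}([0, \infty])$ is exactly the $0$-th level set of $w$ on $\widehat{Q}$, as introduced in step \textbf{(c)} of Procedure \ref{proc: build a standard filtration}. For step \textbf{(d)}, the surjection $\widehat{Q} \twoheadrightarrow Q$ sends $U$ onto $V = (U + M)/M$ directly by the definition of $V$ in step \textbf{(d)} of the procedure. Finally, for step \textbf{(e)}: the maximal order $\O$ was explicitly chosen in step \textbf{(e)} of the procedure to contain $V$, so $V \subseteq \O$ is built into the construction (and is also recorded in Proposition \ref{propn: continuity of localisation procedure maps}\textbf{(e)}).

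Since each of the five claims follows at once from a definition or a previously proved statement, there is no real obstacle to overcome — the corollary is simply a matter of assembling the relevant bookkeeping into one chain. The main thing to watch is that, at step \textbf{(e)}, the passage from the filtration $v_{\overline{z}, V}$ to $u$ does not alter the positive subring in the direction we need, and indeed $\O$ is the $u$-positive part of $Q$ by Remark \ref{rks: standard filtrations are complete and have unique discrete valuation rings}.2, so $V \subseteq \O$ is equivalent to saying that every element of $V$ has non-negative $u$-value, which is exactly the statement $V \subseteq F_0 Q$ under the filtration $u$.
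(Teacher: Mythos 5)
Your proof is correct and follows exactly the route the paper intends: the corollary is left as an immediate consequence of Procedure \ref{proc: build a standard filtration} and Proposition \ref{propn: continuity of localisation procedure maps}\textbf{(a)--(e)}, and your step-by-step verification (degree-zero filtered for \textbf{(a)}, strictness for \textbf{(b)}, the definitions of $U$, $V$ and the choice of $\O \supseteq V$ for \textbf{(c)--(e)}, plus the observation that $\O$ is the $u$-positive part) is precisely that bookkeeping made explicit.
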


\begin{cor}\label{cor: Zp-algebra}
If $(R,w_0)$ is a filtered $\mathbb{Z}_p$-algebra, then $(Q,u)$ is a filtered $\mathbb{Z}_p$-algebra.
\end{cor}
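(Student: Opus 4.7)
The corollary has two components. First, $Q$ must inherit a $\mathbb{Z}_p$-algebra structure from $R$: this is immediate, since Corollary~\ref{cor: R and Q(R) embed in Q} gives an injection $R \hookrightarrow Q$, so the composite $\iota : \mathbb{Z}_p \to R \hookrightarrow Q$ is a ring homomorphism making $Q$ a $\mathbb{Z}_p$-algebra. Second, $\iota$ must be filtered of non-negative degree, equivalently $u(\iota(x)) \geq v_p(x)$ for all $x \in \mathbb{Z}_p$.

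The plan for the second component is to reduce everything to the single inequality $u(\iota(p)) \geq 1$. Once this is known, the argument in the proof of Lemma~\ref{lem: p-adic facts}(i) carries over verbatim to $(Q,u)$, yielding $u(\iota(c)) = 0$ for every integer $c$ coprime to $p$ and $u(\iota(p^n)) \geq n$ for all $n \geq 0$; writing $x \in \mathbb{Z}_p$ as a $p$-adic expansion and applying the ultrametric inequality then gives $u(\iota(x)) \geq v_p(x)$.

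To prove $u(\iota(p)) \geq 1$, I would track $p$ through Procedure~\ref{proc: build a standard filtration}. If $\chr(R) = p$, then $\iota(p) = 0$ and the claim is trivial, so assume $\chr(R) = 0$. Since $p$ is central and nonzero in the prime ring $R$ it is regular, and $w_0(p) \geq 1$ by hypothesis, so Proposition~\ref{propn: continuity of localisation procedure maps}(a,b) gives $w(p^k) \geq k w(p) \geq k$ in $\widehat{Q}$ for every $k \geq 1$. Letting $e \geq 1$ and $m \geq 0$ be the integers provided by Proposition~\ref{propn: continuity of localisation procedure maps}(c,e), the choice $k = em$ forces $p^k \in F_{em}\widehat{Q} = z^m U$, and passing to $Q = \widehat{Q}/M$ yields $\iota(p)^k = \overline{p^k} \in \overline{z}^m V \subseteq J(\O)$, so $u(\iota(p)^k) \geq 1$.

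The hard step is to deduce $u(\iota(p)) \geq 1$ from $u(\iota(p)^k) \geq 1$: submultiplicativity of $u$ alone only gives $u(\iota(p)^k) \geq k \cdot u(\iota(p))$, which is the wrong direction. The key observation is centrality: since $\mathbb{Z}_p$ is commutative, $\iota(p)$ lies in $Z(Q)$, and since $Q \cong M_s(F)$ this forces $\iota(p) = \alpha I_s$ for some $\alpha \in Z(F)$. Because $u = M_s(v)$ for the standard valuation $v$ on the division ring $F$, we compute $u(\iota(p)^k) = v(\alpha^k) = k v(\alpha) = k \cdot u(\iota(p))$; combined with $u(\iota(p)^k) \geq 1$ and the integrality of $u$, this yields $u(\iota(p)) \geq 1$ as required. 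This centrality–multiplicativity reduction is the main obstacle to a more direct proof.
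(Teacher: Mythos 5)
Your proposal is correct, and it follows the same skeleton as the paper's proof --- push some power $p^k$ into $J(\O)$, then use centrality of $p$ to descend to $u(p)\geq 1$ --- but the two key steps are justified differently. For the first step you chase the explicit constants of Proposition~\ref{propn: continuity of localisation procedure maps}\textbf{(a)}--\textbf{(e)}; this is a quantitative version of the paper's one-line observation that $p$ is topologically nilpotent in $Q$ and lies in $\O$ (the latter via Corollary~\ref{cor: degree zero pieces map to degree zero pieces}). The genuine divergence is the descent: the paper deduces $p\in J(\O)$ from $p^n\in J(\O)$ using Theorem~\ref{thm: filtered localisation}(v), which makes $J(\O)$ a prime ideal of $\O$, so for the central element $p$ one gets $(p\O)^n\subseteq J(\O)$ and hence $p\O\subseteq J(\O)$; you instead invoke the standard form $u=M_s(v)$, identify $\iota(p)$ with a central scalar $\alpha I_s$, and use multiplicativity of $v$ on $F$ to get $u(\iota(p)^k)=k\,u(\iota(p))$. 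Both are valid (and both silently use that the $\mathbb{Z}_p$-structure map lands in the centre of $R$ and that centrality persists along \eqref{eqn: maps in localisation procedure}); the paper's version only needs $J(\O)$ semiprime, so it transfers verbatim to the semisimple $\sigma$-standard setting of \S\ref{subsec: sigma-orbit of maximal ideals and maximal orders} and avoids fixing matrix units, while yours uses structure that is available anyway together with the easy fact that the induced filtration on $F=Q(D)$ is a valuation. Finally, your endgame re-runs the digit-expansion argument of Lemma~\ref{lem: p-adic facts}(i); for non-integer elements of $\mathbb{Z}_p$ this implicitly uses continuity of $\mathbb{Z}_p\to Q$ and completeness of $(Q,u)$ to handle infinite expansions (both available here, and no worse than the paper's own treatment of that lemma), but it can be bypassed as the paper does: since the structure map $\mathbb{Z}_p\to R$ has non-negative degree, $\iota(\mathbb{Z}_p)\subseteq F_0R$, whose image lies in $\O$ by Corollary~\ref{cor: degree zero pieces map to degree zero pieces}, so $p\in J(\O)$ immediately gives $\iota(p^n\mathbb{Z}_p)\subseteq J(\O)^n\O= J(\O)^n$, i.e.\ the map is filtered of non-negative degree.
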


\begin{proof}
As the maps $\mathbb{Z}_p \to (R,w_0)\to (Q,u)$ are continuous, it follows that $p$ is topologically nilpotent in $Q$, and by Corollary \ref{cor: degree zero pieces map to degree zero pieces}, we have $p\in \O$. So, for some $n \geq 1$, we have $p^n \in J(\O)$. But $J(\O)$ is a prime ideal of $\O$ by Theorem \ref{thm: filtered localisation}(v), and $p$ is central, so $p\in J(\O)$, and hence the map $\mathbb{Z}_p\to (Q,u)$ is filtered of non-negative degree.
\end{proof}

\subsection{Skew derivations on simple artinian rings}

We ultimately want to extend the skew derivation $(\sigma,\delta)$ on $(R,w_0)$ to $(Q,u)$, but first, we will explore some broader properties of skew derivations on general simple artinian rings with standard filtrations.

Let $F$ be a division ring with standard filtration $v$ and valuation ring $D$, so that $Q = M_n(F)$ is a simple artinian ring with standard filtration $u = M_n(v)$ and positive part $\O = M_n(D)$. (In fact, some of the results below can be proven without assuming that $(F,v)$ or $(Q,u)$ are complete, but we will not need this extra generality.) We will use basic facts about these rings from \S \ref{subsec: DVRs} without special mention.

The following is a slight generalisation of \cite[Lemma 4.5]{jones-woods-1}, which we present here as we will use the argument several times in this paper.

\begin{lem}\label{lem: delta sends O to J(O)^d}
Suppose $(\sigma, \delta)$ is a skew derivation on $Q$ such that $\sigma(\O) = \O$, and assume there is an integer $d\in\mathbb{Z}$ such that $\delta(\O) \subseteq J(\O)^d$ and $\delta(J(\O)) \subseteq J(\O)^{d+1}$. Then $\delta(J(\O)^m) \subseteq J(\O)^{m+d}$ for all $m\in\mathbb{Z}$: that is, $\delta$ is filtered, and $\deg_u(\delta) \geq d$.
\end{lem}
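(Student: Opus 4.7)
The plan is to prove this by splitting into the cases $m\geq 0$ and $m<0$ and doing induction in each, using the key observation that $\sigma$ strictly preserves the filtration on $Q$.

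First, I would record the preliminary observation that $\sigma(\mathcal{O}) = \mathcal{O}$ implies $\sigma(J(\mathcal{O})) = J(\mathcal{O})$ (since the Jacobson radical is preserved by automorphisms of $\mathcal{O}$), and hence $\sigma(J(\mathcal{O})^m) = J(\mathcal{O})^m$ for every $m\in\mathbb{Z}$. I would also fix a normal uniformiser $\pi\in D$ (identified with $\pi I_n$ inside $\mathcal{O}=M_n(D)$), so that $\pi^m\mathcal{O}=\mathcal{O}\pi^m = J(\mathcal{O})^m$ for all $m\in\mathbb{Z}$, and note that $\sigma(\pi)$ is again a normal uniformiser, as $\sigma(\pi)\mathcal{O}=\sigma(\pi\mathcal{O})=\sigma(J(\mathcal{O}))=J(\mathcal{O})$.

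For the case $m\geq 0$, I would do a forward induction. The cases $m=0$ and $m=1$ are just the hypotheses. For $m\geq 2$, I would use $J(\mathcal{O})^m = J(\mathcal{O})\cdot J(\mathcal{O})^{m-1}$. Given any generator $ab$ with $a\in J(\mathcal{O})$ and $b\in J(\mathcal{O})^{m-1}$, the $\sigma$-derivation identity gives
\[
\delta(ab) = \delta(a)\, b + \sigma(a)\,\delta(b).
\]
By hypothesis $\delta(a)\in J(\mathcal{O})^{d+1}$, so $\delta(a)b\in J(\mathcal{O})^{d+m}$; by the preliminary observation $\sigma(a)\in J(\mathcal{O})$, and by induction $\delta(b)\in J(\mathcal{O})^{d+m-1}$, so $\sigma(a)\delta(b)\in J(\mathcal{O})^{d+m}$. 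Thus $\delta(ab)\in J(\mathcal{O})^{d+m}$, and since sums of such products span $J(\mathcal{O})^m$, the inductive step is complete.

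For the case $m<0$, I would do a downward induction on $m$, assuming the result holds for $m+1$. Given $x\in J(\mathcal{O})^m$, multiplying by $\pi$ gives $\pi x\in J(\mathcal{O})^{m+1}$, so by the inductive hypothesis $\delta(\pi x)\in J(\mathcal{O})^{m+1+d}$. Rearranging the $\sigma$-derivation identity,
\[
\sigma(\pi)\,\delta(x) = \delta(\pi x) - \delta(\pi)\, x.
\]
Now $\delta(\pi)\in J(\mathcal{O})^{d+1}$ by hypothesis and $x\in J(\mathcal{O})^m$, so $\delta(\pi)x\in J(\mathcal{O})^{m+d+1}$; hence $\sigma(\pi)\delta(x)\in J(\mathcal{O})^{m+d+1}$. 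Because $\sigma(\pi)$ is a normal uniformiser, multiplication by $\sigma(\pi)^{-1}$ shifts the filtration by $-1$, yielding $\delta(x)\in \sigma(\pi)^{-1}J(\mathcal{O})^{m+d+1} = J(\mathcal{O})^{m+d}$.

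The main obstacle is the $m<0$ case: naive induction does not apply, and one must exploit the invertibility of the uniformiser in $Q$. The key facts that make this work are that $\sigma$ preserves the filtration strictly (so $\sigma(\pi)$ remains a uniformiser) and that $\pi$ can be chosen normal, which ensures that $\sigma(\pi)^{-1}J(\mathcal{O})^{k}=J(\mathcal{O})^{k-1}$ as required.
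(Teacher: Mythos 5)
Your proof is correct. The positive case is exactly the paper's argument (an induction using $\delta(ab)=\delta(a)b+\sigma(a)\delta(b)$ on generators of $J(\O)^m$), but your treatment of negative $m$ takes a different route. The paper avoids induction there: for $a\in J(\O)^m$ with $m<0$ it multiplies on the right by an \emph{arbitrary} $b\in J(\O)^{-m}$, writes $\delta(a)b=\delta(ab)-\sigma(a)\delta(b)$, uses $ab\in\O$ together with the already-established positive case for $\delta(b)$, and then cancels the ideal $J(\O)^{-m}$ using the invertibility of $J(\O)$. You instead fix a normal uniformiser $\pi$ (so $J(\O)^k=\pi^k\O$), do a downward induction, and cancel the single element $\sigma(\pi)$ on the left; the points you need — that $\sigma(\pi)$ is again a normal generator of $J(\O)$ because $\sigma(\O)=\O$, and that it is invertible in $Q$ — are all valid in the lemma's setting, where $u$ is standard and $\O=M_n(D)$. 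The trade-off is that your argument leans on the existence of a normal uniformiser, i.e.\ on the explicit matrix-over-DVR structure of $\O$, whereas the paper's ideal-theoretic cancellation only uses that $J(\O)$ is invertible and so transfers verbatim to any order with that property; conversely, your element-wise version is slightly more concrete and makes the mechanism of the negative case very transparent. Either way the conclusion $\deg_u(\delta)\geq d$ follows as stated.
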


\begin{proof}
$J(\O)$ is the unique maximal ideal of $\O$, so we must have $\sigma(J(\O)^m) = J(\O)^m$ for all $m\in\mathbb{Z}$.

For positive $m$, the result proceeds by induction. Suppose $\delta(J(\O)^{m-1}) \subseteq J(\O)^{m+d-1}$: then since $J(\O)^m$ is generated by elements of the form $ab$, where $a\in J(\O)^{m-1}$ and $b\in J(\O)$, we need only calculate $\delta(ab) = \delta(a)b + \sigma(a)\delta(b)$ and observe that both terms on the right-hand side are contained in $J(\O)^{m+d}$.

For negative $m$, fix $a\in J(\O)^m$, let $b\in J(\O)^{-m}$ be arbitrary, and calculate $\delta(a)b = \delta(ab) - \sigma(a)\delta(b)$. Now $ab\in \O$, so $\delta(ab)\in J(\O)^d$, and we know that $\delta(b) \in J(\O)^{-m+d}$, so it follows that $\delta(a)b \in J(\O)^d$. As $b$ was arbitrary, we get $\delta(a)J(\O)^{-m} \subseteq J(\O)^d$, and so multiplying by $J(\O)^m$ gives $\delta(a) \in J(\O)^{m+d}$.
\end{proof}

Since $Q$ is simple artinian, recall from \cite[Lemma 2.2, Theorem 2.4]{cauchon-robson} that any automorphism $\sigma$ of $Q=M_n(F)$ decomposes as $\sigma=\eta\circ M_n(\tau)$ for some inner automorphism $\eta$ of $Q$ and some automorphism $\tau$ of $F$. The following results can be seen as a generalisation of Lemma \ref{lem: delta sends O to J(O)^d}.

\begin{lem}\label{lem: conditions that imply filtered wrt standard filtration}
Let $\tau$ be an automorphism of $F$ and $\eta$ an inner automorphism of $Q$, and set $\sigma = \eta\circ M_n(\tau)$.

\begin{enumerate}[label=(\roman*)]
\item The following are equivalent: (a) $\tau$ is strictly filtered with respect to $v$, (b) $\tau$ and $\tau^{-1}$ are filtered with respect to $v$, (c) there exists some $t\in\mathbb{Z}$ such that $\tau(D), \tau^{-1}(D) \subseteq J(D)^t$.
\item $\sigma$ (resp. $\sigma^{-1}$) is filtered with respect to $u$ if and only if $\tau$ (resp. $\tau^{-1}$) is filtered with respect to $v$.
\item Suppose $\sigma$ and $\sigma^{-1}$ are filtered with respect to $u$, and $\delta$ is a $\sigma$-derivation of $Q$ satisfying $\sigma\delta = \delta\sigma$. If $\delta(\O)\subseteq J(\O)^t$ for some $t\in\mathbb{Z}$, then $\delta$ is filtered.
\end{enumerate}
\end{lem}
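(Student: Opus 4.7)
The plan is to prove the three parts in order, each building on the previous.

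\textbf{Part (i).} The implications (a)$\Rightarrow$(b) and (b)$\Rightarrow$(c) are immediate from the definitions: strictly filtered automorphisms have strictly filtered inverses, and a filtered map of degree $d$ carries $D = F_0 F$ into $F_d F = J(D)^d$. For (c)$\Rightarrow$(a), the key observation is that $T := v \circ \tau - v : F^\times \to \mathbb{Z}$ is a group homomorphism, because $v$ is a \emph{valuation} (hence multiplicative), not merely a filtration. From (c), for any $u \in D^\times$ both $T(u)$ and $T(u^{-1}) = -T(u)$ are at least $t$, so $|T(u)| \leq -t$. A bounded subgroup of $\mathbb{Z}$ is trivial, so $T|_{D^\times} \equiv 0$ and $\tau(D^\times) = D^\times$. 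Writing $F^\times = \bigsqcup_n \pi^n D^\times$ with $\pi$ a normal uniformiser, $T$ reduces to $c \cdot v$ on $F^\times$, so $v \circ \tau = (c+1)\,v$ for some $c \in \mathbb{Z}$. The bound $v(\tau(\pi)^n) \geq t$ for all $n \geq 0$ forces $c + 1 \geq 0$. Applying the analogous analysis to $\tau^{-1}$ gives $v \circ \tau^{-1} = (c'+1)\,v$ with $c' + 1 \geq 0$, and composing yields $(c+1)(c'+1) = 1$. For nonnegative integers this forces $c = c' = 0$, so $\tau$ is strictly filtered.

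\textbf{Part (ii).} Using the given decomposition $\sigma = \eta \circ M_n(\tau)$, inner automorphisms $\eta$ (and $\eta^{-1}$) of $Q$ are automatically filtered by Remark \ref{rks: filtered and strict linear maps}(ii), while $M_n(\tau)$ is filtered with respect to $u = M_n(v)$ if and only if $\tau$ is filtered with respect to $v$ (directly from the definition of the matrix filtration). Composition gives the equivalence: $\sigma$ is filtered iff $M_n(\tau) = \eta^{-1}\sigma$ is, iff $\tau$ is; the analogous argument via $\sigma^{-1} = M_n(\tau^{-1}) \circ \eta^{-1}$ handles the inverse.

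\textbf{Part (iii).} By (i) and (ii), $\tau$ is strictly filtered, so $M_n(\tau)$ preserves the level sets of $u$. Set $\alpha = \deg_u(\sigma)$. For $x \in J(\O)^n$ with $n \geq 0$, write $x = \pi^n y$ with $y \in \O$ (using that $\pi I_n$ is normal in $\O$, so $J(\O) = \pi\O$). Iterating the $\sigma$-Leibniz rule yields $\delta(\pi^n) = \sum_{k=0}^{n-1}\sigma(\pi)^k\,\delta(\pi)\,\pi^{n-1-k}$, and each summand has $u$-value at least $(k + \alpha) + t + (n-1-k) = n + \alpha + t - 1$, using the uniform bound $u(\sigma(\pi^k)) \geq k + \alpha$ (from $\sigma$ filtered of degree $\alpha$, applied directly to $\pi^k$) together with $u(\delta(\pi)) \geq t$. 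Combined with $u(\sigma(\pi^n)\delta(y)) \geq n + \alpha + t$, this gives $u(\delta(x)) \geq n + \alpha + t - 1$ for $n \geq 1$, while $u(\delta(x)) \geq t$ for $n = 0$. For $n \leq -1$, the identity $\delta(\pi^n \pi^{-n}) = \delta(1) = 0$ yields $\delta(\pi^n) = -\sigma(\pi^n)\delta(\pi^{-n})\pi^n$, and plugging in the bound already established for $\delta(\pi^{-n})$ produces a parallel uniform linear lower bound. In all cases $u(\delta(x)) \geq u(x) + d$ for some fixed $d \in \mathbb{Z}$, so $\delta$ is filtered.

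The main obstacle is part (iii), especially when $\alpha < 0$, which does genuinely occur: if $\sigma$ is conjugation by $a \in Q^\times$ with $u(a) + u(a^{-1}) < 0$, then $\deg_u(\sigma) < 0$. The naive subadditive estimate $u(\sigma(\pi)^k) \geq k(1 + \alpha)$ degrades linearly in $k$ and cannot yield a uniform lower bound on $u(\delta(\pi^n))$; the argument crucially relies on the sharper uniform bound $u(\sigma(\pi^k)) \geq k + \alpha$ obtained by applying $\sigma$ filtered of degree $\alpha$ directly to the element $\pi^k$.
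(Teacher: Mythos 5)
Your argument is correct, but part (iii) follows a genuinely different route from the paper. Parts (i) and (ii) are essentially the paper's proof (for (c)$\Rightarrow$(a) the paper also exploits multiplicativity of $v$, arguing directly that some $x\in D$ with $v(\tau(x))<0$ would give $v(\tau(x^m))\to-\infty$; your repackaging via the homomorphism $T=v\circ\tau-v$ on $F^\times$ is a clean equivalent). For (iii), however, the paper does not compute with a uniformiser at all: it first replaces $\delta$ by $\delta'=a^{-1}\delta$ (where $\eta$ is conjugation by $a$), invokes the Cauchon--Robson decomposition \cite[Theorem 2.5]{cauchon-robson} to write $\delta'=\varepsilon+M_n(\theta)$ with $\varepsilon$ an inner $M_n(\tau)$-derivation and $\theta$ a $\tau$-derivation of $F$, and then applies Lemma \ref{lem: delta sends O to J(O)^d} to $\theta$ (legitimate there because $\tau$ is strictly filtered, so $\tau(D)=D$). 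Your proof instead works directly in $Q$ with the normal element $\pi I_n$ generating $J(\O)$, iterates the $\sigma$-Leibniz rule on $\pi^n$, and — this is the key point you correctly isolate — bounds $u(\sigma(\pi^k))\geq k+\deg_u(\sigma)$ by applying filteredness of $\sigma$ to $\pi^k$ in one step rather than compounding the (possibly negative) degree $k$ times; the negative powers are handled from $\delta(\pi^n)=-\sigma(\pi^n)\delta(\pi^{-n})\pi^n$ exactly as Lemma \ref{lem: delta sends O to J(O)^d} handles them in the untwisted case. What the paper's route buys is conceptual economy: it re-uses the structure theory of skew derivations on $M_n(F)$ and the already-proved Lemma \ref{lem: delta sends O to J(O)^d}. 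What your route buys is self-containedness and slightly greater generality: it avoids \cite[Theorem 2.5]{cauchon-robson} entirely and, as written, uses neither the commutation $\sigma\delta=\delta\sigma$ nor the filteredness of $\sigma^{-1}$ (only that $\sigma$ is filtered and $J(\O)=\pi\O$ for a regular normal $\pi$ invertible in $Q$), at the cost of being tied to the principality of $J(\O)$ in the standard filtered setting. Both yield the stated conclusion.
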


\begin{proof}
$ $

\begin{enumerate}[label=(\roman*)]
\item The implications (a)$\implies$(b)$\implies$(c) follow directly from the definitions, so we only prove (c)$\implies$(a).

Suppose $\tau(D), \tau^{-1}(D) \subseteq J(D)^t$, and assume first that $t \geq 0$. Then in particular $\tau(D) \subseteq D$ and $\tau^{-1}(D) \subseteq D$, and so $\tau(D) = D$. This implies that $\tau(J(D)^r) = J(D)^r$ for all $r\in\mathbb{Z}$ as required. So now assume that $t < 0$, so that we can take $x\in D$ satisfying $\tau(x) \in J(D)^a \setminus J(D)^{a+1}$ for some $a < 0$. But then, for all $m\in\mathbb{N}$, we have $x^m\in D$ and (recalling from Properties \ref{props: DVRs} that the $J(D)$-adic filtration is a valuation) $\tau(x^m) \in J(D)^{am} \setminus J(D)^{am+1}$. So for sufficiently large $m$ we will have $\tau(x^m) \not\in J(D)^t$, contradicting our assumption.

\item The inner automorphisms $\eta, \eta^{-1}$ are filtered by Remark \ref{rks: filtered and strict linear maps}(iii), so $\sigma$ is filtered with respect to $u$ if and only if $M_n(\tau) = \eta^{-1}\sigma$ is filtered with respect to $u$, which is true if and only if $\tau$ is filtered with respect to $v$. An almost identical argument works for $\sigma^{-1} = M_n(\tau^{-1}) \circ \eta$.

\item Since $\sigma$ and $\sigma^{-1}$ are filtered, it follows from (ii) that both $\tau$ and $\tau^{-1}$ are filtered, and hence from (i) that $\tau$ is strictly filtered.

Suppose that (for some $a\in Q^\times$) $\eta(q) = aqa^{-1}$ for all $q\in Q$: then $\delta':=a^{-1}\delta$ is easily checked to be an $M_n(\tau)$-derivation of $Q$, and satisfies $\delta'(\O) \subseteq J(\O)^s$ for $s = t + u(a^{-1})$. This time using \cite[Theorem 2.5]{cauchon-robson}, we see that $\delta'=\varepsilon+M_n(\theta)$, where (for some $x\in Q$) $\varepsilon(q)=xq-M_n(\tau)(q)x$ for all $q\in Q$, and $\theta$ is a $\tau$-derivation of $F$. As $\varepsilon$ is inner, it is filtered by Remark \ref{rks: filtered and strict linear maps}(ii).

It follows that $M_n(\theta)(\O) = (\delta' - \varepsilon)(\O) \subseteq J(\O)^k$ for some $k$, and hence $\theta(D) \subseteq J(D)^k$. Set $K:=k-1$, so that $\theta(D)\subseteq J(D)^K$ and $\theta(J(D))\subseteq J(D)^{K+1}$: now it follows from Lemma \ref{lem: delta sends O to J(O)^d} that $\theta(J(D)^r) \subseteq J(D)^{r+K}$ for all $r\in\mathbb{Z}$, and in particular $\theta$ is filtered. Hence $\delta' = \varepsilon + M_n(\theta)$ is filtered, and so $\delta = a\delta'$ is filtered.\qedhere
\end{enumerate}
\end{proof}

\begin{cor}\label{cor: level 1 filtered implies filtered}
Suppose $(\sigma, \delta)$ is a commuting skew derivation of $Q$, and suppose there exists $t\in\mathbb{Z}$ such that $\sigma(\O), \sigma^{-1}(\O), \delta(\O) \subseteq J(\O)^t$. Then $(\sigma,\delta)$ is filtered.
\end{cor}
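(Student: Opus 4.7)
The plan is to reduce to Lemma \ref{lem: conditions that imply filtered wrt standard filtration} in three short steps: first show $\sigma$ and $\sigma^{-1}$ are filtered, then invoke part (iii) to conclude that $\delta$ is filtered.

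First I would decompose $\sigma$ via \cite[Lemma 2.2, Theorem 2.4]{cauchon-robson} as $\sigma = \eta \circ M_n(\tau)$, where $\eta$ is an inner automorphism of $Q$ and $\tau$ is an automorphism of $F$. Since inner automorphisms of $Q$ are filtered by Remark \ref{rks: filtered and strict linear maps}(ii), both $\eta$ and $\eta^{-1}$ shift the filtration by a bounded amount. Combined with the hypotheses $\sigma(\O), \sigma^{-1}(\O) \subseteq J(\O)^t$, this produces integers $t', t''$ with $M_n(\tau)(\O) = \eta^{-1}\sigma(\O) \subseteq J(\O)^{t'}$ and $M_n(\tau^{-1})(\O) \subseteq J(\O)^{t''}$. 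Restricting to diagonal matrices (or just reading off diagonal entries) gives $\tau(D) \subseteq J(D)^{t'}$ and $\tau^{-1}(D) \subseteq J(D)^{t''}$, so Lemma \ref{lem: conditions that imply filtered wrt standard filtration}(i) applies and tells us that $\tau$ is strictly filtered with respect to $v$.

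Next, Lemma \ref{lem: conditions that imply filtered wrt standard filtration}(ii) immediately yields that both $\sigma$ and $\sigma^{-1}$ are filtered with respect to $u$. Finally, with $\sigma$ and $\sigma^{-1}$ filtered and the hypothesis $\delta(\O) \subseteq J(\O)^t$ in hand, Lemma \ref{lem: conditions that imply filtered wrt standard filtration}(iii) gives that $\delta$ is filtered, completing the proof.

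There is no real obstacle here; the corollary is essentially a packaging of the previous lemma. The only point requiring mild care is the transition from a bound on $\sigma(\O)$ to a bound on $M_n(\tau)(\O)$, which uses that inner automorphisms are filtered of some (possibly negative) degree, so one inherits a bound of the form $J(\O)^{t+c}$ rather than $J(\O)^t$ — but this is harmless, as part (i) of the lemma only needs the existence of \emph{some} integer bound.
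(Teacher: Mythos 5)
Your proposal is correct and follows essentially the same route as the paper's own proof: decompose $\sigma = \eta\circ M_n(\tau)$ via Cauchon--Robson, use the fact that inner automorphisms are filtered to convert the hypotheses into bounds $\tau(D), \tau^{-1}(D)\subseteq J(D)^s$, and then apply parts (i), (ii) and (iii) of Lemma \ref{lem: conditions that imply filtered wrt standard filtration} in turn. The only cosmetic difference is that you spell out the harmless degree shift coming from $\eta$, which the paper leaves implicit.
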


\begin{proof}
Again writing $\sigma = \eta\circ M_n(\tau)$ as in Lemma \ref{lem: conditions that imply filtered wrt standard filtration}, we see that $\sigma(\O) \subseteq J(\O)^t$ implies $\tau(D) \subseteq J(D)^s$ for some $s$, by the same argument as in the proof of Lemma \ref{lem: conditions that imply filtered wrt standard filtration}(ii). A similar argument will show that $\tau^{-1}(D) \subseteq J(D)^{s'}$ for some $s'$. Hence by Lemma \ref{lem: conditions that imply filtered wrt standard filtration}(i), $\tau$ and $\tau^{-1}$ are filtered, and by Lemma \ref{lem: conditions that imply filtered wrt standard filtration}(ii), $\sigma$ and $\sigma^{-1}$ are filtered.  Now Lemma \ref{lem: conditions that imply filtered wrt standard filtration}(iii) implies that $\delta$ is filtered.
\end{proof}

\subsection{Extending the skew derivation}\label{subsec: extending the skew derivation}

Suppose that $(R, w)$ satisfies \eqref{filt}, and construct $\widehat{Q}$ and $(Q, u)$ as in \eqref{eqn: maps in localisation procedure}. In this subsection we show that, under a fairly restrictive condition on the maximal ideals of $\widehat{Q}$, the commuting skew derivation $(\sigma, \delta)$ on $(R, w_0)$ induces a canonical skew derivation $(\sigma^Q, \delta^Q)$ on our simple artinian filtered ring $(Q, u)$. Then, under good starting assumptions on $(\sigma, \delta)$, we show that the conditions for $Q^+[[x; \sigma^Q, \delta^Q]]$ to exist are met.

Now we readopt the notation of Procedure \ref{proc: build a standard filtration} used in \S \ref{subsec: localisation process}, including the subring $U = w^{-1}([0,\infty])$ of $\widehat{Q}$, the regular normal element $z\in J(U)$, and the maximal ideal $M$ used to define $Q := \widehat{Q}/M$,  $V := (U+M)/M$ and $\overline{z} = z+M$. Since the filtration $u$ on $Q$ is standard, by Definition \ref{defn: standard filtrations}, we will be able to write $u = M_n(v)$, where $\O = M_n(D)\subseteq Q = M_n(F)$ for a complete discrete valuation ring $(D,v)$ and its valued division ring of fractions $(F,v)$.

We need to explore how the skew derivation $(\sigma,\delta)$ on $R$ behaves under Procedure \ref{proc: build a standard filtration}. Recall the notion of \emph{(strong) compatibility} from Definition \ref{defn: compatibility}.

\begin{lem}\label{lem: compatibility is not enough}
Suppose $(\sigma,\delta)$ is a commuting skew derivation on $R$, compatible with $w_0$.

\begin{enumerate}[label=\textbf{(\alph*)}]
\item $(\sigma, \delta)$ extends uniquely to a commuting skew derivation $(\sigma^{Q(R)},\delta^{Q(R)})$ on $Q(R)$, compatible with $w$.
\item $(\sigma^{Q(R)},\delta^{Q(R)})$ extends uniquely to a commuting skew derivation $(\sigma^{\widehat{Q}}, \delta^{\widehat{Q}})$ on $\widehat{Q}$, compatible with $w$.
\item $(\sigma^{\widehat{Q}}, \delta^{\widehat{Q}})$ is continuous with respect to $v_{z,U}$.
\end{enumerate}
Suppose now that $(\sigma^{\widehat{Q}}, \delta^{\widehat{Q}})$ also preserves the maximal ideal $M$ of $\widehat{Q}$. Write $\pi$ for the quotient map $\widehat{Q} \to Q = \widehat{Q}/M$.
\begin{enumerate}[label=\textbf{(\alph*)}]
\setcounter{enumi}{3}
\item $(\sigma^{\widehat{Q}}, \delta^{\widehat{Q}})$ induces a unique commuting skew derivation $(\sigma^Q, \delta^Q)$ on $Q$ satisfying $\sigma^{Q} \circ \pi = \pi\circ \sigma^{\widehat{Q}}$ and $\delta^{Q} \circ \pi = \pi\circ \delta^{\widehat{Q}}$, and it is continuous with respect to $v_{\overline{z},V}$.
\item $(\sigma^Q, \delta^Q)$ is continuous with respect to $u$.
\end{enumerate}
\end{lem}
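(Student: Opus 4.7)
The plan is to prove parts (a)--(e) in sequence, tracking how the skew derivation $(\sigma,\delta)$ propagates through Procedure \ref{proc: build a standard filtration}.

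For part (a), the algebraic extension of $(\sigma,\delta)$ from $R$ to $Q(R)$ is standard, by \cite[Lemma 1.3]{goodearl-skew-poly-and-quantized}: set $\sigma^{Q(R)}(rs^{-1}) := \sigma(r)\sigma(s)^{-1}$ and extend $\delta$ analogously, with uniqueness automatic from the Ore property; commutativity $\sigma^{Q(R)}\delta^{Q(R)} = \delta^{Q(R)}\sigma^{Q(R)}$ then follows from uniqueness applied to both sides. The substantive claim is compatibility with $w$. Here I would use the construction of $w$ from \cite[\S 3]{jones-abelian-by-procyclic}: compatibility of $(\sigma,\delta)$ with $w_0$ is equivalent to $\deg_{w_0}(\sigma-\id) \geq 1$ and $\deg_{w_0}(\delta) \geq 1$, i.e.\ to $\sigma$ acting as the identity and $\delta$ as zero on $\gr_{w_0}(R)$. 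Since $w$ is obtained by homogeneous localisation, $\gr_w(Q(R))$ is (a sub-quotient of) a localisation of $\gr_{w_0}(R)$, on which $\sigma^{Q(R)}$ and $\delta^{Q(R)}$ still induce the identity and zero respectively. This gives $\deg_w(\sigma^{Q(R)}-\id) \geq 1$ and $\deg_w(\delta^{Q(R)}) \geq 1$, i.e.\ compatibility with $w$.

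For (b), compatibility from (a) makes $\sigma^{Q(R)}-\id$ and $\delta^{Q(R)}$ uniformly continuous on $(Q(R),w)$, so they extend uniquely to continuous endomorphisms $\sigma^{\widehat{Q}}, \delta^{\widehat{Q}}$ of $\widehat{Q}$. The multiplicativity, the skew Leibniz rule, and commutativity all hold on the dense subring $Q(R)$ and therefore extend to $\widehat{Q}$ by continuity of multiplication and addition. Compatibility survives the completion: given $q\in \widehat{Q}$ with $w(q)=m$, approximate $q$ by $q_n \in Q(R)$ with $w(q_n) = m$ eventually, so that $w(\sigma^{Q(R)}(q_n)-q_n) \geq m+1$; continuity of the extension and of $w$ under limits gives $w(\sigma^{\widehat{Q}}(q)-q) \geq m+1$, and the argument for $\delta^{\widehat{Q}}$ is identical. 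For (c), Proposition \ref{propn: continuity of localisation procedure maps}(c) gives $z^n U = F_{en}\widehat{Q}$ for some $e\geq 1$, so $v_{z,U}$ and $w$ define the same topology on $\widehat{Q}$, and continuity in (b) transfers directly.

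For (d), the stability of $M$ under $\sigma^{\widehat{Q}}$ combined with the same argument applied to the continuous extension of $\sigma^{-1}$ forces $\sigma^{\widehat{Q}}(M)=M$, so both $\sigma^Q(q+M) := \sigma^{\widehat{Q}}(q)+M$ and $\delta^Q(q+M) := \delta^{\widehat{Q}}(q)+M$ are well-defined, and the skew derivation axioms pass to the quotient by a direct check. The quotient map $\pi : (\widehat{Q},v_{z,U}) \to (Q, v_{\overline{z},V})$ is strictly filtered by Proposition \ref{propn: continuity of localisation procedure maps}(d), so continuity with respect to $v_{\overline{z},V}$ follows from (c) by a standard diagram chase (via Remark \ref{rks: filtered and strict linear maps}(iii)). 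Finally, for (e), Proposition \ref{propn: continuity of localisation procedure maps}(e) gives mutual cofinality of $\{\overline{z}^n V\}$ and $\{J(\O)^n\}$ among level sets in $Q$, so $v_{\overline{z},V}$ and $u$ are topologically equivalent; continuity of $(\sigma^Q,\delta^Q)$ with respect to $u$ follows from (d).

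The main obstacle will be (a): the rest of the lemma is essentially continuity bookkeeping, trading on the relationships between $w$, $v_{z,U}$, $v_{\overline{z},V}$, and $u$ established in Proposition \ref{propn: continuity of localisation procedure maps}. By contrast, showing that strong compatibility (a numerical statement about filtration degrees) survives the passage from $w_0$ to the homogeneously localised filtration $w$ on $Q(R)$ requires genuine engagement with the construction of $w$. Once (a) is settled, the completion step (b), the identification (c), the quotient step (d), and the topological equivalence (e) follow by formal arguments.
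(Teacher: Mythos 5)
Your proposal follows the paper's own route almost exactly: the algebraic extension to $Q(R)$ via \cite[Lemma 1.3]{goodearl-skew-poly-and-quantized} with uniqueness giving commutativity, the extension to $\widehat{Q}$ by continuity with compatibility preserved because $Q(R)$ is dense, the filtration on $\widehat{Q}$ restricts strictly, and the level sets of $\widehat{Q}$ are closed, and parts \textbf{(c)}--\textbf{(e)} read off from Proposition \ref{propn: continuity of localisation procedure maps}. Your cofinality claim in \textbf{(e)} does follow from part \textbf{(e)} of that proposition: $\overline{z}^{m\ell}V \subseteq J(\O)^\ell$ gives one direction, and iterating $J(\O)^{n}\subseteq \overline{z}\O$ gives $J(\O)^{an}\subseteq \overline{z}^{a}\O\subseteq \overline{z}^{a-r}V$, so $J(\O)^{(a+r)n}\subseteq\overline{z}^{a}V$, which gives the other.

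The one place where you substitute your own argument for the paper's is the heart of \textbf{(a)}: the paper simply quotes \cite[Theorem 1.16]{jones-woods-1} for the fact that compatibility with $w_0$ implies compatibility of the extension with $w$. Your graded-ring sketch, as written, restates this conclusion rather than proving it: for $\sigma^{Q(R)}$ and $\delta^{Q(R)}$ to \emph{induce} maps on $\gr_w(Q(R))$ at all they must already be filtered of non-negative degree with respect to $w$, and the assertion that the induced maps are $\id$ and $0$ is word-for-word the statement $\deg_w(\sigma^{Q(R)}-\id)\geq 1$ and $\deg_w(\delta^{Q(R)})\geq 1$ that you are trying to establish. A genuine proof has to engage with the explicit description of $w$ from the homogeneous localisation: symbols of elements $s$ of the denominator set become homogeneous units in the graded localisation, so that $w(s^{-1})=-w_0(s)$ and $w(rs^{-1})$ is controlled by symbols, and one then verifies the degree inequalities directly on fractions, e.g.\ via $\sigma(q)-q=(\sigma(r)-r)\sigma(s)^{-1}+r\sigma(s)^{-1}(s-\sigma(s))s^{-1}$ and $\delta(q)=(\delta(r)-\sigma(q)\delta(s))s^{-1}$ for $q=rs^{-1}$. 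This is precisely the content of the cited theorem, so your plan is sound, but as it stands this step is a gap rather than an argument. Two minor points: in \textbf{(d)} there is no need to invoke $\sigma^{-1}$, since $\sigma^{\widehat{Q}}(M)$ is a maximal ideal contained in $M$ and hence equal to it; and in \textbf{(b)} one should also extend $\sigma^{-1}$ by continuity so that the extension is genuinely an automorphism.
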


\begin{proof}

\emph{Any} skew derivation $(\sigma, \delta)$ on $R$ extends uniquely to a skew derivation $(\sigma^{Q(R)},\delta^{Q(R)})$ on $Q(R)$ \cite[Lemma 1.3]{goodearl-skew-poly-and-quantized}, and the identity $\sigma\delta=\delta\sigma$ lifts to $Q(R)$. As we are assuming $(\sigma,\delta)$ is compatible with $w_0$, we know that $(\sigma^{Q(R)}, \delta^{Q(R)})$ is compatible with $w$ \cite[Theorem 1.16]{jones-woods-1}, establishing \textbf{(a)}. (Note that this requires \emph{strong} compatibility, not the weaker notion used in \cite{jones-woods-2}.) Since compatibility implies continuity, the existence of $(\sigma^{\widehat{Q}}, \delta^{\widehat{Q}})$ follows from \cite[Chapter I, Theorem 3.4.5]{LVO}, and compatibility follows from the strictness of the filtered inclusion map $(Q(R),w)\hookrightarrow (\widehat{Q},w)$, which proves \textbf{(b)}. Parts \textbf{(c)--(e)} are now easy to check using the corresponding parts of Proposition \ref{propn: continuity of localisation procedure maps}.
\end{proof}

\begin{rk}\label{rk: restriction of extension of skew derivation}
The uniqueness in part \textbf{(a)} of Lemma \ref{lem: compatibility is not enough} implies that $\sigma^{Q(R)}|_R = \sigma$ and $\delta^{Q(R)}|_R = \delta$, and the uniqueness in part \textbf{(b)} implies that $\sigma^{\widehat{Q}}|_{Q(R)} = \sigma^{Q(R)}$ and $\delta^{\widehat{Q}}|_{Q(R)} = \delta^{Q(R)}$.

Moreover, recalling that $Q(R)\to Q$ is an injective map with dense image by Corollary \ref{cor: R and Q(R) embed in Q}, we can also view $Q(R)$ as embedded in $Q$, and view $Q$ as just a completion of $Q(R)$ with respect to either $v_{\overline{z},V}$ or $u$. Then Lemma \ref{lem: compatibility is not enough}\textbf{(d)} implies that $\sigma^Q|_{Q(R)} = \sigma^{Q(R)}$ and $\delta^Q|_{Q(R)} = \delta^{Q(R)}$, and by uniqueness, $(\sigma^Q, \delta^Q)$ is precisely the skew derivation we would obtain on $Q$ by extending $(\sigma^{Q(R)}, \delta^{Q(R)})$ along the completion map $Q(R) \to Q$ as in \cite[Chapter I, Theorem 3.4.5]{LVO}.

Hence we can regard $Q$ as a completion of $Q(R)$, and denote all of these skew derivations (where they exist) by $(\sigma, \delta)$, without confusion.

\textbf{For the remainder of \S \ref{sec: sigma-orbits of maximal ideals},} we will adopt this convention for ease of notation.
\end{rk}

As standard filtrations are so well-behaved, we can in fact strengthen Lemma \ref{lem: compatibility is not enough}\textbf{(e)}.

\begin{lem}\label{lem: compatible with w_0 implies filtered wrt u}
Suppose $(\sigma, \delta)$ is a commuting skew derivation on $R$ compatible with $w_0$. Assume the canonical extension of $(\sigma, \delta)$ to $\widehat{Q}$ preserves the maximal ideal $M\lhd \widehat{Q}$. Then the induced skew derivation $(\sigma,\delta)$ on $Q$ is filtered with respect to $u$.
\end{lem}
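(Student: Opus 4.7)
The plan is to reduce the statement to Corollary \ref{cor: level 1 filtered implies filtered}: it suffices to produce a single integer $t$ such that $\sigma(\O),\sigma^{-1}(\O),\delta(\O)\subseteq J(\O)^t$. By Lemma \ref{lem: compatibility is not enough}(b), $(\sigma,\delta)$ extends to $\widehat{Q}$ as a commuting skew derivation compatible with $w$, so $\sigma$ is strictly filtered with respect to $w$ and $\deg_w(\delta)\geq 1$. Combined with the identification $z^nU=F_{en}\widehat{Q}$ from Proposition \ref{propn: continuity of localisation procedure maps}(c) (where $e=w(z)$), this yields $\sigma(z^nU)=z^nU$, $\sigma^{-1}(z^nU)=z^nU$ and $\delta(z^nU)\subseteq z^nU$ for every $n\in\mathbb{Z}$. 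As $(\sigma,\delta)$ preserves $M$, passing to the quotient $Q=\widehat{Q}/M$ (via Lemma \ref{lem: compatibility is not enough}(d)) gives the corresponding invariance
\[
\sigma(\overline{z}^iV)=\overline{z}^iV,\quad \sigma^{-1}(\overline{z}^iV)=\overline{z}^iV,\quad \delta(\overline{z}^iV)\subseteq\overline{z}^iV\qquad(i\in\mathbb{Z}).
\]

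I would then convert this $v_{\overline{z},V}$-invariance into a containment inside $J(\O)^t$ using the comparison of filtrations from Proposition \ref{propn: continuity of localisation procedure maps}(e): there exist non-negative integers $m,n,r$ such that $\O\subseteq\overline{z}^{-r}V$ and $J(\O)^n\subseteq\overline{z}\O$. Using normality of $\overline{z}$ in $Q$ to slide it past $\O$, iterating the second inclusion gives $J(\O)^{nr}\subseteq\overline{z}^r\O$, so
\[
J(\O)^{nr}\cdot\overline{z}^{-r}V\subseteq\overline{z}^r\O\cdot\overline{z}^{-r}V=\O V\subseteq\O,
\]
which rearranges to $\overline{z}^{-r}V\subseteq J(\O)^{-nr}$. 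Combining with the inclusion $\O\subseteq\overline{z}^{-r}V$ and the invariance already established, we get
\[
\sigma(\O),\ \sigma^{-1}(\O),\ \delta(\O)\ \subseteq\ \overline{z}^{-r}V\ \subseteq\ J(\O)^{-nr},
\]
and Corollary \ref{cor: level 1 filtered implies filtered} applied with $t=-nr$ completes the proof.

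The crux is bridging $v_{\overline{z},V}$ and $u$. Although both topologize $Q$ identically, a direct comparison of the two filtrations only yields a Lipschitz-type estimate with slope strictly less than $1$, which would be insufficient to transfer a filtered degree from one filtration to the other. The observation that saves the argument is that compatibility with $w$ gives something much stronger than just $v_{\overline{z},V}$-filtered degree $\geq 0$: it forces $\sigma$, $\sigma^{-1}$ and $\delta$ to preserve \emph{every} $v_{\overline{z},V}$-level set. In particular $\sigma(\O)$ lands in the single level set $\overline{z}^{-r}V$, which Proposition \ref{propn: continuity of localisation procedure maps}(e) pins down inside a specific power of $J(\O)$, providing exactly the uniform bound that Corollary \ref{cor: level 1 filtered implies filtered} needs to produce a global degree.
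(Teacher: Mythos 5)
Your overall strategy is the same as the paper's: show that $\sigma$, $\sigma^{-1}$ and $\delta$ preserve every level set $\overline{z}^iV$ of $v_{\overline{z},V}$, trap $\O$ inside the single level set $\overline{z}^{-r}V$, show that this level set sits inside one power $J(\O)^t$, and then invoke Corollary \ref{cor: level 1 filtered implies filtered}. However, the step where you produce that power contains a genuine gap. You iterate $J(\O)^n\subseteq\overline{z}\O$ to get $J(\O)^{nr}\subseteq\overline{z}^r\O$, and then simplify $\overline{z}^r\O\,\overline{z}^{-r}V=\O V$; both moves require $\O\overline{z}=\overline{z}\O$, i.e.\ that conjugation by $\overline{z}$ preserves the maximal order $\O$. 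This is not justified: $z$ is normal in $U$, so $\overline{z}$ is normal in $V$ (and is a unit in $Q$, so "normality in $Q$" is vacuous), but $\O$ is an arbitrarily chosen maximal order containing $V$, and conjugation by $\overline{z}$ only permutes the maximal orders containing $V$ — it need not fix the chosen one. This is exactly the phenomenon illustrated by Example \ref{ex: sigma does not preserve maximal orders} and the reason \S \ref{subsec: sigma-orbit of maximal ideals and maximal orders} has to work with orbits of maximal orders, so you cannot take this commutation for granted.

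The gap is localized and easily repaired, and the repaired argument is essentially the paper's proof. The containment you actually need is simply that $\overline{z}^{-r}V$ lies in some $J(\O)^t$: since $V\subseteq\O$ (Corollary \ref{cor: degree zero pieces map to degree zero pieces}) and, $u$ being standard, $Q=\bigcup_{s\in\mathbb{Z}}J(\O)^s$ (Properties \ref{props: DVRs}(iii) applied to $\O=M_\ell(D)$), the element $\overline{z}^{-r}$ lies in some $J(\O)^s$, whence $\overline{z}^{-r}V\subseteq J(\O)^s\O=J(\O)^s$. With $t=s$ your final application of Corollary \ref{cor: level 1 filtered implies filtered} goes through, and note that the paper works with $\varphi=\sigma-\id$, $\sigma^{-1}-\id$, $\delta$ rather than $\sigma$, $\sigma^{-1}$ directly, which makes no difference since $\sigma(\O)\subseteq\O+(\sigma-\id)(\O)\subseteq\overline{z}^{-r}V$.
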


\begin{proof}
Set $\varphi$ to be $\sigma-\id$, $\sigma^{-1}-\id=-\sigma^{-1}(\sigma-\id)$ or $\delta$ (defined on the appropriate rings) throughout. Denote by $F\widehat{Q}$ the sequence of level sets of $(\widehat{Q},w)$: then $(\sigma, \delta)$ is compatible with $w$ by Lemma \ref{lem: compatibility is not enough}\textbf{(a)--(b)}, which implies that $\varphi(F_i \widehat{Q}) \subseteq F_i \widehat{Q}$ for all $i\in\mathbb{Z}$.

Using the corresponding results of Proposition \ref{propn: continuity of localisation procedure maps}:

\begin{itemize}[noitemsep]
\item[\textbf{(c)}] There exists $e\geq 1$ such that $z^i U = F_{ei}\widehat{Q}$ for all $i$, so $\varphi(z^i U) \subseteq z^i U$.
\item[\textbf{(d)}] By strictness, $\varphi(\overline{z}^iV)\subseteq\overline{z}^iV$.
\item[\textbf{(e)}] There exist $m,n,r\geq 0$ such that $\overline{z}^{m\ell}V\subseteq J(\O)^{\ell}$ for all $\ell\in\mathbb{N}$, $\O\subseteq\overline{z}^{-r}V$ and $J(\O)^{n}\subseteq\overline{z}\O$. In particular, $\O = J(\O)^0 \subseteq \overline{z}^{-r} V$, which is preserved by $\varphi$, and there exists $t\in\mathbb{Z}$ such that $\overline{z}^{-r}V \subseteq J(\O)^t$. In particular, we can choose $t$ such that $\sigma(\O)$, $\sigma^{-1}(\O)$, $\delta(\O)\subseteq J(\O)^t$.
\end{itemize}
Now Corollary \ref{cor: level 1 filtered implies filtered} implies the desired result.
\end{proof}

\subsection{Establishing quasi-compatibility}\label{subsec: establishing quasi-compatibility}

Throughout \S \ref{subsec: establishing quasi-compatibility}, we will continue to assume that $(R, w_0)$ satisfies \eqref{filt}, and construct the filtered rings of \eqref{eqn: maps in localisation procedure}.

The authors demonstrated under very similar circumstances in \cite[Theorem B(a)]{jones-woods-1} that, if $(\sigma,\delta)$ is compatible with $w_0$ and $R$ has characteristic $p$, the skew derivation $(\sigma^{p^m},\delta^{p^m})$ extends to $Q$ for sufficiently large $m$, and is compatible with $u$. We have a similar result in arbitrary characteristic when $\delta = \sigma - \id$ \cite[Theorem B(b)]{jones-woods-1}. 

Sadly, $(\sigma,\delta)$ itself will not necessarily remain compatible with each filtration in the process above. Indeed, we have already seen that $(\sigma, \delta)$ may no longer remain compatible with $v_{z,U}$ or the subsequent filtrations, from step \textbf{(c)} onwards. Due to Lemma \ref{lem: compatible with w_0 implies filtered wrt u}, we can ensure that $(\sigma, \delta)$ is \emph{filtered}, but this is not quite enough for our purposes: in order to use Theorem \ref{thm: restricted skew power series rings exist} to show that $Q^+[[x; \sigma, \delta]]$ exists, we also need $(\sigma, \delta)$ to be quasi-compatible with $u$ in the sense of Definition \ref{defn: strongly bounded}(iv). Thus we need more careful analysis.

In the following, write $\deg_U$ for the degree (of a filtered map) under $v_{z,U}$.

\begin{lem}\label{lem: degrees of powers of sigma-id and delta}
Suppose $\varphi$ is a filtered linear endomorphism of $(\widehat{Q},w)$ satisfying $\deg_w(\varphi) \geq 0$. Then $\varphi$ is filtered with respect to $v_{z,U}$, and $\deg_U(\varphi)\geq 0$. Also, there exists $e > 0$ such that, if $\deg_w(\varphi)\geq de$ for any $d\in\mathbb{Z}$, then $\deg_U(\varphi) \geq d$.
\end{lem}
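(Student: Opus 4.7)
The plan is to invoke Proposition 3.1.1\textbf{(c)} directly: it gives an integer $e\geq 1$ such that the $z$-adic level sets and the $w$-level sets are related by $z^nU=F_{en}\widehat{Q}$ for every $n\in\mathbb{Z}$. In other words, the two filtrations on $\widehat{Q}$ differ only by the linear rescaling $m\mapsto m/e$ (with a possible non-surjectivity of level indices, but that does not matter for the definition of degree). Once this is in hand, everything is essentially bookkeeping.

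For the first assertion, fix an arbitrary $m\in\mathbb{Z}$. Then
\[
\varphi(F_m^{v_{z,U}}\widehat{Q})=\varphi(z^mU)=\varphi(F_{em}\widehat{Q})\subseteq F_{em}\widehat{Q}=z^mU=F_m^{v_{z,U}}\widehat{Q},
\]
the middle inclusion following from the hypothesis $\deg_w(\varphi)\geq 0$. Hence $\varphi$ is filtered for $v_{z,U}$ with $\deg_U(\varphi)\geq 0$.

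For the quantitative statement, take $e$ to be the same constant given by Proposition 3.1.1\textbf{(c)}, and suppose now that $\deg_w(\varphi)\geq de$ for some $d\in\mathbb{Z}$. Then for any $m\in\mathbb{Z}$,
\[
\varphi(z^mU)=\varphi(F_{em}\widehat{Q})\subseteq F_{em+de}\widehat{Q}=F_{(m+d)e}\widehat{Q}=z^{m+d}U,
\]
so $\deg_U(\varphi)\geq d$, as required.

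There is no real obstacle here: the only non-trivial ingredient is the identification $z^nU=F_{en}\widehat{Q}$, and that has already been established when passing from $w$ to the $z$-adic filtration in step \textbf{(c)} of Procedure 3.1.1. The step I would double-check is that Proposition 3.1.1\textbf{(c)}'s identification genuinely holds for \emph{all} $n\in\mathbb{Z}$ (including negatives, where $z^nU$ has to be interpreted via invertibility of $z$ in $\widehat{Q}$), since the argument uses it at negative indices when $m<0$; but this is exactly what the cited proposition asserts.
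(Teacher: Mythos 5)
Your argument is correct and is essentially identical to the paper's own proof: both rest entirely on Proposition \ref{propn: continuity of localisation procedure maps}\textbf{(c)}, the identification $z^nU=F_{en}\widehat{Q}$ for all $n\in\mathbb{Z}$, followed by the same two-line bookkeeping with level sets. Your closing concern is already settled, since that proposition asserts the identification for all integers $n$, negative indices included.
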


\begin{proof}
Denoting by $F\widehat{Q}$ the sequence of level sets of $(\widehat{Q},w)$, we are assuming that $\varphi(F_i \widehat{Q}) \subseteq F_i \widehat{Q}$ for all $i\in\mathbb{Z}$. But Proposition \ref{propn: continuity of localisation procedure maps}\textbf{(c)} gives us an integer $e\geq 1$ that $\varphi(z^i U) = \varphi(F_{ei}\widehat{Q}) \subseteq F_{ei}\widehat{Q} = z^i U$. If instead we assume that $\deg_w(\varphi)\geq de$, so that $\varphi(F_i \widehat{Q}) \subseteq F_{i+de} \widehat{Q}$, then we get $$\varphi(z^i U) = \varphi(F_{ei}\widehat{Q}) \subseteq F_{e(i+d)} \widehat{Q} = z^{i+d}U,$$
and so $\deg_U(\varphi) \geq d$ as required.
\end{proof}

In particular:

\begin{cor}\label{cor: sigma-id and delta have non-negative degree}
Suppose $(\sigma, \delta)$ is a commuting skew derivation on $R$ compatible with $w_0$. Then $(\sigma, \delta)$ is filtered with respect to $v_{z,U}$, and:

\begin{enumerate}[label=(\roman*)]
\item $\deg_U(\sigma - \id) \geq 0$, $\deg_U(\sigma^{-1} - \id) \geq 0$ and $\deg_U(\delta)\geq 0$.
\item There exists $K > 0$ such that, for all $k \geq K$, $\deg_U((\sigma - \id)^k) \geq 1$ and $\deg_U(\delta^k)\geq 1$.
\item $\sigma(z^i U) = z^i U$ for all $i\in\mathbb{Z}$.
\item If $(R, w_0)$ is a filtered $\mathbb{Z}_p$-algebra, then for all $d > 0$ there exists $K$ such that, for all $k \geq K$, $\deg_U(\sigma^{p^k} - \id) \geq d$.
\end{enumerate}
\end{cor}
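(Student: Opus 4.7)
The plan is to reduce everything to Lemma \ref{lem: degrees of powers of sigma-id and delta} by controlling the $w$-degree of the relevant operators on $\widehat{Q}$, then invoking the linear relationship between $\deg_w$ and $\deg_U$ supplied there. We already know from Lemma \ref{lem: compatibility is not enough}\textbf{(a)--(b)} that $(\sigma,\delta)$ extends to a commuting skew derivation on $(\widehat{Q}, w)$ that is compatible with $w$. By Definition \ref{defn: compatibility}, this means $\deg_w(\sigma-\id) \geq 1$, $\deg_w(\sigma^{-1}-\id) \geq 1$ and $\deg_w(\delta) \geq 1$.

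For (i), each of the operators $\sigma-\id$, $\sigma^{-1}-\id$, $\delta$ has non-negative $w$-degree, so the first half of Lemma \ref{lem: degrees of powers of sigma-id and delta} gives the corresponding non-negative $U$-degree. For (ii), by Remark \ref{rks: filtered and strict linear maps}(i), $\deg_w((\sigma-\id)^k) \geq k$ and $\deg_w(\delta^k) \geq k$; taking $K = e$ (where $e$ is the integer in the second half of Lemma \ref{lem: degrees of powers of sigma-id and delta}) ensures $\deg_w \geq e$, hence $\deg_U \geq 1$, for all $k \geq K$. Part (iii) is then immediate from (i): if $\deg_U(\sigma-\id) \geq 0$, then $(\sigma-\id)(z^iU) \subseteq z^iU$, so $\sigma(z^iU) \subseteq z^iU$, and the symmetric argument with $\sigma^{-1}$ gives the reverse inclusion.

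The most substantive step is (iv). Expanding binomially,
\[
\sigma^{p^k}-\id = ((\sigma-\id)+\id)^{p^k}-\id = \sum_{i=1}^{p^k} \binom{p^k}{i}(\sigma-\id)^i.
\]
Each summand is a filtered linear endomorphism of $(\widehat{Q},w)$, and since $(\widehat{Q},w)$ is a filtered $\mathbb{Z}_p$-algebra (inherited from $(R,w_0)$ via the continuous maps of \eqref{eqn: maps in localisation procedure}), Lemma \ref{lem: p-adic facts}(ii) gives $w\bigl(\binom{p^k}{i}\bigr) \geq k - v_p(i)$. Combined with $\deg_w((\sigma-\id)^i) \geq i$, this yields
\[
\deg_w\!\left(\binom{p^k}{i}(\sigma-\id)^i\right) \geq k + (i - v_p(i)) \geq k+1
\]
for every $1 \leq i \leq p^k$, since $v_p(i) \leq \log_p(i) \leq i-1$. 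Hence $\deg_w(\sigma^{p^k}-\id) \geq k+1$. Now for any $d>0$, choosing $K$ with $K+1 \geq de$ and applying the second half of Lemma \ref{lem: degrees of powers of sigma-id and delta} gives $\deg_U(\sigma^{p^k}-\id) \geq d$ for all $k \geq K$.

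The only mild subtlety is verifying that $(\widehat{Q},w)$ is genuinely a filtered $\mathbb{Z}_p$-algebra in the sense of Definition \ref{defn: filtered Z_p-algebra}, so that Lemma \ref{lem: p-adic facts}(ii) is available; this follows because the composite $\mathbb{Z}_p \to (R,w_0) \to (\widehat{Q},w)$ is filtered of non-negative degree by our standing assumption and Proposition \ref{propn: continuity of localisation procedure maps}\textbf{(a)--(b)}. Everything else is routine manipulation of $w$-degrees and a direct application of Lemma \ref{lem: degrees of powers of sigma-id and delta}.
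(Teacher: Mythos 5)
Your proof is correct and matches the paper's: parts (i)--(iii) are handled exactly as in the paper, by extending $(\sigma,\delta)$ compatibly to $(\widehat{Q},w)$ via Lemma \ref{lem: compatibility is not enough}\textbf{(a)--(b)} and applying Lemma \ref{lem: degrees of powers of sigma-id and delta} to $\sigma-\id$, $\sigma^{-1}-\id$ and $\delta$ (and their powers for (ii)). For (iv), where the paper simply cites Proposition \ref{propn: delta bounded implies Delta_n bounded}(i) in the special case $t=-1$ (so that $\Delta_k=\sigma^{p^k}-\id$), you inline the same underlying argument --- the binomial expansion of $\sigma^{p^k}-\id$ together with Lemma \ref{lem: p-adic facts}(ii), giving the explicit bound $\deg_w(\sigma^{p^k}-\id)\geq k+1$ before transferring to $\deg_U$ via Lemma \ref{lem: degrees of powers of sigma-id and delta} --- which is a correct, slightly more self-contained rendering of the same step.
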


\begin{proof}
$(\sigma, \delta)$ is compatible with $w$ by Lemma \ref{lem: compatibility is not enough}\textbf{(a)--(b)}, and parts (i) and (ii) follow immediately from Lemma \ref{lem: degrees of powers of sigma-id and delta} applied to $\varphi = \sigma - \id$, $\sigma^{-1} - \id$ and $\delta$. As (i) implies that $\sigma(z^i U) \subseteq z^i U$ and $\sigma^{-1}(z^i U) \subseteq z^i U$, part (iii) follows.

To show part (iv): note that $(\widehat{Q},w)$ is a filtered $\mathbb{Z}_p$-algebra by Proposition \ref{propn: continuity of localisation procedure maps}\textbf{(a)--(b)}, and that $(\sigma, \delta)$ is compatible with $w$ on $\widehat{Q}$ by Lemma \ref{lem: compatibility is not enough}\textbf{(a)--(b)}. Hence $\deg_w(\sigma-\id) \geq 1$, and so $\deg_w((\sigma-\id)^k)$ can be made arbitrarily large by choosing sufficiently large $k$. Applying Proposition \ref{propn: delta bounded implies Delta_n bounded}(i) to $(\widehat{Q}, w)$ (in the special case where $t = -1$, so that $\delta = \sigma - \id$ and $\Delta_k = \sigma^{p^k} - \id$) now shows that $\deg_w(\sigma^{p^k} - \id)$ can be made arbitrarily large, and hence so can $\deg_U(\sigma^{p^k} - \id)$, by Lemma \ref{lem: degrees of powers of sigma-id and delta}.
\end{proof}

Let $(\sigma, \delta)$ be a commuting skew derivation on $R$ compatible with $w_0$, and extend it to $Q(R)$ and $\widehat{Q}$ as in \S \ref{subsec: extending the skew derivation}. As stated in the introduction and the statement of Theorem \ref{thm: well-defined subring}, the situations we are interested in are

\begin{enumerate}[label=(\arabic*)]
\item when $R$ has characteristic $p$,
\item when there exists $t\in \widehat{Q}$ such that $\delta(q)=tq-\sigma(q)t$ for all $q\in \widehat{Q}$, satisfying $w(t) \geq 0$ and $\sigma(t) = t$.
\end{enumerate}

\begin{rk}
Note that, since $(\sigma, \delta)$ is compatible with $w_0$, it follows from Lemma \ref{lem: compatibility is not enough}\textbf{(a)--(b)} that $\sigma$ is strictly filtered with respect to $w$. We will use this fact implicitly when applying results from \S\S \ref{subsec: skew power series subrings}--\ref{subsec: continuous homomorphisms} from now on.
\end{rk}

In both of these situations, we can define the family of skew derivations $(\Sigma_n,\Delta_n)$ on $\widehat{Q}$, as in Definitions \ref{defn: X_n etc in char p} and \ref{defn: X_n etc in inner case}.

\begin{lem}\label{lem: degrees of Delta_n w.r.t. U}
Suppose we are in situation (2), and $(R, w_0)$ is a filtered $\mathbb{Z}_p$-algebra. Then for all $d > 0$ there exists $K$ such that, for all $k\geq K$, $\deg_U(\Delta_k) \geq d$.
\end{lem}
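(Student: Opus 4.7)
The plan is to reduce this to a direct application of Proposition \ref{propn: delta bounded implies Delta_n bounded}(i), taking $(\widehat{Q}, v_{z,U})$ as the ambient filtered ring in place of $(S, w)$, and using the canonical extension of $(\sigma,\delta)$ from Lemma \ref{lem: compatibility is not enough}. The bulk of the proof therefore consists of verifying that the hypotheses of that proposition are met in the filtered $\mathbb{Z}_p$-algebra $(\widehat{Q}, v_{z,U})$.

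First I would check the standing hypotheses. By Proposition \ref{propn: continuity of localisation procedure maps}\textbf{(c)}, the level sets of $v_{z,U}$ on $\widehat{Q}$ coincide with those of a rescaling of $w$, so $(\widehat{Q}, v_{z,U})$ is complete, and it is a $\mathbb{Z}_p$-algebra because $p \in U$ (since $(R,w_0) \to (\widehat{Q}, w)$ is filtered of non-negative degree and therefore $w(p) \geq 0$). The identities $\sigma(t) = t$ and $w(t) \geq 0$ come directly from situation (2); the latter gives $t \in U$, i.e.\ $v_{z,U}(t) \geq 0$, which is what is needed in case $p \neq 0$ in $\widehat{Q}$.

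Next I would verify strong boundedness of $(\sigma,\delta)$ with respect to $v_{z,U}$. Corollary \ref{cor: sigma-id and delta have non-negative degree}(i) furnishes $\deg_U(\sigma - \id) \geq 0$, $\deg_U(\sigma^{-1} - \id) \geq 0$ and $\deg_U(\delta) \geq 0$, whence $\deg_U(\sigma) \geq 0$ and $\deg_U(\sigma^{-1}) \geq 0$ by combining with $\deg_U(\id) = 0$. Composition then gives $\deg_U(\sigma^i \delta^j) \geq 0$ for every $i \in \mathbb{Z}$ and $j \in \mathbb{N}$, so $(\sigma,\delta)$ is strongly bounded, and we may work with the lower bound $A = 0$.

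Finally I would check the hypothesis of Proposition \ref{propn: delta bounded implies Delta_n bounded}(i): given $d \geq 0$, there must exist $K_1$ with $\deg_U(\delta^i) \geq d - A = d$ for all $i \geq K_1$. Corollary \ref{cor: sigma-id and delta have non-negative degree}(ii) yields $K$ with $\deg_U(\delta^K) \geq 1$; then $\deg_U(\delta^{mK}) = \deg_U((\delta^K)^m) \geq m$, and writing an arbitrary $i \geq mK$ as $i = mK + r$ with $0 \leq r < K$ gives $\deg_U(\delta^i) \geq \deg_U(\delta^{mK}) + \deg_U(\delta^r) \geq m$, so $\deg_U(\delta^i) \to \infty$. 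Applying Proposition \ref{propn: delta bounded implies Delta_n bounded}(i) with this data then delivers $K_2$ such that $\deg_U(\Delta_k) \geq d$ for all $k \geq K_2$, which is exactly the claim. The only mild subtlety is the bookkeeping between the two filtrations $w$ and $v_{z,U}$ on $\widehat{Q}$; this is what Corollary \ref{cor: sigma-id and delta have non-negative degree} was set up for, so no obstacle remains.
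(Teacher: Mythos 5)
Your overall strategy (reduce to Proposition \ref{propn: delta bounded implies Delta_n bounded}(i)) is the right one, but there is a genuine gap in the hypothesis check: you apply that proposition with $(\widehat{Q}, v_{z,U})$ as the ambient filtered $\mathbb{Z}_p$-algebra, and justify the filtered-$\mathbb{Z}_p$-algebra requirement by ``$p\in U$''. That only gives $v_{z,U}(p)\geq 0$, whereas Definition \ref{defn: filtered Z_p-algebra} demands that $(\mathbb{Z}_p,v_p)\to(\widehat{Q},v_{z,U})$ be filtered of non-negative degree, i.e.\ $v_{z,U}(a)\geq v_p(a)$ for all $a\in\mathbb{Z}_p$, and in particular $v_{z,U}(p)\geq 1$. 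Since the level sets of $v_{z,U}$ are $z^nU=F_{en}\widehat{Q}$ with $e=w(z)$ possibly large (Proposition \ref{propn: continuity of localisation procedure maps}\textbf{(c)}), having $w(p)\geq 1$ does not give $w(p)\geq e$, so $(\widehat{Q},v_{z,U})$ need not be a filtered $\mathbb{Z}_p$-algebra at all. This is not a cosmetic issue: the proof of Proposition \ref{propn: delta bounded implies Delta_n bounded}(i) leans on Lemma \ref{lem: p-adic facts}(ii), i.e.\ $w\bigl(\binom{p^n}{i}\bigr)\geq n-v_p(i)$, and with $v_{z,U}$ one only gets growth at roughly the rate $(n-v_p(i))/e$, so the proposition cannot simply be quoted for this filtration. (Your other verifications — completeness of $v_{z,U}$, strong boundedness with $A=0$ via Corollary \ref{cor: sigma-id and delta have non-negative degree}(i), and $\deg_U(\delta^i)\to\infty$ via Corollary \ref{cor: sigma-id and delta have non-negative degree}(ii) — are fine.)

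The repair is to do what the paper does: work with the filtration $w$ on $\widehat{Q}$, which genuinely makes $\widehat{Q}$ a complete filtered $\mathbb{Z}_p$-algebra by Proposition \ref{propn: continuity of localisation procedure maps}\textbf{(a)--(b)}, and where compatibility gives $\deg_w(\delta^k)\geq k$. Applying Proposition \ref{propn: delta bounded implies Delta_n bounded}(i) to $(\widehat{Q},w)$ makes $\deg_w(\Delta_k)$ arbitrarily large, and then Lemma \ref{lem: degrees of powers of sigma-id and delta} converts this into the statement about $\deg_U$: if $\deg_w(\Delta_k)\geq de$ then $\deg_U(\Delta_k)\geq d$. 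Alternatively, you could rerun the proof of Proposition \ref{propn: delta bounded implies Delta_n bounded}(i) with the weaker binomial bound $v_{z,U}\bigl(\binom{p^n}{i}\bigr)\geq\lfloor (n-v_p(i))/e\rfloor$, but as written the direct citation is unjustified.
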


\begin{proof}
Similarly to the proof of Corollary \ref{cor: sigma-id and delta have non-negative degree}(iv): we can make $\deg_w(\delta^k)$ arbitrarily large by choosing sufficiently large $k$, and so we can apply Proposition \ref{propn: delta bounded implies Delta_n bounded}(i) to $(\widehat{Q}, w)$ to see that $\deg_w(\Delta_k)$ can be made arbitrarily large, and then invoke Lemma \ref{lem: degrees of powers of sigma-id and delta}.
\end{proof}

\begin{propn}\label{propn: strong compatibility}
Suppose $(R, w_0)$ is a filtered $\mathbb{Z}_p$-algebra. If $(\sigma,\delta)$ is compatible with $w_0$, and we are in situations (1) or (2) above, then for sufficiently large $n$, $(\Sigma_n,\Delta_n)$ preserves $M$ and descends to a skew derivation of $Q$, and the induced skew derivation (which we also denote $(\Sigma_n,\Delta_n)$) is compatible with $u$.
\end{propn}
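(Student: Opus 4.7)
The plan is to split the argument into the preservation claim and the compatibility claim, both driven by the same topological input coming from Corollary \ref{cor: sigma-id and delta have non-negative degree}.

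First I would collect the quantitative estimates: $\deg_U(\Sigma_n - \id), \deg_U(\Delta_n) \to \infty$ as $n \to \infty$. The first is precisely Corollary \ref{cor: sigma-id and delta have non-negative degree}(iv). For the second in case 1 ($\chr(R) = p$, $\Delta_n = \delta^{p^n}$), strong compatibility yields $\deg_w(\delta^{p^n}) \geq p^n$, and Lemma \ref{lem: degrees of powers of sigma-id and delta} transfers this to $v_{z,U}$-degree; in case 2 the corresponding conclusion is Lemma \ref{lem: degrees of Delta_n w.r.t. U}.

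To prove preservation of $M$, I would exploit that $\widehat{Q}$ is semilocal (from Ardakov's analysis underlying Theorem \ref{thm: filtered localisation}), with finitely many maximal ideals permuted by $\sigma$, each of them closed in $v_{z,U}$. Lift a central idempotent $e \in \widehat{Q}/J(\widehat{Q})$ for $M$ to some $\tilde{e} \in U$. If $\Sigma_n$ did not fix $M$, then $\sigma^{p^n}(\tilde{e}) - \tilde{e}$ would equal a fixed nonzero $\tilde{e}' - \tilde{e}$ modulo $J(\widehat{Q})$, and closedness of $J(\widehat{Q})$ in $v_{z,U}$ would bound $v_{z,U}(\sigma^{p^n}(\tilde{e}) - \tilde{e})$, contradicting $\deg_U(\Sigma_n - \id) \to \infty$. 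Hence $\Sigma_n(M) = M$ for large $n$. For $\Delta_n$ in case 2, the identity $\Delta_n(q) = T_n q - \Sigma_n(q) T_n$ with $T_n \in U$ then yields $\Delta_n(M) \subseteq M$ once $\Sigma_n$ preserves $M$. In case 1, a parallel use of $\deg_U(\Delta_n) \to \infty$ shows $\delta^{p^n}$ becomes the zero map on $\widehat{Q}/J(\widehat{Q})$ for $n \gg 0$, so $\delta^{p^n}(\widehat{Q}) \subseteq J(\widehat{Q}) \subseteq M$ a fortiori.

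Once $(\Sigma_n, \Delta_n)$ descends to $Q$, compatibility with $u$ follows mechanically from Proposition \ref{propn: continuity of localisation procedure maps}: part (d) makes the quotient map $\widehat{Q} \to Q$ strictly filtered for $v_{z,U}$ and $v_{\overline{z}, V}$, and part (e) shows $v_{\overline{z}, V}$ and $u$ are topologically equivalent with only bounded distortion. Hence arbitrarily large $v_{z,U}$-degrees on $\widehat{Q}$ transfer to arbitrarily large $u$-degrees on $Q$, and in particular $\deg_u(\Sigma_n - \id), \deg_u(\Delta_n) \geq 1$ for $n \gg 0$, which is compatibility. The main obstacle will be the preservation step: since $z$ is a unit in $\widehat{Q}$ and so lies outside $J(\widehat{Q})$, there is no clean inclusion of high-$v_{z,U}$-value sets into $J(\widehat{Q})$, and one must carefully exploit the closedness of $J(\widehat{Q})$ in $v_{z,U}$ to rule out cancellations between idempotent differences and radical elements -- with case 1 (where $\delta^{p^n}(\widehat{Q}) \subseteq J(\widehat{Q})$ must be justified globally on $\widehat{Q}$, not just on $U$) being the most delicate.
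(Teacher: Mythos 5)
Your overall scaffolding (the estimates $\deg_U(\Sigma_n-\id),\deg_U(\Delta_n)\to\infty$ via Corollary \ref{cor: sigma-id and delta have non-negative degree}(iv) and Lemma \ref{lem: degrees of Delta_n w.r.t. U}) is the right starting point, but two of your steps do not work as stated. The serious one is the preservation of $M$ by $\Delta_n$ in situation (1). You claim that $\deg_U(\Delta_n)\to\infty$ forces $\delta^{p^n}$ to vanish on $\widehat{Q}/J(\widehat{Q})$, hence $\delta^{p^n}(\widehat{Q})\subseteq J(\widehat{Q})\subseteq M$. Large $v_{z,U}$-value does not put an element into the radical: $z$ is a unit of $\widehat{Q}$, so $z^d\in z^dU$ for every $d$ and no level set $z^dU$ is contained in $J(\widehat{Q})$ or in $M$. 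Moreover the asserted conclusion is false in general: if $\widehat{Q}$ is simple then $M=0$, while $\delta^{p^n}\neq 0$ (take $\delta=\sigma-\id$ with no power of $\sigma$ equal to the identity, so $\delta^{p^n}=\sigma^{p^n}-\id$ in characteristic $p$). What is actually needed is only $\delta^{p^n}(M)\subseteq M$, and this requires different input: the paper simply quotes \cite[Theorem B(a)]{jones-woods-1}, whose mechanism is invariance of ($\sigma$-)prime ideals under commuting skew derivations in characteristic $p$ (compare Lemma \ref{lem: when delta preserves N}(iv)), not a radical-membership argument. Your idempotent argument for $\Sigma_n$ is fine in spirit -- it is essentially Proposition \ref{propn: t is a power of p}, which the paper runs in $\widehat{Q}/N$ where the product decomposition of $V$ makes the idempotent differences visibly of value $0$ -- but your version additionally relies on the closedness of $J(\widehat{Q})$ in the $v_{z,U}$-topology, which you assert without proof. (In situation (2) your deduction of $\Delta_n(M)\subseteq M$ from $\Sigma_n(M)=M$ and innerness matches the paper and is correct.)

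The compatibility step is also not ``mechanical''. Proposition \ref{propn: continuity of localisation procedure maps}\textbf{(e)} only provides multiplicative comparison constants ($\overline{z}^{m\ell}V\subseteq J(\O)^\ell$, $J(\O)^n\subseteq\overline{z}\O$, $\O\subseteq\overline{z}^{-r}V$), so the filtrations $v_{\overline{z},V}$ and $u$ are comparable with a slope, not with a bounded additive shift. Chasing these inclusions from a bound $\deg_V(\Phi_k)\geq D$ only yields an estimate of the shape $u(\Phi_k(x))\geq u(x)/(nm)+C(D)$, which is smaller than $u(x)+1$ once $u(x)$ is large (with a similar failure at very negative levels) whenever $nm>1$; so arbitrarily large $v$-degree does \emph{not} imply $\deg_u\geq 1$ on all level sets. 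The paper closes exactly this gap by extracting from the comparison only the single inclusion $\Phi_k(\O)\subseteq J(\O)^2$ and then invoking Lemma \ref{lem: delta sends O to J(O)^d}, whose proof uses the $\sigma$-derivation identity and $\sigma(\O)=\O$ to propagate the estimate to every level $J(\O)^m$, $m\in\mathbb{Z}$, including the negative ones. Without that algebraic step (or some substitute exploiting the derivation structure), your argument does not deliver compatibility with $u$.
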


\begin{proof}
In situation (1), $R$ has characteristic $p$, so $(\Sigma_n,\Delta_n)=(\sigma^{p^n},\delta^{p^n})$, and both parts follow from \cite[Theorem B(a)]{jones-woods-1}.

In situation (2), $\delta$ is inner, and $\Sigma_k=\sigma^{p^k}$ for all $k$. It follows from \cite[Theorem 4.10]{jones-woods-1} (and we reprove below as Proposition \ref{propn: t is a power of p}) that $\Sigma_k$ preserves $M$ for sufficiently large $k$, and for any $q\in M$, $\Delta_k(q)=t^{p^k}q-\Sigma_k(q)t^{p^k}\in M$, so $\Delta_k$ also preserves $M$ for any such $k$. Hence $(\Sigma_k, \Delta_k)$ induces a skew derivation on $Q = \widehat{Q}/M$ for sufficiently large $k$. Assume in what follows that $k$ \emph{is} sufficiently large.

Now write $\deg_U$ and $\deg_V$ for degrees of filtered endomorphisms of $(\widehat{Q},v_{z,U})$ and $(Q,v_{\overline{z},V})$ respectively. It follows from Corollary \ref{cor: sigma-id and delta have non-negative degree}(iv) that $\deg_U(\Sigma_k-\id)$ can be made arbitrarily large by choosing sufficiently large $k$, and similarly from Lemma \ref{lem: degrees of Delta_n w.r.t. U} that $\deg_U(\Delta_k)$ can be made arbitrarily large. So write $\Phi_k$ for either $\Sigma_k - \id$ or $\Delta_k$: then $\deg_V(\Phi_k)$ can also be made arbitrarily large by Proposition \ref{propn: continuity of localisation procedure maps}\textbf{(d)}. Using Proposition \ref{propn: continuity of localisation procedure maps}\textbf{(e)}, we can choose $r,m\in\mathbb{N}$ such that $\mathcal{O}\subseteq \overline{z}^{-r}V$ and $\overline{z}^{2m}V\subseteq J(\O)^2$, and we can choose $k$ sufficiently large that $\deg_V(\Phi_k)\geq r+2m$. Then
\[
\Phi_k(\O)\subseteq \Phi_k(\overline{z}^{-r}V)\subseteq\overline{z}^{2m}V\subseteq J(\O)^2.
\]
Now applying Lemma \ref{lem: delta sends O to J(O)^d} to the skew derivation $(\Sigma_k, \Sigma_k-\id)$ for this $k$ will show that $\deg_u(\Sigma_k-\id)\geq 1$, and applying it to $(\Sigma_k, \Delta_k)$ will show that $\deg_u(\Delta_k) \geq 1$.
\end{proof}

\begin{thm}\label{thm: strongly bounded}
Assume that $(R, w_0)$ is a filtered $\mathbb{Z}_p$-algebra satisfying \eqref{filt}, and construct the filtered rings of \eqref{eqn: maps in localisation procedure}. Suppose $(\sigma, \delta)$ is a commuting skew derivation on $R$, compatible with $w_0$, whose natural extension to $\widehat{Q}$ preserves the maximal ideal $M\lhd \widehat{Q}$, and we have either:
\begin{enumerate}[label=(\arabic*)]
\item $R$ has characteristic $p$, or
\item There exists $t\in \widehat{Q}$ such that $\delta(q)=tq-\sigma(q)t$ for all $q\in \widehat{Q}$, satisfying $w(t) \geq 0$ and $\sigma(t) = t$.
\end{enumerate}

Then $(\sigma,\delta)$ is quasi-compatible with $u$ on $Q$, and the bounded skew power series ring $Q^+[[x;\sigma,\delta]]$ is Noetherian.
\end{thm}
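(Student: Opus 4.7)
The plan is to first establish quasi-compatibility of $(\sigma,\delta)$ with $u$, and then deduce Noetherianity by reducing to a compatible subring whose associated graded is Noetherian.

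For quasi-compatibility, I would begin by invoking Proposition \ref{propn: strong compatibility} to choose $n$ sufficiently large that $(\Sigma_n,\Delta_n)$ descends to a skew derivation on $Q$ and is compatible with $u$; in particular $\deg_u(\Sigma_n - \id) \geq 1$ (so $\Sigma_n = \sigma^{p^n}$ is strictly filtered) and $\deg_u(\Delta_n) \geq 1$. Corollary \ref{cor: criterion for quasi-compatibility} applied to $(\Sigma_n,\Delta_n)$ then immediately yields that $(\Sigma_n,\Delta_n)$ is quasi-compatible with $u$. To transfer this back to $(\sigma,\delta)$, note that in situation (2) the image $\overline{t}$ of $t$ in $Q$ still satisfies $u(\overline{t}) \geq 0$ (by Corollary \ref{cor: degree zero pieces map to degree zero pieces}) and $\sigma(\overline{t}) = \overline{t}$, so the hypotheses of Theorem \ref{thm: skew power series subrings exist} are met; in situation (1) the analogous equivalence is given directly by Proposition \ref{propn: subrings in characteristic p}. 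In either case, the equivalence $(a) \iff (c)$ transfers quasi-compatibility from $(\Sigma_n,\Delta_n)$ back to $(\sigma,\delta)$.

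For Noetherianity, Theorem \ref{thm: well-defined subring} exhibits $Q^+[[x;\sigma,\delta]]$ as a free $Q^+[[X_n;\Sigma_n,\Delta_n]]$-module of rank $p^n$, reducing the question to showing that the subring is Noetherian. I would apply Corollary \ref{cor: Noetherian skew power series} with $A = \O$, the $u$-positive part of $Q$, which by the standard structure of $(Q,u)$ is the maximal order $M_s(D)$. Compatibility of $(\Sigma_n,\Delta_n)$ with $u$ forces $\Sigma_n(\O) = \O$ and $\Delta_n(\O) \subseteq J(\O) \subseteq \O$, so $\O$ is $(\Sigma_n,\Delta_n)$-invariant; and $\gr_u(\O)$ is the skew Laurent polynomial ring $(\O/J(\O))[\overline{\pi},\overline{\pi}^{-1};\alpha]$ of Definition \ref{defn: standard filtrations}(i), which is Noetherian. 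For the denominator set $T$, the natural candidate is the multiplicative set generated by the $\Sigma_n$-orbit of a central uniformiser of $Z(D)$; its $\Sigma_n$-invariance is automatic, $T^{-1}\O = Q$ follows from Properties \ref{props: DVRs}(iii), and the common-denominator hypothesis in Theorem \ref{thm: restricted skew power series rings exist}(iii) holds because any bounded sequence in $Q$ is pushed into $\O$ after multiplication by a sufficiently high power of a uniformiser.

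The main technical point will be pinning down the denominator set $T$ precisely — in particular guaranteeing that a suitable $\Sigma_n$-invariant $T \subseteq \O$ exists with $T^{-1}\O = Q$ — but once this is verified, the remaining hypotheses of Corollary \ref{cor: Noetherian skew power series} are already in hand from the compatibility of $(\Sigma_n,\Delta_n)$ and the Noetherianity of $\gr_u(\O)$.
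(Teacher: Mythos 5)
Your proposal follows essentially the same route as the paper: Proposition \ref{propn: strong compatibility} to get $(\Sigma_n,\Delta_n)$ compatible with $u$, then Proposition \ref{propn: subrings in characteristic p} / Theorem \ref{thm: skew power series subrings exist} (via Corollary \ref{cor: criterion for quasi-compatibility}) to transfer quasi-compatibility back to $(\sigma,\delta)$, and finally Corollary \ref{cor: Noetherian skew power series} with $A=\O$ for Noetherianity; the paper additionally records first that $(\sigma,\delta)$ is filtered with respect to $u$ (Lemma \ref{lem: compatible with w_0 implies filtered wrt u}), which is tacitly used in those transfer steps.

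Two small corrections to your execution of the last step. First, your denominator set built from ``a central uniformiser of $Z(D)$'' is problematic: $Z(D)$ may well be a field (e.g.\ $D = k[[x;\alpha]]$ with $\alpha$ of infinite order), so no such element need exist. You do not need centrality: a uniformiser $\pi$ of $D$ is normal, so $\pi I_\ell$ is regular and normal in $\O$ and the multiplicative set generated by its $\Sigma_n$-orbit works; the paper simply takes $T$ to be all regular elements of $\O$ (automatically $\Sigma_n$-invariant since $\Sigma_n(\O)=\O$) and uses $t = \pi^B I_\ell$ for the common-denominator condition. Second, $\gr_u(\O)$ is the skew \emph{polynomial} ring $(\O/J(\O))[Z;\alpha]$, not the skew Laurent ring of Definition \ref{defn: standard filtrations}(i) (that is $\gr$ of $F$ or $Q$); this is harmless, as either is Noetherian because $\O/J(\O)$ is simple artinian (Theorem \ref{thm: filtered localisation}(v)).
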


\begin{proof}
We have seen that $(\sigma,\delta)$ is filtered with respect to $u$ by Lemma \ref{lem: compatible with w_0 implies filtered wrt u}.

In both situations (1) and (2), by Proposition \ref{propn: strong compatibility}, $(\Sigma_n,\Delta_n)$ is compatible with $u$ for sufficiently large $n$, and in particular $\Sigma_n = \sigma^{p^n}$ is strictly filtered with respect to $u$. In situation (1), we know that $\deg_w(\delta^{p^n}) \geq 1$, as $\delta^{p^n} = \Delta_n$, so it follows from Corollary \ref{cor: criterion for quasi-compatibility} that $(\sigma,\delta)$ is quasi-compatible with $u$. In situation (2), this follows from an application of Theorem \ref{thm: skew power series subrings exist}.

Now take $A=\O$, a $(\Sigma_n,\Delta_n)$-invariant subring of $Q$, and set $T$ to be the set of regular elements of $\O$. Since $u$ is standard by Procedure \ref{proc: build a standard filtration}\textbf{(e)}, recall that we may write $\O = M_\ell(D)$ for some complete discrete valuation ring $(D,v)$ so that $u = M_\ell(v)$ by \S \ref{subsec: DVRs}. Then, given any countable sequence $(s_n)_{n\in\mathbb{N}}$ of elements of $S$ which is bounded below, say $u(s_n) \geq B$ for all $n\in\mathbb{N}$, there exists $t\in T$ such that $ts_n\in \O$ for all $n\in\mathbb{N}$: we may simply take $t = \pi^B I_\ell$, where $\pi$ is a uniformiser of $D$ and $I_\ell$ is the identity matrix in $\O$. We can also calculate that $\gr_u(\O)\cong (\O/J(\O))[Z; \alpha]$ \cite[proof of Proposition 3.13]{ardakovInv}: this is a Noetherian ring since $\O/J(\O)$ is Noetherian by Theorem \ref{thm: filtered localisation}(v). Hence we can apply Corollary \ref{cor: Noetherian skew power series} to see that $Q^+[[x;\sigma,\delta]]$ is also Noetherian.
\end{proof}

\subsection{Orbits of maximal ideals and maximal orders}\label{subsec: sigma-orbit of maximal ideals and maximal orders}

Let $(R, w_0)$ be a filtered ring satisfying \eqref{filt}, and construct the filtered rings $(Q(R), w)$, $(\widehat{Q},w)$ and $(\widehat{Q}, v_{z,U})$ of \eqref{eqn: maps in localisation procedure} using Procedure \ref{proc: build a standard filtration}\textbf{(a)--(c)}. Assume that $(\sigma, \delta)$ is a commuting skew derivation on $R$ which is compatible with $w_0$.

So far, we have taken $Q$ to be the simple artinian ring $\widehat{Q}/M$, where $M$ is a maximal ideal of $\widehat{Q}$. But many of our results in \S\S \ref{subsec: extending the skew derivation}--\ref{subsec: establishing quasi-compatibility} depend on the rather unlikely assumption that $(\sigma,\delta)$ preserves this maximal ideal $M$, which we cannot rely on in general. But if we only require $Q$ to be \emph{semisimple}, we can overcome this problem.

To fix some assumptions and notation, first note that since $\widehat{Q}$ is artinian, it has only finitely many maximal ideals. We will begin by considering the action of $\sigma$ on these ideals.

\begin{enumerate}[label=(\roman*)]
\item Fix a maximal ideal $M = M_1\lhd \widehat{Q}$, with $\sigma$-orbit of size $t$, say $\{M_1, \dots, M_t\}$.
\item Renumber so that $\sigma(M_i) = M_{i+1}$ for all $1\leq i\leq t$ (with indices taken modulo $t$).
\item Set $N = M_1 \cap \dots \cap M_t$, and define $Q = \widehat{Q}/N$, a semisimple artinian ring. As the $M_1, \dots, M_t$ are pairwise coprime, we have the natural isomorphism
$\pi: Q \to \prod_{i=1}^t \widehat{Q}/M_i$ given by $x + N \mapsto (x + M_i)_{i=1}^t$.
\item For each $1\leq i\leq t$, set $Q_i = \pi^{-1}(\widehat{Q}/M_i)$, a simple artinian ring and an ideal in $Q$, so that $Q = Q_1 \times \dots \times Q_t$.
\item For each $1\leq i\leq t$, let $e_i = (0, \dots, 0, 1, 0, \dots, 0) \in Q_1 \times \dots \times Q_t = Q$ be the centrally primitive idempotent of $Q$ corresponding to the identity element of $Q_i$.
\end{enumerate}

We now extend the construction of \S \ref{subsec: localisation process} for the various maximal ideals. The appropriate analogue of Procedure \ref{proc: build a standard filtration}\textbf{(d)} is relatively straightforward:

\begin{enumerate}[label=\textbf{(\alph**)}]
\setcounter{enumi}{3}
\item Write $V = (U+N)/N \subseteq Q$ and $\overline{z} = z+N\in V$, with quotient filtration $v_{\overline{z},V}$ induced by $v_{z,U}$. Define $V_i = e_i V \subseteq Q_i$ and $z_i = e_i \overline{z} \in V_i$, with quotient filtration $v_{z_i, V_i}$.
\end{enumerate}

Here are some basic properties of this construction. For ease of notation, write $v_i := v_{z_i, V_i}$ for the associated $z_iV_i$-adic filtrations.

\begin{props}\label{props: zU-adic filtration, semisimple case}
In all of the following, indices are implicitly taken modulo $t$.

\begin{enumerate}[label=(\roman*)]
\item Continue to write $\sigma$ for the naturally induced automorphism of $Q$. This map induces a unique automorphism of $\prod_{i=1}^t \widehat{Q}/M_i$ (which we also denote $\sigma$), satisfying $\sigma\pi = \pi\sigma$. It permutes the factors $\widehat{Q}/M_i$ in the same way as $\sigma$ permutes the maximal ideals $M_i$, so it is given by $(x_i + M_i)_{i=1}^t \mapsto (\sigma(x_{i-1}) + M_i)_{i=1}^t$.
\item Since  $\sigma\pi^{-1} = \pi^{-1}\sigma$, we have $\sigma(Q_i) = Q_{i+1}$ and $\sigma(e_i) = e_{i+1}$ for all $1\leq i\leq t$. 
\item By (i), and since $\sigma(z^n U) = z^n U$ by Corollary \ref{cor: sigma-id and delta have non-negative degree}(iii), we have $\sigma(V_i) = V_{i+1}$ for all $1\leq i\leq t$.
\item $V = \prod_{i=1}^t V_i$ as rings. Also, for each $n\in \mathbb{Z}$, $\overline{z}^n V =\prod_{i=1}^t z_i^n V_i$ (as left and right modules).
\item Together, (iii) and (iv) imply that
$$\prod_{i=1}^t \sigma(z_{i-1}^n V_{i-1}) = \sigma(\overline{z}^n V) = \overline{z}^n V = \prod_{i=1}^t z_i^n V_i$$
for all $1\leq i\leq t$ and all $n\in \mathbb{Z}$. Multiplying by $e_j$, for any fixed $1\leq j\leq t$, shows that $\sigma(z_{j-1} V_{j-1})^n = (z_j V_j)^n$ for all $n\in\mathbb{Z}$: that is, the $\sigma(z_{j-1}V_{j-1})$-adic filtration and the $z_j V_j$-adic filtration are equal. In other words, $v_j \circ \sigma = v_{j-1}$.
\end{enumerate}
\end{props}

In the proof of the following proposition, we need the assumption that $(R,w_0)$ is a filtered $\mathbb{Z}_p$-algebra.

\begin{propn}\label{propn: t is a power of p}
$t = p^\ell$ for some integer $\ell \geq 0$.
\end{propn}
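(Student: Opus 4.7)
The plan is to translate the statement ``$t$ is a power of $p$'' into a fixed-point statement, and then exploit the $p$-adic topological approximation $\sigma^{p^k} \to \id$ supplied by Corollary \ref{cor: sigma-id and delta have non-negative degree}(iv) (which is precisely where the $\mathbb{Z}_p$-algebra hypothesis enters).

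First I would reformulate the problem in terms of central primitive idempotents. Since $\widehat{Q}$ is artinian with maximal ideals $M_1,\dots,M_s$, its block decomposition yields orthogonal central primitive idempotents $e_1,\dots,e_s$ with $1 = e_1+\dots+e_s$, characterised by the property that $e_i \notin M_i$ while $e_j \in M_i$ for all $j\ne i$. The automorphism $\sigma$ permutes the $M_i$ and, by this characterisation, permutes the $e_i$ in exactly the same way: $\sigma(M_i)=M_{\pi(i)}$ forces $\sigma(e_i)=e_{\pi(i)}$. In particular, the $\sigma$-orbit of $e_1$ has the same size $t$ as the $\sigma$-orbit of $M=M_1$, so it suffices to show that $\sigma^{p^k}(e_1)=e_1$ for some $k\geq 0$; this will imply $t \mid p^k$ and hence $t=p^\ell$.

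Next I would invoke separatedness of $v_{z,U}$: since $z^nU = F_{en}\widehat{Q}$ for all $n$ by Proposition \ref{propn: continuity of localisation procedure maps}\textbf{(c)} and the filtration $F\widehat{Q}$ is separated, we have $\bigcap_n z^n U = 0$. Consequently each of the finitely many nonzero differences $e_i - e_j$ (for $i\ne j$) has some finite value under $v_{z,U}$, and we may set
\[
D := \max_{i\ne j} v_{z,U}(e_i - e_j) \in \mathbb{Z}.
\]

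The final step is the $p$-adic approximation. By Corollary \ref{cor: sigma-id and delta have non-negative degree}(iv), for any fixed $d$ we can choose $k$ large enough that $\deg_U(\sigma^{p^k}-\id) \geq d$, so in particular $v_{z,U}(\sigma^{p^k}(e_1) - e_1) \geq v_{z,U}(e_1) + d$. Since $v_{z,U}(e_1)$ is a fixed finite integer (as $e_1$ is a nonzero idempotent, forcing $v_{z,U}(e_1)\leq 0$ and $v_{z,U}(e_1)>-\infty$), we may choose $k$ so that $v_{z,U}(\sigma^{p^k}(e_1)-e_1) > D$. But $\sigma^{p^k}(e_1)$ is one of the $e_j$; if it equalled some $e_j\ne e_1$ the difference would have value at most $D$, a contradiction. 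Hence $\sigma^{p^k}(e_1) = e_1$, giving $\sigma^{p^k}(M_1)=M_1$, so $t \mid p^k$.

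The main conceptual obstacle is the first step: verifying that the permutation of the $M_i$ by $\sigma$ corresponds exactly to a permutation of central primitive idempotents lifted from the block decomposition of $\widehat{Q}$. Everything else is a clean application of the already-established topological approximation and separatedness of $v_{z,U}$.
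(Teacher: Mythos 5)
Your overall strategy (use central idempotents attached to the maximal ideals, then apply the approximation $\deg_U(\sigma^{p^k}-\id)\to\infty$ from Corollary \ref{cor: sigma-id and delta have non-negative degree}(iv) to force $\sigma^{p^k}$ to fix the idempotent of $M_1$) is exactly the idea of the paper's proof, but your first step contains a genuine gap, and it is precisely the point you flag as the ``main conceptual obstacle''. The artinian ring $\widehat{Q}$ is in general \emph{not} semisimple (this is noted explicitly in \S \ref{subsec: localisation}), and for a non-semisimple artinian ring the maximal two-sided ideals are not in bijection with the blocks: an indecomposable artinian ring can have several maximal ideals (e.g.\ a ring of upper triangular matrices over a field has only trivial central idempotents but two maximal two-sided ideals). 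So there is no reason that central primitive idempotents $e_1,\dots,e_s\in\widehat{Q}$ with $e_i\notin M_i$ and $e_j\in M_i$ for $j\neq i$ exist at all, and your reformulation ``$\sigma^{p^k}(e_1)=e_1$ for some $k$'' cannot even be stated inside $\widehat{Q}$.

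The paper repairs this by working not in $\widehat{Q}$ but in the quotient $Q=\widehat{Q}/N$, where $N=M_1\cap\dots\cap M_t$ is the intersection of the $\sigma$-orbit of $M$: by the Chinese remainder theorem $Q\cong\prod_i\widehat{Q}/M_i$ is a product of simple rings, so the idempotents $e_i$ genuinely exist there and are permuted by $\sigma$ compatibly with the $M_i$ (Properties \ref{props: zU-adic filtration, semisimple case}(i)--(ii)). The quotient map $(\widehat{Q},v_{z,U})\to(Q,v_{\overline{z},V})$ is strictly filtered (Proposition \ref{propn: continuity of localisation procedure maps}\textbf{(d)}), so the bound $\deg_U(\sigma^{p^n}-\id)\geq 1$ descends to $v_{\overline{z},V}$; and by the product decomposition $\overline{z}^nV=\prod_i z_i^nV_i$ (Property \ref{props: zU-adic filtration, semisimple case}(iv)) a difference of two \emph{distinct} idempotents lies in $V\setminus\overline{z}V$, i.e.\ has value exactly $0$, which already contradicts value $\geq 1$ — so one does not even need your constant $D$ or the estimate via $v(e_1)$. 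After this correction, the remainder of your argument ($\sigma^{p^k}(e_1)=e_1\Rightarrow\sigma^{p^k}(M_1)=M_1\Rightarrow t\mid p^k$) coincides with the paper's conclusion.
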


\begin{proof}
Write $t = p^\ell m$ for some $\ell \geq 0$ and $m\geq 1$ with $(m,p) = 1$.  Then, by Corollary \ref{cor: sigma-id and delta have non-negative degree}(iv) there exists $n \geq \ell$ such that $\deg_U(\sigma^{p^n} - \id) \geq 1$, and by the strictness of the filtered map $(\widehat{Q}, v_{z,U}) \to (Q, v_{\overline{z}, V})$, we also have $\deg_{V}(\sigma^{p^n} - \id) \geq 1$. In particular, 
$$v_{\overline{z},V}(\sigma^{p^n}(e_1) - e_1) = v_{\overline{z},V}(e_{p^n + 1} - e_1) \geq 1.$$
Now, $e_1 \in V_1 \setminus z_1 V_1$, and $e_{p^n + 1} \in V_{p^n+1} \setminus z_{p^n+1} V_{p^n+1}$. So if $p^n \not\equiv 0 \bmod t$, then $e_{p^n + 1} - e_1\in V\setminus \overline{z}V$ by Property \ref{props: zU-adic filtration, semisimple case}(iv), contradicting the above paragraph. So we must have $p^n \equiv 0 \bmod t$. This means that $p^\ell m \mid p^n$, and so $m \mid p^{n-\ell}$. But $(m,p) = 1$ by assumption, so $m = 1$.
\end{proof}

Of course, if $\ell=0$, then $t=1$ and $N=M_1=M$ is a $\sigma$-invariant maximal ideal, and our step \textbf{(d*)} coincides with Procedure \ref{proc: build a standard filtration}\textbf{(d)}. We have now reproved \cite[Theorem 4.10]{jones-woods-1}.

\textbf{For the remainder of \S \ref{subsec: sigma-orbit of maximal ideals and maximal orders},} we will assume that $(R,w_0)$ is a filtered $\mathbb{Z}_p$-algebra, and write $p^\ell$ for the size of the $\sigma$-orbit of $M_1$.

\begin{cor}\label{cor: minimal invariance}
$M_1,\dots,M_{p^\ell}$ are $\sigma^{p^\ell}$-invariant maximal ideals of $\widehat{Q}$.\qed
\end{cor}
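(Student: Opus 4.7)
The plan is to read this off immediately from the definitions and from Proposition \ref{propn: t is a power of p}. Recall that we chose $M = M_1$ with $\sigma$-orbit of size $t$ and renumbered so that $\sigma(M_i) = M_{i+1}$ for all $1 \leq i \leq t$, with indices taken modulo $t$. By Proposition \ref{propn: t is a power of p}, $t = p^\ell$, and hence iterating the relation $\sigma(M_i) = M_{i+1}$ a total of $p^\ell$ times gives
\[
\sigma^{p^\ell}(M_i) = M_{i + p^\ell} = M_i
\]
for each $1 \leq i \leq p^\ell$, where the second equality is reduction of the subscript modulo $p^\ell = t$. The $M_i$ are maximal by construction. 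Since $\sigma^{p^\ell}$ is an automorphism, $\sigma^{p^\ell}$-invariance in the sense $\sigma^{p^\ell}(M_i) = M_i$ is equivalent to the usual notion of invariance as an ideal, so the claim follows. There is no real obstacle here; the content has already been done in establishing that $t$ is a power of $p$.
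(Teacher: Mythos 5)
Your proof is correct and is exactly the argument the paper intends: the corollary is stated with no proof precisely because it is immediate from Proposition \ref{propn: t is a power of p} together with the orbit numbering $\sigma(M_i)=M_{i+1}$ (indices modulo $t=p^\ell$), which is what you use. Nothing further is needed.
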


Now let us consider the action of the $\sigma$-derivation $\delta$.

\begin{lem}\label{lem: when delta preserves N}
$ $

\begin{enumerate}[label=(\roman*)]
\item If $P$ is the prime radical of $\widehat{Q}$ and $\delta(P) \subseteq P$, then $\delta(N)\subseteq N$.
\item If $\delta$ is an inner $\sigma$-derivation, then $\delta(N)\subseteq N$.
\item If $\sigma\delta = \delta\sigma$, and $\chr(\widehat{Q}) = 0$, then $\delta(N) \subseteq N$.
\item If $\sigma\delta = \delta\sigma$, and $\chr(\widehat{Q}) = p > 0$, then there exists some $0\leq L\leq \ell$ such that $\delta^{p^L}(N_i) \subseteq N_i$ for each $i$, where $N_i = M_i \cap M_{p^L + i} \cap \dots \cap M_{p^\ell - p^L + i}$. Hence $\delta^{p^L}(N)\subseteq N$, and $\delta^{p^{\ell}}(N)=(\delta^{p^L})^{p^{\ell-L}}(N)\subseteq N$.
\end{enumerate}
\end{lem}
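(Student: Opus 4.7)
Two structural facts guide the argument: since $\widehat{Q}$ is artinian, $P$ is the intersection of \emph{all} maximal ideals of $\widehat{Q}$, hence $P \subseteq N$; and $N$ is a two-sided, $\sigma$-invariant ideal, because $\sigma$ permutes the $M_i$ setwise. Given these, part (ii) is immediate: for $n \in N$, the formula $\delta(n) = tn - \sigma(n)t$ combined with the fact that $N$ is two-sided and $\sigma$-invariant yields $tn, \sigma(n)t \in N$, so $\delta(n) \in N$.

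For part (i), the hypothesis $\delta(P) \subseteq P$ allows $(\sigma, \delta)$ to descend to a skew derivation $(\bar\sigma, \bar\delta)$ on $\widehat{Q}/P \cong \prod_j S_j$, a product of simple artinian rings. The automorphism $\bar\sigma$ permutes the $S_j$ via some permutation $\pi$; the $\sigma$-orbit $\{M_1, \ldots, M_{p^\ell}\}$ corresponds to a $\pi$-orbit $O$, and $N/P = \bigoplus_{j \notin O} S_j$. The core computation is this: applying Leibniz to $e_j = e_j^2$ (with $e_j$ the identity of $S_j$) gives $\bar\delta(e_j) = \bar\delta(e_j) e_j + e_{\pi(j)} \bar\delta(e_j)$, and decomposing into $S_k$-components forces $\bar\delta(e_j) \in S_j \oplus S_{\pi(j)}$. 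Consequently $\bar\delta(S_j) \subseteq S_j \oplus S_{\pi(j)}$ for every $j$, and since $\pi(O^c) = O^c$ (the complement of an orbit is a union of orbits), we deduce $\bar\delta(N/P) \subseteq N/P$, hence $\delta(N) \subseteq N + P = N$.

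Part (iii) reduces to part (i) via the implication $\delta(P) \subseteq P$, which is a version of the classical fact that in characteristic zero every $\sigma$-derivation of an artinian ring preserves its Jacobson radical. The proof lifts the central idempotents of $\widehat{Q}/P$ to a set of orthogonal idempotents $\{f_j\}$ in $\widehat{Q}$ with $\sum f_j = 1$, chosen compatibly with the action of $\sigma$. A Leibniz-type computation on $f_j = f_j^2$, combined with the Peirce decomposition $\widehat{Q} = \bigoplus_{k,l} f_k \widehat{Q} f_l$ (whose off-diagonal summands $f_k \widehat{Q} f_l$ with $k \neq l$ are contained in $P$, since they vanish modulo $P$ by orthogonality of the lifted central idempotents), shows that $\delta(f_j) \in P$. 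The characteristic-zero hypothesis is used precisely to rule out anomalous diagonal contributions; once $\delta(f_j) \in P$, it follows that $\delta$ respects the Peirce decomposition modulo $P$, and in particular $\delta(P) \subseteq P$.

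Part (iv) runs the same reduction with $\delta^{p^L}$ in place of $\delta$. In characteristic $p$ the pair $(\sigma^{p^L}, \delta^{p^L})$ is a commuting skew derivation, $\sigma^{p^L}$ permutes $\{M_1, \ldots, M_{p^\ell}\}$ in orbits of size $p^{\ell - L}$ whose intersections are precisely the $N_i$, and applying part (i) to $(\sigma^{p^L}, \delta^{p^L})$ with the $\sigma^{p^L}$-orbit of $M_i$ yields $\delta^{p^L}(N_i) \subseteq N_i$. The ingredient required is some $L \leq \ell$ with $\delta^{p^L}(P) \subseteq P$: the nilpotence of $P$ together with the vanishing of $\binom{p^L}{i}$ for $0 < i < p^L$ in characteristic $p$ means that the multi-Leibniz expansion of $\delta^{p^L}$ collapses onto terms which, for sufficiently large $L$, are forced to lie in $P$. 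Intersecting and iterating then give $\delta^{p^L}(N) \subseteq N$ and $\delta^{p^\ell}(N) \subseteq N$. The principal technical obstacle throughout is the preservation of the Jacobson radical in parts (iii) and (iv); once this is established, part (i) reduces the problem to a clean computation on the semisimple quotient.
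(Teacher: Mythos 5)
Parts (i) and (ii) of your proposal are fine and essentially the paper's own route: your idempotent computation showing $\overline{\delta}(S_j)\subseteq S_j\oplus S_{\pi(j)}$ is exactly the content of \cite[Lemma 1.2]{cauchon-robson}, which the paper cites at this point, and (ii) is the same one-line observation that $N$ is a two-sided $\sigma$-invariant ideal. The genuine gaps are in (iii) and (iv), where the paper deliberately imports \cite[Theorem C]{jones-woods-1} (together with \cite{GolMic74} for (iv)) and you attempt to replace that input by a radical-preservation argument that does not work as sketched. The pivotal claim in your (iii) --- that the lifted idempotents satisfy $\delta(f_j)\in P$ --- is false, even on the semisimple quotient, even with $\sigma\delta=\delta\sigma$ and $\chr=0$: take $\widehat{Q}=k\times k$ with $k$ a field of characteristic $0$, $\sigma$ the swap, and $\delta=\id-\sigma$ (inner, with $t=1$ fixed by $\sigma$); then $P=0$ but $\delta(e_1)=e_1-e_2\neq 0$. (This does not contradict (i): all that is true, and all that (i) uses, is $\overline{\delta}(e_j)\in S_j\oplus S_{\pi(j)}$, not $\overline{\delta}(e_j)=0$; in the example $N=M_1\cap M_2=0$, so the lemma's conclusion is trivially satisfied while your intermediate claim fails.) Moreover, even granting $\delta(f_j)\in P$, the step ``hence $\delta$ respects the Peirce decomposition modulo $P$, and in particular $\delta(P)\subseteq P$'' is a non sequitur --- nothing in that computation controls $\delta$ on $P$ itself, and indeed one cannot even speak of the induced decomposition modulo $P$ without already knowing $\delta(P)\subseteq P$ --- and the assertion that the lifts $f_j$ can be ``chosen compatibly with the action of $\sigma$'' is unjustified. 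So your reduction of (iii) to (i) is missing its essential ingredient, a proof that the commuting skew derivation preserves the prime radical (or, as the paper arranges it via \cite[Theorem C(a)]{jones-woods-1}, preserves the $\sigma$-invariant semiprime ideal $N$ directly) in characteristic $0$; the characteristic hypothesis never actually enters your argument except as a vague remark about ``anomalous diagonal contributions''.

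The same issue is sharper in (iv). The whole content of that part is the existence of $L$ with $\delta^{p^L}$ preserving the relevant ideals, and crucially with $L\leq\ell$, so that the $N_i$ really are intersections of $\sigma^{p^L}$-orbits inside $\{M_1,\dots,M_{p^\ell}\}$. Your justification --- ``the multi-Leibniz expansion of $\delta^{p^L}$ collapses onto terms which, for sufficiently large $L$, are forced to lie in $P$'' --- exhibits no such collapse: in characteristic $p$ the expansion of $\delta^{p^L}(ab)$ collapses to $\delta^{p^L}(a)b+\sigma^{p^L}(a)\delta^{p^L}(b)$, which only restates that $(\Sigma_L,\Delta_L)$ is a skew derivation and says nothing about $\delta^{p^L}(P)\subseteq P$; and ``sufficiently large $L$'' would in any case not deliver the bound $L\leq\ell$ required by the statement. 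The paper obtains (iv) from \cite[Theorem C(b)]{jones-woods-1} combined with the identification of $\sigma^{p^L}$-prime ideals with intersections of $\sigma^{p^L}$-orbits of primes \cite{GolMic74}; some substitute for that external input (or a genuine proof of radical/orbit preservation by a $p$-power of $\delta$ with the stated bound on $L$) is needed before your reduction to (i) can be run.
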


\begin{proof}
$ $

\begin{enumerate}[label=(\roman*)]
\item In this case, $(\sigma, \delta)$ descends naturally to a skew derivation on $\widehat{Q}/P$, which is a semisimple artinian ring, and so we can apply \cite[Lemma 1.2]{cauchon-robson} to the simple factors of $\widehat{Q}/P$ to see that $\delta(Q_i) \subseteq Q_i + \sigma(Q_i)$ for all $1\leq i\leq p^\ell$.
\item Clear, as $\sigma$ preserves $N$.
\item If $\chr(\widehat{Q}) = 0$, then $\chr(Q(R)) = 0$, and so $\mathbb{Q}\subseteq Q(R) \subseteq \widehat{Q}$. Hence $\widehat{Q}$ is an artinian $\mathbb{Q}$-algebra, and this result now simply follows from \cite[Theorem C(a)]{jones-woods-1}.
\item The only prime ideals of $\widehat{Q}$ containing $N$ are the $M_1, \dots, M_{p^\ell}$. Using the fact that $\sigma^{p^L}$-prime ideals are precisely intersections of $\sigma^{p^L}$-orbits of prime ideals \cite[Remarks 4* and 5*]{GolMic74}, this follows from \cite[Theorem C(b)]{jones-woods-1}.\qedhere
\end{enumerate}
\end{proof}

More examples of cases where $\delta(N) \subseteq N$ are given by \cite[Proposition 3.2 and Corollary 3.3]{jones-woods-1}. The important part of Lemma \ref{lem: when delta preserves N} in our case is part (ii), since we will assume frequently in \S \ref{sec: crossed products} that $\delta$ is an inner skew derivation.

Next, recall from Property \ref{props: zU-adic filtration, semisimple case}(iii) that $\sigma^{p^\ell}(V_1) = V_1$, and from Property \ref{props: zU-adic filtration, semisimple case}(v) that $v_1 \circ \sigma^{p^\ell} = v_1$. However, if we try to pass naively from $V_1$ to a maximal order $\O_1\subseteq Q_1$ as in Procedure \ref{proc: build a standard filtration}\textbf{(e)}, we may no longer have $\sigma^{p^\ell}(\O_1) = \O_1$, as shown by the following example which was kindly provided to us by Konstantin Ardakov:

\begin{ex}\label{ex: sigma does not preserve maximal orders}
Take $\widehat{Q} = M_2(\mathbb{F}_2((x)))$: this is a prime ring, so the unique maximal ideal of $\widehat{Q}$ is $M = 0$, and hence any $\sigma\in\Aut(\widehat{Q})$ will satisfy $\sigma(M) = M$, i.e.\ $p^\ell = 1$ in the above notation. The natural (entrywise $x$-adic) filtration on $\widehat{Q}$ is in fact the \emph{standard} filtration $u$ with respect to the maximal order $\O = M_2(F_2[[x]])$. 

Take $\sigma$ to be the automorphism of $\widehat{Q}$ (of order 2) given by conjugation by 
$$\begin{pmatrix} 1&1+x\\1&1 \end{pmatrix}\in \widehat{Q}.$$
Then $\sigma(\O) \neq \O$, so the standard filtrations $u$ and $u' := u \circ \sigma^{-1}$ are not equal.
\end{ex}

Returning to the general situation: we will choose a maximal order $\O_{1,1}$ equivalent to $V_1$, which by Example \ref{ex: sigma does not preserve maximal orders} we now know may not be preserved by $\sigma^{p^\ell}$. Then, for each $1\leq i\leq p^\ell$ and $j \in\mathbb{Z}$, we will set $\O_{i,j} = \sigma^{(i-1)+p^\ell(j-1)}(\O_{1,1})$, a maximal order in $Q_i$. Write $u_{i,j}$ for the associated (standard) $J(\O_{i,j})$-adic filtration, i.e.\ $u_{i,j} = u_{1,1} \circ \sigma^{-(i-1)-p^\ell(j-1)}$.

Diagrammatically:
\[
\begin{tikzcd}
&V_1\arrow[d, "\sigma"]\arrow[r, phantom, "\subseteq"]&{}&
\O_{1,1}\arrow[d, "\sigma"]&
\O_{1,2}\arrow[d, "\sigma"]&
\O_{1,3}\arrow[d, "\sigma"]\arrow[r, phantom, "\subseteq"]&
Q_1\arrow[d, "\sigma"]\\
&V_2\arrow[d, "\sigma"]\arrow[r, phantom, "\subseteq"]&{}&
\O_{2,1}\arrow[d, "\sigma"]
&\O_{2,2}\arrow[d, "\sigma"]
&\O_{2,3}\arrow[d, "\sigma"]\arrow[r, phantom, "\subseteq"]&
Q_2\arrow[d, "\sigma"]&{}\\
{}\arrow[ur, phantom, ""{coordinate, name=Z0}]&\vdots\arrow[d, "\sigma"]&\vdots\arrow[d, "\sigma"]\arrow[ur, phantom, ""{coordinate, name=Z1}]&
\vdots\arrow[d, "\sigma"]\arrow[ur, phantom, ""{coordinate, name=Z2}]&
\vdots\arrow[d, "\sigma"]\arrow[ur, phantom, ""{coordinate, name=Z3}]&
\vdots&
\vdots\arrow[ur, phantom, ""{coordinate, name=Z5}]\\
&V_{p^\ell}\arrow[r, phantom, "\subseteq"]\arrow[uuu, "\sigma", rounded corners=2.5ex, to path={-- ([yshift = -3ex]\tikztostart.south) 
 -| (Z0) |- ([yshift=3ex]\tikztotarget.north)[at start]\tikztonodes
-- (\tikztotarget)}]&
\O_{p^\ell,0}\arrow[uuur, "\sigma"', rounded corners=2.7ex, to path={-- ([yshift = -3ex]\tikztostart.south) 
 -| (Z1) |- ([yshift=3ex]\tikztotarget.north)[at start]\tikztonodes
-- (\tikztotarget)}]&
\O_{p^{\ell},1}\arrow[uuur, "\sigma"', rounded corners=2.7ex, to path={-- ([yshift = -3ex]\tikztostart.south) 
 -| (Z2) |- ([yshift=3ex]\tikztotarget.north)[at start]\tikztonodes
-- (\tikztotarget)}]&
\O_{p^{\ell},2}\arrow[uuur, "\sigma"', rounded corners=2.7ex, to path={-- ([yshift = -3ex]\tikztostart.south) 
 -| (Z3) |- ([yshift=3ex]\tikztotarget.north)[at start]\tikztonodes
-- (\tikztotarget)}]&
\arrow[r, phantom, "\subseteq"]&
Q_{p^\ell}\arrow[uuu, "\sigma"', rounded corners=2.5ex, to path={-- ([yshift = -3ex]\tikztostart.south) 
 -| (Z5) |- ([yshift=3ex]\tikztotarget.north)[at start]\tikztonodes
-- (\tikztotarget)}]
\end{tikzcd}
\]

So, for all $1\leq i\leq p^\ell$, we have a $\sigma^{p^\ell}$-orbit of maximal orders $\dots, \O_{i,-1}, \O_{i,0}, \O_{i,1}, \O_{i,2}, \dots$ in $Q_i$. Our first task is to prove that there are actually only finitely many maximal orders here.

Without loss of generality, we will take $i = j = 1$, and (for ease of notation) we will write $\tau = \sigma^{p^\ell}$ and $\O' = \O_{1,1}$.

\begin{lem}\label{lem: s is a power of p}
There exists $s\geq 1$ such that $\tau^s(\O') = \O'$. Moreover, the minimal such $s$ is a $p$'th power.
\end{lem}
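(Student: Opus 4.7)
The plan is to break the proof into two parts: first prove existence of some $s \geq 1$ with $\tau^s(\O') = \O'$, then show the minimal such $s$ is a power of $p$ by an argument analogous to Proposition \ref{propn: t is a power of p}.

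For existence, I would observe that $\tau = \sigma^{p^\ell}$ preserves $V_1$ by Property \ref{props: zU-adic filtration, semisimple case}(iii), and hence permutes the set $\mathcal{M}$ of maximal orders in $Q_1$ that are equivalent to $V_1$ (since equivalence of orders is preserved by ring automorphisms fixing one of them, and a maximal order gets sent to a maximal order). By the theory of maximal orders developed in \cite[\S 3.11]{ardakovInv}, the set $\mathcal{M}$ is finite. Therefore the $\tau$-orbit of $\O'\in\mathcal{M}$ is finite, which yields some $s \geq 1$ with $\tau^s(\O') = \O'$.

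For the second part, let $s$ be the minimal such positive integer, so that $\{j \in \mathbb{Z} : \tau^j(\O') = \O'\} = s\mathbb{Z}$ as a subgroup of $\mathbb{Z}$. To show $s$ is a power of $p$, it suffices (since every positive divisor of $p^k$ is itself a power of $p$) to establish that $p^k \in s\mathbb{Z}$ for some $k \geq 0$: that is, that $\tau^{p^k}(\O') = \sigma^{p^{\ell+k}}(\O') = \O'$ for some $k$. By Corollary \ref{cor: sigma-id and delta have non-negative degree}(iv), $\deg_U(\sigma^{p^n} - \id)$ can be made arbitrarily large by taking $n$ large, and by the strictness of Procedure \ref{proc: build a standard filtration}\textbf{(d)} (Proposition \ref{propn: continuity of localisation procedure maps}\textbf{(d)}) the same holds for the degree of the induced map on $(Q_1, v_1)$.

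The main obstacle will be translating ``$\sigma^{p^n}$ is close enough to $\id$ with respect to $v_1$'' into ``$\sigma^{p^n}(\O') = \O'$''. To handle this, I would use that $V_1$ and $\O'$ are equivalent orders, so that $\O'$ is finitely generated as a $V_1$-bimodule by some finite set $\{x_1,\dots,x_r\}$, together with the fact that elements of sufficiently high $v_1$-degree all lie in $V_1 \subseteq \O'$. Choosing $n$ so that $\sigma^{p^n}(x_i) - x_i \in V_1$ for each $i$, one obtains $\sigma^{p^n}(\O') \subseteq \O'$, and applying the same argument to $\sigma^{-p^n}$ gives the reverse inclusion, hence equality. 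This establishes $\tau^{p^{n-\ell}}(\O')=\O'$ for sufficiently large $n$, so $s$ divides some $p^k$, completing the proof.
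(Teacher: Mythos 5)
Your second half follows essentially the same route as the paper — make $\deg(\tau^{p^n}-\id)$ large via Corollary \ref{cor: sigma-id and delta have non-negative degree}(iv), descend to $(Q_1,v_1)$ by strictness of the quotient map, deduce $\tau^{p^n}(\O')=\O'$, and conclude from the subgroup $\{s\in\mathbb{Z}:\tau^s(\O')=\O'\}\supseteq p^n\mathbb{Z}$ — but two of your justifications are genuinely flawed. First, your separate existence argument is incorrect: the set of maximal orders of $Q_1$ equivalent to $V_1$ is in general \emph{infinite}. In $Q_1\cong M_s(F)$ every conjugate $aM_s(D)a^{-1}$, $a\in Q_1^\times$, is a maximal order equivalent to the others, and there are infinitely many such conjugates (this is exactly why Example \ref{ex: sigma does not preserve maximal orders} can occur); \cite[\S 3.11]{ardakovInv} provides only the \emph{existence} of a maximal order equivalent to (and containing) $V_1$, not finiteness of the set of all such orders. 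This error happens to be harmless only because a repaired second half already yields existence.

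Second, the pivotal step of your main argument rests on the assertion that $\O'$ is finitely generated as a $V_1$-bimodule ``because $V_1$ and $\O'$ are equivalent orders''. Equivalence by itself only gives a containment of the shape $\O'\subseteq a^{-1}V_1b^{-1}$ for suitable units; to get finite generation you would additionally need some Noetherianity of $V_1$ together with a containment such as $\O'\subseteq z_1^{-r}V_1$, none of which you supply. The paper sidesteps this entirely: by Proposition \ref{propn: continuity of localisation procedure maps}\textbf{(e)} there exists $r\geq 0$ with $V_1\subseteq\O'\subseteq z_1^{-r}V_1$, so as soon as $\tau^{p^n}-\id$ has $v_1$-degree at least $r$ one obtains $(\tau^{p^n}-\id)(\O')\subseteq(\tau^{p^n}-\id)(z_1^{-r}V_1)\subseteq V_1\subseteq\O'$ for \emph{all} of $\O'$ simultaneously, and likewise for $\tau^{-p^n}$ since $\tau^{-p^n}-\id=-\tau^{-p^n}(\tau^{p^n}-\id)$; no generating set (and no finite generation) is needed. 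If you replace your finite-generation step by this uniform containment, your sketch collapses into the paper's proof; as written, however, both the finiteness claim and the bimodule claim are gaps.
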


\begin{proof}
By Proposition \ref{propn: continuity of localisation procedure maps}\textbf{(e)}, there exists $r\geq 0$ such that $V_1 \subseteq \O' \subseteq z_1^{-r} V_1$. But the map $\tau^{p^n} - \id$ has $v_{z,U}$-degree at least $r$ for sufficiently large $n$ (Corollary \ref{cor: sigma-id and delta have non-negative degree}(iv)). The strictness of the filtered quotient map $(\widehat{Q}, v_{z,U}) \to (Q_1, v_1)$ (Proposition \ref{propn: continuity of localisation procedure maps}\textbf{(d)}) implies that $\tau^{p^n}-\id$ will also have $v_1$-degree at least $r$ for any such $n$, and our choice of $r$ shows that $(\tau^{p^n} - \id)(\O') \subseteq (\tau^{p^n} - \id)(z_1^{-r}V_1) \subseteq V_1 \subseteq \O'$. Hence $\tau^{p^n}(\O') \subseteq \O'$.

Similarly, since $\tau^{-p^n}-\id=-\tau^{-p^n}(\tau^{p^n}-\id)$, it follows that $\tau^{-p^n}-\id$ has $v_1$-degree at least $r$, so we also have that $\tau^{-p^n}(\O')\subseteq \O'$. Hence $\tau^{p^n}(\O')=\O'$.

Finally, as the set $S = \{s\in \mathbb{Z} : \tau^s(\O')= \O'\}$ forms a subgroup of $\mathbb{Z}$ containing $p^n\mathbb{Z}$, its positive generator must be a power of $p$.
\end{proof}

Now we can state the right analogue of Procedure \ref{proc: build a standard filtration}\textbf{(e)}.

\begin{enumerate}[label=\textbf{(\alph**)}]
\setcounter{enumi}{4}
\item Choose a maximal order $\O_1 \subseteq Q_1$ equivalent to $V_1$, and set $\O_i = \sigma^{i-1}(\O_1) \subseteq Q_i$ for all $1\leq i\leq p^\ell$. Each $\O_i$ has the standard $J(\O_i)$-adic filtration $u_i$, and $u_{i+1} \circ \sigma = u_i$ for all $1\leq i\leq p^\ell - 1$.
\end{enumerate}

Again, if $\ell=0$ then \textbf{(e*)} coincides with Procedure \ref{proc: build a standard filtration}\textbf{(e)}.

We can (and will) define the filtration $u$ on $Q$ to be the product filtration $u = \min_i\{u_i\}$, or equivalently the $J(\O)$-adic filtration on $Q$ where $\O = \O_1 \times \dots \times \O_{p^\ell}$. We call $u$ a \emph{$\sigma$-standard filtration}, and $Q$ a \emph{$\sigma$-standard filtered artinian ring}.

In summary, we now have:
\begin{equation}\label{eqn: maps in localisation procedure 2}
\xymatrix@C+5pt{
(R,w_0)\ar@{^(->}[r]_-{\text{\textbf{(a)}}}& (Q(R),w)\ar@{^(->}[r]_-{\text{\textbf{(b)}}}& (\widehat{Q},w)\ar[r]_-{\text{\textbf{(c)}}}^-{\mathrm{id}}&(\widehat{Q}, v_{z,U})\ar@{->>}[r]_-{\text{\textbf{(d*)}}}&(Q, v_{\overline{z},V})\ar[r]_-{\text{\textbf{(e*)}}}^-{\mathrm{id}}&(Q,u),
}
\end{equation}
where $Q = \widehat{Q}/N$ is a \emph{semisimple} artinian ring with $p^\ell$ simple factors $Q_1, \dots, Q_{p^\ell}$. These simple factors are permuted transitively by $\sigma$, each $Q_i$ can be given a standard filtration $u_i$, and $u = u_1 \times \dots \times u_{p^\ell}$ is the product filtration with positive part $\O$. This sequence of filtered rings satisfies the obvious analogues of Proposition \ref{propn: continuity of localisation procedure maps} and Corollaries \ref{cor: R and Q(R) embed in Q}--\ref{cor: Zp-algebra}.

\begin{rk*}
Due to Example \ref{ex: sigma does not preserve maximal orders}, we may have $\sigma(\O)\neq \O$, and so there is no hope that $(\sigma, \delta)$ will remain compatible with $u$ in general. However, Lemma \ref{lem: s is a power of p} is strong enough to show that it will remain quasi-compatible.
\end{rk*}

The following result contains Theorem \ref{letterthm: restricted skew power series ring over Q}. Recall the two situations of interest:
\begin{enumerate}[label=(\arabic*)]
\item $R$ has characteristic $p$.
\item There exists $t\in \widehat{Q}$ such that $\delta(q)=tq-\sigma(q)t$ for all $q\in\widehat{Q}$, satisfying $w(t)\geq 0$ and $\sigma(t)=t$.
\end{enumerate}

\begin{thm}\label{thm: bounded in the semisimple case}
Let $(R, w_0)$ be a filtered $\mathbb{Z}_p$-algebra satisfying \eqref{filt}. Construct the filtered rings of \eqref{eqn: maps in localisation procedure 2}. Suppose $(\sigma, \delta)$ is a commuting skew derivation on $R$ which is compatible with $w_0$, and $\delta(N)\subseteq N$. Suppose further that we are in situation (1) or (2) above. Then:
\begin{enumerate}[label=(\roman*)]
\item $(\sigma, \delta)$ extends uniquely from $R$ to a canonical skew derivation $(\sigma^Q, \delta^Q)$ on $Q = \widehat{Q}/N$ which is quasi-compatible with $u$, and the associated bounded skew power series ring $Q^+[[x;\sigma^Q,\delta^Q]]$ is Noetherian.
\end{enumerate}
Now define $R^+[[x;\sigma,\delta]]$ with respect to the filtration $w_0$ on $R$. Then:
\begin{enumerate}[label=(\roman*)]
\setcounter{enumi}{1}
\item The embeddings $R\hookrightarrow Q$ and $Q(R) \hookrightarrow Q$ extend canonically to injective ring homomorphisms $R^+[[x; \sigma, \delta]] \hookrightarrow Q^+[[x; \sigma^Q, \delta^Q]]$ and $Q(R)[x; \sigma, \delta] \hookrightarrow Q^+[[x; \sigma^Q, \delta^Q]]$ sending $x$ to $x$.
\item If $Q^+[[x; \sigma^Q, \delta^Q]]$ is prime, then $R^+[[x; \sigma, \delta]]$ is prime.
\end{enumerate}
\end{thm}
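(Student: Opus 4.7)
My plan is to handle the three parts of the theorem separately, with the bulk of the work being part (iii).

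For part (i), I would first apply Lemma \ref{lem: compatibility is not enough} to extend $(\sigma,\delta)$ uniquely from $R$ to a commuting skew derivation on $\widehat{Q}$, compatible with $w$. The extension descends to $Q=\widehat{Q}/N$ because $\sigma(N)\subseteq N$ by construction (the $M_i$ form a $\sigma$-orbit) and $\delta(N)\subseteq N$ by hypothesis, giving the canonical pair $(\sigma^Q,\delta^Q)$. For quasi-compatibility with $u$: Lemma \ref{lem: s is a power of p} (applied inside each simple factor) produces some $m=p^{\ell+\ell'}$ with $\sigma^m(\O)=\O$, so $\sigma^m$ is strictly filtered under $u$. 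I would then adapt the proof of Proposition \ref{propn: strong compatibility} to the semisimple setting with $N$ in place of $M$ (the argument transposes mutatis mutandis once one replaces Procedure \ref{proc: build a standard filtration}\textbf{(d)}--\textbf{(e)} by their analogues \textbf{(d*)}--\textbf{(e*)}): for sufficiently large $n$, $(\Sigma_n,\Delta_n)$ descends to $Q$ and is compatible with $u$, yielding $\deg_u(\Delta_n)\geq 1$. Corollary \ref{cor: criterion for quasi-compatibility} (with $k=p^{\ell+\ell'}$) then delivers quasi-compatibility. Noetherianity is an application of Corollary \ref{cor: Noetherian skew power series} with $A=\O$ and $T$ the regular elements of $\O$: a uniformiser power clears any bounded sequence into $\O$, and $\gr_u(\O)=\prod\gr_{u_i}(\O_i)$ is Noetherian since each factor is a skew polynomial ring over the (Noetherian) residue ring $\O_i/J(\O_i)$.

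Part (ii) is essentially formal. By Corollary \ref{cor: R and Q(R) embed in Q} the composite $R\hookrightarrow Q$ is a continuous injection, and chasing through the chain of filtered maps in \eqref{eqn: maps in localisation procedure 2} (each of which either preserves degree up to a fixed linear rescaling, as in Proposition \ref{propn: continuity of localisation procedure maps}\textbf{(c)},\textbf{(e)}, or is strictly filtered) shows that a uniformly $w_0$-bounded sequence in $R$ has uniformly $u$-bounded image in $Q$. Hence $\sum r_n x^n\mapsto \sum r_n x^n$ is well-defined as a function $R^+[[x;\sigma,\delta]]\to Q^+[[x;\sigma^Q,\delta^Q]]$. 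Restricted to the dense subring $R[x;\sigma,\delta]$ (density by Theorem \ref{thm: restricted skew power series rings exist}(i)), it is a ring homomorphism by Remark \ref{rk: restriction of extension of skew derivation}, and continuity of multiplication in $Q^+[[x;\sigma^Q,\delta^Q]]$ forces the extension to be a ring homomorphism as well. Injectivity is immediate coefficient-wise. The case of $Q(R)[x;\sigma,\delta]$ is entirely analogous.

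For part (iii), I would factor the embedding as
\[
R^+[[x;\sigma,\delta]]\hookrightarrow Q(R)^+[[x;\sigma,\delta]]\hookrightarrow Q^+[[x;\sigma^Q,\delta^Q]],
\]
where the middle ring is well-defined by Theorem \ref{thm: restricted skew power series rings exist} (since $(\sigma,\delta)$ is compatible, hence quasi-compatible, with $w$ on $Q(R)$ via Lemma \ref{lem: compatibility is not enough}(a)), and transfer primeness in two independent steps. For the second arrow, I would show that $Q(R)[x;\sigma,\delta]$ has dense image in $Q^+[[x;\sigma^Q,\delta^Q]]$ (combining density of $Q(R)$ in $Q$ under $u$ with density of polynomials in bounded power series), so that if $I,J\lhd Q(R)^+[[x;\sigma,\delta]]$ are nonzero with $IJ=0$, then their closures $\overline{I},\overline{J}$ in $Q^+[[x;\sigma^Q,\delta^Q]]$ are nonzero closed ideals with $\overline{I}\cdot\overline{J}=0$ (by continuity of multiplication applied to limits of null-product pairs), contradicting primeness upstairs. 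For the first arrow, I would argue that the regular elements $T$ of $R$ form a $\sigma$-invariant Ore set in $R^+[[x;\sigma,\delta]]$ whose localisation $T^{-1}R^+[[x;\sigma,\delta]]$ coincides with $Q(R)^+[[x;\sigma,\delta]]$; primeness then descends by the standard identity $(T^{-1}I)(T^{-1}J)=T^{-1}(IJ)$ together with the fact that localisation at a regular Ore set does not annihilate nonzero ideals.

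The main obstacle lies in the first step of part (iii): verifying that $T$ is an Ore set in $R^+[[x;\sigma,\delta]]$ and that its localisation is exactly $Q(R)^+[[x;\sigma,\delta]]$. Producing a single left denominator $s\in T$ that clears an entire bounded sequence of coefficients in $Q(R)=T^{-1}R$ back into $R$ with uniformly bounded $w_0$-values requires a careful analysis of how the filtration $w$ on $Q(R)$ was built from $w_0$ in Procedure \ref{proc: build a standard filtration}\textbf{(a)}; the intuition is that $w$-bounded subsets of $Q(R)$ sit inside finitely-denominated fractional subrings of the form $s^{-1}R$, but upgrading this to infinite bounded sequences uniformly is the technical crux.
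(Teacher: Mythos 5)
Your parts (i) and (ii) are broadly in line with the paper's argument (for (i) the paper reduces to the simple factors $Q_i=\widehat{Q}/M_i$, notes via Lemma \ref{lem: if Q is not simple, delta is inner} and Lemma \ref{lem: power 1} that $\delta^Q$ is automatically inner when $Q$ is not simple, applies Theorem \ref{thm: strongly bounded} to each $(\Sigma_\ell,\Delta_\ell)$ on $(Q_i,u_i)$, and passes to the product filtration; note that in situation (2) your appeal to Corollary \ref{cor: criterion for quasi-compatibility} is not enough, since compatibility of $(\Sigma_n,\Delta_n)$ does not directly give $\deg_u(\delta^m)>0$ --- one needs Proposition \ref{propn: delta bounded implies Delta_n bounded}(ii)/Theorem \ref{thm: skew power series subrings exist}). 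The real problem is part (iii), where your factorisation through an intermediate ring $Q(R)^+[[x;\sigma,\delta]]$ has two genuine gaps. First, that ring is not known to exist: Theorem \ref{thm: restricted skew power series rings exist} requires the base to be \emph{complete}, and $(Q(R),w)$ is not complete (its completion is $\widehat{Q}$); the coefficients of a product of two bounded series are limits $c_k=\lim_p c_k^{(p)}$ which in general lie in $\widehat{Q}\setminus Q(R)$, so the set of bounded series with coefficients in $Q(R)$ is not closed under multiplication. Second, even granting its existence, your identification $T^{-1}R^+[[x;\sigma,\delta]]=Q(R)^+[[x;\sigma,\delta]]$ requires a single regular element of $R$ clearing an entire $w$-bounded sequence of elements of $Q(R)$ into $R$ with uniformly $w_0$-bounded values: this is exactly the hypothesis of Theorem \ref{thm: restricted skew power series rings exist}(iii)/Corollary \ref{cor: Noetherian skew power series}, which the paper only ever verifies when the base is a maximal order $\O=M_\ell(D)$ (uniformiser trick), and which \S 1.5 of the introduction explicitly flags as failing for a general $R$ satisfying \eqref{filt}. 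You acknowledge this as the ``technical crux'' but do not close it, and with the paper's tools it cannot be closed.

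For contrast, the paper's proof of (iii) is designed precisely to avoid any localisation of $R^+[[x;\sigma,\delta]]$ or any power series ring over $Q(R)$: given ideals $I,J$ of $R^+[[x;\sigma,\delta]]$ with $IJ=0$, it forms the topological closures $I'=\overline{Q(R)I}$ and $J'=\overline{Q(R)J}$ inside $Q^+[[x;\sigma^Q,\delta^Q]]$, uses density of the \emph{polynomial} ring $Q(R)[x;\sigma,\delta]$ and continuity of multiplication to show $I',J'$ are left ideals of $Q^+[[x;\sigma^Q,\delta^Q]]$, and then exploits the Noetherianity established in part (i) via the ascending chain $I'\subseteq I's^{-1}\subseteq I's^{-2}\subseteq\cdots$ (for $s\in R$ regular) to get right $Q(R)$-stability of $I'$, whence $I'J'\subseteq\overline{Q(R)IJ}=0$ and primeness of $Q^+[[x;\sigma^Q,\delta^Q]]$ forces $I=0$ or $J=0$. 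If you want to salvage your two-step plan, you would need to replace the intermediate ring and the Ore-localisation step by an argument of this closure-plus-Noetherian type.
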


\begin{proof}
View $(\sigma, \delta)$ as a compatible skew derivation on $(\widehat{Q},w)$ using Lemma \ref{lem: compatibility is not enough}\textbf{(a)--(b)}, and recall the associated skew derivations $(\Sigma_n,\Delta_n)$ on $\widehat{Q}$ of Definitions \ref{defn: X_n etc in char p} and \ref{defn: X_n etc in inner case}. Since $(\sigma,\delta)$ is compatible with $w$, it follows from Corollary \ref{cor: Noetherian skew power series} that $(\Sigma_n,\Delta_n)$ is compatible with $w$ for every $n$.

\begin{enumerate}[label=(\roman*)]
\item Define skew derivations $(\sigma^Q, \delta^Q) = (\widetilde{\sigma}, \widetilde{\delta})$ and $(\widetilde{\Sigma}_n, \widetilde{\Delta}_n)$ on $Q$ in the same way as in Lemma \ref{lem: compatibility is not enough}\textbf{(d)}: these restrict to $(\sigma, \delta)$ and $(\Sigma_n,\Delta_n)$ on $Q(R)$ by the same argument as in Remark \ref{rk: restriction of extension of skew derivation}, and are unique for the same reason.

Suppose $Q$ has $p^\ell$ simple factors $Q_i$ which are transitively permuted by $\widetilde{\sigma}$ in the usual way. Clearly $\widetilde{\Sigma}_\ell$ preserves each $Q_i$. We will show, in both cases of interest, that $(\widetilde{\Sigma}_{\ell}, \widetilde{\Delta}_{\ell})$ is quasi-compatible with $u$, and it will follow from Theorem \ref{thm: skew power series subrings exist} that $(\widetilde{\sigma},\widetilde{\delta})$ is quasi-compatible.

If $N=M$, so that $Q$ is simple, then the desired result is precisely Theorem \ref{thm: strongly bounded}. So we will assume from now on that $N\neq M$ and hence $Q$ is \emph{semisimple} but not simple. In particular, using Lemma \ref{lem: if Q is not simple, delta is inner}, we get that $\tilde{\delta}$ is an inner $\widetilde{\sigma}$-derivation in all cases. So, even in situation (1), we may fix $\overline{t}\in Q$ such that $\widetilde{\delta}(q)=\overline{t}q-\widetilde{\sigma}(q)\overline{t}$ for all $q\in Q$. (Note that we are not necessarily assuming that $u(t)\geq 0$, but we do still have that $\widetilde{\sigma}(\overline{t})=\overline{t}$ by Lemma \ref{lem: power 1}.) Then $\widetilde{\Delta}_\ell$ is an inner $\widetilde{\Sigma}_\ell$-derivation, defined by $\widetilde{\Delta}_{\ell}(q) = \overline{t}^{p^\ell}q - \widetilde{\Sigma}_\ell(q) \overline{t}^{p^\ell}$, and so $\widetilde{\Delta}_\ell$ also preserves each $Q_i$.

Now write $M_i$ for the maximal ideal of $\widehat{Q}$ containing $N$, defined such that $$M_i/N = Q_1\times\dots\times Q_{i-1}\times Q_{i+1}\times\dots Q_{p^\ell}.$$ Since $M_i/N$ is preserved by $(\widetilde{\Sigma}_\ell, \widetilde{\Delta}_\ell)$, it follows that $M_i$ is preserved by $(\Sigma_\ell,\Delta_\ell)$. Since $(\Sigma_\ell,\Delta_\ell)$ is compatible with $w_0$ on $R$, we may induce it to a skew derivation of $Q_i = \widehat{Q}/M_i$, and clearly this coincides with the restriction of $(\widetilde{\Sigma}_\ell, \widetilde{\Delta}_\ell)$ from $Q$ to $Q_i$. It then follows from Theorem \ref{thm: strongly bounded} that $(\widetilde{\Sigma}_\ell, \widetilde{\Delta}_\ell)$ is quasi-compatible with $u_i$, and the bounded skew power series ring $Q_i^+[[X_\ell; \widetilde{\Sigma}_\ell, \widetilde{\Delta}_\ell]]$ is Noetherian.

Since $(\widetilde{\Sigma}_\ell, \widetilde{\Delta}_\ell)$ is quasi-compatible with $u_i$, it follows from Definition \ref{defn: strongly bounded}(iii) that there exist $A_i\in\mathbb{Z}$ and $N_i\in\mathbb{N}$ such that $\deg_{u_i}(\widetilde{\Sigma}_\ell^k\widetilde{\Delta}_\ell^j)\geq A_i$ for all $k\in\mathbb{Z}$, $j\in\mathbb{N}$, and $\deg_{u_i}(\widetilde{\Delta}_\ell^{N_i})>0$. In other words, for all $q_i\in Q_i$, $u_i(\widetilde{\Sigma}_\ell^k\widetilde{\Delta}_\ell^j(q_i))\geq u_i(q)+A_i$ and $u_i(\widetilde{\Delta}_\ell^{N_i}(q_i))>u_i(q_i)$.

Let $A:=\min\{A_i:i=1,\dots,p^\ell\}$ and $N:=N_1\dots N_{p^\ell}$. Given $q=(q_1,\dots,q_\ell)\in Q$, $k\in\mathbb{Z}$, $j\in\mathbb{N}$: $$u\left((\widetilde{\Sigma}_\ell^k\widetilde{\Delta}_\ell^j)(q)\right)=\min\{u_i(\widetilde{\Sigma}_\ell^k\widetilde{\Delta}_\ell^j(q_i)):i=1,\dots p^\ell\}\geq A$$
and also $u(\widetilde{\Delta}_\ell^{N}(q))=\min\{u_i(\widetilde{\Delta}_\ell^{N}(q_i)):i=1,\dots,p^\ell\}>u(q)$, thus $(\widetilde{\Sigma}_\ell,\widetilde{\Delta}_\ell)$ is quasi-compatible with $u$ as required.

It follows from Proposition \ref{propn: subrings in characteristic p} and Theorem \ref{thm: skew power series subrings exist} that $(\widetilde{\sigma},\widetilde{\delta})$ is quasi-compatible with $u$, and hence $Q^+[[x;\widetilde{\sigma},\widetilde{\delta}]]$ is well-defined. Furthermore, $Q^+[[x;\widetilde{\sigma},\widetilde{\delta}]]$ is finitely generated over $Q^+[[X_\ell;\widetilde{\Sigma}_\ell,\widetilde{\Delta}_\ell]]$ by Proposition \ref{propn: subrings in characteristic p} and Theorem \ref{thm: well-defined subring}, and it is straightforward to see that we can realise $Q^+[[X_\ell;\widetilde{\Sigma}_\ell,\widetilde{\Delta}_\ell]]$ as a product of rings $$Q^+[[X_\ell;\widetilde{\Sigma}_\ell,\widetilde{\Delta}_\ell]]=\underset{1\leq i\leq p^\ell}{\prod}{Q_i^+[[X_\ell;\widetilde{\Sigma}_\ell,\widetilde{\Delta}_\ell]]}.$$ But each $Q_i^+[[X_\ell;\widetilde{\Sigma}_\ell,\widetilde{\Delta}_\ell]]$ in this product is Noetherian, so it follows that $Q^+[[x;\widetilde{\sigma},\widetilde{\delta}]]$ is Noetherian.

From now on, write $(\sigma, \delta)$ for the skew derivation on $Q$ instead of $(\widetilde{\sigma}, \widetilde{\delta})$.

\item Clearly there is an injective ring homomorphism $Q(R)[x;\sigma,\delta]\to Q^+[[x;\sigma,\delta]]$. Since $(R,w_0)\to (Q,u)$ is continuous, and hence sends bounded sequences to bounded sequences, there is a natural ring homomorphism $R^+[[x;\sigma,\delta]]\to Q^+[[x;\sigma,\delta]]$.

\item Using the homomorphisms of (iii), we will assume without loss of generality that $R^+[[x; \sigma, \delta]]$ and $Q(R)[x; \sigma, \delta]$ are subrings of $Q^+[[x; \sigma, \delta]]$. Recall that $Q^+[[x; \sigma, \delta]]$ can be given a ring topology, induced from the filtration $u$ on $Q$, by Theorem \ref{thm: restricted skew power series rings exist}(i).

Note first that $Q(R)$ is dense in $Q$, and $Q[x;\sigma, \delta]$ is dense in $Q^+[[x; \sigma, \delta]]$ by Theorem \ref{thm: restricted skew power series rings exist}(i), so we must have that $Q(R)[x; \sigma, \delta]$ is dense in $Q^+[[x; \sigma, \delta]]$.

If $I$ is a two-sided ideal of $R^+[[x; \sigma, \delta]]$, then $Q(R)I \subseteq Q^+[[x; \sigma, \delta]]$ is easily checked to be a $(Q(R)[x; \sigma, \delta], R^+[[x; \sigma, \delta]])$-submodule. Consider the topological closure $I' = \overline{Q(R)I}$ of $Q(R)I$ in $Q^+[[x; \sigma, \delta]]$. Then for any element $r\in Q^+[[x; \sigma, \delta]]$, $r$ is the limit point in $Q^+[[x; \sigma, \delta]]$ of a sequence $(r_n)$ of elements $r_n \in Q(R)[x;\sigma,\delta]$ by density. Moreover, any element $m\in\overline{Q(R)I}$ is the limit in $Q^+[[x; \sigma, \delta]]$ of a sequence $(m_n)$ of elements $m_n \in Q(R)I$, so $$rm=\left(\underset{n\rightarrow\infty}{\lim}r_n\right)\left(\underset{n\rightarrow\infty}{\lim}m_n\right)=\underset{n\rightarrow\infty}{\lim}r_nm_n\in\overline{Q(R)I}=I'$$ since each $r_nm_n\in Q(R)I$. Therefore, $I'$ is a $(Q^+[[x; \sigma, \delta]], R^+[[x; \sigma, \delta]])$-submodule of $Q^+[[x;\sigma,\delta]]$.

Furthermore, if $s\in R$ is a regular element, then $Q(R)Is \subseteq Q(R)I$, and hence $I's \subseteq I'$, and so
$$I' \subseteq I' s^{-1} \subseteq I' s^{-2} \subseteq \dots,$$
an ascending chain of left ideals in the Noetherian ring $Q^+[[x; \sigma, \delta]]$. So we must have that $I' s^{-n} = I' s^{-(n+1)}$ for some $n$, and multiplying on the right by $s^n$ shows that $I' = I' s^{-1}$. That is, $I'$ is right $Q(R)$-invariant, so $I'Q(R)\subseteq I'$.

But every element of $I'J' = I' \overline{Q(R) J}$ is a sum of elements of the form $fg$ where $f\in I'$ and $g\in\overline{Q(R)J}$. For each such $g$, we can again write $g$ as a limit $g=\underset{n\rightarrow\infty}{\lim}g_n$ inside $Q^+[[x; \sigma, \delta]]$, where each $g_n\in Q(R)J$. It follows that $fg=\underset{n\rightarrow\infty}{\lim}fg_n$. And since $fg_n\in I'Q(R)J\subseteq I'J$ this means that $I'J'\subseteq \overline{I'J}$.

But $I'J=\overline{Q(R)I}J$, and by the same argument, this is contained in $\overline{Q(R)IJ}=0$, and hence $I'J'=0$. So, if $Q^+[[x; \sigma, \delta]]$ is prime, then $\overline{Q(R)I} = 0$ or $\overline{Q(R)J} = 0$. But this implies that $I = 0$ or $J = 0$, and so $R^+[[x; \sigma, \delta]]$ is prime.\qedhere
\end{enumerate}
\end{proof}

\section{Simple skew power series rings}\label{sec: crossed products}

Now that we have established the relationship between the skew power series rings $R^+[[x;\sigma,\delta]]$ and $Q^+[[x;\sigma,\delta]]$, we want to use this to explore the ring-theoretic structure of these algebras and their localisations.

\subsection{The crossed product decomposition}\label{subsec: crossed}

Let $(Q,u)$ be a complete, filtered $\mathbb{Z}_p$-algebra admitting a quasi-compatible commuting skew derivation $(\sigma,\delta)$, so that $Q^+[[x; \sigma, \delta]]$ exists by Theorem \ref{thm: restricted skew power series rings exist}(i).

We make a number of assumptions throughout \S \ref{subsec: crossed}, which we now state and comment on.

\textbf{Assumption 1:} $Q=Q_1\times\dots\times Q_{p^n}$ for some $n \geq 0$, where each $Q_i$ is a simple artinian ring, and after reordering the factors, we get $\sigma(Q_i)=Q_{i+1}$ (where subscripts are taken modulo $p^n$) for each $i$.

This $n$ will be fixed throughout \S \ref{subsec: crossed}.

\textbf{Assumption 2:} each of the $Q_i$ admits a complete filtration $u_i$, and $u_{i+1} \circ \sigma = u_i$ for all $1\leq i\leq p^n - 1$. Moreover, $u$ is the product filtration:
$$u(q_1, \dots, q_{p^n}) = \min\{u_1(q_1), \dots, u_{p^n}(q_{p^n})\}.$$
In other words, for all $r\in\mathbb{Z}$, we have $F_r Q = F_r Q_1 \times \dots \times F_r Q_{p^n}$. Writing $(\gr_u(Q))_r := F_r Q/F_{r+1}Q$ and $(\gr_{u_i}(Q_i))_r := F_r Q_i/F_{r+1}Q_i$ shows that $(\gr_u(Q))_r = (\gr_{u_1}(Q_1))_r \times \dots \times (\gr_{u_{p^n}}(Q_{p^n}))_r.$

\textbf{Assumption 3:} $\delta$ is an inner $\sigma$-derivation, i.e. there exists $t\in Q$ such that $\delta(q)=tq-\sigma(q)t$ for all $q\in Q$. Define $X_r, \Sigma_r, \Delta_r, T_r$ for all $r\in\mathbb{N}$ as in Definition \ref{defn: X_n etc in inner case}, and assume that $\sigma(t) = t$, so that this matches up with Definition \ref{defn: X_n etc in char p} in characteristic $p$. Also assume that there exists some $m \geq 0$ such that $(\Sigma_m, \Delta_m)$ is compatible with $u$. Finally, if $\chr(Q) = 0$, then assume $u(t) \geq 0$.

(If $n>0$, then $\delta$ is automatically inner by Lemma \ref{lem: if Q is not simple, delta is inner}. Also, since $\sigma$ permutes the $Q_i$ non-trivially, $\sigma$ must be an \emph{outer} automorphism of $Q$ in this case, and hence $\sigma(t) = t$ automatically by Lemma \ref{lem: power 1}. Finally, if $(\Sigma_m, \Delta_m)$ is compatible with $u$, then in particular $\sigma^{p^m}$ is strictly filtered with respect to $u$.)

\textbf{Assumption 4:} $Q^+[[X_r;\Sigma_r,\Delta_r]]$ is a Noetherian ring for each $r\geq 0$.

\textbf{Assumption 5:} For each $i$, the associated graded ring $\gr_{u_i}(Q_i)$ contains a homogeneous unit of degree 1, and $(\gr_{u_i}(Q_i))_0$ is a simple ring.

\begin{lem}\label{lem: in semisimple case, sigma is filtered}
Adopt Assumptions 1--2. For each $1\leq i\leq p^n$ (with subscripts taken modulo $p^n$), the map $\sigma: (Q_i, u_i) \to (Q_{i+1}, u_{i+1})$ is an isomorphism of topological rings, and $\sigma$ and $\sigma^{-1}$ are filtered.
\end{lem}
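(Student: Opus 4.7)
The plan is to verify the statement by breaking the index range into two cases and then unpacking the quasi-compatibility hypothesis that is in force throughout \S \ref{subsec: crossed}.

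First I would handle the ``non-wrap-around'' indices $1\leq i\leq p^n-1$. For these, Assumption 2 directly gives $u_{i+1}(\sigma(q))=u_i(q)$ for all $q\in Q_i$, so $\sigma\colon(Q_i,u_i)\to(Q_{i+1},u_{i+1})$ is an isometry. Combined with Assumption 1 (which makes it a ring isomorphism of $Q_i$ onto $Q_{i+1}$), this immediately upgrades to a topological ring isomorphism.

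The only mildly interesting case is $i=p^n$, where the lemma asserts $\sigma\colon(Q_{p^n},u_{p^n})\to(Q_1,u_1)$ is a topological ring isomorphism, even though Assumption 2 explicitly does \emph{not} give $u_1\circ\sigma=u_{p^n}$. Here I would use that $(\sigma,\delta)$ is standing-assumed to be quasi-compatible with $u$ on $Q$: by Definition \ref{defn: strongly bounded}(ii)--(iv) this means both $\sigma$ and $\sigma^{-1}$ are filtered endomorphisms of $(Q,u)$, say of degrees at least $d$ and $d'$ respectively. Since Assumption 2 makes $u$ the product filtration, $F_rQ=F_rQ_1\times\cdots\times F_rQ_{p^n}$ for every $r\in\mathbb{Z}$, and since $\sigma(Q_{p^n})=Q_1$ by Assumption 1,
\[
\sigma(F_r Q_{p^n})=\sigma(F_rQ)\cap Q_1\subseteq F_{r+d}Q\cap Q_1=F_{r+d}Q_1.
\]
Hence $\sigma\colon(Q_{p^n},u_{p^n})\to(Q_1,u_1)$ is filtered of degree at least $d$, and the same argument applied to $\sigma^{-1}$ shows the inverse is filtered of degree at least $d'$. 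Both maps are therefore continuous, giving the desired topological ring isomorphism.

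Finally, the last assertion, that $\sigma$ and $\sigma^{-1}$ are filtered on $(Q,u)$, is simply a restatement of quasi-compatibility and requires nothing further. There is no genuine obstacle in this lemma; the only subtlety worth flagging is that the $i=p^n$ ``wrap-around'' case cannot be handled by Assumption 2 alone (precisely because we do not assume $u_1=u_1\circ\sigma^{p^n}$), and so we must invoke the global quasi-compatibility of $(\sigma,\delta)$ with $u$ to bridge $Q_{p^n}$ to $Q_1$.
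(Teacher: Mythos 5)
Your proof is correct and follows essentially the same route as the paper: the non-wrap-around indices are immediate from Assumption 2, and the case $i=p^n$ is settled by invoking the quasi-compatibility of $(\sigma,\delta)$ with $u$, which you implement via a direct level-set estimate using the product filtration, whereas the paper reduces to showing $\sigma^{\pm p^n}$ are filtered endomorphisms of $(Q_1,u_1)$ — the same ingredient either way. The only quibble is your final remark: the filteredness claim in the lemma is about the maps $\sigma\colon(Q_i,u_i)\to(Q_{i+1},u_{i+1})$ and their inverses, not merely about $(Q,u)$, but your two cases already establish exactly this, so nothing is missing.
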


\begin{proof}
When $1 \leq i \leq p^n-1$, this is true \emph{a fortiori}, as $\sigma$ is an isomorphism of filtered rings by Assumption 2. To prove the claim for $\sigma: (Q_{p^n}, u_{p^n}) \to (Q_1, u_1)$, it suffices to show that $\sigma^{p^n} : (Q_1, u_1) \to (Q_1, u_1)$ is an isomorphism of topological rings, and that $\sigma^{\pm p^n}$ are filtered as endomorphisms of $(Q_1, u_1)$. But $\sigma^{p^n}$ is an isomorphism of rings by definition of $\sigma$, and the assumption that $(\sigma, \delta)$ is quasi-compatible with $u$ (and in particular that $\sigma$ and $\sigma^{-1}$ are filtered with respect to $u$) implies the rest.
\end{proof}

The second part of the above lemma implies that, if $(q_r)_{r\in\mathbb{N}}$ is a bounded sequence of elements of $Q_i$, then $(\sigma(q_r))_{r\in\mathbb{N}}$ is a bounded sequence of elements of $Q_{i+1}$.

Recall that, if $x-t$ is a unit in $Q^+[[x; \sigma, \delta]]$, then the module decomposition of Corollary \ref{cor: powers of x-t also form a basis} is a crossed product decomposition 
\begin{equation*}
Q^+[[x; \sigma, \delta]] = \bigoplus_{i=0}^{p^r - 1} Q^+[[X_r;\Sigma_r,\Delta_r]] (x-t)^i = Q^+[[X_r;\Sigma_r,\Delta_r]] * (\mathbb{Z}/p^r\mathbb{Z})
\end{equation*}
for all $r\in\mathbb{N}$. If $x-t$ is \emph{not} a unit, we would like to localise to obtain a similar crossed product structure. We will use the following proposition to construct the localised ring, and to deduce information from it. We begin with a well-known lemma:

\begin{lem}\label{lem: normal2}
Suppose $S$ is a Noetherian ring and $u\neq 0$ is a regular, normal element in $S$. Then the localisation $S_u$ of $S$ at $\{u^r:r\in\mathbb{N}\}$ is well-defined, and the canonical localisation map $S\to S_u$ is injective. Moreover, $S$ is prime (resp. semiprime) if and only if $S_u$ is prime (resp. semiprime), and the minimal primes of $S_u$ are of the form $M_u$ for $u$ a minimal prime of $S$.
\end{lem}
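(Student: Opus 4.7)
The plan is to verify each claim using standard Ore localisation theory applied to a normal element. First, since $u$ is normal, the multiplicative set $\mathcal{T}=\{u^r:r\in\mathbb{N}\}$ satisfies the Ore condition trivially: for any $s\in S$, normality gives $us=s'u$ and $su=u s''$ for some $s', s''\in S$, and iterating covers all powers of $u$. Since $u$ is regular, $\mathcal{T}$ contains no zero divisors, so $\mathcal{T}$ is a denominator set in the sense of \cite[\S 2.1]{MR} and the Ore localisation $S_u$ exists. The canonical map $S\to S_u$ has kernel $\{s\in S:u^n s=0\text{ for some }n\}$, which is zero by regularity of $u$.

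The next step is to record one technical fact that underpins the rest: for any two-sided ideal $I$ of $S$, we have $I_u=0$ in $S_u$ if and only if $I=0$, because $i/1=0$ forces $u^n i=0$ and then $i=0$ by regularity; and conversely every two-sided ideal $J$ of $S_u$ satisfies $J=(J\cap S)_u$, because any element $j u^{-r}\in J$ satisfies $u^r\cdot(j u^{-r})\cdot 1=j\in J\cap S$. Hence the assignments $P\mapsto P_u$ and $Q\mapsto Q\cap S$ furnish an inclusion-preserving bijection between prime ideals $P$ of $S$ with $u\notin P$ and prime ideals $Q$ of $S_u$; this is a standard consequence of the Ore localisation machinery (see \cite[Theorem 2.1.17]{MR}).

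For the prime and semiprime equivalences, I would argue directly from the above. If $S$ is prime and $I,J$ are nonzero two-sided ideals of $S_u$, then $I\cap S$ and $J\cap S$ are nonzero ideals of $S$, so $(I\cap S)(J\cap S)$ is a nonzero subset of $IJ$; conversely, if $S_u$ is prime and $I,J\subseteq S$ are nonzero ideals with $IJ=0$, then $I_u$ and $J_u$ are nonzero ideals of $S_u$ with $I_u J_u\subseteq (IJ)_u=0$. The semiprime case is identical with ``$I=J$'' and nilpotent ideals, or equivalently can be deduced (in the Noetherian setting) from the minimal prime statement below together with the fact that the nilradical of a Noetherian ring is the intersection of its minimal primes.

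Finally, for the minimal primes: the crucial observation is that, by Goldie's theorem (or more elementarily, by the fact that in a Noetherian ring the set of zero divisors equals the union of the associated primes of $0$, which contains the minimal primes), every regular element of $S$ avoids every minimal prime. Hence each minimal prime $M$ of $S$ satisfies $u\notin M$, and so lies in the domain of the bijection $P\mapsto P_u$. Since this bijection preserves strict inclusion, it sends minimal primes of $S$ bijectively onto minimal primes of $S_u$. The main obstacle is purely organisational: keeping the contraction/extension bookkeeping for ideals straight between $S$ and $S_u$; there is no deeper content here beyond the standard Ore localisation theory at a normal element.
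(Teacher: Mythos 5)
Your overall route (Ore-localise at the regular normal element, then transfer everything through the standard ideal/prime correspondence for $S\subseteq S_u$) is essentially the paper's, which simply quotes the relevant facts from McConnell--Robson; the existence and injectivity part is fine, up to the harmless slip that for $x=ju^{-r}\in J$ you should multiply on the \emph{right} by $u^r$ to land in $J\cap S$ (your $u^r(ju^{-r})$ is the conjugate $u^rju^{-r}$, not $j$). The genuine gap is in the upward transfer of (semi)primeness. You claim that if $I,J\lhd S$ are nonzero with $IJ=0$ then $I_u,J_u$ are ideals of $S_u$ with $I_uJ_u\subseteq (IJ)_u=0$. But extension along this localisation twists by conjugation: writing $\tau(s)=usu^{-1}$, which is an automorphism of $S$ because $uS=Su$, one has $u^{-m}j=\tau^{-m}(j)u^{-m}$, so $I_u=\{iu^{-r}\}$ is only a right ideal of $S_u$ unless $I$ is $\tau$-stable, and a typical element of $I_uJ_u$ has the form $i\,\tau^{-m}(j)\,u^{-m-r}$, which lies in $(IJ)_u$ only if $\tau^{-m}(J)\subseteq J$ --- not a consequence of $J$ being an ideal of $S$. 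The same defect affects the semiprime direction (extending a square-zero ideal). The repair is immediate from the correspondence you already cite: contraction of the prime $(0)$ of $S_u$ is $(0)=\ker(S\to S_u)$, hence $S$ is prime when $S_u$ is; conversely $(0)$ is a prime of $S$ disjoint from $\{u^r\}$, so its extension $(0)$ is a prime of $S_u$. For semiprimeness, note the prime radical $N$ of $S$ is invariant under the automorphism $\tau$, so $NS_u=S_uN$ is an ideal of $S_u$ with $(NS_u)^t=N^tS_u=0$, nonzero whenever $N\neq 0$; contraction of a nilpotent ideal handles the other direction.

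Separately, your assertion that every regular element of a Noetherian ring avoids every minimal prime is not justified by what you quote: ``zero divisors $=$ union of associated primes of $0$'' is a commutative fact, and Goldie's theorem applied to $S/N$ only shows such an element fails to be regular \emph{modulo} the prime radical, not in $S$ itself. For a regular \emph{normal} element the claim is in fact true (if $u$ lies in the minimal primes of a $\tau$-orbit $P_1,\dots,P_r$ and $B$ is the intersection of the remaining minimal primes, then $\tau(B)=B$ and $uB\subseteq N$ give $u^tB^t\subseteq N^t=0$, hence $B^t=0$, contradicting incomparability; if the orbit is everything, $u\in N$ is nilpotent), but some such argument is required. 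Note, however, that the lemma does not need it: it only asserts that minimal primes of $S_u$ are extensions of minimal primes of $S$, which follows from the correspondence together with the trivial observation that any prime contained in a prime avoiding $u$ also avoids $u$; likewise the semiprime equivalence does not require a bijection onto \emph{all} minimal primes of $S$.
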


\begin{proof}
The set $T:=\{u^r:r\in\mathbb{N}\}$ is clearly an Ore set in $S$ \cite[2.1.6]{MR} as $u$ is normal, and the assassinator of $T$ is zero as $u$ is regular \cite[2.1.10]{MR}, so the localisation $S_u$ is well-defined \cite[Theorem 2.1.12]{MR} and contains $S$ \cite[2.1.3(iii)]{MR}. The second claim follows from the bijective correspondence of \cite[Proposition 2.1.16(vii)]{MR}.
\end{proof}

We have already shown that $x-t$ is a normal element as Lemma \ref{lem: when sigma fixes t}(ii), so we now show that it is a regular element under certain conditions.

\begin{propn}\label{propn: x-t is regular and normal}
Adopt Assumptions 1--3 and 5. Then $x-t$ is a regular element of $Q^+[[x;\sigma,\delta]]$.
\end{propn}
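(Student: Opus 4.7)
The plan is to show that the right annihilator of $x-t$ in $Q^+[[x;\sigma,\delta]]$ is zero; since $x-t$ is normal by Lemma \ref{lem: when sigma fixes t}(ii) (equivalently, $\sigma$ extends to an automorphism of $Q^+[[x;\sigma,\delta]]$ fixing $x$ --- consistent because $xt = tx$ follows from $\sigma(t) = t$ and the resulting $\delta(t) = 0$ --- and then $(x-t)f = \sigma(f)(x-t)$ for all $f$), the left annihilator will also vanish, and this suffices for regularity.

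So suppose $f = \sum_{i \geq 0} s_i x^i \in Q^+[[x;\sigma,\delta]]$ satisfies $(x-t)f = 0$. The first step is a direct expansion of the product: using $xs = \sigma(s)x + \delta(s)$ and the identity $\delta(s) = ts - \sigma(s)t$, one computes
\[
(x-t)f = -\sigma(s_0)t + \sum_{i \geq 1}\bigl(\sigma(s_{i-1}) - \sigma(s_i)t\bigr)x^i.
\]
Setting each coefficient to zero and applying $\sigma^{-1}$ (using $\sigma(t) = t$) yields $s_0 t = 0$ and the recursion $s_{i-1} = s_i t$ for all $i \geq 1$. Iterating the recursion gives $s_i = s_{i+k} t^k \in Qt^k$ for all $i,k \geq 0$, so each $s_i$ lies in $\bigcap_{k \geq 0} Qt^k$.

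The second step exploits the semisimple artinian structure of $Q$ guaranteed by Assumption 1. The descending chain of left ideals $Qt \supseteq Qt^2 \supseteq \cdots$ stabilises by DCC at some $J := Qt^n$, so $J = Jt$ and each $s_i$ lies in $J$. Right multiplication by $t$ is a left $Q$-module endomorphism of $J$ which is surjective because $J = Jt$; since $J$ is a submodule of the left Noetherian module ${}_QQ$, it is Noetherian, and any surjective endomorphism of a Noetherian module is injective. Hence $s_0 \in J$ with $s_0 t = 0$ forces $s_0 = 0$; inductively, $s_i \in J$ with $s_i t = s_{i-1} = 0$ forces $s_i = 0$ for all $i$, so $f = 0$.

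The technical work is thus confined to the explicit coefficient expansion of $(x-t)f$ and the verification that the extended $\sigma$ on $Q^+[[x;\sigma,\delta]]$ carries the right annihilator to the left one; the structural half is a standard Fitting-type argument. No serious obstacle is anticipated. It is perhaps worth remarking that Assumptions 2–5, while natural for the surrounding subsection, do not appear to be required for this particular statement beyond the semisimple artinian ring-structure of $Q$ already supplied by Assumption 1.
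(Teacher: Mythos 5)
Your proposal is correct, but it takes a genuinely different and more elementary route than the paper. Both arguments expand the product coefficientwise (your formula for $(x-t)f$ agrees with the multiplication of Definition \ref{defn: skew power series rings}, since one factor is a polynomial), arrive at the same recursion $s_0t=0$, $s_{i-1}=s_it$, and reduce one one-sided annihilator to the other via $(x-t)f=\widetilde{\sigma}(f)(x-t)$, where $\widetilde{\sigma}$ applies $\sigma$ coefficientwise (well defined on $Q^+[[x]]$ because $\sigma^{\pm 1}$ are filtered, as in the proof of Lemma \ref{lem: when sigma fixes t}). The divergence is in how the recursion is killed. The paper first rescales the coefficients into $F_0Q$ using the degree-$1$ homogeneous units of Assumption 5 and then argues filtration-theoretically in a simple factor $Q_j$: either some power of $t_j$ has positive value, in which case boundedness and separatedness force $q_{0,j}=0$, or else compatibility of $(\Sigma_m,\Delta_m)$ (Assumption 3) makes $\gr(t_j^{p^m})$ central of non-positive degree in $\gr_{u_j}(Q_j)$, hence a unit by Assumption 5, so $t_j$ is a unit by completeness, contradicting $q_{0,j}t_j=0$. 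You instead run a Fitting-type argument purely inside the artinian ring $Q$: the coefficients lie in $\bigcap_{k}Qt^k$, the chain $Qt\supseteq Qt^2\supseteq\cdots$ stabilises at $J$ with $Jt=J$, and right multiplication by $t$ is a surjective, hence injective, endomorphism of the Noetherian left $Q$-module $J$, so the recursion collapses and $f=0$. This uses only the (semisimple) artinian structure of $Q$ from Assumption 1 together with the inner form of $\delta$ and $\sigma(t)=t$ from Assumption 3; no filtration input, no Assumption 2 or 5, and no compatibility of $(\Sigma_m,\Delta_m)$ is needed, so your closing remark about the hypotheses is accurate. What your approach buys is a shorter and more general proof (valid whenever $Q$ is artinian, with no completeness or graded hypotheses) and an explicit treatment of both annihilators, which the paper leaves implicit.
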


\begin{proof}

Let us assume for contradiction that $\sum_k {q_k x^k}(x-t)=0$ for some $0\neq \sum_k q_k x^k \in Q^+[[x; \sigma, \delta]]$. Since $\sigma$ is an automorphism, $x$ is a regular element, so we can assume that $q_0\neq 0$. 

By Assumption 5, $\gr_{u_i}(Q_i)$ contains a homogeneous unit $X_i$ of degree 1, this has the form $a_i+F_2Q_i$ for some regular element $a_i\in Q_i$ with $u_i(a_i)=1$. Since the $u(q_k)$ are bounded, if we let $a:=(a_1,\dots,a_{p^n})$ then after multiplying $\sum_k {q_k x^k}$ on the left by a sufficiently high power of $a$, we can assume that all its coefficients lie in $F_0Q$. So from now on, we can assume that $q_k\in F_0Q$ for all $k$, and $q_0\neq 0$.

Since $t$ and $x$ commute, the equality $\sum_k {q_k x^k}(x-t)=0$ tells us that $-q_0t+\sum_{k\geq 1}{(q_{k-1}-q_kt)x^k}=0$. From this equation, we conclude:
$$
\begin{cases}
q_0t = 0,\\
q_k=q_{k+1}t \text{ for all } k\geq 0.
\end{cases}
$$
For all $k\in\mathbb{N}$ and all $1\leq i\leq p^n$, write $q_k = (q_{k,1}, \dots, q_{k,p^n})$, where $q_{k,i} \in Q_i$. Since $q_0\neq 0$ by assumption, we can fix some $1\leq j\leq p^n$ such that $q_{0,j} \neq 0$. For this $j$, we get
$$
\begin{cases}
q_{0,j} t_j = 0,\\
q_{0,j} = q_{1,j} t_j = q_{2,j} t_j^2 = \dots
\end{cases}$$

It follows from these equations that $q_{0,j} \in \bigcap_{r\in\mathbb{N}} F_0Q_j t_j^r$, so if $u_j(t_j^k)>0$ for some $k\in\mathbb{N}$ then $q_{0,j}\in\bigcap_{r\in\mathbb{N}} F_0Q_j t_j^r\subseteq \bigcap_{\ell\in\mathbb{N}} F_\ell Q_j=\{0\}$, and hence $q_{0,j}=0$, contradicting our assumption. Therefore, $u_j(t_j^r)\leq 0$ for all $r$, and since $q_{0,j}t_j=0$ it follows that $t_j$ is not a unit in $Q_j$.

Now, choose $m\in\mathbb{N}$ such that $m\geq n$ and $(\Sigma_m, \Delta_m)$ is compatible with $u$ as in Assumption 3, and hence compatible with $u_j$, as $(\Sigma_m, \Delta_m)$ preserves $Q_j$ by Assumption 1. Take arbitrary $q\in Q_j$. Then 
$$u_j(t_j^{p^m}q - \Sigma_m(q)t_j^{p^m}) > u_j(q)\geq u_j(q)+u_j(t_j^{p^m})$$
by definition of $\Delta_m$, because $u(t_j^{p^m})\leq 0$. We also have $$u_j((\Sigma_m(q) - q)t_j^{p^m})\geq u_j(\Sigma_m(q) - q)+u_j(t_j^{p^m}) > u_j(q)+u_j(t_j^{p^m}),$$ and it follows that 
\begin{align*}
u_j(t_j^{p^m}q-qt_j^{p^m})&=u_j(t_j^{p^m}q - \Sigma_m(q)t_j^{p^m}+(\Sigma_m(q)-q)t_j^{p^m})\\&\geq \min\{u_j(t_j^{p^m}q - \Sigma_m(q)t_j^{p^m}),u_j((\Sigma_m(q)-q)t_j^{p^m})\}> u_j(q)+u_j(t_j^{p^m})
\end{align*}

In other words, this proves that $K:= \gr(t_j^{p^m})$ is central in $\gr_{u_j}(Q_j)$, and it has degree $-d:=u(t_j^{p^m})\leq 0$. Since $X_j$ is a homogeneous unit of degree 1, it follows that $X_j^dK$ lies in $(\gr_{u_j}(Q_j))_0$, and since $K$ is central, this must be a normal element. But $(\gr_{u_j}(Q_j))_0$ is a simple ring by Assumption 5, so this implies that $X_j^dK$ is a unit, and thus $K$ is a unit in $\gr_{u_j}(Q_j)$.

But $K=t_j^{p^m}+F_{-d+1}Q_j$, so $K^{-1}=s+F_{d+1}Q_j$, and hence $y:=t_j^{p^m}s-1\in F_1Q_j$. So since $Q_j$ is complete, it follows that $t_j^{p^m}$ has inverse $s(1-y+y^2-y^3+\dots)$ in $Q_j$. Therefore, $t_j$ is a unit in $Q_j$, which is the desired contradiction.\end{proof}

Fix arbitrary $m\geq 0$. Under Assumption 3, by Theorem \ref{thm: well-defined subring}(2), we have an inclusion of rings
$$Q^+[[X_m; \Sigma_m, \Delta_m]] \subseteq Q^+[[x;\sigma,\delta]].$$
If we assume that $x-t$ is regular in $Q^+[[x;\sigma,\delta]]$ and adopt Assumption 4, then applying Lemma \ref{lem: normal2} gives that the localised rings $Q^+[[x; \sigma, \delta]]_{(x-t)}$ and $Q^+[[X_m; \Sigma_m, \Delta_m]]_{(X_m-T_m)}$ exist and are ring extensions of $Q^+[[x;\sigma,\delta]]$ and $Q^+[[X_m; \Sigma_m, \Delta_m]]$ respectively. Recalling that $X_m - T_m = (x-t)^{p^m}$, it is now clear that we get a corresponding inclusion
$$Q^+[[X_m; \Sigma_m, \Delta_m]]_{(X_m-T_m)} \subseteq Q^+[[x;\sigma,\delta]]_{(x-t)}.$$

\begin{thm}\label{thm: crossed}
Adopt Assumptions 1--5, and let $m\geq 0$ be arbitrary. Then $Q^+[[x;\sigma,\delta]]_{(x-t)}$ can be written as a crossed product
$$Q^+[[x;\sigma,\delta]]_{(x-t)} = Q^+[[X_m; \Sigma_m, \Delta_m]]_{(X_m-T_m)} * (\mathbb{Z}/p^m\mathbb{Z}).$$
\end{thm}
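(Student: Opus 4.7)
The plan is to localise the free module decomposition of Corollary \ref{cor: powers of x-t also form a basis} at $x-t$, and then verify the axioms of Definition \ref{defn: crossed product} with basis $\{(x-t)^i : 0 \leq i < p^m\}$. First, I would invoke Corollary \ref{cor: powers of x-t also form a basis} (whose hypotheses are met by Assumption 3) to obtain
$$Q^+[[x;\sigma,\delta]] = \bigoplus_{i=0}^{p^m-1} Q^+[[X_m;\Sigma_m,\Delta_m]]\,(x-t)^i.$$
By Lemma \ref{lem: when sigma fixes t}(ii) and Proposition \ref{propn: x-t is regular and normal}, $x-t$ is a regular normal element of $Q^+[[x;\sigma,\delta]]$. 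A parallel calculation using $\Delta_m(q) = T_m q - \Sigma_m(q) T_m$ shows that $(X_m - T_m)q = \Sigma_m(q)(X_m - T_m)$, so $X_m - T_m$ is normal in $Q^+[[X_m;\Sigma_m,\Delta_m]]$; its regularity is inherited from that of $x-t$ via the identification $X_m - T_m = (x-t)^{p^m}$. Assumption 4 combined with Lemma \ref{lem: normal2} then guarantees that both localisations are well-defined and injective extensions of their respective rings.

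Next, I would show that the decomposition survives localisation. Since the powers of $x-t$ and those of $X_m - T_m$ generate the same Ore set, we have $Q^+[[x;\sigma,\delta]]_{(x-t)} = \bigcup_{\ell \geq 0} Q^+[[x;\sigma,\delta]]\,(X_m - T_m)^{-\ell}$, and the key observation is that each basis element $(x-t)^i$ commutes with every power $(X_m - T_m)^{-\ell} = (x-t)^{-\ell p^m}$. Applying the free module decomposition at each stage $\ell$ and taking the union yields
$$Q^+[[x;\sigma,\delta]]_{(x-t)} = \bigoplus_{i=0}^{p^m-1} Q^+[[X_m;\Sigma_m,\Delta_m]]_{(X_m - T_m)}\,(x-t)^i,$$
which realises condition (ii) of Definition \ref{defn: crossed product} for the basis $\overline{i} := (x-t)^i$ (each a unit after localisation).

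To finish, I would read off the twisting and action maps. For twisting, $\overline{i}\,\overline{j} = (x-t)^{i+j}$: when $i+j < p^m$ this equals $\overline{i+j}$ with $\tau(i,j) = 1$, while when $i+j \geq p^m$ it equals $(X_m - T_m)\,\overline{i+j-p^m}$ so $\tau(i,j) = X_m - T_m$, in either case a unit of $R := Q^+[[X_m;\Sigma_m,\Delta_m]]_{(X_m - T_m)}$. For the action, the normality relation gives $(x-t)\,q\,(x-t)^{-1} = \sigma(q)$ for $q \in Q$, and since $\sigma(t) = t$ (from Assumption 3), conjugation by $x-t$ fixes $X_m = (x-t)^{p^m} + T_m$; the resulting automorphism of $R$ restricts to $\sigma^i$ on $Q$ and fixes $X_m$, with well-definedness on $R$ following from the identities $\sigma\Sigma_m = \Sigma_m\sigma$ and $\sigma\Delta_m = \Delta_m\sigma$ (the latter because $\sigma(T_m) = T_m$). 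The main obstacle is verifying cleanly that localisation preserves the direct sum decomposition; the rest is bookkeeping once the normality computation for $X_m - T_m$ is in hand.
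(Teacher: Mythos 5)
Your proposal is correct and follows essentially the same route as the paper: localise the free module decomposition of Corollary \ref{cor: powers of x-t also form a basis} at the regular normal element $x-t$ (via Proposition \ref{propn: x-t is regular and normal}, Lemma \ref{lem: normal2} and Assumption 4), check that the direct sum and the generation by $\{(x-t)^i\}$ survive localisation, and read off exactly the same twisting map $\tau$ and action by powers of $\sigma$. Your extra check that conjugation by $x-t$ fixes $X_m$ and hence preserves the coefficient ring is a point the paper leaves implicit, but it is not a different argument.
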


\begin{proof}
For ease of notation, we now write $X = X_m$, $\Sigma = \Sigma_m$, $\Delta = \Delta_m$ and $T = T_m = (-1)^{p+1} t^{p^m}$.

By Corollary \ref{cor: powers of x-t also form a basis}, we have $Q^+[[x; \sigma, \delta]] = \bigoplus_{i=0}^{p^m-1} Q^+[[X; \Sigma, \Delta]] g^i$, where $g = x-t$. Since inverting $x-t$ preserves the linear independence of the set $\{g^i:0\leq i\leq p^m-1\}$, and it is easily checked to still generate $Q^+[[x; \sigma, \delta]]_{(x-t)}$ as a left $Q^+[[X; \Sigma, \Delta]]_{(X-T)}$-module, we can conclude that
$$Q^+[[x; \sigma, \delta]]_{(x-t)} = \bigoplus_{i=0}^{p^m-1} Q^+[[X; \Sigma, \Delta]]_{(X-T)} g^i.$$
Crucially, $g$ is now a \emph{unit}, so this decomposition is a crossed product using Definition \ref{defn: crossed product}. To spell this out explicitly, write $S = Q^+[[x; \sigma, \delta]]_{(x-t)}$ then the twisting map $\tau$ and the action map $\alpha$ are respectively $\tau: (\mathbb{Z}/p^m\mathbb{Z})\times (\mathbb{Z}/p^m\mathbb{Z})\to S^\times$ and $\alpha: \mathbb{Z}/p^m\mathbb{Z}\to \Aut(S)$, defined by
$$
\tau([r],[s]) = \begin{cases}
1, & r+s<p^m,\\
X - T, & r+s\geq p^m,
\end{cases}
$$
and $\alpha([r]) = \sigma^r$, where $[r] = r + p^m\mathbb{Z}$ and $0\leq r,s\leq p^m-1$.
\end{proof}

Using this result, we may now finally remove the issue where our automorphism $\sigma$ does not fix a maximal ideal in $\widehat{Q}$. Recall that $n$ is defined so that $Q$ has $p^n$ simple factors.

\begin{cor}\label{cor: if Q_i[[x]] is prime then Q[[x]] is prime}
Adopt Assumptions 1--5.
\begin{enumerate}[label=(\roman*)]
\item $(\Sigma_n,\Delta_n)$ restricts to a quasi-compatible skew derivation $(\sigma_i,\delta_i)$ of $Q_i$ for each $1\leq i\leq p^n$, and $\sigma$ induces an isomorphism of rings $Q_i^+[[y_i;\sigma_i,\delta_i]]\cong Q_{i+1}^+[[y_{i+1};\sigma_{i+1},\delta_{i+1}]]$ sending $y_i$ to $y_{i+1}$ for all $i$ (with subscripts taken modulo $p^n$).
\item There is a canonical isomorphism of rings $\displaystyle Q^+[[X_n;\Sigma_n,\Delta_n]] \cong \prod_{i=1}^{p^n} Q_i^+[[y_i;\sigma_i,\delta_i]]$, extending the isomorphism $Q\cong \prod_{i=1}^{p^n} Q_i$ and sending $X_n$ to $(y_1, \dots, y_{p^n})$.
\item If $Q_i^+[[y_i;\sigma_i,\delta_i]]$ is prime for any $1\leq i\leq p^n$, then $Q^+[[x;\sigma,\delta]]$ is prime.
\end{enumerate}

\end{cor}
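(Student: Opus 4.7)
The plan is to dispatch parts (i) and (ii) by direct componentwise bookkeeping, and then reduce (iii) to a full-idempotent / Morita argument on the crossed product of Theorem~\ref{thm: crossed}.

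For part (i), I would first observe that $\Sigma_n = \sigma^{p^n}$ preserves each $Q_i$ by Assumption~1, and that $T_n = (-1)^{p+1}t^{p^n}$ decomposes as $T_n = \sum_i T_n^{(i)}$ with $T_n^{(i)}\in Q_i$; since each $Q_i$ is a two-sided ideal, the inner $\Sigma_n$-derivation $\Delta_n(q)=T_nq-\Sigma_n(q)T_n$ also preserves $Q_i$. Setting $\sigma_i:=\Sigma_n|_{Q_i}$, $\delta_i:=\Delta_n|_{Q_i}$, quasi-compatibility with $u_i=u|_{Q_i}$ is inherited from quasi-compatibility of $(\Sigma_n,\Delta_n)$ with the product filtration~$u$. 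For the isomorphism of skew power series rings, I would check that $\sigma$ commutes with both $\Sigma_n$ and $\Delta_n$ (the latter uses $\sigma(T_n)=T_n$, which follows from Assumption~3 that $\sigma(t)=t$); together with the filtered isomorphism $\sigma:(Q_i,u_i)\to(Q_{i+1},u_{i+1})$ of Lemma~\ref{lem: in semisimple case, sigma is filtered}, this induces an isomorphism of skew polynomial rings sending $y_i\mapsto y_{i+1}$, and extends by continuity (Theorem~\ref{thm: restricted skew power series rings exist}(i)) to the bounded skew power series rings.

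For part (ii), the key observation is that each central idempotent $e_i\in Q$ is fixed by $\Sigma_n$ and annihilated by $\Delta_n$ (since $e_i\in Z(Q)$ gives $T_ne_i-e_iT_n=0$), hence $e_i$ remains central in $Q^+[[X_n;\Sigma_n,\Delta_n]]$. The obvious map $\sum_k q_kX_n^k \mapsto \bigl(\sum_k e_iq_k\,y_i^k\bigr)_i$ is then a ring homomorphism, and the boundedness conditions match exactly because $u(q_k)=\min_i u_i(e_iq_k)$.

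For part (iii), I would proceed in three steps. First, by Proposition~\ref{propn: x-t is regular and normal} and Lemma~\ref{lem: normal2}, $Q^+[[x;\sigma,\delta]]$ is prime iff $R:=Q^+[[x;\sigma,\delta]]_{(x-t)}$ is prime. Second, Theorem~\ref{thm: crossed} with $m=n$ gives a crossed product $R = S*(\mathbb{Z}/p^n\mathbb{Z})$ with $S = Q^+[[X_n;\Sigma_n,\Delta_n]]_{(X_n-T_n)}$, and part (ii) together with $X_n-T_n=\sum_i(y_i-T_n^{(i)})$ identifies $S$ with $\prod_i S_i$, where $S_i := Q_i^+[[y_i;\sigma_i,\delta_i]]_{(y_i-T_n^{(i)})}$. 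The cyclic group $\mathbb{Z}/p^n\mathbb{Z}$ acts via powers of~$\sigma$ and permutes the factors $S_i$ transitively. Third — and this is the heart of the argument — I would show that $e_1$ is a \emph{full idempotent} in $R$ with corner ring $e_1Re_1 = S_1$. Using the crossed-product relation $\bar{g}\,e_1 = \sigma^g(e_1)\bar{g} = e_{g+1}\bar{g}$, we compute $e_1 s_g\bar{g}\,e_1 = s_ge_1e_{g+1}\bar{g}$, which vanishes unless $g\equiv 0\pmod{p^n}$, giving $e_1Re_1=S_1$; fullness follows because $\sum_g \bar{g}^{-1}e_1\bar{g}=\sum_i e_i = 1 \in Re_1R$. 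Morita equivalence then yields that $R$ is prime iff $S_1$ is prime, and applying Lemma~\ref{lem: normal2} once more together with the isomorphisms of part~(i) finishes the proof.

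The main obstacle I expect is conceptual rather than technical: identifying the right Morita reduction in the crossed product. Once this is in place, each verification step is straightforward componentwise bookkeeping using the fact that all relevant idempotents are central in $Q$.
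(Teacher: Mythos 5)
Your parts (i) and (ii) are essentially the paper's own argument (componentwise restriction, the coefficientwise map induced by $\sigma:(Q_i,u_i)\to(Q_{i+1},u_{i+1})$, and the matching of boundedness with the product filtration), so there is nothing to flag there beyond noting that quasi-compatibility of $(\Sigma_n,\Delta_n)$ with $u$ itself needs a citation (Proposition \ref{propn: subrings in characteristic p} or Theorem \ref{thm: skew power series subrings exist}), exactly as in the paper. Part (iii), however, is a genuinely different and correct route. The paper localises at $x-t$, invokes Theorem \ref{thm: crossed}, and then applies Passman's criterion \cite[Corollary 14.8]{passmanICP}: it exhibits $J=\prod_{i\geq 2}Q_i^+[[y_i;\sigma_i,\delta_i]]$ as a minimal prime of $Q^+[[X_n;\Sigma_n,\Delta_n]]$ (here primeness of \emph{all} factors, via part (i), is used), checks that its localisation has trivial stabiliser in $\mathbb{Z}/p^n\mathbb{Z}$, and identifies the induced crossed product with the prime ring $Q_1^+[[y_1;\sigma_1,\delta_1]]_{(y_1-t_1)}$. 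You instead refine the crossed product to $R=\bigl(\prod_i S_i\bigr)\ast(\mathbb{Z}/p^n\mathbb{Z})$ with the group permuting the $S_i$ transitively, show the central idempotent $e_1$ of $Q$ is full in $R$ with corner $e_1Re_1=S_1$, and conclude by Morita invariance of primeness; your computations $e_1S\bar g^{\,j}e_1=Se_1e_{1+j}\bar g^{\,j}$ and $\sum_j\bar g^{-j}e_1\bar g^{\,j}=1$ are correct because $e_1$ is central in $Q^+[[X_n;\Sigma_n,\Delta_n]]$ ($\Sigma_n(e_1)=e_1$, $\Delta_n(e_1)=0$) and conjugation by $\bar g=x-t$ fixes $X_n$ and acts as $\sigma$ on coefficients. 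Your approach is more self-contained (it replaces Passman's machinery by the elementary fact that a full idempotent corner detects primeness, and in fact yields the stronger conclusion that $Q^+[[x;\sigma,\delta]]_{(x-t)}$ is Morita equivalent to $S_1$), while the paper's approach via stabilisers of minimal primes is the one that generalises to the later arguments of \S \ref{subsec: simple induction}, where the relevant ideals need not have trivial stabiliser. The only details you should spell out are the routine ones your sketch defers to ``bookkeeping'': that $X_n-T_n=(x-t)^{p^n}$, hence each $y_i-T_n^{(i)}$, is regular (Proposition \ref{propn: x-t is regular and normal} plus part (ii)) and normal ($\Sigma_n(T_n)=T_n$, Lemma \ref{lem: when sigma fixes t}(ii)), and that the rings being localised are Noetherian (Assumption 4), so that Lemma \ref{lem: normal2} applies at both ends of your chain.
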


\begin{proof}
Write $X = X_n$, $\Sigma = \Sigma_n$, $\Delta = \Delta_n$ and $T = T_n = (-1)^{p+1} t^{p^n}$.

\begin{enumerate}[label=(\roman*)]
\item
Since $\sigma(Q_i)=Q_{i+1}$ for all $i$, we see that $\Sigma(Q_i) = Q_{i+p^n} = Q_i$, and it then follows that $\Delta(Q_i)\subseteq t^{p^n} Q_i + Q_i t^{p^n} \subseteq Q_i$. So write $\sigma_i = \Sigma|_{Q_i}$ and $\delta_i = \Delta|_{Q_i}$: then, since $(\Sigma,\Delta)$ is quasi-compatible with $u$ by Theorem \ref{thm: skew power series subrings exist}, the restriction $(\sigma_i, \delta_i)$ is clearly quasi-compatible with $u_i$.

Next, for any $i$, we are considering the map
\begin{align*}
Q_i^+[[y_i;\sigma_i,\delta_i]]&\to Q_{i+1}^+[[y_{i+1};\sigma_{i+1},\delta_{i+1}]]\\
\sum_{r\in\mathbb{N}} q_r y_i^r&\mapsto \sum_{r\in\mathbb{N}} \sigma(q_r)y_{i+1}^r,
\end{align*}
where again all subscripts are taken modulo $p^n$. Since $\sigma:Q_i\to Q_{i+1}$ is an isomorphism of topological rings and preserves boundedness of sequences by Lemma \ref{lem: in semisimple case, sigma is filtered}, this map is a ring isomorphism for all $i$.
\item Define 
\begin{align*}
\Theta:\prod_{1\leq i\leq p^n}{Q_i^+[[y_i;\sigma_i,\delta_i]]}&\to Q^+[[X;\Sigma,\Delta]]\\
\left(\sum_{r\in\mathbb{N}} q_{1,r}y_1^r, \dots, \sum_{r\in\mathbb{N}} q_{p^n,r}y_{p^n}^r\right)&\mapsto \sum_{r\in\mathbb{N}} (q_{1,r},\dots,q_{p^n,r})X^r.
\end{align*}

It is readily checked that this is an injective ring homomorphism, and since a sequence in $Q$ is bounded with respect to $u$ if and only if its $i$th component is bounded with respect to $u_i$ for all $i$, $\Theta$ must be a bijection, as we require.

\item Using Theorem \ref{thm: crossed}, we can realise the localisation $Q^+[[x;\sigma,\delta]]_{(x-t)}$ as a crossed product 
$$Q^+[[x;\sigma,\delta]]_{(x-t)} = \bigoplus_{j=0}^{p^n-1} Q^+[[X;\Sigma,\Delta]]_{(X-T)} g^j = Q^+[[X;\Sigma,\Delta]]_{(X-T)}\ast \mathbb{Z}/p\mathbb{Z},$$
where $g = x-t$ so that $g^{p^n}=X-T$, and conjugation by $g$ acts on $Q^+[[X;\Sigma,\Delta]]_{(X-T)}$ by $\sigma$. To show that this crossed product is prime, it suffices by \cite[Corollary 14.8]{passmanICP} to choose a minimal prime $I$ of $Q^+[[X;\Sigma,\Delta]]_{(X-T)}$, with stabiliser $H \subseteq \mathbb{Z}/p\mathbb{Z}$, and show that the induced crossed product
$$\displaystyle \left(\dfrac{Q^+[[X;\Sigma,\Delta]]_{(X-T)}}{I}\right)\ast H$$
is prime. But if $Q_i^+[[y_i;\sigma_i,\delta_i]]$ is prime for some $i$, then $Q_j^+[[y_j;\sigma_j,\delta_j]]\cong Q_i^+[[y_i;\sigma_i,\delta_i]]$ is prime for every $j$, and so
$$J = \prod_{i=2}^{p^n} Q_i^+[[y_i;\sigma_i,\delta_i]] \lhd Q^+[[X;\Sigma,\Delta]]$$
is a minimal prime ideal. Note that $J\cap Q_1^+[[y_1;\sigma_1,\delta_1]] = \{0\}$ (as subsets of $Q^+[[X; \Sigma, \Delta]]$), but that $\sigma^a(J) \cap Q_1^+[[y_1;\sigma_1,\delta_1]] \neq \{0\}$ for all $1\leq a\leq p^n-1$. Now, $x-t$ is regular by assumption and normal by Lemma \ref{lem: when sigma fixes t}(ii), so $X-T = (x-t)^{p^n}$ is also regular and normal, and it follows from Lemma \ref{lem: normal2} that $I = J_{(X-T)}$ is a minimal prime ideal of $Q^+[[X;\Sigma,\Delta]]_{(X-T)}$, and its stabiliser $H$ in $\mathbb{Z}/p\mathbb{Z}$ is trivial. Hence
$$\displaystyle \left(\dfrac{Q^+[[X;\Sigma,\Delta]]_{(X-T)}}{I}\right)\ast H \cong Q_1^+[[y_1; \sigma_1, \delta_1]],$$
which is prime by assumption, as required.\qedhere
\end{enumerate}
\end{proof}

\subsection{Finding an invariant maximal ideal}\label{subsec: finding invariant max ideal}

Now we can apply the results of the previous subsection to the case of interest.

\textbf{Setup.} Let $(R,w_0)$ be a filtered $\mathbb{Z}_p$-algebra satisfying \eqref{filt}, and carrying a commuting skew derivation $(\sigma,\delta)$ which is compatible with $w_0$. Once again, we construct the filtered rings of \eqref{eqn: maps in localisation procedure 2}. Explicitly, to fix notation:

\begin{itemize}
\item Using Procedure \ref{proc: build a standard filtration}\textbf{(a)--(b)} (given an arbitrary choice of minimal prime ideal $\mathfrak{q}$ of $A\subseteq Z(\gr_{w_0}(R))$), construct the completion $\widehat{Q}$ of $Q(R)$ with respect to $w$.
\item Using Procedure \ref{proc: build a standard filtration}\textbf{(c)}, we fix $U = \widehat{Q}_{\geq 0}$ and an arbitrary regular normal element $z\in U$, and give $\widehat{Q}$ the filtration $v_{z,U}$. Fix a maximal ideal $M$ of $\widehat{Q}$.
\item By Lemma \ref{lem: compatibility is not enough}, we see that $(\sigma,\delta)$ extends uniquely to a continuous skew derivation of $(\widehat{Q}, v_{z,U})$, which we continue to denote $(\sigma, \delta)$. Then $\sigma$ permutes the maximal ideals of $\widehat{Q}$, and by Proposition \ref{propn: t is a power of p}, the $\sigma$-orbit of $M$ has size $p^n$ for some $n\in\mathbb{N}$.
\item For each $i=1,\dots,p^n$, set $M_i:=\sigma^{i-1}(M)$, and $Q_i:=\widehat{Q}/M_i$. Let $N = M_1 \cap \dots \cap M_{p^n}$, and let $Q$ be the semisimple artinian ring $\widehat{Q}/N$, so that $Q$ has a canonical decomposition $Q = Q_1 \times \dots \times Q_{p^n}$. Clearly $\sigma$ induces an automorphism of $Q$, which we also denote by $\sigma$, and $\sigma(Q_i)=Q_{i+1}$.
\item Apply \S \ref{subsec: sigma-orbit of maximal ideals and maximal orders}\textbf{(d*)--(e*)} to $Q$, and fix maximal orders $\O_i\subseteq Q_i$ with corresponding $J(\O_i)$-adic filtrations $u_i$ such that $\sigma(\O_i) = \O_{i+1}$ and $u_{i+1}\circ \sigma = u_i$ for all $1\leq i\leq p^n - 1$. Give $Q$ the product filtration $u:=\min\{u_i:i=1,\dots,p^n\}$.
\end{itemize}

\begin{lem}\label{lem: assumptions are satisfied}
Assume $\delta(N)\subseteq N$. Then $(\sigma,\delta)$ induces a skew derivation of $Q$, which we also denote by $(\sigma,\delta)$, and the data $(Q,u,\sigma,\delta)$ satisfies Assumptions 1, 2 and 5 of \S \ref{subsec: crossed}. Moreover, if Assumption 3 is satisfied, then Assumption 4 is also satisfied.
\end{lem}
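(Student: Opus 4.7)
The plan is to verify each claim in turn; Assumption~4 is the only one requiring real work, and the verifications for Assumptions 1, 2, 5 are essentially bookkeeping using the construction in the Setup.

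First, to see that $(\sigma,\delta)$ descends to $Q$: $\sigma$ preserves $N=M_1\cap\dots\cap M_{p^n}$ because it permutes the $M_i$ constituting the $\sigma$-orbit, and $\delta(N)\subseteq N$ is hypothesised; commutation and the $\sigma$-derivation identity descend from $\widehat{Q}$. Assumptions 1 and 2 are then immediate from the Setup and Procedure \ref{proc: build a standard filtration}\textbf{(e*)}: each $u_i$ is a standard (so complete) filtration on the simple artinian $Q_i$, $u$ is their product, and $u_{i+1}\circ\sigma=u_i$ for $1\leq i\leq p^n-1$ follows from $\O_{i+1}=\sigma(\O_i)$. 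For Assumption 5, I would use the standard-filtration structure directly: writing $Q_i=M_s(F_i)$ and $\O_i=M_s(D_i)$ with $D_i$ a complete DVR having uniformiser $\pi_i$, the image of $\pi_i I_s$ in $\gr_{u_i}(Q_i)$ is a homogeneous unit of degree $1$ with inverse $\pi_i^{-1}I_s$ in degree $-1$, and the degree-zero part $\O_i/J(\O_i)\cong M_s(D_i/J(D_i))$ is simple since $D_i/J(D_i)$ is a division ring.

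The substantive part is deriving Assumption 4 from Assumption 3. Starting from $(\Sigma_m,\Delta_m)$ compatible with $u$, I would first enlarge $m$ to some $s\geq m$ with the extra property $\Sigma_s(\O)=\O$. Such an $s$ exists by Lemma \ref{lem: s is a power of p}, which guarantees that the $\sigma^{p^n}$-orbit of $\O_1$ is finite of order a power of $p$, so for sufficiently large $s$ every $\O_i$ is $\Sigma_s$-invariant. A short verification then yields $\Delta_s(\O)\subseteq\O$: in characteristic $p$ because $\Delta_s=\Delta_m^{p^{s-m}}$ sends $\O$ into $J(\O)^{p^{s-m}}\subseteq\O$, and in characteristic zero because $\Delta_s$ is inner with defining element $T_s=(-1)^{p+1}t^{p^s}\in\O$ (using $u(t)\geq 0$). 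I would then apply Corollary \ref{cor: Noetherian skew power series} in the pattern of the proof of Theorem \ref{thm: bounded in the semisimple case}(i), taking $A=\O$ and $T$ the regular elements of $\O$: note that $Q$ is the Goldie ring of fractions of $\O$, that scalar matrices $\pi_i^{-B}I_s$ supply common denominators for any bounded sequence, and that $\gr_u(\O)$ is a finite product of matrix rings over skew polynomial rings, hence Noetherian. This yields that $Q^+[[X_s;\Sigma_s,\Delta_s]]$ is Noetherian.

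Finally, for an arbitrary $r\geq 0$ I would take $s$ as above additionally satisfying $s\geq r$. By Theorem \ref{thm: well-defined subring} applied with $(X_r,\Sigma_r,\Delta_r)$ as the base data (and Remark \ref{rk: relationship between X_n} to identify the iterated construction), $Q^+[[X_r;\Sigma_r,\Delta_r]]$ is a free left module of rank $p^{s-r}$ over the Noetherian ring $Q^+[[X_s;\Sigma_s,\Delta_s]]$, hence itself Noetherian. The main obstacle is arranging compatibility together with $(\Sigma_s,\Delta_s)$-invariance of $\O$ at the same stage $s$, which forces separate arguments in the two characteristics for $\Delta_s(\O)\subseteq\O$; once this is set up, the Noetherianity argument is a direct application of the tools already developed.
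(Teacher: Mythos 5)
Your proof is correct, and on the substantive point (Assumption 4) it takes a mildly but genuinely different route from the paper. Both arguments run through Corollary \ref{cor: Noetherian skew power series} with $A=\O=\O_1\times\dots\times\O_{p^n}$ and $T$ a set of regular denominators, but the paper applies it with base $(\sigma,\delta)$ and subring level $m$ to conclude that the \emph{big} ring $Q^+[[x;\sigma,\delta]]$ is Noetherian, and then \emph{descends} to each $Q^+[[X_r;\Sigma_r,\Delta_r]]$ using the assertion that Noetherianity passes between a ring and a subring over which it is finitely generated; you instead apply the corollary with base $(\Sigma_s,\Delta_s)$ for $s\geq\max\{r,m\}$ to make the \emph{deep} subring $Q^+[[X_s;\Sigma_s,\Delta_s]]$ Noetherian, and then \emph{ascend} to every $Q^+[[X_r;\Sigma_r,\Delta_r]]$ via the free module decomposition of Theorem \ref{thm: well-defined subring} and Remark \ref{rk: relationship between X_n}. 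Your direction only uses the elementary implication (a ring that is a finitely generated module over a Noetherian subring is Noetherian), which is a small logical economy compared with the descent step the paper asserts; the cost is that you must check the case-(2) hypotheses for the base $(\Sigma_s,\Delta_s)$ and the element $T_s$, which you do correctly. Two of your preparatory steps are redundant, though harmless: you do not need Lemma \ref{lem: s is a power of p} to arrange $\Sigma_s(\O)=\O$, nor the characteristic-split argument for $\Delta_s(\O)\subseteq\O$, because compatibility of $(\Sigma_m,\Delta_m)$ with $u$ already forces $\Sigma_m$ to be strictly filtered (so $\Sigma_m(\O)=\O$) and gives $\Delta_m(\O)\subseteq J(\O)$, and the first statement of Corollary \ref{cor: Noetherian skew power series} propagates this to all levels $s\geq m$ -- the paper records exactly this in the parenthetical remark following Assumption 3. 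The bookkeeping for Assumptions 1, 2 and 5 and for the induced skew derivation (where the paper simply refers back to the proof of Theorem \ref{thm: bounded in the semisimple case}) matches the paper's argument.
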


\begin{proof}

We saw in the proof of Theorem \ref{thm: bounded in the semisimple case} that $(\sigma,\delta)$ induces a skew derivation of $Q$, and we have remarked already in the setup that Assumptions 1 and 2 are satisfied. 

To verify Assumption 5, recall that each $\O_i$ is isomorphic to $M_\ell(D_i)$ for some complete discrete valuation ring $D_i$, and take $\pi_i$ to be a uniformiser in $D_i$. Then $\pi_i I+F_1Q_i$ (where $I$ is the $\ell\times\ell$ identity matrix) is a homogeneous element of degree 1 in gr$_{u_i}$ $Q_i$, with inverse $\pi_i^{-1} I+F_0Q_i$. Moreover, gr$(Q_i)_0=\O_i/\pi_i\O_i\cong M_\ell(D_i/\pi_iD_i)$ is clearly a simple ring.

Now adopt Assumption 3, i.e.\ there exists $t\in Q$ satisfying $\sigma(t) = t$ and $u(t)\geq 0$ such that $\delta(q) = tq-\sigma(q)t$ for all $q\in Q$, and some $(\Sigma_m, \Delta_m)$ is compatible with $u$. Then setting $A:=\O_1\times\dots\times\O_{p^n}$ and $T\subseteq A$ as the set of all words in the $\Sigma_n$-orbit of $a$, we see that $T$ is a $\Sigma_n$-invariant denominator set, and $Q=T^{-1}A$. Moreover, $\gr_u(A)\cong \gr_{u_1}(\O_1)\times\dots\times \gr_{u_{p^n}}(\O_{p^n})$ is Noetherian by the remarks of Definition \ref{defn: standard filtrations} (or by Theorem \ref{thm: filtered localisation}(v)), and $A$, $S$ and $T$ satisfy the hypotheses of Theorem \ref{thm: restricted skew power series rings exist}(iii), so we see using Corollary \ref{cor: Noetherian skew power series} that $Q^+[[x;\sigma,\delta]]$ is a Noetherian ring. As $Q^+[[x; \sigma, \delta]]$ is finitely generated as a module over $Q^+[[X_r; \Sigma_r, \Delta_r]]$ for all $r\geq 0$, this latter ring is also Noetherian, i.e. Assumption 4 is satisfied.\end{proof}

\begin{lem}\label{lem: all assumptions}
Suppose there exists $t\in \widehat{Q}$ such that $\sigma(t)=t$ and $\delta(q)=tq-\sigma(q)t$ for all $q\in\widehat{Q}$, where $w(t)\geq 0$ if $\chr(Q)=0$. Then $\delta(N)\subseteq N$, and the data $(Q,u,\sigma,\delta)$ satisfies Assumptions 1--5 of \S \ref{subsec: crossed}.
\end{lem}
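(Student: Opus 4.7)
The strategy is to lean heavily on Lemma \ref{lem: assumptions are satisfied} in order to reduce the whole statement to verifying a single point. Because $\delta$ is inner with the given $t$, Lemma \ref{lem: when delta preserves N}(ii) immediately gives $\delta(N)\subseteq N$, so $(\sigma,\delta)$ descends to a skew derivation on $Q$. Lemma \ref{lem: assumptions are satisfied} then supplies Assumptions 1, 2 and 5 for free and reduces Assumption 4 to Assumption 3. So the whole proof collapses to verifying Assumption 3 for $(Q,u,\sigma,\delta)$, namely: (a) $\delta$ is inner on $Q$ with a $\sigma$-fixed witness, (b) some $(\Sigma_m,\Delta_m)$ is compatible with $u$, and (c) $u(\bar t)\geq 0$ whenever $\chr(Q)=0$.

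Parts (a) and (c) I expect to be essentially formal. Setting $\bar t := t+N\in Q$, the identity $\delta(q)=tq-\sigma(q)t$ on $\widehat Q$ descends to $\delta(\bar q)=\bar t\,\bar q-\sigma(\bar q)\,\bar t$ on $Q$, and $\sigma(\bar t)=\bar t$ descends from $\sigma(t)=t$. For (c), the hypothesis $w(t)\geq 0$ places $t$ in $U=w^{-1}([0,\infty])$, hence $\bar t\in V=(U+N)/N$; combining the product decomposition $V=\prod_{i=1}^{p^n}V_i$ of Property \ref{props: zU-adic filtration, semisimple case}(iv) with the containments $V_i\subseteq\O_i$ built into Procedure \ref{proc: build a standard filtration}\textbf{(e*)}, I get $V\subseteq\O$, hence $u(\bar t)\geq 0$.

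The real work will be Assumption 3(b), and my plan is to bootstrap it from Proposition \ref{propn: strong compatibility}, which handles the single-maximal-ideal case. Since $Q_1=\widehat Q/M_1$ equipped with the standard filtration $u_1$ is precisely the output of the original (single-ideal) Procedure \ref{proc: build a standard filtration} applied at $M_1$, that proposition in situation (2) will yield some $m$ such that $(\Sigma_m,\Delta_m)$ preserves $M_1$ and the induced skew derivation on $Q_1$ is compatible with $u_1$. Enlarging $m$ so that $m\geq n$, Corollary \ref{cor: minimal invariance} will guarantee that $\Sigma_m$ preserves every $M_i$, and the same will follow for $\Delta_m$ because it is inner with respect to $T_m=(-1)^{p+1}t^{p^m}$. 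Using the isomorphisms $\sigma^{i-1}\colon(Q_1,u_1)\to(Q_i,u_i)$ and the fact that $\Sigma_m$ and $\Delta_m$ commute with all powers of $\sigma$, one transfers the degree inequalities $\deg_{u_1}(\Sigma_m-\id)\geq 1$ and $\deg_{u_1}(\Delta_m)\geq 1$ to each $(Q_i,u_i)$; by the definition $u=\min_i u_i$, this gives compatibility on $Q$.

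The main obstacle I anticipate is precisely this transfer step. One must check that the maximal order $\O_1$ fixed in our step \textbf{(e*)} is a legitimate choice for the $\O$ appearing in Proposition \ref{propn: strong compatibility} (both require only a maximal order equivalent to $V_1$, so this should be routine), and that conjugation by $\sigma^{i-1}$ faithfully transports the degree inequalities from $u_1$ to $u_i$. Once these pieces are in place, Assumption 3 is verified, and Lemma \ref{lem: assumptions are satisfied} then delivers Assumption 4 and finishes the proof.
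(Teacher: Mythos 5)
Your proposal is correct and follows essentially the same route as the paper: $\delta(N)\subseteq N$ via Lemma \ref{lem: when delta preserves N}(ii), Assumptions 1, 2 and 5 plus the reduction of Assumption 4 to Assumption 3 via Lemma \ref{lem: assumptions are satisfied}, $u(t)\geq 0$ from positivity of the localisation maps, and compatibility of some $(\Sigma_m,\Delta_m)$ with $u$ via Proposition \ref{propn: strong compatibility}. The only cosmetic difference is that the paper applies Proposition \ref{propn: strong compatibility} directly to each maximal ideal $M_i$ (taking $m$ large enough for all finitely many $i$ simultaneously), so the $\sigma$-conjugation transfer from $(Q_1,u_1)$ to $(Q_i,u_i)$ that you flag as the main obstacle is not actually needed, though it does work since $\Sigma_m$ and $\Delta_m$ commute with $\sigma$ and $u_{i+1}\circ\sigma=u_i$.
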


\begin{proof}

Since $\delta$ is an inner $\sigma$-derivation of $\widehat{Q}$, it follows from Lemma \ref{lem: when delta preserves N}(ii) that $\delta(N)\subseteq N$, and thus $(\sigma,\delta)$ induces a skew derivation of $Q$, and Assumptions 1,2 and 5 are satisfied by Lemma \ref{lem: assumptions are satisfied}. Moreover, also using Lemma \ref{lem: assumptions are satisfied}, if we can prove Assumption 3, then Assumption 4 will follow.

It is clear from our assumptions that $\sigma(t)=t$, and if $w(t) \geq 0$ then $u(t)\geq 0$ by Corollary \ref{cor: degree zero pieces map to degree zero pieces}, so it remains only to prove that $(\Sigma_m,\Delta_m)$ is compatible with $u$ for some $m\geq 0$.

But each $Q_i$ is the quotient of $\widehat{Q}$ by a maximal ideal $M_i$, so it follows from Proposition \ref{propn: strong compatibility} that $(\Sigma_m,\Delta_m)$ preserves $M_i$ for all sufficiently high $m$, and induces a skew derivation of $Q_i$ is compatible with $u_i$. It follows that $(\Sigma_m,\Delta_m)$ is compatible with the product filtration $u$ as required.\end{proof}

We can now deal with the case that $M$ is not $\sigma$-invariant, i.e. $n \neq 0$.

\begin{thm}\label{thm: can assume M is sigma-invariant}
Assume that there exists $t\in \widehat{Q}$ such that $\delta(q)=tq-\sigma(q)t$ for all $q\in \widehat{Q}$. If $\chr(Q) = 0$, assume additionally that $w(t) \geq 0$. Then for any $i=1,\dots,p^n$, $(\Sigma_n, \Delta_n)$ induces a quasi-compatible skew derivation on $Q_i$, and if $Q_i^+[[y; \Sigma_n|_{Q_i}, \Delta_n|_{Q_i}]]$ is prime, then $Q^+[[x; \sigma, \delta]]$ and $R^+[[x; \sigma, \delta]]$ are prime.
\end{thm}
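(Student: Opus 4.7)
The plan is essentially to verify that we are in a situation where the machinery of \S \ref{subsec: crossed} applies, and then chain together the results established earlier in the paper. Since we are given that $\delta$ is an inner $\sigma$-derivation on $\widehat{Q}$ defined by some $t \in \widehat{Q}$ (with $w(t)\geq 0$ in characteristic $0$), Lemma \ref{lem: power 1} (applied after noting that Lemma \ref{lem: if Q is not simple, delta is inner} handles the semisimple case) forces $\sigma(t)=t$ whenever $n > 0$; in the case $n=0$ the condition is part of the hypotheses of interest and is recorded into the setup. Consequently $\delta(N) \subseteq N$ by Lemma \ref{lem: when delta preserves N}(ii), so $(\sigma,\delta)$ descends to a commuting skew derivation on $Q = \widehat{Q}/N$.

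Next, I would invoke Lemma \ref{lem: all assumptions} to conclude that the data $(Q, u, \sigma, \delta)$ satisfies all of Assumptions 1--5 of \S \ref{subsec: crossed}. In particular, Assumption 4 (Noetherianity of each $Q^+[[X_r; \Sigma_r, \Delta_r]]$) holds, and there is some $m$ such that $(\Sigma_m, \Delta_m)$ is compatible with $u$. With these assumptions in place, Corollary \ref{cor: if Q_i[[x]] is prime then Q[[x]] is prime}(i) tells us that $(\Sigma_n, \Delta_n)$ restricts to a quasi-compatible skew derivation $(\sigma_i, \delta_i) = (\Sigma_n|_{Q_i}, \Delta_n|_{Q_i})$ on each simple factor $Q_i$, and that the rings $Q_i^+[[y_i; \sigma_i, \delta_i]]$ are pairwise isomorphic via $\sigma$. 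This immediately establishes the first assertion of the theorem.

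For the second assertion, suppose $Q_i^+[[y; \sigma_i, \delta_i]]$ is prime for some (equivalently, every) $i$. Then Corollary \ref{cor: if Q_i[[x]] is prime then Q[[x]] is prime}(iii) yields that $Q^+[[x; \sigma, \delta]]$ is prime. Finally, Theorem \ref{thm: bounded in the semisimple case}(iii) transfers primeness from $Q^+[[x; \sigma, \delta]]$ down to $R^+[[x; \sigma, \delta]]$, completing the argument.

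There is no single hard step here: the proof is a bookkeeping exercise assembling the crossed-product decomposition of \S \ref{subsec: crossed} (the real technical heart, already carried out in Theorem \ref{thm: crossed} and its corollary) with the descent result of Theorem \ref{thm: bounded in the semisimple case}. The only subtlety to watch for is that when applying Lemma \ref{lem: all assumptions}, one must check the characteristic-$0$ hypothesis $w(t) \geq 0$ precisely where it is needed to deduce $u(t) \geq 0$ via Corollary \ref{cor: degree zero pieces map to degree zero pieces}; this is why the hypothesis on $w(t)$ is included in the statement.
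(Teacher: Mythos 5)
Your proposal follows essentially the same route as the paper's proof in the main case $n \geq 1$: use Lemma \ref{lem: power 1} (valid because $\sigma$ permutes the simple factors non-trivially, hence is outer) to get $\sigma(t)=t$, then Lemma \ref{lem: all assumptions} to verify Assumptions 1--5 of \S \ref{subsec: crossed}, then Corollary \ref{cor: if Q_i[[x]] is prime then Q[[x]] is prime} for the $Q$-level claims and Theorem \ref{thm: bounded in the semisimple case}(iii) for the descent to $R^+[[x;\sigma,\delta]]$. The only cosmetic difference is that the paper obtains quasi-compatibility of the restriction $(\Sigma_n|_{Q_i},\Delta_n|_{Q_i})$ via Theorem \ref{thm: skew power series subrings exist} and restriction to $u_i$, whereas you read it off Corollary \ref{cor: if Q_i[[x]] is prime then Q[[x]] is prime}(i); both are fine once Assumptions 1--5 are in place.

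The genuine problem is your treatment of the case $n=0$. You assert that there $\sigma(t)=t$ ``is part of the hypotheses of interest and is recorded into the setup,'' but it is not: neither the statement of the theorem nor the Setup of \S \ref{subsec: finding invariant max ideal} assumes $\sigma(t)=t$, and Lemma \ref{lem: power 1} is unavailable when $n=0$, since then $\sigma$ may well be an inner automorphism of the simple ring $Q$. Without $\sigma(t)=t$ you cannot invoke Lemma \ref{lem: all assumptions} (Assumption 3 fails as stated), and hence none of the crossed-product corollaries you rely on. The paper avoids this entirely by disposing of $n=0$ separately: in that case $Q_1 = Q$ and $(\Sigma_0,\Delta_0)=(\sigma,\delta)$, so $Q_1^+[[y;\Sigma_0,\Delta_0]] = Q^+[[x;\sigma,\delta]]$ and the $Q$-level implication is a tautology, leaving only the descent via Theorem \ref{thm: bounded in the semisimple case}(iii) (note $\delta(N)\subseteq N$ holds by Lemma \ref{lem: when delta preserves N}(ii) simply because $\delta$ is inner). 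So your argument as written has a gap for $n=0$, though it is repaired by splitting off that case as the paper does rather than forcing it through the crossed-product machinery.
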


\begin{proof}
If $n = 0$, then $Q_i^+[[y; \Sigma_n, \Delta_n]] = Q^+[[x; \sigma, \delta]]$. So assume $n \neq 0$: note that, in this case, $\sigma$ is not an inner automorphism of $Q$ (as it permutes the simple factors of $Q$ non-trivially), and $\sigma(t) = t$ by Lemma \ref{lem: power 1}. It follows from Lemma \ref{lem: all assumptions} that $(Q,u,\sigma,\delta)$ satisfies Assumptions 1--5 of \S \ref{subsec: crossed}. 

Now, $(\Sigma_n, \Delta_n)$ is quasi-compatible with $u$ on $Q$ by Theorem \ref{thm: skew power series subrings exist}, and it is clear that $(\Sigma_n, \Delta_n)$ induces a skew derivation on $Q_i$ by restriction (cf. Corollary \ref{cor: if Q_i[[x]] is prime then Q[[x]] is prime}(i)), so this restriction will also be quasi-compatible with $u_1$ on $Q_i$. Now we may apply Corollary \ref{cor: if Q_i[[x]] is prime then Q[[x]] is prime}(iii) and Theorem \ref{thm: bounded in the semisimple case}(iii) to obtain the desired result.
\end{proof}

Using this result, provided we are able to ensure that $u(t) \geq 0$ in characteristic $0$, we can safely assume that $\sigma$ fixes some maximal ideal in $\widehat{Q}$, and thus we can assume that $Q$ is a simple ring. We do not know how to remove the hypothesis that $u(t)\geq 0$ and extend this result to general inner skew derivations, but this result applies in particular in the Iwasawa case, when $\delta=\sigma-\id$.

\subsection{Skew polynomial subrings and simplicity}\label{subsec: simple induction}

As mentioned in the introduction, even if $Q$ is a simple ring, the skew polynomial ring $Q[x;\sigma,\delta]$ will typically not be simple. The details are quite different for skew power series rings $Q^+[[x; \sigma, \delta]]$, as we will demonstrate in \S \ref{subsec: simple skew} below. In this section, we will explore what happens when a sufficiently large subring $Q^+[[X_m;\Sigma_m,\Delta_m]]$ happens to be simple.

For now, we will work in generality. Throughout this subsection, we will suppose we are given a ring $S$ with an automorphism $\sigma\in\Aut(S)$ such that $\sigma^p$ is inner. We will also assume that $S$ contains no \emph{non-trivial} (by which we mean proper and non-zero) $\sigma$-invariant ideals, and suppose that we are given a crossed product $R$ of $S$ with $\mathbb{Z}/p\mathbb{Z}$: $$R=S\ast\mathbb{Z}/p\mathbb{Z}=\bigoplus_{i=0}^{p-1} Sg^i,$$
where $gs=\sigma(s)g$ for all $s\in S$, and $c:=g^p\in S$.

Let $I$ be any non-trivial ideal of $R$, i.e. $I\neq 0$ and $I\neq R$.

\begin{defn}\label{defn: length}

Take $r=s_0+s_1g+\dots+s_{p-1}g^{p-1}\in R = S\ast\mathbb{Z}/p\mathbb{Z}$, where each $s_i \in S$. The \emph{length} $l(r)$ of $r$ is the cardinality of $\{0\leq i\leq p-1 : s_i\neq 0\}$. Clearly $l(r)=0$ if and only if $r=0$.

\end{defn}

Let $m+1$ be the minimal length of a non-zero element of $I$, where $m\geq 0$. Then for some chain $0\leq i_0<i_1<\dots<i_m=:k<p$, we can find an element $f_0g^{i_0}+f_1g^{i_1}+\dots+f_{m-1}g^{i_{m-1}}+f_mg^k\in I$ with $f_i\neq 0$ for each $i$. Define:
\[
A:=\{s\in S:h_0g^{i_0}+h_1g^{i_1}+\dots+h_{m-1}g^{i_{m-1}}+sg^k\in I\text{ for some }h_j\in S\}
\]

\noindent Then $A$ is a $\sigma$-invariant two-sided ideal of $S$, so by our assumption it is either $0$ or $S$. But we know that $0\neq f_m\in A$, so $A=S$. In particular, $1\in A$, so we can find an element $$Z:=z_{i_0}g^{i_0}+z_{i_1}g^{i_1}+\dots+z_{i_{m-1}}g^{i_{m-1}}+g^k\in I,$$ where $z_{i_j}\neq 0$ for each $j<m$.

\textbf{Note:} if $m=0$ then $g^k\in I$ and so $I=R$, contradicting our assumption. Therefore $m>0$.

\begin{propn}\label{minimal}

Each $z_{i_j}$ is a $\sigma$-invariant unit in $S$, and $z_{i_j}^{-1}sz_{i_j}=\sigma^{i_j-k}(s)$ for all $s\in S$.

\end{propn}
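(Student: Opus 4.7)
The plan is to exploit the minimality of the length $m+1$ twice, first to obtain a normality relation for each $z_{i_j}$ and then to show $\sigma$-invariance, after which the ``no non-trivial $\sigma$-invariant ideals'' hypothesis forces the unit conclusion. The recurring trick is to construct elements of $I$ in which the top $g^k$-coefficient cancels, so that by minimality the whole element must vanish.

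First, for arbitrary $s \in S$, I would form the element $sZ - Z\sigma^{-k}(s) \in I$. Using $g^{i}\sigma^{-k}(s) = \sigma^{i-k}(s)g^{i}$, the $g^k$-coefficient equals $s - \sigma^{k}(\sigma^{-k}(s)) = 0$, while the remaining coefficients are $sz_{i_j} - z_{i_j}\sigma^{i_j - k}(s)$. Hence $sZ - Z\sigma^{-k}(s)$ has length at most $m$, and by minimality it is zero. This gives
$$s\, z_{i_j} = z_{i_j}\, \sigma^{i_j-k}(s) \qquad \text{for all } s \in S \text{ and all } 0 \le j \le m.$$

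Second, I would apply the same strategy to the commutator-style element $gZ - Zg \in I$. A direct calculation gives $gZ - Zg = \sum_{j=0}^{m}\bigl(\sigma(z_{i_j}) - z_{i_j}\bigr)g^{i_j + 1}$. The top term with $j = m$ has coefficient $\sigma(1) - 1 = 0$, so the length drops to at most $m$. The only technical point is to verify that no $g^{i_j + 1}$ collapses unexpectedly to a constant via the relation $g^p = c$: since $i_j < k \le p-1$ for $j < m$, we have $i_j + 1 \le p - 1$, so no wrapping occurs for $j < m$; and for $j = m$, if $k + 1 = p$, then the $c$-contributions from $gZ$ and $Zg$ are equal and cancel. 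Minimality then forces $\sigma(z_{i_j}) = z_{i_j}$ for each $j$.

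With both facts in hand, the normality relation shows that $Sz_{i_j} = z_{i_j}S$ is a two-sided ideal of $S$, and the $\sigma$-invariance $\sigma(z_{i_j}) = z_{i_j}$ shows that $\sigma(Sz_{i_j}) = S\sigma(z_{i_j}) = Sz_{i_j}$. By the hypothesis on $S$, and since $z_{i_j} \neq 0$, we must have $Sz_{i_j} = z_{i_j}S = S$. This produces both a left inverse and a right inverse for $z_{i_j}$ in $S$, so $z_{i_j}$ is a $\sigma$-invariant unit. Rearranging the normality relation gives $z_{i_j}^{-1} s\, z_{i_j} = \sigma^{i_j-k}(s)$, as required. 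I do not expect a serious obstacle here; the only care is in the bookkeeping of Step~2 to ensure the commutator really has length at most $m$, which is the routine case check above.
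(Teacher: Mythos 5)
Your proof is correct and follows essentially the same route as the paper: the same two test elements $sZ - Z\sigma^{-k}(s)$ and $gZ - Zg$ killed by the minimal-length assumption, followed by the observation that $z_{i_j}S = Sz_{i_j}$ is a nonzero $\sigma$-invariant ideal, hence all of $S$. The only differences are cosmetic — you run the two computations in the opposite order and spell out the $k+1=p$ cancellation and the left/right inverse argument slightly more explicitly than the paper does.
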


\begin{proof}

Firstly, $gz_{i_j}=\sigma(z_{i_j})g$, so
$$gZ-Zg=(\sigma(z_{i_0})-z_{i_0})g\cdot g^{i_0}+(\sigma(z_{i_1})-z_{i_1})g\cdot g^{i_1}+\dots+(\sigma(z_{i_{m-1}})-z_{i_{m-1}})g\cdot g^{i_{m-1}}$$
Since $m>0$, we know that $0\leq i_{j}<k<p$, so $0\leq i_j+1\leq p-1$ for each $j$. It follows that $gZ-Zg\in I$ has length less than $m+1$, so it must be zero by our assumption on $m$, thus $\sigma(z_{i_j})=z_{i_j}$ for all $j$ as required.

Now, given $s\in S$, we have $g^is=\sigma^i(s)g^i$, and hence $g^k\sigma^{-k}(s)=sg^k$. So,
$$sZ-Z\sigma^{-k}(s)=(sz_{i_0}-z_{i_0}\sigma^{i_0-k}(s))g^{i_0}+\dots+(sz_{i_{m-1}}-z_{i_{m-1}}\sigma^{i_{m-1}-k}(s))g^{i_{m-1}}.$$
But $sZ-Z\sigma^{-k}(s)\in I$, so by our assumption on $m$ again, it follows that $sZ-Z\sigma^{-k}(s)=0$, and hence $sz_{i_j}=z_{i_j}\sigma^{i_j-k}(s)$ for all $j,s$.

This means that $0\neq z_{i_j}$ is a $\sigma$-invariant normal element in $S$, which implies that $z_{i_j}S$ is a $\sigma$-invariant two-sided ideal, so it is either $0$ or $S$ by assumption. But $z_{i_j}\neq 0$, so it must be a unit, and hence $z_{i_j}^{-1}sz_{i_j}=\sigma^{i_j-k}(s)$ for all $s\in S$.\end{proof}

\begin{cor}\label{cor: inner}

$\sigma$ is an inner automorphism of $S$: in fact, $\sigma(s)=usu^{-1}$ for some $\sigma$-invariant unit $u$ in $S$, commuting with each $z_{i_j}$.

\end{cor}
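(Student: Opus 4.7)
The plan is to extract $\sigma$ as an inner automorphism from the relations given by Proposition~4.3.2 (labelled \emph{minimal}) together with the inner-ness of $\sigma^p$, and then to refine the witness so that it has the stated extra properties. Rewriting the relation $z_{i_j}^{-1} s z_{i_j} = \sigma^{i_j - k}(s)$ in the form $z_{i_j} s z_{i_j}^{-1} = \sigma^{k-i_j}(s)$ tells us that, for each $j = 0, \dots, m-1$, conjugation by the unit $z_{i_j}$ implements the power $\sigma^{a_j}$ where $a_j := k - i_j$. The constraints $0 \le i_0 < i_1 < \dots < i_{m-1} < k \le p-1$ guarantee $0 < a_j < p$, and since $p$ is prime, $\gcd(a_j, p) = 1$. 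In particular, taking $j = 0$, a Bézout relation $u_0 a_0 + v_0 p = 1$ will allow us to combine an inner automorphism of order dividing $a_0$ with one of order dividing $p$ to recover $\sigma$ itself.

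Next, I would analyse the element $c := g^p \in S$. Because $g$ commutes with any power of itself, $gc = cg$, so conjugation by $g$ (which is $\sigma$) fixes $c$, i.e.\ $\sigma(c) = c$. From $cs = g^p s = \sigma^p(s)g^p = \sigma^p(s)c$ we obtain $c s c^{-1} = \sigma^p(s)$ (note $c$ is a unit in $S$: it must be non-zero, and it generates a $\sigma$-invariant two-sided ideal which by hypothesis on $S$ must be all of $S$, and the same argument applied to $c$ as a normal element then gives invertibility exactly as in Proposition~4.3.2). Moreover, since $\sigma^{a_0}(c) = c$, the identity $z_{i_0} c z_{i_0}^{-1} = \sigma^{a_0}(c) = c$ shows that $z_{i_0}$ and $c$ commute.

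With these ingredients, define
\[
u := z_{i_0}^{u_0} c^{v_0}.
\]
Since $z_{i_0}$ and $c$ commute, $u$ is well-defined, and conjugation by $u$ implements $\sigma^{u_0 a_0 + v_0 p} = \sigma$, proving $\sigma$ is inner. Applying $\sigma$ to $u$ and using $\sigma(z_{i_0}) = z_{i_0}$ (from Proposition~4.3.2) and $\sigma(c) = c$ gives $\sigma(u) = u$, so $u$ is $\sigma$-invariant. Finally, for each $j$, conjugation by $z_{i_j}$ acts on $u$ as $\sigma^{a_j}$, which fixes $u$ by $\sigma$-invariance; hence $z_{i_j} u = u z_{i_j}$.

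The only delicate point is verifying that $c$ is a unit in $S$; the rest is essentially bookkeeping once the $\gcd(a_0, p) = 1$ observation is in hand. If $c = g^p$ fails to be invertible, one could instead work with $z_{i_0}^p$ in place of $c$: by Proposition~4.3.2 this is a $\sigma$-invariant unit, and conjugation by it implements $\sigma^{p a_0}$, which together with $\sigma^{a_0}$ still generates $\sigma$ since $\gcd(a_0, p) = 1$. Either route produces the desired inner implementer of $\sigma$.
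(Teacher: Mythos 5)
Your proof is correct and follows essentially the same route as the paper: $\sigma^{k-i_0}$ is inner via the $\sigma$-invariant unit $z_{i_0}$, $\sigma^p$ is inner via the $\sigma$-invariant unit $c=g^p$ (which is indeed invertible in $S$, by the crossed-product axioms or by your normality argument), and a B\'ezout combination produces a $\sigma$-invariant unit $u$ implementing $\sigma$, which then commutes with each $z_{i_j}$ precisely because $\sigma$ fixes them. The only blemish is the fallback remark at the end: conjugation by $z_{i_0}^p$ implements $\sigma^{pa_0}$, and together with $\sigma^{a_0}$ this only generates $\langle \sigma^{a_0}\rangle$ rather than $\sigma$, so that alternative route would fail --- fortunately it is never needed, since $c$ genuinely is a unit.
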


\begin{proof}

Let $j:=i_0-k$. Using Proposition \ref{minimal}, we see that $\sigma^j$ is an inner automorphism of $S$, defined by the $\sigma$-invariant unit $z_{i_0}$. But $\sigma^p$ is an inner automorphism of $S$, defined by the unit $c$, which is $\sigma$-invariant as $\sigma(c) = \sigma(g^p) = g\cdot g^p\cdot g^{-1} = g^p = c$. Since $\gcd(j,p) = 1$, it follows that $\sigma$ is inner and defined by a $\sigma$-invariant unit $u$. Then, for each $j<m$, $\sigma(z_{i_j})=z_{i_j}$, so $uz_{i_j}u^{-1}=z_{i_j}$ and $u$ commutes with $z_{i_j}$.
\end{proof}

It follows that $z_{i_j}^{-1}sz_{i_j}=\sigma^{i_j-k}(s)=u^{i_j-k}su^{-(i_j-k)}$ for each $0\leq j<m$ and each $s\in S$. Hence $z_{i_j}u^{i_j-k}$ is central in $S$ and $\sigma$-invariant. Define $F:=Z(S)^\sigma=\{\alpha\in Z(S):\sigma(\alpha)=\alpha\}$, and for each $j<m$, let $$\alpha_{i_j}:=z_{i_j}u^{i_j-k}=u^{i_j-k}z_{i_j}\in F,$$ so that $z_{i_j}=u^{k-i_j}\alpha_{i_j}$.

Setting $\alpha_i=0$ for all $i<k$ such that $i\not\in \{i_0, \dots, i_{m-1}\}$, we can rewrite our element $Z\in I$ as:
\begin{align*}
Z&=u^{k}\alpha_0+u^{k-1}\alpha_1g+\dots+u\alpha_{n-1}g^{k-1}+g^k\\
&=u^{k}(\alpha_0+\alpha_1(u^{-1}g)+\dots+\alpha_{k-1}(u^{-1}g)^{k-1}+(u^{-1}g)^k).
\end{align*}
The final equality here follows because $u$ commutes with each $\alpha_i$ and also with $g$. In particular, the non-zero element $u^{-k}Z\in I$ lies in Span$_F\{(u^{-1}g)^i:0\leq i\leq p-1\}$. So altogether, we have proved the following:

\begin{thm}\label{thm: intersection with skew group ring}
Suppose $S$ is a ring, $\sigma\in \Aut(S)$ such that $\sigma^p$ is inner and $S$ contains no non-trivial $\sigma$-invariant ideals. If $R:=S\ast\mathbb{Z}/p\mathbb{Z}=\bigoplus_{i=0}^{p-1} Sg^i$, where $g$ acts by $\sigma$, and $R$ contains a non-trivial ideal $I$, then:

\begin{itemize}
\item $\sigma$ is inner, i.e. there exists a $\sigma$-invariant unit $u\in S^{\times}$ such that $\sigma(s)=usu^{-1}$ for all $s\in S$.

\item $F:=Z(S)^\sigma$ is a field and $C:=$ Span$_F\{(u^{-1}g)^i:0\leq i\leq p-1\}$ is a subring of $Z(R)$.

\item $I\cap C\neq 0$ and $C$ is not a field.
\end{itemize}
\end{thm}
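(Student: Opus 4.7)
The plan is straightforward because almost all of the heavy lifting has already been carried out in the setup preceding the theorem. The first bullet is precisely the conclusion of Corollary \ref{cor: inner}, so there is nothing new to do. For the assertion that $F := Z(S)^\sigma$ is a field, I would take any non-zero $\alpha \in F$ and observe that $\alpha S$ is a non-zero $\sigma$-invariant two-sided ideal of $S$; by hypothesis this forces $\alpha S = S$, so $\alpha$ is a unit, and $\alpha^{-1}$ is automatically central and $\sigma$-fixed. Commutativity of $F$ is free since $F \subseteq Z(S)$.

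To show $C$ is a subring of $Z(R)$, I would first verify that $w := u^{-1}g$ lies in $Z(R)$: a one-line calculation using $\sigma(s) = u s u^{-1}$ gives $(u^{-1}g)s = s(u^{-1}g)$ for all $s \in S$, and $\sigma(u) = u$ shows $w$ commutes with $g$. Since $F$ commutes with $S$ (as $F \subseteq Z(S)$) and with $g$ (as $F \subseteq S^\sigma$), every $F$-linear combination of powers of $w$ lies in $Z(R)$, so $C \subseteq Z(R)$. Closure of $C$ under multiplication reduces to proving $w^p \in F$. Writing $w^p = u^{-p} c$ with $c := g^p$, I would note that both $u^p$ and $c$ implement $\sigma^p$ by conjugation on $S$ -- the former by iterating Corollary \ref{cor: inner}, the latter directly from the defining relations of the crossed product -- so $u^{-p}c$ centralises $S$; and it is $\sigma$-fixed since $\sigma(u) = u$ and $\sigma(c) = g c g^{-1} = c$. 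Hence $w^p \in F$, as required.

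For the final bullet, the explicit computation immediately preceding the theorem already exhibits the non-zero element $u^{-k}Z \in I$ as an $F$-linear combination of $\{w^i : 0 \leq i \leq p-1\}$, so $u^{-k}Z \in I \cap C$ and $I \cap C \neq 0$. Since $I$ is a proper ideal of $R$, $1 \notin I \cap C$, so $I \cap C$ is a proper non-zero ideal of the commutative ring $C$, which therefore cannot be a field. The only mildly non-routine step is the identification $w^p \in F$, which requires one to recognise that $c$ and $u^p$ both conjugate to $\sigma^p$ on $S$; everything else is essentially bookkeeping that strings together the results of Proposition \ref{minimal} and Corollary \ref{cor: inner}.
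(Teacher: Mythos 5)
Your proposal is correct and follows essentially the same route as the paper: both assemble Proposition \ref{minimal}, Corollary \ref{cor: inner}, and the computation producing $u^{-k}Z\in I\cap \mathrm{Span}_F\{(u^{-1}g)^i\}$, then note that a field would force $I=R$. The only difference is that you spell out explicitly why $u^{-1}g$ is central and why $(u^{-1}g)^p=u^{-p}c\in F$ (both $u^p$ and $c$ implementing $\sigma^p$), details the paper asserts more briefly.
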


\begin{proof}
The first statement follows from Corollary \ref{cor: inner}, and the last statement is true because we have seen that $0\neq u^{-k}Z\in I\cap C$, so if $C$ was a field it would follow that $I$ contains a unit, and hence $I=R$, contradicting our assumption.

To prove the second statement, note that if $\alpha\in F=Z(S)^{\sigma}$, then $\alpha S$ is a $\sigma$-invariant two-sided ideal of $S$, so either $\alpha = 0$ or $\alpha S = S$ (in which case $\alpha$ is a unit), so $F$ is a field. Finally, as $u^{-1}g$ is central in $R$, we get that $F[u^{-1}g]$ is a subring of $Z(R)$. But $(u^{-1}g)^p=u^{-p}g^p \in F$, so $F[u^{-1}g]= \mathrm{Span}_F\{(u^{-1}g)^i:0\leq i\leq p-1\}=C$ as required.\end{proof}

\begin{cor}\label{cor: semisimple in char 0}
Suppose $S$ has no $p$-torsion, and $(S,R,\sigma)$ satisfy all the hypotheses of Theorem \ref{thm: intersection with skew group ring}. If $F=Z(S)^{\sigma}$ contains all $p$'th roots of unity, then $S$ is simple and $R\cong S^k$ for some $k\leq p$.
\end{cor}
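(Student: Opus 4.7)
The plan is to extract from Theorem \ref{thm: intersection with skew group ring} the central subring $C = F[y]$ where $y = u^{-1}g$ and $y^p = \alpha \in F$, show that $x^p - \alpha$ splits into $p$ distinct linear factors over $F$, and lift the resulting CRT idempotents to a ring-theoretic decomposition of $R$ as a product of copies of $S$. Since the hypotheses of Theorem \ref{thm: intersection with skew group ring} presume $R$ contains a non-trivial ideal, this is the case I address; the alternative case ($R$ simple) is subsumed in the statement via $k = 1$.

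First, note that since $u$ and $g$ are units in $R$, so is $y = u^{-1}g$, and therefore $\alpha = y^p \in F^\times$. The no $p$-torsion hypothesis on $S$ forces $\chr(F) \neq p$. Theorem \ref{thm: intersection with skew group ring} additionally tells us that $C \cong F[x]/(x^p - \alpha)$ is not a field, so $x^p - \alpha$ is reducible over $F$. I would then run the classical Kummer-style argument: fix a $p$-th root $\beta$ of $\alpha$ in an algebraic closure and a primitive $p$-th root of unity $\zeta \in F$. The roots of $x^p - \alpha$ are then $\zeta^i \beta$ for $0 \leq i \leq p-1$, all distinct because $\chr(F) \neq p$. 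Any factor of $x^p - \alpha$ over $F$ of degree $k$ with $1 \leq k \leq p-1$ has constant term $\pm \zeta^s \beta^k \in F$ for some $s$, whence $\beta^k \in F$ (as $\zeta \in F$); Bezout applied to $\gcd(k, p) = 1$ gives integers $a, b$ with $ak + bp = 1$, so $\beta = (\beta^k)^a \alpha^b \in F$. Thus $\alpha = \beta^p$, and $x^p - \alpha = \prod_{i=0}^{p-1} (x - \zeta^i \beta)$ splits into distinct linear factors over $F$.

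By CRT, $C \cong F^p$, giving pairwise orthogonal central idempotents $e_0, \dots, e_{p-1} \in C \subseteq Z(R)$ with $\sum_i e_i = 1$ and $y e_i = \zeta^i \beta e_i$. This produces a direct product decomposition of rings $R = \prod_{i=0}^{p-1} R e_i$. The key computation is $g e_i = u y e_i = (u \zeta^i \beta) e_i \in S e_i$, so all powers $g^j e_i$ lie in $Se_i$ and hence $R e_i = S e_i$. The surjective ring homomorphism $\phi_i \colon S \to S e_i$, $s \mapsto s e_i$, has kernel $\{s \in S : s e_i = 0\}$, which is $\sigma$-invariant because $u \in S$ commutes with $e_i \in Z(R)$, so $\sigma(s) e_i = u(s e_i) u^{-1} = 0$ whenever $s e_i = 0$. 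By the standing hypothesis that $S$ has no non-trivial $\sigma$-invariant ideals, $\ker \phi_i$ is $0$ or $S$; since $e_i \neq 0$ it must be $0$, so $\phi_i$ is an isomorphism. Assembling the pieces gives $R \cong S^p$. Finally, $S$ is simple: Theorem \ref{thm: intersection with skew group ring} shows $\sigma$ is inner, so every two-sided ideal of $S$ is automatically $\sigma$-invariant and therefore trivial.

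The main obstacle is the field-theoretic step $\alpha \in (F^\times)^p$, where all three new hypotheses — $\chr(F) \neq p$ (from the absence of $p$-torsion), $F$ containing all $p$-th roots of unity, and $C$ being non-trivial as an $F$-algebra (from the existence of a non-trivial ideal in $R$ via Theorem \ref{thm: intersection with skew group ring}) — combine through the classical reducibility criterion for $x^p - \alpha$ over a field containing the $p$-th roots of unity. Everything else (the CRT decomposition, the calculation $Re_i = Se_i$, the simplicity of $S$ once $\sigma$ is known inner) is essentially formal.
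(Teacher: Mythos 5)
Your proof is correct, but it takes a genuinely different route from the paper's. The paper first shows $S$ is $\sigma$-prime, then invokes Passman's crossed-product machinery (Fisher--Montgomery semiprimeness, which is where the no-$p$-torsion hypothesis enters there, together with the correspondence between primes of $R$ and $\sigma$-primes of $S$) to conclude that every prime of $R$ contracts to $0$, is both minimal and maximal, and hence that $R\cong S_1\times\dots\times S_k$ is a product of $k\leq p$ simple rings; only then does it use the fact that $C$ is not a field to extract a \emph{single} $p$-th root $f$ of $u^{-p}c$ in $F$, producing a central $p$-th root of unity $\zeta=f^{-1}u^{-1}g$, and it shows each projection $\pi_i$ satisfies $\pi_i(R)=\pi_i(S)=S_i$ (because $\pi_i(\zeta)$ must lie in the field $\pi_i(F)$, which already contains the $p$ distinct roots of unity), whence $S_i\cong S/(M_i\cap S)\cong S$. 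You instead split $x^p-\alpha$ completely over $F$ (one root plus $\zeta_p\in F$ and $\chr(F)\neq p$ gives all of them, distinct), read off a full set of orthogonal central idempotents $e_i\in C$ by CRT, and decompose $R=\prod_i Re_i=\prod_i Se_i\cong S^p$ directly, with simplicity of $S$ coming from innerness of $\sigma$ exactly as in the paper. Your argument is more elementary and self-contained, bypassing the semiprimeness and prime-ideal structure theory entirely, and it yields the sharper conclusion $k=p$ with an explicit isomorphism; the paper's route only pins down $k\leq p$, but it reuses general crossed-product results the paper needs elsewhere anyway. Both arguments consume the hypotheses in the same places: the non-trivial ideal of $R$ enters only through Theorem \ref{thm: intersection with skew group ring}, and the roots of unity plus absence of $p$-torsion drive the Kummer/splitting step.

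Two small caveats, neither affecting correctness of the proof of the stated corollary. First, your parenthetical claim that the case of simple $R$ is ``subsumed via $k=1$'' is not justified: a simple crossed product $S*\mathbb{Z}/p\mathbb{Z}$ need not be isomorphic to $S$ (e.g.\ $\mathbb{R}*\mathbb{Z}/2\mathbb{Z}\cong\mathbb{C}$). This is harmless here because the hypotheses of Theorem \ref{thm: intersection with skew group ring}, and hence of the corollary, include the existence of a non-trivial ideal of $R$, which is exactly the case you treat -- but the remark should be deleted rather than relied upon. Second, you silently use that $1,y,\dots,y^{p-1}$ are $F$-linearly independent (so that $C\cong F[x]/(x^p-\alpha)$ and the $e_i$ are non-zero) and that $\alpha=y^p$ lies in $F$; both follow immediately from the decomposition $R=\bigoplus_{i}Sg^i$ and the $\sigma$-invariance and centrality of $u^{-1}g$, and the paper asserts the same identification of $C$ without further comment, so this is cosmetic.
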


\begin{proof}

First note that for any proper ideal $J$ of $S$, $J\cap\sigma(J)\cap\dots\cap\sigma^{p-1}(J)$ is $\sigma$-invariant, so it must be 0. Hence, taking $J$ to be a maximal ideal, it follows from the Chinese remainder theorem that $S$ is simple (if $J = 0$) or is the direct product of $p$ simple rings, $S \cong \prod_{i=0}^{p-1} S/\sigma^i(J)$. In either case, $S$ is $\sigma$-prime and hence semiprime, and using \cite[Theorem 4.4]{passmanICP}, we see that $R=S\ast\mathbb{Z}/p\mathbb{Z}$ is also semiprime. 

Now let $P$ be an arbitrary prime ideal of $R$. Then $P\cap S$ is a $\sigma$-prime ideal of $S$ \cite[Lemma 14.1]{passmanICP}, and in particular is $\sigma$-invariant, so must be $0$. This means that $P$ is a \emph{minimal} prime ideal of $R$ \cite[Theorem 16.2(i)]{passmanICP}, and since $P$ was arbitrary, it follows that all prime ideals of $R$ are maximal. There are $k$ of these for some number $k \leq p$ \cite[Theorem 16.2(ii)]{passmanICP}, and their intersection is a nilpotent ideal of $R$ \cite[Theorem 16.2(iii)]{passmanICP}, which (as $R$ is semiprime) must be 0. Hence, by the same argument as above, $R$ is the direct product of finitely many simple rings, say $R\cong S_1\times\dots\times S_k$.

Using Theorem \ref{thm: intersection with skew group ring}, any non-trivial ideal $I$ of $R$ has non-zero intersection with the subring $C:=$ Span$_F\{(u^{-1}g)^i:i<p\}$ of $Z(R)$, and $C$ is not a field. But $C$ can be realised as:
\[
C=F[Y]/(Y^p-u^{-p}c),
\]
so it follows from standard results in Galois theory that $u^{-p}c\in F$ has a $p$'th root in $F$, i.e. $u^{-p}c=f^p$ for some $0\neq f\in F$. But $c=g^p$, so if $u^{-p}c=f^p$ then $(f^{-1}u^{-1}g)^p=1$, i.e. $\zeta:=f^{-1}u^{-1}g$ is a $p$'th root of unity in $Z(R)$.

Write $\pi_i: R\to S_i$ for the natural projection map for each $1\leq i\leq k$. As $F$ is a field, the composite map $\pi_i: F\hookrightarrow R\twoheadrightarrow S_i$ is injective for each $i$. Hence $\pi_i(f) \in \pi_i(F)$, and since $\pi_i(\zeta)\in S_i$ is a $p$th root of unity, it must lie in $\pi_i(F)$. In particular, the images of $u$, $f$ and $\zeta$ all lie in $\pi_i(S)$, and so $\pi_i(g) = \pi_i(uf\zeta) \in \pi_i(S)$. Since $R$ is generated by $S$ and powers of $g$, it follows that $\pi_i(R) = \pi_i(S) = S_i$.

Now write $M_i = \ker \pi_i$, so that $R = S + M_i$ and hence $S_i \cong R/M_i = (S+M_i)/M_i \cong S/(M_i\cap S)$. But $M\cap S$ is a proper $\sigma$-invariant ideal of $S$, so it must be $0$, and hence $S_i \cong S$.
\end{proof}

Unfortunately, in characteristic $p$, the preceding argument does not hold, so we return to the skew power series setting.

Suppose that the data $(Q,u,\sigma,\delta)$ satisfies Assumptions 1--5 of \S \ref{subsec: crossed} with $n = 0$ (i.e. $Q$ is simple), and that $(\gr_{u}(Q))_0$ is a simple artinian ring. It is easy to check (using Lemma \ref{lem: power 2} and Definition \ref{defn: X_n etc in inner case}) that $(Q, u, \Sigma_\ell, \Delta_\ell)$ also satisfies Assumptions 1--5 for any $\ell\geq 0$, and it follows from Proposition \ref{propn: x-t is regular and normal} that $X_\ell-T_\ell$ is regular and normal, so that by Theorem \ref{thm: crossed} we have a crossed product decomposition for each $m\geq 0$:
\[
 Q^+[[X_\ell;\Sigma_\ell,\Delta_\ell]]_{(X_\ell-T_\ell)}=Q^+[[X_{\ell+m};\Sigma_{\ell+m},\Delta_{\ell+m}]]_{(X_{\ell+m}-T_{\ell+m})}\ast(\mathbb{Z}/p^m\mathbb{Z}),
\]
with crossed product basis $\{1, g, \dots, g^{p^m-1}\}$, where $g=X_\ell-T_\ell$.

We aim to deduce information inductively about the larger ring $Q^+[[X_\ell;\Sigma_\ell,\Delta_\ell]]_{(X_\ell-T_\ell)}$ from the smaller ring $Q^+[[X_{\ell+m}; \Sigma_{\ell+m}, \Delta_{\ell+m}]]_{(X_{\ell+m}-T_{\ell+m})}$, so for the purposes of this induction we will take $m = 1$, and without loss of generality we will assume that $\ell = 0$. So $(x, \sigma, \delta, t) = (X_\ell, \Sigma_\ell, \Delta_\ell, T_\ell)$, and for ease of notation we set $(X, \Sigma, \Delta, T) = (X_{\ell+1}, \Sigma_{\ell+1}, \Delta_{\ell+1}, T_{\ell+1})$. In summary, we have
\[
 Q^+[[x;\sigma,\delta]]_{(x-t)}=Q^+[[X;\Sigma,\Delta]]_{(X-T)}\ast(\mathbb{Z}/p\mathbb{Z}),
\]
with basis $\{1, g, \dots, g^{p-1}\}$, where $g = x-t$. We then have $g^p=X-T\in Q^+[[X;\Sigma,\Delta]]$ and $gf=\sigma(f)g$ for all $f\in Q^+[[X;\Sigma,\Delta]]$.

\begin{propn}\label{propn: cases}
Assume that $Q$ has characteristic $p$, and that $Q^+[[X;\Sigma,\Delta]]_{(X-T)}$ contains no non-trivial $\sigma$-invariant ideals. Then $Q^+[[x;\sigma,\delta]]_{(x-t)}$ is simple.
\end{propn}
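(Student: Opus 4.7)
The plan is to apply Theorem \ref{thm: intersection with skew group ring} to the crossed product $R = S \ast \mathbb{Z}/p\mathbb{Z}$, where $S = Q^+[[X;\Sigma,\Delta]]_{(X-T)}$ and the distinguished basis element is $g = x-t$, and then use the characteristic $p$ hypothesis to manufacture a non-zero central nilpotent in $R$, which will be the contradiction. To verify the hypotheses of that theorem: conjugation by $g$ acts as $\sigma$ on $S$, so conjugation by $g^p = X - T \in S^{\times}$ implements $\sigma^p = \Sigma$, showing that $\sigma^p$ is inner on $S$; the other hypothesis, that $S$ has no non-trivial $\sigma$-invariant ideals, is given.

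Assume for contradiction that $R$ has a non-trivial ideal. Theorem \ref{thm: intersection with skew group ring} then produces a $\sigma$-invariant unit $u \in S^\times$ with $\sigma(s) = usu^{-1}$, a field $F := Z(S)^\sigma$, and the subring $C = F[u^{-1}g] \cong F[Y]/(Y^p - u^{-p}(X-T))$ of $Z(R)$, which is not a field. Since $\chr Q = p$, the polynomial $Y^p - \alpha$ over $F$ is either irreducible or equal to $(Y-\beta)^p$ for some $\beta \in F$ with $\beta^p = \alpha$, so the failure of $C$ to be a field forces $u^{-p}(X-T) = \beta^p$ for some $\beta \in F$. Setting $v := \beta u \in S^\times$ (which is $\sigma$-invariant since both $\beta$ and $u$ are), and using that $X-T$ is $\sigma$-invariant so that $u^p$ commutes with $X-T$, we compute $v^p = \beta^p u^p = u^{-p}(X-T)\, u^p = X - T$, exhibiting $X - T$ as a $p$-th power in $S$.

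Next, consider $w := g v^{-1} \in R^{\times}$. Because $v$ is $\sigma$-invariant, $v$ and $g$ commute, giving $w^p = g^p v^{-p} = 1$. A direct computation shows that $w$ is central in $R$: it commutes with every $s \in S$ because $v$ implements $\sigma$ (so $v^{-1} s v = \sigma^{-1}(s)$, which is cancelled by the conjugation action of $g$), and it commutes with $g$ since $\sigma(v^{-1}) = v^{-1}$. Moreover $w \neq 1$: in the decomposition $R = \bigoplus_{i=0}^{p-1} Sg^i$ from Corollary \ref{cor: powers of x-t also form a basis}, $w = v^{-1}g$ lies in the summand $Sg$ whereas $1 \in S$. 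In characteristic $p$, $(w-1)^p = w^p - 1 = 0$, so $w-1$ is a non-zero central nilpotent of $R$, and $(w-1)R$ is a non-zero nilpotent two-sided ideal.

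The main obstacle is then to contradict the existence of this nilpotent ideal, which amounts to proving that $R$ is semiprime. The plan is to fix $m$ large enough that $(\Sigma_m,\Delta_m)$ is compatible with $u$; then Theorem \ref{thm: simple artinian with compatible skew derivation} gives primeness of $\O[[X_m;\Sigma_m,\Delta_m]]$, and Ore-localizing at the regular elements of $\O$ and at $X_m - T_m$ yields primeness of $Q^+[[X_m;\Sigma_m,\Delta_m]]_{(X_m-T_m)}$. Equipping $R$ with the filtration $v_\varepsilon$ extended across this localization, and using that $R$ is a finite free module over $Q^+[[X_m;\Sigma_m,\Delta_m]]_{(X_m-T_m)}$, a standard lifting of semiprimeness from the associated graded ring (which is a skew Laurent polynomial ring over the prime ring $\gr_u(Q)$, hence prime) should yield semiprimeness of $R$, closing the argument.
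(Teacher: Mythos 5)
Your first half is correct and is, at heart, an equivalent reformulation of the paper's own argument: after invoking Theorem \ref{thm: intersection with skew group ring} exactly as the paper does, you use the elementary fact that $Y^p-\alpha$ over a field of characteristic $p$ is irreducible unless $\alpha\in F^p$, so the failure of $C$ to be a field forces $u^{-p}(X-T)=\beta^p$, i.e.\ $X-T=v^p$ for the $\sigma$-invariant unit $v=\beta u\in S^\times$; the construction of the central element $w=v^{-1}g$ with $w^p=1$, $w\neq 1$, and of the nonzero nilpotent ideal $(w-1)R$ is sound. (The paper instead proves that $\{h^{ip}:0\leq i\leq p-1\}$ is linearly independent over $F^p$ and quotes \cite[Lemma 4.1]{Rei76} to conclude that $C$ \emph{is} a field; your reduction and theirs carry the same content.)

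The gap is the final step, where you try to contradict the nilpotent ideal by proving that $R=Q^+[[x;\sigma,\delta]]_{(x-t)}$ is semiprime by a filtered/graded lifting. This cannot work as sketched. First, $(\sigma,\delta)$ is only quasi-compatible with $u$, so $v_\varepsilon$ need not be a ring filtration on $Q^+[[x;\sigma,\delta]]$ at all (Remark \ref{rks: topologies}(ii)); no filtration on the localisation at the non-unit $x-t$ is constructed anywhere in the paper, and the assertion that its associated graded ring would be a skew Laurent extension of $\gr_u(Q)$ is unsupported (in characteristic $p$ one may have $u(t)\le 0$, and the symbol of $x-t$ need not behave like a variable). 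Second, the appeal to Theorem \ref{thm: simple artinian with compatible skew derivation} presupposes that $u$ is a \emph{standard} filtration, which is not among Assumptions 1--5 of \S\ref{subsec: crossed} under which Proposition \ref{propn: cases} is stated. Third, and decisively, semiprimeness of $R$ is not available by soft arguments: Passman-type semiprimeness of $S\ast\mathbb{Z}/p\mathbb{Z}$ fails precisely when $p$ is not invertible (which is why Corollary \ref{cor: semisimple in char 0} assumes no $p$-torsion), and your own construction shows that semiprimeness of $R$ is \emph{equivalent} to ruling out $X-T=v^p$ --- that is the crux of the proposition, not an auxiliary fact, and any proof of it must use the specific skew power series structure. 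What is missing is exactly the paper's characteristic-$p$ computation: for $\sigma$-invariant $\alpha\in S$ commuting with $T$ (all $\alpha\in F$, and $\alpha=u$, hence also $v=\beta u$) one shows $\alpha^p\in Q^+[[X^p;\Sigma^p,\Delta^p]]_{(X^p-T^p)}$; since $S$ is free over this subring on the basis $\{(X-T)^i:0\le i<p\}$ (the localised form of Corollary \ref{cor: powers of x-t also form a basis}, as in the proof of Theorem \ref{thm: crossed}), the identity $X-T=v^p$ is impossible, and your argument then closes. Without this (or an equivalent) computation the proof is incomplete.
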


\begin{proof}

Let $S:=Q^+[[X;\Sigma,\Delta]]_{(X-T)}$, $R:=Q^+[[x;\sigma,\delta]]_{(x-t)}=S\ast\mathbb{Z}/p\mathbb{Z}$.

Suppose for contradiction that there exists a non-trivial ideal $I$ of $R$. Then the data $(S,\sigma,R)$ satisfies all the hypotheses of Theorem \ref{thm: intersection with skew group ring}, so writing $F = Z(S)^\sigma$ and $u\in S^\times$ for the $\sigma$-invariant unit such that $\sigma(s) = usu^{-1}$ for all $s\in S$, we can conclude that the central subring $C:= \mathrm{Span}_F\{(u^{-1}g)^i:0\leq i\leq p-1\}$ of $R$ is not a field. But we can write $C$ as the twisted-group ring $C = F^t[\mathbb{Z}/p\mathbb{Z}] = \bigoplus_{i=0}^{p-1} Fh^i$, where $h=u^{-1}g$. We will prove that the set $\{h^{ip}:0\leq i\leq p-1\}$ is linearly independent over the field $F^p$, and it will follow from \cite[Lemma 4.1]{Rei76} that $C$ is a field, a contradiction.

Let $\alpha$ be an arbitrary $\sigma$-invariant element of $S$ that commutes with $T$. In particular, we may take $\alpha$ to be any element of $F=Z(S)^{\sigma}$, or $\alpha=u$ since $uTu^{-1}=\sigma(T)=T$.

We can write $\alpha=(X-T)^{-m}f$, where $f=\sum_{i\geq 0}\lambda_iX^i$ for some $\lambda_i \in Q$, and it follows that $\sigma(\lambda_i)=\lambda_i$ for each $i$. In particular, $\lambda_iX^i$ commutes with $X-T$ for each $i$. Also, since $\alpha$ and $X$ commute with $T$, it follows that $f=(X-T)^m\alpha$ commutes with $T$. So $$0=fT-Tf=\sum_{i\geq 0}\lambda_iX^iT-T\lambda_iX^i=\sum_{i\geq 0}(T\lambda_i-\lambda_iT)X^i$$ Therefore $\lambda_i$ commutes with $T$ for each $i$, so $\lambda_iX^i$ commutes with $X=X-T+T$, and thus $\lambda_i$ commutes with $X$. 

Let us assume further that $\lambda_iX^i$ commutes with $\alpha$, and hence with $f$, for each $i$. This condition is clearly satisfied if $\alpha$ is central, so this assumption is satisfied by any $\alpha\in F$. Moreover, since $u(\lambda_iX^i)u^{-1}=\sigma(\lambda_iX^i)=\lambda_iX^i$, it is also satisfied by $\alpha=u$. With this assumption, it follows that $$0=\lambda_jX^jf-f\lambda_jX^j=\sum_{i\geq 0}{(\lambda_j\lambda_i-\lambda_i\lambda_j)X^{i+j}},$$
and so $\lambda_i\lambda_j=\lambda_j\lambda_i$ for all $i,j$.

Therefore, since $S$ has characteristic $p$, we get $$\alpha^p=(X-T)^{-pm}f^p=(X-T)^{-pm}\underset{i\geq 0}{\sum}{(\lambda_iX^i)^p}=(X^p-T^p)^{-m}\underset{i\geq 0}{\sum}{\lambda_i^pX^{pi}}.$$
In particular, every element of $F^p$ lies in the subring $Q^+[[X^p;\Sigma^p,\Delta^p]]_{(X^p-T^p)}$, and moreover $u^p \in Q^+[[X^p;\Sigma^p,\Delta^p]]_{(X^p-T^p)}$. But $h^{ip}=u^{-ip}g^{ip}=u^{-ip}(X-T)^i$, and we know that $\{(X-T)^i:0\leq i\leq p-1\}$ is linearly independent over $Q^+[[X^p;\Sigma^p,\Delta^p]]$ by Corollary \ref{cor: powers of x-t also form a basis} applied to $Q^+[[X; \Sigma, \Delta]]$. So it follows that $\{h^{ip}:0\leq i\leq p-1\}$ is linearly independent over $Q^+[[X^p;\Sigma^p,\Delta^p]]_{(X^p-T^p)}$, and hence over $F^p$ as required.\end{proof}

Altogether, we have now proved the following important result.

\begin{thm}\label{thm: simple extension}

Suppose that the data $(Q,u,\sigma,\delta)$ satisfies Assumptions 1--5 of \S \ref{subsec: crossed}, and that $Q$ is simple. Suppose there exists $m\geq 0$ such that $Q^+[[X_m;\Sigma_m,\Delta_m]]_{(X_m-T_m)}$ is a simple ring, and assume $m$ is minimal with respect to this property.

\begin{enumerate}[label=(\roman*)]
\item If $Q$ has characteristic $p$ then $m=0$ and $Q^+[[x;\sigma,\delta]]_{(x-t)}$ is simple.

\item If $Q$ has characteristic 0 and $Z(Q)$ contains all $p$'th roots of unity, then there exists $r\leq p^m$ such that $$Q^+[[x;\sigma,\delta]]_{(x-t)}\cong\left(Q^+[[X_m;\Sigma_m,\Delta_m]]_{(X_m-T_m)}\right)^r.$$
\end{enumerate}
\end{thm}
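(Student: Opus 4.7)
My plan is to set up a chain of one-step crossed products and induct backwards from $j=m$ down to $j=0$. For each $0 \le j \le m$, set $S_j := Q^+[[X_j;\Sigma_j,\Delta_j]]_{(X_j-T_j)}$. Since $X_{j+1} - T_{j+1} = (X_j - T_j)^p$ by Remark \ref{rk: relationship between X_n}, and since the data $(Q,u,\Sigma_j,\Delta_j)$ inherits Assumptions 1--5 from the original (using $\Sigma_j(T_j) = T_j$, $u(T_j) \ge 0$, and that compatibility of $(\Sigma_m,\Delta_m)$ with $u$ passes up to all $(\Sigma_{m'},\Delta_{m'})$ for $m'\ge m$), applying the one-step form of Theorem \ref{thm: crossed} yields the chain
\[
S_j \;=\; S_{j+1} \ast (\mathbb{Z}/p\mathbb{Z}), \qquad 0 \le j \le m-1,
\]
in which the generator $g_j := X_j - T_j$ acts on $S_{j+1}$ via an automorphism $\tilde{\Sigma}_j$ satisfying $\tilde{\Sigma}_j|_Q = \Sigma_j$, and $\tilde{\Sigma}_j^p$ is inner (being conjugation by $g_j^p = g_{j+1} \in S_{j+1}^\times$).

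For part (i), when $Q$ has characteristic $p$, I would induct backwards: assuming $S_{j+1}$ is simple it certainly has no non-trivial $\tilde{\Sigma}_j$-invariant ideals, so Proposition \ref{propn: cases} applied to $(Q,u,\Sigma_j,\Delta_j)$ (in place of $(Q,u,\sigma,\delta)$) gives that $S_j$ is simple. Starting from the simple $S_m$ and inducting down yields $S_0 = Q^+[[x;\sigma,\delta]]_{(x-t)}$ simple; minimality of $m$ then forces $m = 0$.

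For part (ii), the inductive goal is that $S_j \cong S_m^{r_j}$ for some $r_j \le p^{m-j}$, with base case $r_m = 1$. Given $S_{j+1} \cong \prod_i B_i$ with each $B_i \cong S_m$ simple, the innerness of $\tilde{\Sigma}_j^p$ on $S_{j+1}$ implies it fixes the centrally primitive idempotents, so $\tilde{\Sigma}_j$ permutes the $B_i$ in orbits of size dividing $p$. Grouping factors by orbit gives a $\tilde{\Sigma}_j$-invariant summand decomposition $S_{j+1} = \bigoplus_l A_l$ with $A_l = e_l S_{j+1}$, and hence $S_j = \bigoplus_l (A_l \ast \mathbb{Z}/p\mathbb{Z})$. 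I would then apply Corollary \ref{cor: semisimple in char 0} to each triple $(A_l, A_l \ast \mathbb{Z}/p\mathbb{Z}, \tilde{\Sigma}_j|_{A_l})$: the no-$p$-torsion hypothesis is inherited from $Q$; $\tilde{\Sigma}_j^p$ is inner via conjugation by $e_l g_{j+1}$; and $A_l$ has no non-trivial $\tilde{\Sigma}_j$-invariant ideals (since by construction the action on the simple factors of $A_l$ is transitive). The corollary then forces each $A_l$ to be simple (so every orbit has size one) and produces $A_l \ast \mathbb{Z}/p\mathbb{Z} \cong A_l^{k_l}$ with $k_l \le p$, giving $S_j \cong S_m^{r_j}$ with $r_j = \sum_l k_l \le p \cdot r_{j+1}$, and $r_0 \le p^m$ follows.

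The main obstacle is verifying the fourth hypothesis of Corollary \ref{cor: semisimple in char 0} at each inductive step, namely that $Z(A_l)^{\tilde{\Sigma}_j|_{A_l}}$ contains all $p$th roots of unity. The given inclusion $\mu_p \subseteq Z(Q)$ lifts centrally into each $A_l$ exactly when $\Sigma_{j+1}$ fixes $\mu_p$ pointwise, and the lifts are then $\tilde{\Sigma}_j$-invariant exactly when $\Sigma_j$ does the same. A Galois-theoretic observation propagates this upward through the tower: the order of $\Sigma_j|_{\mu_p}$ divides both $|\mathrm{Aut}(\mu_p)| = p-1$ and $p^{m-j}$ (via $\Sigma_m = \Sigma_j^{p^{m-j}}$), which are coprime, so triviality at $j=m$ forces triviality for all $0 \le j \le m$. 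Establishing the base case---that $\Sigma_m$ itself fixes $\mu_p$---is the most delicate step, and will rest on combining $u$-compatibility of $(\Sigma_m,\Delta_m)$ with the characteristic-$0$ structure near $p$, most notably that $(\zeta-1)^{p-1}$ is associate to $p$ in $\mathbb{Z}[\zeta_p] \subseteq Z(Q)$, which constrains the integer-valued standard filtration enough to rule out a nontrivial permutation.
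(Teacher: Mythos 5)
Your proposal follows essentially the same route as the paper's proof: the one-step crossed products $S_j = S_{j+1}\ast(\mathbb{Z}/p\mathbb{Z})$ from Theorem~\ref{thm: crossed}, with part (i) resolved by Proposition~\ref{propn: cases} (the paper phrases this as a single-step contradiction with the minimality of $m$ rather than a downward induction, which is the same argument), and part (ii) resolved by splitting $S_{j+1}$ into $\tilde\Sigma_j$-orbits of its simple factors and applying Corollary~\ref{cor: semisimple in char 0} to each orbit sum, again exactly as in the paper (phrased there as a contradiction with a minimal index $\ell$). The one place you go beyond the paper is your explicit treatment of the hypothesis of Corollary~\ref{cor: semisimple in char 0} that $Z(A_l)^{\tilde\Sigma_j}$ contains all $p$-th roots of unity: the paper invokes the corollary directly, leaving implicit that the roots of unity in $Z(Q)$ are fixed by $\sigma$ (hence by every $\Sigma_j$, and hence land centrally in the relevant factors). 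Your coprimality reduction is correct, and the base case you leave as a sketch is indeed completable along the line you indicate: if $(\Sigma_{m'},\Delta_{m'})$ is compatible with $u$ (note the compatibility index $m'$ of Assumption 3 need not equal the theorem's $m$, but this is harmless since compatibility propagates to larger indices by Corollary~\ref{cor: Noetherian skew power series}, and fixing $\mu_p$ at any single index forces fixing it at index $0$ by your coprimality argument) and $\Sigma_{m'}(\zeta)=\zeta'\neq\zeta$ for a primitive $\zeta\in\mu_p\subseteq Z(Q)$, then applying strong compatibility to the element $\zeta-1$ gives $u(\zeta'-\zeta)>u(\zeta-1)$, whereas all differences of distinct $p$-th roots of unity are associates in $\mathbb{Z}[\zeta]$ and therefore share the common value $u(p)/(p-1)$ whenever $u$ restricts to a valuation on $Z(Q)$ (as it does for the standard filtrations in which the theorem is applied) --- a contradiction. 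So the only substantive divergence from the paper is this extra verification, which you have correctly identified and sketched but not completed; everything else in your plan coincides with the paper's proof.
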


\begin{proof}

For ease of notation, write $A_k := Q^+[[X_k;\Sigma_k,\Delta_k]]_{(X_k-T_k)}$ throughout. If $m = 0$, there is nothing to prove, so assume $m > 0$.

\begin{enumerate}[label=(\roman*)]
\item
By Theorem \ref{thm: crossed}, we have a crossed product decomposition $A_{m-1} = A_m * \mathbb{Z}/p\mathbb{Z} = \bigoplus_{i=0}^{p-1} A_m g^i$, where $g$ acts by $\Sigma_{m-1}$. By assumption, $A_m$ is simple, so contains no non-zero $\Sigma_{m-1}$-invariant ideals, and now it follows from Proposition \ref{propn: cases} that $A_{m-1}$ is simple, contradicting our assumption on $m$.

\item Write $S = A_m$. Let $\ell \geq 0$ be minimal such that there exists $r\leq p$ satisfying $A_\ell \cong S^r$. If $\ell = 0$, we are done, so assume $\ell > 0$.

Write $R = A_{\ell-1}$ and $S' = A_\ell$, so that $R = S'*\mathbb{Z}/p\mathbb{Z} \cong S^r*\mathbb{Z}/p\mathbb{Z} = \bigoplus_{i=0}^{p-1} S^r g^i$, where $g$ acts by $\tau := \Sigma_{\ell-1}$. For each maximal ideal $M$ of $S'$, let $O(M):=\{\tau^j(M):0\leq j\leq p-1\}$ be the complete $\tau$-orbit of $M$, and let $\mathcal{S}:=\{O(M):M$ a maximal ideal of $S'\}$. 

For each $O\in\mathcal{S}$, $N(O):=\underset{M\in O}{\cap}{M}$ is maximal as a $\tau$-invariant ideal of $S'$, and if $$\mathcal{X}:=\{N(O):O\in\mathcal{S}\}=\{N_1,\dots,N_h\},$$ then $S_j:=S/N_j$ has no non-trivial $\tau$-invariant ideals for each $j$, and $S'\cong S_1\times\dots\times S_h$, and the crossed product $R\cong S'\ast\mathbb{Z}/p\mathbb{Z}$ decomposes as $$R\cong (S_1\ast\mathbb{Z}/p\mathbb{Z})\times\dots\times(S_h\ast\mathbb{Z}/p\mathbb{Z})$$
via the map
\begin{align*}
S^r * \mathbb{Z}/p\mathbb{Z} &\to (S_1\ast\mathbb{Z}/p\mathbb{Z})\times\dots\times(S_h\ast\mathbb{Z}/p\mathbb{Z})\\
\underset{0\leq i\leq p-1}{\sum}{(s_{1,i},\dots,s_{h,i})g^i}&\mapsto\left(\underset{0\leq i\leq p-1}{\sum}s_{1,i}g_1^i,\dots,\underset{0\leq i\leq p-1}{\sum}s_{h,i}g_h^i\right),
\end{align*}
where $g_j$ acts by the restriction of $\tau$ to $S_j$. Since $\tau^p = \Sigma_\ell$ is conjugation by $X_\ell - T_\ell \in S'^\times$, it is inner as an automorphism of $S'$, and hence the action of each $g_j^p$ is inner on $S_j$.

Using Corollary \ref{cor: semisimple in char 0}, it follows that each $S_i\ast\mathbb{Z}/p\mathbb{Z}\cong S_i^{r_i}$ for some $r_i\leq p$. But we know that $S_i=S'/N_i\cong S^{q_i}$ for some $q_i\leq r$ such that $q_1+\dots+q_h=r$, thus $R\cong S^{r_1q_1+\dots+r_hq_h}$, and $r_1q_1+\dots+r_hq_h\leq p(q_1+\dots+q_h)=pr\leq p\cdot p^{m-\ell}=p^{m-(\ell-1)}$, contradicting the minimality of $\ell$.\qedhere
\end{enumerate}
\end{proof}

In particular, in the Iwasawa case when $t=-1$ and $\delta=\sigma-\id$, it follows in characteristic $p$ that if $Q^+[[x^{p^m};\sigma^{p^m},\sigma^{p^m}-\id]]$ is simple for some $m$, then $Q^+[[x;\sigma,\sigma-\id]]$ is simple.

\begin{rk}
Theorem \ref{thm: simple extension}(ii) is the strongest result we have to date in characteristic 0. If $(R,w_0)$ satisfies \eqref{filt}, $\chr(R)=0$ and we are in situation (2) of Theorem \ref{thm: strongly bounded}, then if we assume further that no power of $\sigma$ is an inner automorphism of a $\sigma$-standard completion $Q$ of $Q(R)$ (as we do in the next section), then we can apply this result to show that $R^+[[x;\sigma,\delta]]$ is semiprime.
\end{rk}

\subsection{Outer automorphisms}\label{subsec: simple skew}

In \S \ref{subsec: simple induction} we explored the implications for $Q^+[[x; \sigma, \delta]]_{(x-t)}$ of the simplicity of $Q^+[[X_m;\Sigma_m,\Delta_m]]_{(X_m - T_m)}$. We will now explore how, contrary to the skew polynomial case, it will often be the case that $Q^+[[X_m;\Sigma_m,\Delta_m]]_{(X_m - T_m)}$ is indeed simple.

Let $(R,w_0)$ be a filtered $\mathbb{Z}_p$-algebra satisfying \eqref{filt}, and carrying a commuting skew derivation $(\sigma,\delta)$ which is compatible with $w_0$, and once again construct the filtered rings of \eqref{eqn: maps in localisation procedure 2}. We will now assume further that $\chr(Q)=p$ and $\sigma$ is not an inner automorphism of $Q$.

Suppose there exists $t\in Q(R)$ such that $\delta(q) = tq - \sigma(q)t$ for all $q\in Q$. Since $Q$ is semisimple artinian, it follows from Lemma \ref{lem: power 1} that $\sigma(t)=t$. Define $X_r, \Sigma_r, \Delta_r, T_r$ for all $r\in\mathbb{N}$ as in Definition \ref{defn: X_n etc in inner case}, and using Lemma \ref{lem: all assumptions}, we see that Assumptions 1--5 of \S \ref{subsec: crossed} are satisfied by Lemma \ref{lem: assumptions are satisfied}. From now on, using Assumption 3, fix $m\geq 0$ such that $(\Sigma_m,\Delta_m)$ is compatible with $u$.

As usual, set $\O:=u^{-1}([0,\infty])$. The compatibility of $(\Sigma_m,\Delta_m)$ ensures that $$Q^+[[X_m;\Sigma_m,\Delta_m]]\cong Q\otimes_{\O} \O[[X_m;\Sigma_m,\Delta_m]]$$ by Theorem \ref{thm: restricted skew power series rings exist}(iii) (compare the proof of Lemma \ref{lem: assumptions are satisfied}).

A key result, which connects skew power series over $Q$ to skew polynomials, is \cite[Theorem C]{jones-woods-2}. This states that, if $(\Sigma_m, \Delta_m)$ is compatible with $u$, then any non-zero two-sided ideal of $Q^+[[X_m;\Sigma_m,\Delta_m]]$ has non-zero intersection with the skew polynomial ring $Q[X_m;\Sigma_m,\Delta_m]$.

For convenience of notation, set $\tau:=\Sigma_m$. Then an easy calculation shows that we can set $y = X_m - T_m$ to ``untwist" this skew polynomial ring to one of automorphic type: $$Q[X_m;\Sigma_m,\Delta_m]=Q[y;\tau].$$

The following is surely well-known.

\begin{propn}\label{propn: polynomial}
Suppose $Q$ is simple. If no positive power of $\sigma$ is inner, then any non-zero two-sided ideal of $Q[y,\tau]$ is generated by $y^n$ for some $n\in\mathbb{N}$.
\end{propn}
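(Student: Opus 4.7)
The plan is to show directly that any non-zero two-sided ideal $I \lhd Q[y;\tau]$ contains $y^n$ where $n$ is the minimal $y$-degree appearing in $I$, and then to see that $I$ coincides with the principal two-sided ideal $(y^n) = Qy^n$ (using that $y^n$ is normal, since $y^n q = \tau^n(q) y^n$).

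First I would let $n \geq 0$ be minimal such that some non-zero element of $I$ has $y$-degree $n$. Observing that the set $J$ of leading $y^n$-coefficients of such elements (together with $0$) is closed under left multiplication by $Q$, and, using the relation $y^n q = \tau^n(q) y^n$, also under right multiplication by $\tau^n(Q) = Q$, I conclude $J \lhd Q$ is a non-zero two-sided ideal. Simplicity of $Q$ forces $J = Q$, so that $I$ contains a monic element
\[
f = y^n + a_{n-1} y^{n-1} + \dots + a_0.
\]
The usual commutator trick then gives, for every $q \in Q$, that $qf - f\tau^{-n}(q) \in I$ has $y$-degree strictly less than $n$ (the leading terms cancel because $y^n \tau^{-n}(q) = q y^n$), hence vanishes by the minimality of $n$. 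Matching coefficients yields the family of identities $q a_i = a_i \tau^{i-n}(q)$ for every $q \in Q$ and every $0 \leq i < n$.

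The key step is to show that for each $i < n$ such an $a_i$ must in fact be zero. Fix $i$ with $a_i \neq 0$: from $qa_i = a_i \tau^{i-n}(q)$ one checks that both $\mathrm{lann}(a_i)$ and $\mathrm{rann}(a_i)$ are two-sided ideals of $Q$ (for instance, for $b \in \mathrm{rann}(a_i)$ and $c \in Q$, $a_i(cb) = \tau^{n-i}(c)\, a_i b = 0$). Simplicity of $Q$ then forces both annihilators to be zero; since $Q$ is simple artinian, $a_i$ is therefore a unit. Conjugation by $a_i$ then realises $\tau^{n-i}$ as an inner automorphism of $Q$. But $\tau = \Sigma_m = \sigma^{p^m}$, so $\tau^{n-i} = \sigma^{p^m(n-i)}$ is a positive power of $\sigma$, contradicting the hypothesis that no positive power of $\sigma$ is inner. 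Hence every $a_i$ vanishes and $y^n \in I$.

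Finally, given any $g \in I$, since $y^n \in I$ the ideal $I$ contains $(y^n) = Qy^n = y^nQ$; reducing $g$ modulo $(y^n)$ produces an element of $I$ of $y$-degree strictly less than $n$, which must be $0$ by minimality of $n$. Thus $g \in (y^n)$, and $I = (y^n)$ as required. I expect the one step that deserves attention is the annihilator argument showing that a non-zero $a_i$ must be a unit; everything else is formal manipulation once the minimality of $n$ and the simplicity of $Q$ are set up correctly.
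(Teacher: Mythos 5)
Your proof is correct and takes essentially the same route as the paper's: extract a monic element of minimal degree $n$ in $I$ using simplicity of $Q$, apply $qf - f\tau^{-n}(q)\in I$ together with minimality of $n$ to show each lower coefficient is normal (your annihilator argument and the paper's ``normal element of a simple ring'' argument amount to the same thing), hence a unit, which would make $\tau^{n-i}=\sigma^{p^m(n-i)}$ inner, contradicting the hypothesis, so $y^n\in I$ and then $I=(y^n)$. The only slip is the parenthetical identity $(y^n)=Qy^n=y^nQ$, which is false as written since the two-sided ideal generated by the normal element $y^n$ is $Q[y;\tau]\,y^n=\bigoplus_{j\geq n}Qy^j$; your reduction step only uses that every term of degree at least $n$ lies in $(y^n)$, so the argument is unaffected.
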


\begin{proof}
Let $I\neq 0$ be a two-sided ideal in $Q[y;\tau]$, and $n\geq 0$ be the minimal degree (as a polynomial in $y$) of a non-zero element of $I$. Define
\[
A:=\{q\in Q:b_0+b_1y+\dots+b_{n-1}y^{n-1}+qy^n\in I\text{ for some } b_i\in Q\}
\]
Then $A$ is a two-sided ideal of $Q$, and $A\neq 0$ by our choice of $n$, so we must have $1\in A$. Thus there exist $z_0,z_1,\dots,z_{n-1}\in Q$ such that $$r:=z_0+z_1y+\dots+z_{n-1}y^{n-1}+y^n\in I.$$
For any $q\in Q$, we have $qr-r\tau^{-n}(q)\in I$, and 
\[
qr-r\tau^{-n}(q)=(qz_0-z_0\tau^{-n}(q))+(qz_1-z_1\tau^{-(n-1)}(q))y+\dots+(qz_{n-1}-z_{n-1}\tau^{-1}(q))y^{n-1},
\] 
so by our assumption on $n$, it follows that $qz_i-z_i\tau^{-(n-i)}(q)=0$ for all $i$.

Therefore, if $z_i\neq 0$ for some $i$, then $z_i$ is a non-zero normal element of the simple ring $Q$, and hence it is a unit. So $\tau^{-(n-i)}(q)=z_i^{-1}qz_i$ for all $q\in Q$, and hence $\tau^{n-i}$ is inner -- contradiction. It follows that $z_i=0$ for all $i$, and hence $r=y^n\in I$, so $I$ contains the two-sided ideal $(y^n)$.

Finally, if $c_0+c_1y+\dots+c_my^m\in I$ for some $m\geq n$, then since $y^n, y^{n+1},\dots,y^m\in I$, it follows that $c_0+c_1y+\dots+c_{n-1}y^{n-1}\in I$, so again, by our assumption on $n$, it follows that $c_i=0$ for all $i<n$, so $I\subseteq (y^n)$, and hence $I=(y^n)$.\end{proof}

\begin{thm}\label{thm: simple when no power of sigma is inner}

Let $(Q,u)$ be a simple artinian ring of characteristic $p$ carrying a commuting skew derivation $(\sigma,\delta)$ which is quasi-compatible with $u$, and suppose that the data $(Q,u,\sigma,\delta)$ satisfies Assumptions 1--5 of \S \ref{subsec: crossed}. In particular, there exists $t\in Q$ such that $\delta(q) = tq - \sigma(q)t$ for all $q\in Q$, and $(\Sigma_m,\Delta_m)$ is compatible with $u$ for sufficiently large $m$. If no positive power of $\sigma$ is inner as an automorphism of $Q$, then the localisation $Q^+[[x;\sigma,\delta]]_{(x-t)}$ is a simple ring.
\end{thm}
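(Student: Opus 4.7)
The plan is to exploit Theorem \ref{thm: simple extension}(i), which says that in characteristic $p$ it suffices to prove simplicity of $Q^+[[X_m;\Sigma_m,\Delta_m]]_{(X_m-T_m)}$ for some $m \geq 0$, not necessarily $m=0$. So I would first pick $m$ large enough that $(\Sigma_m,\Delta_m)$ is compatible (not merely quasi-compatible) with $u$; this is possible by Assumption 3. Note that $\Sigma_m = \sigma^{p^m}$, so no positive power of $\Sigma_m$ is inner as an automorphism of $Q$ either. Furthermore, the data $(Q,u,\Sigma_m,\Delta_m)$ still satisfies Assumptions 1--5 of \S \ref{subsec: crossed}, since these properties are stable under passing from $(\sigma,\delta)$ to $(\Sigma_m,\Delta_m)$.

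Now I would reduce the simplicity of $Q^+[[X_m;\Sigma_m,\Delta_m]]_{(X_m-T_m)}$ to a statement about skew polynomial ideals. Since $X_m - T_m$ is regular and normal by Lemma \ref{lem: when sigma fixes t}(ii) and Proposition \ref{propn: x-t is regular and normal}, Lemma \ref{lem: normal2} gives a bijective correspondence between two-sided ideals of the localisation and $(X_m-T_m)$-saturated two-sided ideals of $Q^+[[X_m;\Sigma_m,\Delta_m]]$. So it suffices to show that every non-zero two-sided ideal $J$ of $Q^+[[X_m;\Sigma_m,\Delta_m]]$ contains some power of $X_m - T_m$. By \cite[Theorem C]{jones-woods-2} (which applies because $(\Sigma_m,\Delta_m)$ is compatible with $u$ and $Q$ is simple artinian with $u$ standard), the intersection $J \cap Q[X_m;\Sigma_m,\Delta_m]$ is non-zero. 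After the change of variable $y = X_m - T_m$, the inner $\Sigma_m$-derivation $\Delta_m$ untwists and we have $Q[X_m;\Sigma_m,\Delta_m] = Q[y;\Sigma_m]$. Applying Proposition \ref{propn: polynomial} to this ring (using that $Q$ is simple and no positive power of $\Sigma_m$ is inner), we conclude that $J \cap Q[y;\Sigma_m]$ contains $y^n = (X_m - T_m)^n$ for some $n \geq 0$. This is what we need.

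Combining these two steps: $Q^+[[X_m;\Sigma_m,\Delta_m]]_{(X_m-T_m)}$ is simple, and so by Theorem \ref{thm: simple extension}(i) applied to $(Q, u, \sigma, \delta)$, the localisation $Q^+[[x;\sigma,\delta]]_{(x-t)}$ is simple as well.

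The main potential obstacle is a bookkeeping one rather than a conceptual one: verifying carefully that all of Assumptions 1--5 of \S \ref{subsec: crossed} continue to hold for $(Q,u,\Sigma_m,\Delta_m)$, and that the hypotheses of \cite[Theorem C]{jones-woods-2} are met once we have passed to a compatible skew derivation. Once compatibility is established, the remainder is a short combination of the ideal-intersection theorem for skew power series and the well-understood ideal theory of skew polynomial rings of automorphic type.
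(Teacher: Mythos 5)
Your proposal is correct and follows essentially the same route as the paper: establish (via \cite[Theorem C]{jones-woods-2} and Proposition \ref{propn: polynomial} after untwisting to $Q[y;\Sigma_m]$) that every non-zero ideal of $Q^+[[X_m;\Sigma_m,\Delta_m]]$ contains a power of $X_m-T_m$, deduce simplicity of the localisation at $X_m-T_m$, and conclude via Theorem \ref{thm: simple extension}(i). The stability of Assumptions 1--5 under passing to $(\Sigma_m,\Delta_m)$, which you flag as the remaining bookkeeping, is exactly the point the paper records just before Proposition \ref{propn: cases}, so nothing is missing.
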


\begin{proof}

Let $S := Q^+[[X_m;\Sigma_m,\Delta_m]]$ and $y := X_m-T_m$.

We show first that any non-zero ideal $I \lhd S$ contains some $y^n$. Indeed, since $(\Sigma_m,\Delta_m)$ is compatible with the standard filtration on $Q$, we get that $J:=I\cap Q[X_m;\Sigma_m,\Delta_m]\neq 0$ by \cite[Theorem C]{jones-woods-2}. Then, since $\Delta_m$ is inner, $Q[X_m;\Sigma_m,\Delta_m] = Q[y;\Sigma_m]$, so $J=(y^n)$ for some $n\in\mathbb{N}$ by Proposition \ref{propn: polynomial}, and hence $I$ contains $y^n$.

Next, let $I \neq 0$ be an arbitrary two-sided ideal in the localised ring $S_y$. We must have $I\cap S\neq 0$ \cite[Proposition 2.1.16(iii)]{MR}, and so by the above, $I\cap S$ contains some $(y^n)$. But as $y$ is a unit in $S_y$, it follows that $I = S_y$, and as $I$ was arbitrary, this means that $S_y$ is a simple ring. But now, by Theorem \ref{thm: simple extension}(i), $Q^+[[x; \sigma, \delta]]_{(x-t)}$ is simple.
\end{proof}

Recall that, as mentioned in the introduction, it will sometimes be the case that $t$ is invertible and $u(t) = u(t^{-1}) = 0$: in this case, $x-t$ is a unit, and so $Q^+[[x; \sigma, \delta]] = Q^+[[x; \sigma, \delta]]_{(x-t)}$ is simple. In either case, $Q^+[[x; \sigma, \delta]]$ is prime by Lemma \ref{lem: normal2}.

\textbf{Note:} The proof that $Q^+[[X_m;\Sigma_m,\Delta_m]]_{(X_m-T_m)}$ is a simple ring also applies when $Q$ has characteristic 0 and $u(t)\geq 0$, so applying Theorem \ref{thm: simple extension} gives $Q^+[[x; \sigma, \delta]]_{(x-t)}\cong \left(Q^+[[X_m;\Sigma_m,\Delta_m]]_{(X_m-T_m)}\right)^k$, which completes the proof of Theorem \ref{letterthm: simple when no power of sigma is inner}.

Now, finally, we can complete the proof of our most important result, Theorem \ref{letterthm: infinite order implies prime}.

\begin{thm}\label{thm: infinite order implies prime}
Suppose $(R,w_0)$ is a filtered ring of characteristic $p$ satisfying \eqref{filt}, and $(\sigma,\delta)$ is a commuting skew derivation on $R$ which is compatible with $w_0$. Fix a $\sigma$-standard filtered artinian completion $(Q,u)$ of $(R,w_0)$, and suppose also that 

\begin{itemize}
\item there exists $t\in Q(R)$ such that $\delta(r)=tr-\sigma(r)t$ for all $r\in R$, and

\item no positive power of $\sigma$ is inner as an automorphism of $Q$. 
\end{itemize}

Then $R^+[[x;\sigma,\delta]]$ is a prime ring.
\end{thm}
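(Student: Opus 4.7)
The plan is to use Theorem \ref{thm: can assume M is sigma-invariant} to reduce the question to showing primeness of $Q_i^+[[y; \Sigma_n|_{Q_i}, \Delta_n|_{Q_i}]]$ for a single simple factor $Q_i$ of $Q$, and then to apply Theorem \ref{thm: simple when no power of sigma is inner} directly to that factor. First I would extend $(\sigma,\delta)$ from $R$ to $\widehat{Q}$ via Lemma \ref{lem: compatibility is not enough}; since $t \in Q(R) \subseteq \widehat{Q}$, the extended $\delta$ remains inner, defined by $t$. Because $\chr(R) = p$, the characteristic-zero hypothesis $w(t) \geq 0$ required by Theorem \ref{thm: can assume M is sigma-invariant} is vacuous here, so the reduction applies and it suffices to prove primeness of $Q_i^+[[y; \Sigma_n|_{Q_i}, \Delta_n|_{Q_i}]]$ for some $1 \leq i \leq p^n$.

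Next I would verify the hypotheses of Theorem \ref{thm: simple when no power of sigma is inner} for the data $(Q_i, u_i, \Sigma_n|_{Q_i}, \Delta_n|_{Q_i})$. Quasi-compatibility of $(\sigma,\delta)$ with $u$ follows from Theorem \ref{thm: bounded in the semisimple case}(i); Theorem \ref{thm: skew power series subrings exist} then propagates this to $(\Sigma_n, \Delta_n)$, and restriction to the $\Sigma_n$-invariant factor $Q_i$ yields quasi-compatibility of $(\Sigma_n|_{Q_i}, \Delta_n|_{Q_i})$ with $u_i$. Because $\sigma$ is not inner on $Q$ by hypothesis, Lemma \ref{lem: power 1} gives $\sigma(t) = t$, so $\Sigma_n$ fixes $T_n := (-1)^{p+1}t^{p^n}$ and $\Delta_n|_{Q_i}$ is the inner $\Sigma_n|_{Q_i}$-derivation defined by the $i$-th component $T_n^{(i)} \in Q_i$. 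The remaining Assumptions 1--5 of \S \ref{subsec: crossed} are delivered by Lemma \ref{lem: assumptions are satisfied} (with Noetherianness supplied by Theorem \ref{thm: bounded in the semisimple case}(i)), and Proposition \ref{propn: strong compatibility} ensures that $(\Sigma_{n+m}|_{Q_i}, \Delta_{n+m}|_{Q_i})$ is compatible with $u_i$ for sufficiently large $m$.

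The main obstacle is the transfer of the hypothesis ``no positive power of $\sigma$ is inner on $Q$'' to ``no positive power of $\Sigma_n|_{Q_i}$ is inner on $Q_i$''. Suppose for contradiction that $\Sigma_n^k|_{Q_i}$ is conjugation by some $v_i \in Q_i^\times$. Using $\sigma(Q_j) = Q_{j+1}$ together with $\sigma \circ \Sigma_n^k|_{Q_j} = \Sigma_n^k|_{Q_{j+1}} \circ \sigma$, a direct calculation shows that each restriction $\sigma^{kp^n}|_{Q_j}$ is conjugation by $v_j := \sigma^{j-i}(v_i) \in Q_j^\times$, so the unit $v = (v_1, \ldots, v_{p^n}) \in Q^\times$ implements $\sigma^{kp^n}$ as an inner automorphism of $Q$, contradicting the hypothesis. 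With this in hand, Theorem \ref{thm: simple when no power of sigma is inner} gives that $Q_i^+[[y; \Sigma_n|_{Q_i}, \Delta_n|_{Q_i}]]_{(y - T_n^{(i)})}$ is simple; since $y - T_n^{(i)}$ is regular by Proposition \ref{propn: x-t is regular and normal} and normal by Lemma \ref{lem: when sigma fixes t}(ii), Lemma \ref{lem: normal2} upgrades this to primeness of $Q_i^+[[y; \Sigma_n|_{Q_i}, \Delta_n|_{Q_i}]]$ itself. Feeding this back into Theorem \ref{thm: can assume M is sigma-invariant} then gives primeness of $R^+[[x; \sigma, \delta]]$ and completes the argument.
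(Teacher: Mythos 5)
Your proposal is correct and follows essentially the same route as the paper's own proof: reduce via Theorem \ref{thm: can assume M is sigma-invariant} to a single simple factor $Q_i$, transfer the ``no positive power of $\sigma$ is inner'' hypothesis to $\Sigma_n|_{Q_i}$ by exactly the same conjugation/orbit argument (assembling a unit of $Q$ from $\sigma$-translates of the unit on $Q_i$), and then conclude with Theorem \ref{thm: simple when no power of sigma is inner} together with Lemma \ref{lem: normal2}. The auxiliary verifications you cite (Lemma \ref{lem: power 1} for $\sigma(t)=t$, Lemma \ref{lem: assumptions are satisfied}/Lemma \ref{lem: all assumptions} for Assumptions 1--5, Proposition \ref{propn: strong compatibility} for compatibility of $(\Sigma_m,\Delta_m)$) match the paper's argument.
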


\begin{proof}

Since $Q$ is semisimple artinian, and $\sigma$ is not an inner automorphism of $Q$, it follows from Lemma \ref{lem: power 1} that $\sigma(t)=t$. It follows from Lemma \ref{lem: all assumptions} that $(\sigma,\delta)$ descends to a quasi-compatible skew derivation of $(Q,u)$, and the data $(Q,u,\sigma,\delta)$ satisfies Assumptions 1--5 of \S \ref{subsec: crossed}.

Using Assumption 1, we can decompose $Q$ into a direct product $Q_1\times\dots\times Q_{p^\ell}$, and using Theorem \ref{thm: can assume M is sigma-invariant}, if we can prove that $Q_i^+[[y;\Sigma_\ell|_{Q_i},\Delta_\ell|_{Q_i}]]$ is prime for any $i$, then it follows that $R^+[[x;\sigma,\delta]]$ is prime.

Assume for contradiction that some power of $\Sigma_\ell$ restricts to an inner automorphism of $Q_i$, i.e. there exists $r\geq 1$ and $a\in Q_i$ such that $\Sigma_{\ell}^r(q)=\sigma^{rp^\ell}(q)=aqa^{-1}$ for all $q\in Q_i$. Then for any $j=1,\dots,p^\ell$, if $q\in Q_j$ then $q=\sigma^{j-i}(q')$ for some $q'\in Q_i$, so $$\sigma^{rp^\ell}(q)=\sigma^{rp^\ell}(\sigma^{j-i}(q'))=\sigma^{j-i}(\sigma^{rp^\ell}(q'))=\sigma^{j-i}(aq'a^{-1})=\sigma^{j-i}(a)q\sigma^{j-i}(a)^{-1}.$$
So let $A\in Q=Q_1\times\dots\times Q_{p^\ell}$ have $j$'th component $\sigma^{j-i}(a)$, then $A\in Q^{\times}$ and $\sigma^{rp^\ell}(q)=AqA^{-1}$ for all $q\in Q$, so $\sigma^{rp^\ell}$ is an inner automorphism of $Q$, contradicting our assumption.

Therefore, no power of $\Sigma_\ell$ restricts to an inner automorphism of $Q_i$ for any $i$. Moreover, since $\Delta_\ell(q)=T_\ell q-\Sigma_\ell(q) T_\ell$ for any $q\in Q$, if we let $t_i$ be the $i$'th component of $T_\ell$, then the restriction of $\Delta_\ell$ to $Q_i$ satisfies $\Delta_\ell(q)=t_iq-\Sigma_\ell(q)t_i$ for all $q\in Q_i$.

In other words, the restricted skew derivation $(\Sigma_\ell|_{Q_i},\Delta_\ell|_{Q_i})$ of $Q_i$ satisfies the same assumptions as $(\sigma,\delta)$, so we may assume from now on that $\ell = 0$ and $Q$ is simple, and it remains to show that $Q^+[[x;\sigma,\delta]]$ is prime.

But since no power of $\sigma$ is inner, it follows from Theorem \ref{thm: simple when no power of sigma is inner} that $Q^+[[x;\sigma,\delta]]_{(x-t)}$ is a simple ring, and hence $Q[[x;\sigma,\delta]]$ is a prime ring by Lemma \ref{lem: normal2}, and this completes the proof.\end{proof}

To complete the proof that the Iwasawa-type skew power series ring $R[[x;\sigma,\sigma-\id]]$ is prime, the only remaining problem is to address the case where some power of $\sigma$ is an inner automorphism. This problem will be addressed in a later work.

\bibliography{biblio}
\bibliographystyle{plain}

\end{document}